\documentclass{amsart}
\usepackage{url}
\usepackage{color}
\usepackage{times}
\usepackage{cite}
\usepackage{enumerate,latexsym}
\usepackage{latexsym}
\usepackage{amsmath,amssymb}
\usepackage{amsthm}
\usepackage{verbatim}
\usepackage{mathrsfs}
\vfuzz2pt 
\hfuzz2pt 
\newtheorem{thm}{Theorem}[section]
\newtheorem*{theorem*}{Theorem}
\newtheorem*{acknowledgement*}{Acknowledgement}
\newtheorem{cor}[thm]{Corollary}

\newtheorem{lem}[thm]{Lemma}
\newtheorem{prop}[thm]{Proposition}
\theoremstyle{definition}

\theoremstyle{remark}
\newtheorem{rem}[thm]{Remark}

\numberwithin{equation}{section}

\newcommand{\set}[1]{\left\{#1\right\}}
\newcommand{\Real}{\mathbb R}

\newcommand{\func}[1]{\ensuremath{{\mathrm{#1}}\:}}

\newcommand{\dist}[0]{\mathrm{dist}}

\newcommand{\spt}[0]{\func{spt}}

\title{An integer degree for asymptotically conical self-expanders}

\author{Jacob Bernstein}
\address{Department of Mathematics, Johns Hopkins University, 3400 N. Charles Street, Baltimore, MD 21218}
\email{bernstein@math.jhu.edu}

\author{Lu Wang}
\address{Department of Mathematics, University of Wisconsin-Madison, 480 Lincoln Drive, Madison, WI 53706}
\email{luwang@math.wisc.edu}
\thanks{The first author was partially supported by the NSF Grant DMS-1609340. The second author was partially supported by the NSF Grants DMS-1406240 and DMS-1834824, an Alfred P. Sloan Research Fellowship, and the office of the Vice Chancellor for Research and Graduate Education at the University of Wisconsin-Madison with funding from the Wisconsin Alumni Research Foundation and a Vilas Early Investigator Award.}

\begin{document}

\begin{abstract}
We establish the existence of an integer degree for the natural projection map from the space of parameterizations of asymptotically conical self-expanders to the space of parameterizations of the asymptotic cones when this map is proper. As an application we show that there is an open set in the space of cones in $\mathbb{R}^3$ for which each cone in the set has a strictly unstable self-expanding annuli asymptotic to it.
\end{abstract}

\maketitle

\section{Introduction}
A \emph{hypersurface}, i.e., a properly embedded codimension-one submanifold, $\Sigma\subset\mathbb{R}^{n+1}$, is a \emph{self-expander} if
\begin{equation} \label{ExpanderEqn}
\mathbf{H}_\Sigma-\frac{\mathbf{x}^\perp}{2}=\mathbf{0}.
\end{equation}
Here 
$$
\mathbf{H}_\Sigma=\Delta_\Sigma\mathbf{x}=-H_\Sigma\mathbf{n}_\Sigma=-\mathrm{div}_\Sigma(\mathbf{n}_\Sigma)\mathbf{n}_\Sigma
$$
is the mean curvature vector, $\mathbf{n}_\Sigma$ is the unit normal, and $\mathbf{x}^\perp$ is the normal component of the position vector. Self-expanders arise naturally in the study of mean curvature flow. Indeed, $\Sigma$ is a self-expander if and only if the family of homothetic hypersurfaces
$$
\left\{\Sigma_t\right\}_{t>0}=\left\{\sqrt{t}\, \Sigma\right\}_{t>0}
$$
is a \emph{mean curvature flow} (MCF), that is, a solution to the flow
$$
\left(\frac{\partial \mathbf{x}}{\partial t}\right)^\perp=\mathbf{H}_{\Sigma_t}.
$$
Self-expanders are expected to model the behavior of a MCF as it emerges from a conical singularity \cite{AIC}. They are also expected to model the long time behavior of the flow \cite{EHAnn}.

Given an integer $k\geq 2$ and $\alpha\in (0,1)$ let $\Gamma$ be a $C^{k,\alpha}_{*}$-asymptotically conical $C^{k,\alpha}$-hypersurface in $\mathbb{R}^{n+1}$ and let $\mathcal{L}(\Gamma)$ be the link of the asymptotic cone of $\Gamma$.  For instance, if $\lim_{\rho\to 0^+} \rho \Gamma=\mathcal{C}$ in $C^{k, \alpha}_{loc} (\mathbb{R}^{n+1}\setminus\{\mathbf{0}\})$, where $\mathcal{C}$ is a cone, then $\Gamma$ is $C^{k,\alpha}_{*}$-asymptotically conical with asymptotic cone $\mathcal{C}$. For technical reasons, the actual definition is slightly weaker -- see Section 3 of \cite{BWBanachManifold} for the details. We denote the space of $C^{k,\alpha}_{*}$-asymptotically conical $C^{k,\alpha}$-hypersurfaces in $\mathbb{R}^{n+1}$ by $\mathcal{ACH}^{k,\alpha}_n$.

In \cite{BWBanachManifold}, the authors showed that the space $\mathcal{ACE}_n^{k,\alpha}(\Gamma)$ -- see \eqref{ACEDef} below -- of asymptotically conical parameterizations of self-expanders modeled on $\Gamma$ (modulo reparameterizations fixing the parameterization of the asymptotic cone) possesses a natural Banach manifold structure modeled on $C^{k, \alpha}(\mathcal{L}(\Gamma); \mathbb{R}^{n+1})$. The authors further showed that the map
$$
\Pi\colon \mathcal{ACE}_n^{k,\alpha}(\Gamma)\to C^{k, \alpha}(\mathcal{L}(\Gamma); \mathbb{R}^{n+1})
$$
given by $\Pi([\mathbf{f}])=\mathrm{tr}^1_{\infty}[\mathbf{f}]$ is smooth and Fredholm of index $0$. Here $\mathrm{tr}^1_\infty[\mathbf{f}]$ is the trace at infinity of $\mathbf{f}$ -- see \eqref{TraceEqn}. As such, by work of Smale \cite{Smale}, as long as $\Pi$ is proper it possesses a well-defined mod 2 degree -- see \cite{BWProperness} for natural situations in which $\Pi$ is proper. In this paper, we show that as long as $\Pi$ is proper it has a well-defined \emph{integer} degree. Namely, let
$$
\mathcal{V}_{\mathrm{emb}}^{k,\alpha}(\Gamma)= \set{\varphi\in C^{k,\alpha}(\mathcal{L}(\Gamma);\mathbb{R}^{n+1}) \colon \mbox{$\mathscr{E}^{\mathrm{H}}_1[\varphi]$ is an embedding}},
$$
be the space of parameterizations of embedded cones. Here $\mathscr{E}^\mathrm{H}_1[\varphi]$ is the homogeneous degree-one extension of $\varphi$ -- see \eqref{HomoExtEqn}. For any $\varphi\in \mathcal{V}_{\mathrm{emb}}^{k,\alpha}(\Gamma)$ a regular value of $\Pi$ so that $\Pi^{-1}(\varphi)$ is a finite set let
$$
\mathrm{deg}(\Pi, \varphi)= \sum_{[\mathbf{f}]\in \Pi^{-1}(\varphi)} (-1)^{\mathrm{ind}([\mathbf{f}])}
$$
where $\mathrm{ind}([\mathbf{f}])$ is the \emph{Morse index} of $[\mathbf{f}]$ -- see Section \ref{IndexSec}. We claim that in many situations the map $\mathrm{deg}$ depends only on the component of $\varphi$ in $\mathcal{V}_{\mathrm{emb}}^{k,\alpha}(\Gamma)$.
More precisely,

\begin{thm}\label{MainThm}
Given $\Gamma\in\mathcal{ACH}^{k,\alpha}_n$, let $\mathcal{V}\subset \mathcal{V}_{\mathrm{emb}}^{k,\alpha}(\Gamma)$ be a connected open set and let $\mathcal{U}\subset \mathcal{ACE}_n^{k,\alpha}(\Gamma)$ be open and so $\Pi|_{\mathcal{U}}\colon \mathcal{U}\to \mathcal{V}$ is proper. Then for $\varphi\in\mathcal{V}$ which is a regular value of $\Pi|_{\mathcal{U}}$,  
$$
\mathrm{deg}(\Pi|_{\mathcal{U}}, \varphi)=\sum_{[\mathbf{f}]\in \mathcal{U}\cap \Pi^{-1}(\varphi) }(-1)^{\mathrm{ind}([\mathbf{f}])}
$$
is independent of $\varphi$ -- i.e., $\Pi|_{\mathcal{U}}$ has a well-defined integer degree $\mathrm{deg}(\Pi|_\mathcal{U},\mathcal{V})$.
\end{thm}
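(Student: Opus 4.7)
The plan is to adapt Smale's transversality approach to Fredholm degree theory, with the integer orientation supplied by the parity of the Morse index in the spirit of Elworthy-Tromba: the determinant line bundle of the family of linearizations $d\Pi$ is canonically trivialized via the self-adjoint Jacobi operator of the expander, and under this trivialization the orientation at a regular preimage $[\mathbf{f}]$ is precisely $(-1)^{\mathrm{ind}([\mathbf{f}])}$. To prove the theorem, I would reduce to showing that the signed count $\mathrm{deg}(\Pi|_{\mathcal{U}}, \varphi)$ agrees at any two regular values $\varphi_0, \varphi_1 \in \mathcal{V}$. Using connectedness of $\mathcal{V}$, I join them by a smooth path $\gamma\colon [0,1] \to \mathcal{V}$; Sard-Smale applied to $\Pi|_{\mathcal{U}}$ together with a standard parametric transversality perturbation allows one to assume $\gamma$ is transverse to $\Pi|_{\mathcal{U}}$, so that $M := (\Pi|_{\mathcal{U}})^{-1}(\gamma([0,1]))$ is a smooth $1$-submanifold of $\mathcal{U}$ with $\partial M = (\Pi|_{\mathcal{U}})^{-1}(\varphi_0) \cup (\Pi|_{\mathcal{U}})^{-1}(\varphi_1)$. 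Properness of $\Pi|_{\mathcal{U}}$ combined with compactness of $\gamma([0,1])$ then forces $M$ to be compact, so it is a finite disjoint union of circles and arcs; only arcs contribute to the boundary count.

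Along an arc $\alpha$ of $M$, the linearization $d\Pi_{\alpha(t)}$ is invertible except at finitely many fold points $t_0$ where $\ker d\Pi_{\alpha(t_0)}$ is one-dimensional and tangent to $\alpha$. At such a fold two things happen simultaneously: the composition $\gamma^{-1} \circ \Pi \circ \alpha$ has a nondegenerate critical point at $t_0$, so the direction in which $\alpha$ traverses $\gamma$ reverses; and exactly one eigenvalue of the self-adjoint Jacobi operator $L_{\alpha(t)}$ crosses zero as $t$ passes $t_0$, changing $\mathrm{ind}(\alpha(t))$ by $\pm 1$. These two sign-changes cancel, so the product $\varepsilon(t) \cdot (-1)^{\mathrm{ind}(\alpha(t))}$, where $\varepsilon(t) \in \{\pm 1\}$ is the direction of traversal, is locally constant on $\alpha$ and hence constant on the entire arc. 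Summing over arcs -- those with both endpoints at $\varphi_0$ (or both at $\varphi_1$) cancel internally, while those joining $\varphi_0$ to $\varphi_1$ contribute equally to both signed counts -- yields $\mathrm{deg}(\Pi|_{\mathcal{U}}, \varphi_0) = \mathrm{deg}(\Pi|_{\mathcal{U}}, \varphi_1)$.

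The main obstacle is establishing this fold analysis correctly: one must relate the trace-at-infinity linearization $d\Pi_{[\mathbf{f}]}$ to the self-adjoint Jacobi operator $L_{[\mathbf{f}]}$ on the expander in a way that both respects orientation and gives the expected simple, transverse eigenvalue crossing. Concretely, at a fold the one-dimensional kernel of $d\Pi_{[\mathbf{f}]}$ should consist of Jacobi fields with vanishing trace at infinity, and these should be identified with a simple eigenvector of $L_{[\mathbf{f}]}$ whose eigenvalue crosses zero transversally along $\gamma$. In the asymptotically conical weighted-H\"older setting this requires a careful Poisson-type decomposition of normal variations of the expander: a generic Jacobi field is determined by its trace at infinity, while a "decaying" Jacobi field -- living in the weighted $L^2$-kernel of $L_{[\mathbf{f}]}$ -- is precisely the obstruction to $d\Pi_{[\mathbf{f}]}$ being an isomorphism. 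Subsidiary technical issues (a workaround for the non-separability of $C^{k,\alpha}$ when invoking Sard-Smale, preservation of the embeddedness of cones along $\gamma$ inside $\mathcal{V}_{\mathrm{emb}}^{k,\alpha}(\Gamma)$, compactness of the fold locus in $M$) are largely handled by the Banach manifold structure of \cite{BWBanachManifold} together with the properness hypothesis, but must be checked carefully.
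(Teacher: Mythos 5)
Your outline follows the same strategy as the paper (and as White's original argument): join two regular values by a generic transverse path, observe that the preimage $C'=\Pi^{-1}(C)\cap\mathcal{U}$ is a compact $1$-manifold with boundary, orient it using the parity of the Morse index, and check that this orientation is consistent across fold points. You also correctly locate the crux: one must relate the linearization $d\Pi_{[\mathbf{f}]}$ to the self-adjoint Jacobi operator $L_{[\mathbf{f}]}$ so that the direction-reversal and the index jump cancel. So the architecture is right.

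The genuine gap is that you treat this crux as "largely handled by the Banach manifold structure of \cite{BWBanachManifold} together with the properness hypothesis, but must be checked carefully." In fact, that step is where essentially all of the new content of the paper lives (Sections \ref{ModifiedSmoothDependSec}--\ref{StableFormSec}), precisely because the weight $e^{|\mathbf{x}|^2/4}$ grows so fast. When you move along $C'$ and the asymptotic cone of the expander changes, the weighted Sobolev and H\"older spaces change with it, so it is not a priori legitimate to compare the spectrum or the quadratic form $Q$ across nearby expanders. The paper handles this with a chain of new ingredients: a modified Lyapunov--Schmidt reduction using the normalization $\hat{\Xi}_{\mathbf{v}}$ rather than $\Xi_{\mathbf{v}}$ (Theorem \ref{ModifiedSmoothDependThm}), concentration estimates showing eigenfunctions of $L_\Sigma$ with small eigenvalue are effectively supported in a fixed compact ball (Lemma \ref{ConcentrationLem} and its consequences), and the resulting stability of the positive/negative spaces of $Q_{\mathbf{f},\mathbf{v}}$ under perturbations that change the cone (Proposition \ref{QuadFormProp}), culminating in Theorem \ref{MainTechThm}, which is the analogue of White's Proposition 4 and is what makes the two sign changes at a fold actually cancel. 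Your "Poisson-type decomposition of normal variations" sketch does not reach this; the required identity is $\mathrm{ind}(\hat{F}_{\mathbf{v}}[\varphi,\kappa])=\mathrm{ind}(\Sigma)+\mathrm{ind}(D^2_{22}\hat g_{\mathbf{v}})$ and $\mathrm{null}(\hat F_{\mathbf{v}}[\varphi,\kappa])=\mathrm{null}(D^2_{22}\hat g_{\mathbf{v}})$, proved for a small but not infinitesimal neighborhood in a way uniform against the cone changing, and that is a real theorem rather than bookkeeping.

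A secondary gap: you assume the singular set along an arc consists of finitely many isolated fold points. Transversality of the curve $C$ to $\Pi$ only guarantees nullity $\leq 1$ on $C'$ (i.e., $C'\cap\mathcal{S}_2=\emptyset$), not that $C'\cap\mathcal{S}_1$ is discrete; an entire open arc of $C'$ can map to a single $\varphi$ and hence lie in $\mathcal{S}_1$. The paper deals with this via Lemma \ref{OrientationLem}, which shows the second orientation convention is well-defined along such a degenerate arc independently of the choices of $\kappa$ and of $\mathbf{v}$. Your proposal does not anticipate this case, so even granting the fold-point analysis, the argument as written would not close.
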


This is analogous to a result proved by White \cite{WhiteEI} for a large class of compact critical points of elliptic variational problems.  While the argument in this article follows the outline laid out in \cite{WhiteEI}, the details are quite different and are substantially more involved.  This is due to technical issues introduced both by the non-compactness of the problem and, more seriously, by the fast growth of the weight.

As an application of Theorem \ref{MainThm} we obtain the existence of many strictly unstable asymptotically conical self-expanders.

\begin{thm}\label{ApplicationThm}
Let $\Gamma\in\mathcal{ACH}^{k,\alpha}_2$ be an annulus. There is an open set $\mathcal{V}_0\subset\mathcal{V}_{\mathrm{emb}}^{k,\alpha}(\Gamma)$ so that for each $\varphi\in\mathcal{V}_0$ there is an element $[\mathbf{f}]\in\mathcal{ACE}^{k,\alpha}_2(\Gamma)$ with $\mathrm{tr}_\infty^1[\mathbf{f}]=\varphi$ and so that $\mathbf{f}(\Gamma)$ is a strictly unstable self-expander.
\end{thm}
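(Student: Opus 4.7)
My plan is to apply Theorem \ref{MainThm} in a setting where the integer degree of $\Pi|_\mathcal{U}$ can be pinned down at one cone but contradicts the scenario in which every self-expander over $\mathcal{V}$ is stable; this mismatch forces a strictly unstable annular self-expander, and openness near a regular value then yields the open set $\mathcal{V}_0$.

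\textbf{Step 1 (Setup).} First, choose a connected open set $\mathcal{V}\subset \mathcal{V}_{\mathrm{emb}}^{k,\alpha}(\Gamma)$ of embedded annular cones and an open set $\mathcal{U}\subset \mathcal{ACE}_2^{k,\alpha}(\Gamma)$ such that $\Pi|_\mathcal{U}\colon \mathcal{U}\to \mathcal{V}$ is proper; such pairs are furnished by \cite{BWProperness}. Let $d=\mathrm{deg}(\Pi|_\mathcal{U},\mathcal{V})\in\mathbb{Z}$ be the well-defined degree given by Theorem \ref{MainThm}.

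\textbf{Step 2 (Compute the degree at one cone).} Next, locate a regular value $\varphi_0\in \mathcal{V}$ at which the preimage $\Pi|_\mathcal{U}^{-1}(\varphi_0)$ is either empty or consists of a single stable self-expander, so that $d\in\{0,1\}$ can be read off directly. A natural candidate is a cone whose geometry (e.g., large opening or a well-separated link) makes $\mathcal{U}$-admissible annular self-expanders unique or absent.

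\textbf{Step 3 (Too many stable expanders at another cone).} Then, locate a regular value $\varphi_1\in \mathcal{V}$ together with at least $d+1$ distinct stable annular self-expanders asymptotic to $\varphi_1$. Each such expander has Morse index $0$ and contributes $+1$ to the signed count $\mathrm{deg}(\Pi|_\mathcal{U},\varphi_1)=d$, so the only way the degree formula of Theorem \ref{MainThm} can balance is if $\mathcal{U}\cap \Pi|_\mathcal{U}^{-1}(\varphi_1)$ also contains a critical point $[\mathbf{f}_1]$ of \emph{odd} Morse index. By definition $\mathbf{f}_1(\Gamma)$ is then strictly unstable.

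\textbf{Step 4 (Openness).} Finally, since $\varphi_1$ is a regular value, the Fredholm implicit function theorem on the Banach manifold $\mathcal{ACE}^{k,\alpha}_2(\Gamma)$ from \cite{BWBanachManifold} yields a smooth family $\varphi\mapsto [\mathbf{f}(\varphi)]$ of annular self-expanders asymptotic to $\varphi$ for $\varphi$ in a neighborhood $\mathcal{V}_0\subset \mathcal{V}$ of $\varphi_1$. Upper semicontinuity of the Morse index along the continuous family of stability (Jacobi-type) self-adjoint Fredholm operators associated with $[\mathbf{f}(\varphi)]$ then ensures each $\mathbf{f}(\varphi)(\Gamma)$ remains strictly unstable on this neighborhood.

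The main obstacle is Step 3: actually exhibiting a single annular cone that possesses at least two stable self-expanders modeled on $\Gamma$. This is independent of the degree-theoretic machinery and likely proceeds by direct construction -- for instance, by variational minimization in distinct isotopy or homotopy classes of annular parameterizations, by running mean curvature flow from two genuinely different smoothings of the cone, or by a gluing argument when the link components of the cone are placed far apart and each can be capped by a stable nearly-flat sheet. Once such a cone is in hand, the degree argument is essentially formal, and openness in Step 4 is routine.
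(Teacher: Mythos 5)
Your strategic skeleton is correct and matches the paper's: use Theorem \ref{MainThm} to define an integer degree, pin it down at one cone, observe that a stable (index-zero) solution at a nearby regular value contributes $+1$, and conclude the degree can only balance if there is also an odd-index (hence strictly unstable) solution. Openness at a regular value is routine as you say. However, the two substantive ingredients in this plan — your Steps 2 and 3 — are exactly where the paper's real work lies, and your proposal leaves both genuinely open.

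For Step 2, the paper does not merely find a cone with a unique or trivial preimage; it proves a \emph{non-existence} result (Lemma \ref{NonExistLem}): for a sufficiently narrow rotationally symmetric double cone, \emph{no} connected self-expander is asymptotic to it. The proof uses Angenent's shrinking torus as a barrier and the avoidance principle for mean curvature flow. This gives $\Pi|_\mathcal{U}^{-1}(\varphi_{\lambda_0})=\emptyset$ for a suitable $\varphi_{\lambda_0}\in\mathcal{V}$ and hence $\deg=0$ outright; your suggested "large opening or well-separated link" leans in the wrong direction and would not produce emptiness (wide cones are exactly where connected expanders \emph{do} exist). Getting $\deg=0$, rather than $\deg\in\{0,1\}$, matters: with $\deg=0$ one stable solution suffices in Step 3, whereas $\deg=1$ would require two.

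For Step 3, the gap you flag is real, but your proposed fixes don't address the key difficulty: you must produce a stable self-expander that is \emph{specifically an annulus}, i.e.\ in $\mathcal{ACH}^{k,\alpha}_2(\Gamma)$, not merely a stable connected self-expander. The paper handles this in two moves. First (Lemma \ref{ExistLem}), a Plateau/direct-method argument in the region trapped between an Angenent--Ilmanen--Chopp expander and large spheres produces weakly stable \emph{connected} self-expanders asymptotic to wide double cones; nothing in that construction controls the genus. Second (Lemma \ref{SymmetryLem}, a refinement of Fong--McGrath), any weakly stable self-expander whose asymptotic cone is rotationally symmetric must itself be rotationally symmetric — and a connected rotationally symmetric surface asymptotic to a double cone is forced to be an annulus. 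The paper then perturbs to nearby regular values $\varphi_i$, extracts a convergent subsequence of stable connected expanders, applies the symmetry lemma to the rotationally symmetric limit to identify it as an annulus, and reads off that the $\Sigma_i$ are annuli for $i$ large by smooth convergence. None of your suggested constructions (isotopy-class minimization, flowing from two smoothings, gluing caps) confronts this topological identification, and the symmetry reduction is a non-obvious ingredient you would need to supply.

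One small further note on Step 4: what you want is lower semicontinuity of the index (a strictly negative eigenvalue persists under small perturbation); upper semicontinuity goes the wrong way. More cleanly, since $\varphi_i$ is a regular value, the Jacobi operator at the unstable preimage has no kernel, so the index is locally constant and $\Pi$ is a local diffeomorphism there; the open set $\mathcal{V}_0$ is then simply a small neighborhood of $\varphi_{i_0}$, which is exactly how the paper concludes.
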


\begin{rem}
It is necessary to invoke the integer degree for the map $\Pi$, instead of the mod $2$ degree, in order to conclude that the self-expanders in Theorem \ref{ApplicationThm} are strictly unstable. 
\end{rem}

\begin{rem}
In \cite{AIC}, Angenent-Ilmanen-Chopp show that there is a critical value $\delta_*$ so that for $0<\delta<\delta_*$ there are no connected rotationally symmetric self-expanders asymptotic to the rotationally symmetric double cone 
$$
\mathcal{C}_{\delta}=\set{x_1^2+\cdots +x_n^2= \delta^2 x_{n+1}^2},
$$
while for $\delta>\delta_*$ there is at least one connected rotationally symmetric self-expanders asymptotic to $\mathcal{C}_\delta$. In \cite{Helmensdorfer}, Helmensdorfer further analyzed this problem and showed that, for $\delta>\delta_*$, there is a second connected rotationally symmetric self-expander asymptotic to $\mathcal{C}_\delta$. It is likely that the solution constructed by Angenent-Ilmanen-Chopp is strictly stable while the one found by Helmensdorfer is strictly unstable (and there is a unique weakly stable example asymptotic to $\mathcal{C}_{\delta_*}$), but, to our knowledge, a complete proof has not appeared in the literature.
\end{rem}

\section{Notation and background} \label{NotationSec}
\subsection{Basic notions} \label{NotionSubsec}
Denote a (open) ball in $\mathbb{R}^n$ of radius $R$ and center $x$ by $B_R^n(x)$ and the closed ball by $\bar{B}^n_R(x)$. We often omit the superscript $n$ when its value is clear from context. We also omit the center when it is the origin. 

For an open set $U\subset\mathbb{R}^{n+1}$, a \emph{hypersurface in $U$}, $\Sigma$, is a smooth, properly embedded, codimension-one  submanifold of $U$. We also consider hypersurfaces of lower regularity and given an integer $k\geq 2$ and $\alpha\in (0,1)$ we define a \emph{$C^{k,\alpha}$-hypersurface in $U$} to be a properly embedded, codimension-one $C^{k,\alpha}$ submanifold of $U$. When needed, we distinguish between a point $p\in\Sigma$ and its \emph{position vector} $\mathbf{x}(p)$.

Consider the hypersurface $\mathbb{S}^n\subset\mathbb{R}^{n+1}$, the unit $n$-sphere in $\mathbb{R}^{n+1}$. A \emph{hypersurface in $\mathbb{S}^n$}, $\sigma$, is a closed, embedded, codimension-one smooth submanifold of $\mathbb{S}^n$ and \emph{$C^{k,\alpha}$-hypersurfaces in $\mathbb{S}^n$} are defined likewise. Observe that $\sigma$ is a closed codimension-two submanifold of $\mathbb{R}^{n+1}$ and so we may associate to each point $p\in\sigma$ its position vector $\mathbf{x}(p)$. Clearly, $|\mathbf{x}(p)|=1$.

A \emph{cone} is a set $\mathcal{C}\subset\mathbb{R}^{n+1}\setminus\{\mathbf{0}\}$ that is dilation invariant around the origin. That is, $\rho\mathcal{C}=\mathcal{C}$ for all $\rho>0$. The \emph{link} of the cone is the set $\mathcal{L}[\mathcal{C}]=\mathcal{C}\cap\mathbb{S}^{n}$. The cone is \emph{regular} if its link is a smooth hypersurface in $\mathbb{S}^{n}$ and \emph{$C^{k,\alpha}$-regular} if its link is a $C^{k,\alpha}$-hypersurface in $\mathbb{S}^n$. For any hypersurface $\sigma\subset\mathbb{S}^n$ the \emph{cone over $\sigma$}, $\mathcal{C}[\sigma]$, is the cone defined by 
$$
\mathcal{C}[\sigma]=\left\{\rho p\colon p\in\sigma, \rho>0\right\}\subset\mathbb{R}^{n+1}\setminus\{\mathbf{0}\}.
$$
Clearly, $\mathcal{L}[\mathcal{C}[\sigma]]=\sigma$. 

\subsection{Function spaces} \label{FunctionSubsec}
Let $\Sigma$ be a properly embedded, $C^{k,\alpha}$ submanifold of an open set $U\subset\mathbb{R}^{n+1}$. There is a natural Riemannian metric, $g_\Sigma$, on $\Sigma$ of class $C^{k-1,\alpha}$ induced from the Euclidean one. As we always take $k\geq 2$, the Christoffel symbols of this metric, in appropriate coordinates, are well-defined and of regularity $C^{k-2,\alpha}$. Let $\nabla_\Sigma$ be the covariant derivative on $\Sigma$. Denote by $d_\Sigma$ the geodesic distance on $\Sigma$ and by $B^\Sigma_R(p)$ the (open) geodesic ball in $\Sigma$ of radius $R$ and center $p\in\Sigma$. For $R$ small enough so that $B_{R}^\Sigma(p)$ is strictly geodesically convex and $q\in B^\Sigma_R(p)$, denote by $\tau^\Sigma_{p,q}$ the parallel transport along the unique minimizing geodesic in $B^\Sigma_R(p)$ from $p$ to $q$. 

Throughout the rest of this subsection, let $\Omega$ be a domain in $\Sigma$, $l$ an integer in $[0,k]$, $\gamma\in (0,1)$ and $d\in\mathbb{R}$. Suppose $l+\gamma\leq k+\alpha$. We first consider the following norm for functions on $\Omega$:
$$
\Vert f\Vert_{l; \Omega}=\sum_{i=0}^l \sup_{\Omega} |\nabla_\Sigma^i f|.
$$
We then let
$$
C^l(\Omega)=\left\{f\in C_{loc}^l(\Omega)\colon \Vert f\Vert_{l; \Omega}<\infty\right\}.
$$
We next define the H\"{o}lder semi-norms for functions $f$ and tensor fields $T$ on $\Omega$: 
$$
[f]_{\gamma; \Omega} =\sup_{\substack{p,q\in\Omega \\ q\in B^\Sigma_{\delta}(p)\setminus\{p\}}} \frac{|f(p)-f(q)|}{d_\Sigma(p,q)^\gamma} 
\mbox{ and } 
[T]_{\gamma; \Omega} =\sup_{\substack{p,q\in\Omega \\ q\in B^\Sigma_{\delta}(p)\setminus\{p\}}} \frac{|T(p)-(\tau^\Sigma_{p,q})^* T(q)|}{d_\Sigma(p,q)^\gamma},
$$
where $\delta=\delta(\Sigma,\Omega)>0$ so that for all $p\in\Omega$, $B^\Sigma_\delta(p)$ is strictly geodesically convex. We further define the norm for functions on $\Omega$:
$$
\Vert f\Vert_{l, \gamma; \Omega}=\Vert f\Vert_{l; \Omega}+[\nabla_\Sigma^l f]_{\gamma; \Omega},
$$
and let 
$$
C^{l, \gamma}(\Omega)=\left\{f\in C_{loc}^{l, \gamma}(\Omega)\colon \Vert f\Vert_{l, \gamma; \Omega}<\infty\right\}.
$$

We also define the following weighted norm for functions on $\Omega$:
$$
\Vert f\Vert_{l; \Omega}^{(d)}=\sum_{i=0}^l\sup_{p\in\Omega} \left(|\mathbf{x}(p)|+1\right)^{-d+i} |\nabla_\Sigma^i f(p)|.
$$
We then let 
$$
C^{l}_d(\Omega)=\left\{f\in C^l_{loc}(\Omega)\colon \Vert f\Vert_{l; \Omega}^{(d)}<\infty\right\}.
$$
We further define the following weighted H\"{o}lder semi-norms for functions $f$ and tensor fields $T$ on $\Omega$:
\begin{align*}
[f]_{\gamma; \Omega}^{(d)} & =\sup_{\substack{p,q\in\Omega \\ q\in B^\Sigma_{\delta_p}(p)\setminus\{p\}}} \left((|\mathbf{x}(p)|+1)^{-d+\gamma}+(|\mathbf{x}(q)|+1)^{-d+\gamma}\right) \frac{|f(p)-f(q)|}{d_\Sigma(p,q)^\gamma}, \mbox{ and}, \\
[T]_{\gamma; \Omega}^{(d)} & =\sup_{\substack{p,q\in\Omega \\ q\in B^\Sigma_{\delta_p}(p)\setminus\{p\}}} \left((|\mathbf{x}(p)|+1)^{-d+\gamma}+(|\mathbf{x}(q)|+1)^{-d+\gamma}\right) \frac{|T(p)-(\tau^\Sigma_{p,q})^* T(q)|}{d_\Sigma(p,q)^\gamma},
\end{align*}
where $\eta=\eta(\Omega,\Sigma)\in \left(0,\frac{1}{4}\right)$ so that for any $p\in\Sigma$, letting $\delta_p=\eta (|\mathbf{x}(p)|+1)$, $B_{\delta_p}^\Sigma(p)$ is strictly geodesically convex. Next we define the norm for functions on $\Omega$:
$$
\Vert f\Vert_{l, \gamma; \Omega}^{(d)}=\Vert f\Vert_{l; \Omega}^{(d)}+[\nabla_\Sigma^l f]_{\gamma; \Omega}^{(d-l)},
$$
and we let
$$
C^{l,\gamma}_d(\Omega)=\left\{f\in C^{l,\gamma}_{loc}(\Omega)\colon \Vert f\Vert_{l, \gamma; \Omega}^{(d)}<\infty\right\}.
$$
We follow the convention that $C^{l,0}_{loc}=C^{l}_{loc}$, $C^{l,0}=C^l$ and $C^{l,0}_d=C^l_d$ and that $C^{0, \gamma}_{loc}=C^\gamma_{loc}$,  $C^{0,\gamma}=C^\gamma$ and $C^{0,\gamma}_d=C^\gamma_d$. The notation for the corresponding norms is abbreviated in the same fashion.

Finally, we define the following weighted integral norm for functions on $\Omega$:
$$
\Vert f\Vert^{W, (d)}_{l;\Omega}=\left(\int_{\Omega} \sum_{i=0}^l |\nabla^i_\Sigma f|^2 e^{d|\mathbf{x}|^2}\, d\mathcal{H}^n\right)^{\frac{1}{2}},
$$
and we then let 
$$
W^{l}_{d}(\Omega)=\set{f\in W^{l,2}_{loc}(\Omega)\colon\Vert f\Vert^{W, (d)}_{l;\Omega}<\infty}.
$$

In all above definitions of various norms, we often omit the domain $\Omega$ when it is clear from context. These norms can be extended in a straightforward manner to vector-valued functions and tensor fields.  It is a standard exercise to verify that these spaces equipped with the corresponding norms are Banach spaces.

\subsection{Homogeneous functions and homogeneity at infinity} \label{HomoFuncSubsec}
Fix a $C^{k,\alpha}$-regular cone $\mathcal{C}$ with its link $\mathcal{L}$. By our definition, $\mathcal{C}$ is a $C^{k,\alpha}$-hypersurface in $\mathbb{R}^{n+1}\setminus\{\mathbf{0}\}$. For $R>0$, let $\mathcal{C}_R=\mathcal{C}\setminus\bar{B}_R$. There is an $\eta=\eta(\mathcal{L},R)>0$ so that for any $p\in\mathcal{C}_R$, $B^{\mathcal{C}}_{\delta_p}(p)$ is strictly geodesically convex, where $\delta_p=\eta(|\mathbf{x}(p)|+1)$. We also fix an integer $l\in [0,k]$ and $\gamma\in [0,1)$ with $l+\gamma\leq k+\alpha$.

A map $\mathbf{f}\in C^{l,\gamma}_{loc}(\mathcal{C}; \mathbb{R}^M)$ is \emph{homogeneous of degree $d$} if $\mathbf{f}(\rho p)=\rho^d \mathbf{f}(p)$ for all $p\in\mathcal{C}$ and $\rho>0$. Given a map $\varphi\in C^{l,\gamma}(\mathcal{L}; \mathbb{R}^M)$ the \emph{homogeneous extension of degree $d$} of $\varphi$ is the map $\mathscr{E}_d^{\mathrm{H}}[\varphi]\in C^{l,\gamma}_{loc}(\mathcal{C}; \mathbb{R}^M)$ defined by 
\begin{equation} \label{HomoExtEqn}
\mathscr{E}_d^{\mathrm{H}}[\varphi](p)=|\mathbf{x}(p)|^d \varphi(|\mathbf{x}(p)|^{-1}p).
\end{equation}
Conversely, given a homogeneous $\mathbb{R}^M$-valued map of degree $d$, $\mathbf{f}\in C^{l,\gamma}_{loc}(\mathcal{C}; \mathbb{R}^M)$, let $\varphi=\mathrm{tr}[\mathbf{f}]\in C^{l,\beta}(\mathcal{L}; \mathbb{R}^M)$, the \emph{trace} of $\mathbf{f}$, be the restriction of $\mathbf{f}$ to $\mathcal{L}$. Clearly, $\mathbf{f}$ is the homogeneous extension of degree $d$ of $\varphi$.

A map $\mathbf{g}\in C^{l,\gamma}_{loc}(\mathcal{C}_R; \mathbb{R}^M)$ is \emph{asymptotically homogeneous of degree $d$} if 
$$
\lim_{\rho\to 0^+} \rho^d \mathbf{g}(\rho^{-1}p)=\mathbf{f}(p) \mbox{ in $C^{l,\gamma}_{loc}(\mathcal{C}; \mathbb{R}^M)$}
$$
for some $\mathbf{f}\in C^{l,\gamma}_{loc}(\mathcal{C}; \mathbb{R}^M)$ that is homogeneous of degree $d$. For such a $\mathbf{g}$ we define the \emph{trace at infinity} of $\mathbf{g}$ by $\mathrm{tr}^d_\infty[\mathbf{g}]=\mathrm{tr}[\mathbf{f}]$. 
We define
$$
C^{l,\gamma}_{d,\mathrm{H}}(\mathcal{C}_R; \mathbb{R}^M)=\left\{\mathbf{g}\in C^{l,\gamma}_d(\mathcal{C}_R; \mathbb{R}^M)\colon \mbox{$\mathbf{g}$ is asymptotically homogeneous of degree $d$}\right\}.
$$
It is straightforward to verify that $C^{l,\gamma}_{d,\mathrm{H}}(\mathcal{C}_R; \mathbb{R}^M)$ is a closed subspace of $C^{l,\gamma}_d(\mathcal{C}_R; \mathbb{R}^M)$ and that
$$
\mathrm{tr}^d_\infty\colon C^{l,\gamma}_{d,\mathrm{H}}(\mathcal{C}_R; \mathbb{R}^M)\to C^{l,\gamma}(\mathcal{L}; \mathbb{R}^M)
$$
is a bounded linear map. Finally, $\mathbf{x}|_{\mathcal{C}_R}\in C^{k,\alpha}_{1,\mathrm{H}}(\mathcal{C}_R; \mathbb{R}^{n+1})$ and $\mathrm{tr}_\infty^1[\mathbf{x}|_{\mathcal{C}_R}]=\mathbf{x}|_{\mathcal{L}}$.

\subsection{Asymptotically conical hypersurfaces} \label{ACHSubsec}
A $C^{k,\alpha}$-hypersurface, $\Sigma\subset\mathbb{R}^{n+1}$, is \emph{$C^{k,\alpha}_{*}$-asymptotically conical} if there is a $C^{k,\alpha}$-regular cone, $\mathcal{C}\subset\mathbb{R}^{n+1}$, and a homogeneous (of degree $0$) transverse section, $\mathbf{V}$, on $\mathcal{C}$ such that $\Sigma$, outside some compact set, is given by the $\mathbf{V}$-graph of a function in $C^{k,\alpha}_1\cap C^{k}_{1,0}(\mathcal{C}_R)$ for some $R>1$. Here a transverse section is a regularized version of the unit normal -- see Section 2.4 of \cite{BWBanachManifold} for the precise definition. Observe, that by the Arzel\`{a}-Ascoli theorem one has that, for every $l,\gamma$ with $l+\gamma<k+\alpha$,
$$
\lim_{\rho\to 0^+} \rho\Sigma = \mathcal{C} \mbox{ in } C^{l, \gamma}_{loc}(\mathbb{R}^{n+1}\setminus \{\mathbf{0}\}).
$$
Clearly, the asymptotic cone, $\mathcal{C}$, is uniquely determined by $\Sigma$ and so we denote it by $\mathcal{C}(\Sigma)$. Let $\mathcal{L}(\Sigma)$ denote the link of $\mathcal{C}(\Sigma)$ and, for $R>0$, let $\mathcal{C}_R(\Sigma)=\mathcal{C}(\Sigma)\setminus \bar{B}_R$. Denote the space of $C^{k,\alpha}_{*}$-asymptotically conical $C^{k,\alpha}$-hypersurfaces in $\mathbb{R}^{n+1}$ by $\mathcal{ACH}^{k,\alpha}_n$.

Finally, let $K$ be a compact subset of $\Sigma$ and let $\Sigma^\prime=\Sigma\setminus K$. By definition, we may choose $K$ large enough so $\pi_{\mathbf{V}}$ -- the projection of a neighborhood of $\mathcal{C}(\Sigma)$ along $\mathbf{V}$ --  restricts to a $C^{k,\alpha}$ diffeomorphism of $\Sigma^\prime$ onto $\mathcal{C}_R(\Sigma)$. Denote its inverse by $\theta_{\mathbf{V}; \Sigma^\prime}$. 

\subsection{Traces at infinity} \label{TraceSubsec}
Fix an element $\Sigma\in\mathcal{ACH}_n^{k,\alpha}$. Let $l$ be an integer in $[0,k]$ and $\gamma\in [0,1)$ such that $l+\gamma<k+\alpha$. A map $\mathbf{f}\in C_{loc}^{l,\gamma}(\Sigma; \mathbb{R}^M)$ is \emph{asymptotically homogeneous of degree $d$} if $\mathbf{f}\circ\theta_{\mathbf{V}; \Sigma^\prime}\in C^{l,\gamma}_{d,\mathrm{H}}(\mathcal{C}_R(\Sigma); \mathbb{R}^M)$ where $\mathbf{V}$ is a homogeneous transverse section on $\mathcal{C}(\Sigma)$ and $\Sigma^\prime, \theta_{\mathbf{V}; \Sigma^\prime}$ are introduced in the previous subsection. The \emph{trace at infinity} of $\mathbf{f}$ is then
\begin{equation} \label{TraceEqn}
\mathrm{tr}_\infty^d[\mathbf{f}]=\mathrm{tr}_\infty^d[\mathbf{f}\circ\theta_{\mathbf{V}; \Sigma^\prime}] \in C^{l,\gamma}(\mathcal{L}(\Sigma); \mathbb{R}^M).
\end{equation}
Whether $\mathbf{f}$ is asymptotically homogeneous of degree $d$ and the definition of $\mathrm{tr}_{\infty}^d$ are independent of the choice of homogeneous transverse sections on $\mathcal{C}(\Sigma)$. Clearly, $\mathbf{x}|_{\Sigma}$ is asymptotically homogeneous of degree $1$ and $\mathrm{tr}_\infty^{1}[\mathbf{x}|_\Sigma]=\mathbf{x}|_{\mathcal{L}(\Sigma)}$.

We next define the space
$$
C_{d,\mathrm{H}}^{l,\gamma}(\Sigma; \mathbb{R}^M)=\left\{\mathbf{f}\in C_{d}^{l,\gamma}(\Sigma; \mathbb{R}^M)\colon \mbox{$\mathbf{f}$ is asymptotically homogeneous of degree $d$}\right\}.
$$
One can check that $C_{d,\mathrm{H}}^{l,\gamma}(\Sigma; \mathbb{R}^M)$ is a closed subspace of $C_d^{l,\gamma}(\Sigma; \mathbb{R}^M)$, and the map 
$$
\mathrm{tr}_{\infty}^d\colon C_{d,\mathrm{H}}^{l,\gamma}(\Sigma; \mathbb{R}^M)\to C^{l,\gamma}(\mathcal{L}(\Sigma); \mathbb{R}^M)
$$
is a bounded linear map. We further define the set $C^{l,\gamma}_{d,0}(\Sigma;\mathbb{R}^M)\subset C_{d,\mathrm{H}}^{l,\gamma}(\Sigma; \mathbb{R}^M)$ to be the kernel of $\mathrm{tr}_\infty^d$.

\subsection{Asymptotically conical embeddings} \label{ACESubsec}
Fix an element $\Gamma\in \mathcal{ACH}^{k,\alpha}_n$. We define the space of $C^{k,\alpha}_{*}$-asymptotically conical embeddings of $\Gamma$ into $\mathbb{R}^{n+1}$ to be
$$
\mathcal{ACH}^{k,\alpha}_n(\Gamma)=\left\{\mathbf{f}\in C_{1}^{k,\alpha}\cap C^k_{1,\mathrm{H}}(\Gamma; \mathbb{R}^{n+1})\colon\mbox{$\mathbf{f}$ and $\mathscr{E}_{1}^{\mathrm{H}}\circ\mathrm{tr}_\infty^1[\mathbf{f}]$ are embeddings}\right\}.
$$
Clearly, $\mathcal{ACH}^{k,\alpha}_n(\Gamma)$ is an open set in the Banach space $C^{k,\alpha}_{1}\cap C^{k}_{1,\mathrm{H}}(\Gamma; \mathbb{R}^{n+1})$ with the $\Vert\cdot \Vert_{k,\alpha}^{(1)}$ norm. The hypotheses on $\mathbf{f}$, $\mathrm{tr}_\infty^1[\mathbf{f}]\in C^{k,\alpha}(\mathcal{L}(\Gamma);\mathbb{R}^{n+1})$ ensure
$$
\mathcal{C}[\mathbf{f}]=\mathscr{E}_{1}^{\mathrm{H}}\circ\mathrm{tr}_\infty^1[\mathbf{f}]\colon \mathcal{C}(\Gamma)\to \mathbb{R}^{n+1}\setminus\{\mathbf{0}\}
$$
is a $C^{k,\alpha}$ embedding. As this map is homogeneous of degree one, it parameterizes the $C^{k,\alpha}$-regular cone $\mathcal{C}(\mathbf{f}(\Gamma))$ -- see  \cite[Proposition 3.3]{BWBanachManifold}.

Finally, we introduce a natural equivalence relation on $\mathcal{ACH}_{n}^{k,\alpha}(\Gamma)$. First, say a $C^{k,\alpha}$ diffeomorphism $\phi\colon\Gamma\to \Gamma$ \emph{fixes infinity} if $\mathbf{x}|_{\Gamma}\circ \phi\in \mathcal{ACH}^{k,\alpha}_n(\Gamma)$ and
$$
\mathrm{tr}^1_{\infty}[\mathbf{x}|_{\Gamma}\circ \phi]=\mathbf{x}|_{\mathcal{L}(\Gamma)}.
$$ 
Two elements $\mathbf{f}, \mathbf{g}\in\mathcal{ACH}_{n}^{k,\alpha}(\Gamma)$ are equivalent, written $\mathbf{f}\sim\mathbf{g}$, provided there is a $C^{k,\alpha}$ diffeomorphism $\phi\colon\Gamma\to\Gamma$ that fixes infinity so that $\mathbf{f}\circ\phi=\mathbf{g}$. Given $\mathbf{f}\in \mathcal{ACH}_{n}^{k,\alpha}(\Gamma)$ let $[\mathbf{f}]$ be the equivalence class of $\mathbf{f}$. Following \cite{BWBanachManifold}, we define the space
\begin{equation}
\label{ACEDef}
\mathcal{ACE}_n^{k,\alpha}(\Gamma)=\set{[\mathbf{f}]\colon \mathbf{f}\in \mathcal{ACH}^{k,\alpha}_n(\Gamma) \mbox{ and $\mathbf{f}(\Gamma)$ satisfies \eqref{ExpanderEqn}}}.
\end{equation}

\subsection{Various symmetric bilinear forms} \label{BilinearFormSubsec}
Let $\Sigma\subset\mathbb{R}^{n+1}$ be a $C^2$-hypersurface and $\mathbf{v}$ a transverse section on $\Sigma$. Suppose $\mathbf{f}\colon\Sigma\to\mathbb{R}^{n+1}$ is a $C^2$ proper embedding. We set $\Lambda=\mathbf{f}(\Sigma)$ and $\mathbf{w}=\mathbf{v}\circ\mathbf{f}^{-1}$. We also assume $\mathbf{w}$ is transverse to $\Lambda$. If $g_{\mathbf{f}}$ is the pullback metric of the Euclidean one via $\mathbf{f}$, then we let
$$
\Omega_\mathbf{f}=\sqrt{\frac{\det g_{\mathbf{f}}}{\det g_\Sigma}}\, e^{\frac{|\mathbf{f}|^2-|\mathbf{x}|^2}{4}}
$$
which is the ratio of the pullback weighted measure on $\Lambda$ and the one on $\Sigma$, and 
$$
\Omega_{\mathbf{f},\mathbf{v}}=\left(\frac{\mathbf{v}\cdot (\mathbf{n}_\Lambda\circ\mathbf{f})}{\mathbf{v}\cdot\mathbf{n}_\Sigma}\right)^2 \Omega_\mathbf{f}.
$$
Observe that $\Omega_{\mathbf{x}|_\Sigma}=\Omega_{\mathbf{x}|_\Sigma,\mathbf{v}}=1$. 

First we introduce natural weighted inner products for functions on $\Sigma$. Suppose $u,v\in C_c^0 (\Sigma)$. Let
$$
B_\Sigma[u,v]=\int_{\Sigma} uv e^{\frac{|\mathbf{x}|^2}{4}} \, d\mathcal{H}^n
$$
and
$$
B_{\Sigma,\mathbf{v}}[u,v]=B_\Sigma[(\mathbf{v}\cdot\mathbf{n}_\Sigma)u,(\mathbf{v}\cdot\mathbf{n}_\Sigma)v]=\int_\Sigma uv(\mathbf{v}\cdot\mathbf{n}_\Sigma)^2 e^{\frac{|\mathbf{x}|^2}{4}} \, d\mathcal{H}^n.
$$
Likewise, let
$$ 
B_\mathbf{f}[u,v]=B_\Lambda\left[u\circ\mathbf{f}^{-1},v\circ\mathbf{f}^{-1}\right]=\int_\Sigma uv \Omega_\mathbf{f} e^{\frac{|\mathbf{x}|^2}{4}} \, d\mathcal{H}^n
$$
and
$$
B_{\mathbf{f},\mathbf{v}}[u,v]=B_{\Lambda,\mathbf{w}} \left[u\circ\mathbf{f}^{-1},v\circ\mathbf{f}^{-1}\right]=\int_{\Sigma} uv(\mathbf{v}\cdot(\mathbf{n}_\Lambda\circ\mathbf{f}))^2 \Omega_\mathbf{f} e^{\frac{|\mathbf{x}|^2}{4}} \, d\mathcal{H}^n.
$$
Thus, we have that
$$
B_{\mathbf{f}}[u,v]=B_\Sigma[\Omega_\mathbf{f} u, v] \mbox{ and } B_{\mathbf{f},\mathbf{v}}[u,v]=B_{\Sigma,\mathbf{v}}[\Omega_{\mathbf{f},\mathbf{v}} u, v].
$$

Next, we introduce the following Dirichlet energy forms: for $u,v\in C^1_c(\Sigma)$,
$$
D_\Sigma[u,v]=\int_\Sigma \nabla_\Sigma u\cdot\nabla_\Sigma v \, e^{\frac{|\mathbf{x}|^2}{4}}\, d\mathcal{H}^n, 
$$
and 
$$
D_{\Sigma,\mathbf{v}}[u,v]=D_\Sigma[(\mathbf{v}\cdot\mathbf{n}_\Sigma)u,(\mathbf{v}\cdot\mathbf{n}_\Sigma) v].
$$
If $\Sigma$ is a $C^3$-hypersurface, then 
$$
D_{\Sigma,\mathbf{v}}[u,v]=\int_\Sigma \left(\nabla_\Sigma u\cdot\nabla_\Sigma v+a_{\Sigma,\mathbf{v}}uv\right) (\mathbf{v}\cdot\mathbf{n}_\Sigma)^2 e^{\frac{|\mathbf{x}|^2}{4}} \, d\mathcal{H}^n
$$
where 
$$
a_{\Sigma,\mathbf{v}}=-(\mathbf{v}\cdot\mathbf{n}_\Sigma)^{-1} \left(\Delta_\Sigma+\frac{1}{2}\mathbf{x}\cdot\nabla_\Sigma\right) (\mathbf{v}\cdot\mathbf{n}_\Sigma).
$$
Likewise, let 
$$
D_\mathbf{f}[u,v]=D_{\Lambda}\left[u\circ\mathbf{f}^{-1},v\circ\mathbf{f}^{-1}\right]=\int_\Sigma (g_\mathbf{f}^{-1})^{ij}\nabla_i u\nabla_j v \, \Omega_\mathbf{f} e^{\frac{|\mathbf{x}|^2}{4}} \, d\mathcal{H}^n,
$$
and
$$
D_{\mathbf{f},\mathbf{v}}[u,v]=D_{\Lambda,\mathbf{w}}\left[u\circ\mathbf{f}^{-1},v\circ\mathbf{f}^{-1}\right].
$$
When $\Lambda$ is of class $C^3$,
$$
D_{\mathbf{f},\mathbf{v}}[u,v]=\int_\Sigma \left((g_\mathbf{f}^{-1})^{ij}\nabla_i u\nabla_j v+(a_{\Lambda,\mathbf{w}}\circ\mathbf{f}) uv\right) (\mathbf{v}\cdot(\mathbf{n}_\Lambda\circ\mathbf{f}))^2 \Omega_\mathbf{f} e^{\frac{|\mathbf{x}|^2}{4}} \, d\mathcal{H}^n.
$$

Finally, we consider the Jacobi operator (also called the stability operator) on $\Sigma$,
$$
L_\Sigma=\Delta_\Sigma+\frac{1}{2}\mathbf{x}\cdot\nabla_\Sigma+|A_\Sigma|^2-\frac{1}{2}.
$$
The operator $L_\Sigma$ is associated to the symmetric bilinear form
$$
Q_\Sigma[u,v]=\int_\Sigma \left(\nabla_\Sigma u\cdot\nabla_\Sigma v-|A_\Sigma|^2 uv+\frac{1}{2} uv\right) e^{\frac{|\mathbf{x}|^2}{4}} \, d\mathcal{H}^n
$$
in the sense that for $u,v\in C^2_c(\Sigma)$, 
$$
Q_\Sigma[u,v]=-B_\Sigma[u,L_\Sigma v].
$$
In particular, $L_\Sigma$ is symmetric with respect to the inner product $B_\Sigma$. 

We also consider the $\mathbf{v}$-Jacobi operator
$$
L_{\Sigma,\mathbf{v}}=(\mathbf{v}\cdot\mathbf{n}_\Sigma) L_\Sigma\left((\mathbf{v}\cdot\mathbf{n}_\Sigma)\times\right).
$$
It will also be convenient to consider the related operator
$$
L_{\Sigma,\mathbf{v}}^\prime=(\mathbf{v}\cdot\mathbf{n}_\Sigma)^{-2} L_{\Sigma,\mathbf{v}}.
$$
These operators are both associated to the symmetric bilinear form
$$
Q_{\Sigma,\mathbf{v}}[u,v] =Q_\Sigma[(\mathbf{v}\cdot\mathbf{n}_\Sigma)u,(\mathbf{v}\cdot\mathbf{n}_\Sigma)v].
$$
When $\Sigma$ is of class $C^3$,
$$
Q_{\Sigma,\mathbf{v}}[u,v] = \int_\Sigma \left(\nabla_\Sigma u\cdot\nabla_\Sigma v+a^\prime_{\Sigma,\mathbf{v}}uv\right) (\mathbf{v}\cdot\mathbf{n}_\Sigma)^2 e^{\frac{|\mathbf{x}|^2}{4}}\, d\mathcal{H}^n
$$
where
$$
a^\prime_{\Sigma,\mathbf{v}}=-(\mathbf{v}\cdot\mathbf{n}_\Sigma)^{-1} L_\Sigma(\mathbf{v}\cdot\mathbf{n}_\Sigma).
$$
Specifically one has, for $u,v\in C^2_c(\Sigma)$,
$$
Q_{\Sigma,\mathbf{v}}[u,v]=-B_\Sigma[u,L_{\Sigma,\mathbf{v}} v]=-B_{\Sigma,\mathbf{v}}[u,L_{\Sigma,\mathbf{v}}^\prime v],
$$
and so $L_{\Sigma,\mathbf{v}}$ and $L_{\Sigma,\mathbf{v}}^\prime$ are symmetric with respect to $B_\Sigma$ and $B_{\Sigma,\mathbf{v}}$, respectively.

Likewise, we consider the symmetric bilinear form
\begin{align*}
Q_{\mathbf{f}}[u,v] & = Q_{\Lambda}\left[u\circ\mathbf{f}^{-1},v\circ\mathbf{f}^{-1}\right] \\
& = \int_\Sigma \left((g_\mathbf{f}^{-1})^{ij}\nabla_i u\nabla_j v-(|A_\Lambda|^2\circ\mathbf{f})uv+\frac{1}{2}uv\right)\Omega_{\mathbf{f}} e^{\frac{|\mathbf{x}|^2}{4}} \, d\mathcal{H}^n.
\end{align*}
Let
$$
L_\mathbf{f}v=(L_\Lambda (v\circ\mathbf{f}^{-1}))\circ\mathbf{f}.
$$
Observe, that for $u,v\in C^2_c(\Sigma)$,
$$
Q_{\mathbf{f}}[u,v]= -B_\mathbf{f}[u,L_\mathbf{f} v],
$$
so $L_\mathbf{f}$ is symmetric with respect to $B_\mathbf{f}$. There also is a symmetric bilinear form
$$
Q_{\mathbf{f},\mathbf{v}}[u,v]= Q_{\Lambda,\mathbf{w}}\left[u\circ\mathbf{f}^{-1},v\circ\mathbf{f}^{-1}\right]. 
$$
When $\Lambda$ is of class $C^3$,
$$
Q_{\mathbf{f},\mathbf{v}}[u,v]  = \int_\Sigma \left((g_\mathbf{f}^{-1})^{ij}\nabla_iu\nabla_jv+(a_{\Lambda,\mathbf{w}}^\prime\circ\mathbf{f})uv\right)(\mathbf{v}\cdot(\mathbf{n}_\Lambda\circ\mathbf{f}))^2\Omega_\mathbf{f} e^{\frac{|\mathbf{x}|^2}{4}} \, d\mathcal{H}^n.
$$
If we let
$$
L_{\mathbf{f},\mathbf{v}} v=(L_{\Lambda,\mathbf{w}}(v\circ\mathbf{f}^{-1}))\circ\mathbf{f} \mbox{ and } L_{\mathbf{f},\mathbf{v}}^\prime=(\mathbf{v}\cdot(\mathbf{n}_\Lambda\circ\mathbf{f}))^{-2} L_{\mathbf{f},\mathbf{v}},
$$
then we observe that for $u,v\in C^2_c(\Sigma)$,
$$
Q_{\mathbf{f},\mathbf{v}}[u,v]=-B_\mathbf{f}[u,L_{\mathbf{f},\mathbf{v}} v]=-B_{\mathbf{f},\mathbf{v}}[u,L_{\mathbf{f},\mathbf{v}}^\prime v].
$$
As such, $L_{\mathbf{f},\mathbf{v}}^\prime$ is symmetric with respect to $B_{\mathbf{f},\mathbf{v}}$.

We will always write the quadratic form associated to $B_\Sigma$ as $B_\Sigma[u]=B_\Sigma[u,u]$ and do so for the other bilinear forms.

\subsection{First and second variations of the $E$-functional} \label{VariationSubsec} 
Fix a $C^2$-hypersurface $\Sigma\subset\mathbb{R}^{n+1}$ and a transverse section $\mathbf{v}$ on $\Sigma$. Suppose $\mathbf{f}\colon\Sigma\to\mathbb{R}^{n+1}$ is a $C^2$ proper embedding and that $\mathbf{w}=\mathbf{v}\circ\mathbf{f}^{-1}$ is transverse to $\Lambda=\mathbf{f}(\Sigma)$. We also assume $u,v\in C^2_c(\Sigma)$. For $|s|,|t|$ sufficiently small, 
$$
\mathbf{f}_{s,t}=\mathbf{f}+(su+tv)\mathbf{v}
$$
is a proper embedding of $\Sigma$ into $\mathbb{R}^{n+1}$ and $\mathbf{v}\circ\mathbf{f}_{s,t}^{-1}$ is a transverse section on $\mathbf{f}_{s,t}(\Sigma)$.

We have by the first variation formula (see \cite[Proposition 4.1]{BWBanachManifold}),
$$
{\frac{d}{ds}\vline}_{s=0} E[\mathbf{f}_{s,t}(\Sigma)]=-\int_{\Sigma} u\,  \Xi_{\mathbf{f},\mathbf{v}}[tv] \Omega_{\mathbf{f}_{0,t}} e^{\frac{|\mathbf{x}|^2}{4}} \, d\mathcal{H}^n=-B_{\mathbf{f}_{0,t}}[\Xi_{\mathbf{f},\mathbf{v}}[tv],u]
$$
where
$$
\Xi_{\mathbf{f},\mathbf{v}}[tv]=\mathbf{v}\cdot \left(\mathbf{H}-\frac{\mathbf{x}^\perp}{2}\right)[\mathbf{f}_{0,t}].
$$
It will also be convenient to introduce
$$
\tilde{\Xi}_{\mathbf{f},\mathbf{v}}[tv]=\Omega_{\mathbf{f}_{0,t}} \Xi_{\mathbf{f},\mathbf{v}}[tv] \mbox{ and } \hat{\Xi}_{\mathbf{f},\mathbf{v}}[tv]=(\mathbf{v}\cdot\mathbf{n}_{\Sigma})^{-2}\Omega_{\mathbf{f}_{0,t}}\Xi_{\mathbf{f},\mathbf{v}}[tv].
$$
We remark that the quantity $\tilde{\Xi}_{\mathbf{f},\mathbf{v}}$ is closer to what is considered in \cite{WhiteEI}, however for various reasons we find $\hat{\Xi}_{\mathbf{f},\mathbf{v}}$ easier to work with. We also observe that
$$
B_{\mathbf{f}_{0,t}}[\Xi_{\mathbf{f},\mathbf{v}}[tv],u]=B_\Sigma[\tilde{\Xi}_{\mathbf{f},\mathbf{v}}[tv],u]=B_{\Sigma,\mathbf{v}}[\hat{\Xi}_{\mathbf{f},\mathbf{v}}[tv],u].
$$
Thus, we have that
$$
{\frac{d}{ds}\vline}_{s=0} E[\mathbf{f}_{s,t}(\Sigma)]=-B_\Sigma[\tilde{\Xi}_{\mathbf{f},\mathbf{v}}[tv],u]=-B_{\Sigma,\mathbf{v}}[\hat{\Xi}_{\mathbf{f},\mathbf{v}}[tv],u].
$$
In particular, $\mathbf{f}_{0,t}(\Sigma)$ is a self-expander if and only if
$$
\Xi_{\mathbf{f},\mathbf{v}}[tv]=\tilde{\Xi}_{\mathbf{f},\mathbf{v}}[tv]=\hat{\Xi}_{\mathbf{f},\mathbf{v}}[tv]=0.
$$ 

We next compute the second variation at critical points of $E$. If $\mathbf{f}(\Sigma)$ is a self-expander, then, by \cite[Proposition 4.2]{BWBanachManifold}, we have
$$
{\frac{d^2}{dtds}\vline}_{t=s=0} E[\mathbf{f}_{s,t}(\Sigma)]=Q_{\mathbf{f},\mathbf{v}}[u,v]=-B_{\Sigma,\mathbf{v}}[\Omega_{\mathbf{f},\mathbf{v}}L^\prime_{\mathbf{f},\mathbf{v}} v, u],
$$
while differentiating the expression for the first variation formula directly gives, at critical points of $E$,
$$
{\frac{d^2}{dtds}\vline}_{t=s=0} E[\mathbf{f}_{s,t}(\Sigma)]=-B_{\Sigma,\mathbf{v}}[D\hat{\Xi}_{\mathbf{f},\mathbf{v}}(0)v, u].
$$
Hence, as long as $\mathbf{f}(\Sigma)$ is a self-expander and $\mathbf{v}\circ\mathbf{f}^{-1}$ is transverse to $\mathbf{f}(\Sigma)$,
\begin{equation} \label{2ndVarEqn}
D\hat{\Xi}_{\mathbf{f},\mathbf{v}}(0)=\Omega_{\mathbf{f},\mathbf{v}}L_{\mathbf{f},\mathbf{v}}^\prime
\end{equation}
is symmetric with respect to $B_{\Sigma,\mathbf{v}}$.

\subsection{Fast decay of eigenfunctions of $L_{\Sigma}$ and $L_{\Sigma,\mathbf{v}}^\prime$} \label{DecayEigenfunSubsec}
Let $\Sigma\in\mathcal{ACH}^{k,\alpha}_n$ be a self-expander. For $p\in\Sigma$ let $r(p)=|\mathbf{x}(p)|$. It follows from the expander equation, that there exist constants $R_0=R_0(\Sigma)$ and $C_0=C_0(\Sigma)$ so that on $E_{R_0}=\Sigma\setminus B_{R_0}$,
\begin{equation} \label{REstimatesEqn}
||\nabla_\Sigma r|-1|\leq C_0r^{-4} \leq \frac{1}{4}, \, |\nabla_\Sigma^2 r^2-2g_\Sigma|\leq C_0r^{-2} \leq\frac{1}{4},
\end{equation}
and,
\begin{equation} \label{AEstimatesEqn}
|A_\Sigma|^2\leq C_0r^{-2} \leq\frac{1}{4} \mbox{ and } \left|H_{S_\rho}-\frac{n-1}{\rho}\right| \leq C_0 \rho^{-3} \leq \frac{1}{4} \rho^{-1} \mbox{ for $\rho>R_0$}.
\end{equation}
Here $S_\rho=\Sigma\cap \partial B_\rho$ and $H_{S_\rho}$ is the mean curvature of $S_\rho$. Thus $(E_{R_0},g_\Sigma,r)$ is a weakly conical end in the sense of \cite[Section 2]{Bernstein}.

We use results from \cite{Bernstein} to show the eigenfunctions of $L_\Sigma$ or $L_{\Sigma,\mathbf{v}}^\prime$ that below a certain threshold, actually decay extremely fast.

\begin{prop} \label{DecayEigenfunProp}
Suppose that $\Sigma\in\mathcal{ACH}^{k,\alpha}_n$ is a self-expander, $\mathbf{v}\in C^{k,\alpha}_0\cap C^{k}_{0,\mathrm{H}}(\Sigma;\mathbb{R}^{n+1})$ is a transverse section on $\Sigma$ so $\mathcal{C}[\mathbf{v}]=\mathscr{E}^{\mathrm{H}}_1\circ\mathrm{tr}_\infty^1[\mathbf{v}]$ is a transverse section on $\mathcal{C}(\Sigma)$, and $\mu\in\mathbb{R}$. If $u\in W^{1,2}_{loc}\cap W^0_\gamma(E_R)$ for some $\gamma>0$ and $R>R_0$, and satisfies $L_\Sigma u=\mu u$ or $L_{\Sigma,\mathbf{v}}^\prime u=\mu u$, then, for any $\beta<\frac{1}{2}$,
$$
\Vert e^{\beta |\mathbf{x}|^2} u\Vert_{k,\alpha; E_{4R}}<\infty.
$$
\end{prop}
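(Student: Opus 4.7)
The strategy is to first establish strong weighted $L^{2}$ Gaussian decay of $u$ on the end, and then upgrade this to the required pointwise $C^{k,\alpha}$ bound via standard interior elliptic regularity. I would treat the two cases $L_{\Sigma}u=\mu u$ and $L_{\Sigma,\mathbf{v}}^{\prime}u=\mu u$ in a unified way: because $\mathbf{v}$ is transverse to $\Sigma$ and $\mathcal{C}[\mathbf{v}]$ is transverse to $\mathcal{C}(\Sigma)$, the function $\mathbf{v}\cdot\mathbf{n}_{\Sigma}$ is asymptotically homogeneous of degree $0$ and bounded above and below away from $0$ on the end. Multiplication by $\mathbf{v}\cdot\mathbf{n}_{\Sigma}$ thus converts an eigenfunction of $L_{\Sigma,\mathbf{v}}^{\prime}$ into a solution of an $L_{\Sigma}$-type equation modulo a bounded potential perturbation, so it suffices to treat $L_{\Sigma}$.

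The heart of the argument is a weighted Caccioppoli/Agmon-type energy estimate. Conjugating by the Gaussian weight, $w=e^{\phi}u$ with $\phi=\beta|\mathbf{x}|^{2}$, the equation $L_{\Sigma}u=\mu u$ becomes an equation for $w$ with modified drift $\tfrac{1}{2}\mathbf{x}^{\top}-2\nabla_{\Sigma}\phi$ and effective potential $W_{\phi,\mu}=|\nabla_{\Sigma}\phi|^{2}-\Delta_{\Sigma}\phi-\tfrac{1}{2}\mathbf{x}^{\top}\cdot\nabla_{\Sigma}\phi+|A_{\Sigma}|^{2}-\tfrac{1}{2}-\mu$. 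Testing this conjugated equation against $w\eta^{2}e^{|\mathbf{x}|^{2}/4}$, where $\eta$ is a logarithmic annular cutoff supported in $E_{\rho}\setminus E_{4\rho}$ with $|\nabla_{\Sigma}\eta|\lesssim\rho^{-1}$, and integrating by parts produces a weighted Dirichlet energy identity. The weakly conical end bounds \eqref{REstimatesEqn} and \eqref{AEstimatesEqn} identify the asymptotic behavior of $W_{\phi,\mu}$, allowing one to absorb the errors coming from $\mu$, $|A_{\Sigma}|^{2}$, and the mismatch between $|\nabla_{\Sigma}r|^{2}$ and $1$ into the principal term for $\rho$ sufficiently large.

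I would then bootstrap: starting from the initial decay $u\in W^{0}_{\gamma}$, each application of the Caccioppoli estimate with a slightly larger $\beta$ improves the weighted decay rate, with the gain at the current stage closing off the estimate at the next. The weakly-conical-end framework of \cite{Bernstein} enters in two places: first, via \eqref{REstimatesEqn} and \eqref{AEstimatesEqn} to exhibit $(E_{R_{0}},g_{\Sigma},r)$ as such an end; and second, to supply the unique-continuation-at-infinity and asymptotic-structure results which rule out the slow ``polynomial'' asymptotic mode allowed by the cone ODE under the $W^{0}_{\gamma}$-hypothesis. Iteration then produces $\int_{E_{2R}}u^{2}e^{2\beta|\mathbf{x}|^{2}}\,d\mathcal{H}^{n}<\infty$ for every $\beta<\tfrac{1}{2}$. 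To pass from weighted $L^{2}$ to the stated $C^{k,\alpha}$ bound, I would apply Moser iteration followed by interior Schauder estimates for $L_{\Sigma}$ on geodesic balls $B^{\Sigma}_{1}(p)\subset E_{4R}$, whose uniformly bounded geometry (again a consequence of \eqref{REstimatesEqn} and \eqref{AEstimatesEqn}) produces constants independent of the center $p$; multiplying each local $C^{k,\alpha}$ estimate by $e^{\beta|\mathbf{x}(p)|^{2}}$ and taking the supremum over $p\in E_{4R}$ yields the claim.

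The main technical hurdle is closing the weighted energy estimate up to rates $\beta$ arbitrarily close to $\tfrac{1}{2}$. A straight conjugation by $e^{\beta|\mathbf{x}|^{2}}$ produces a leading quadratic term in $W_{\phi,\mu}$ of the form $\beta(4\beta-1)|\mathbf{x}^{\top}|^{2}$, which is favorable only when $\beta<\tfrac{1}{4}$. Reaching the full range requires exploiting that $L_{\Sigma}$ is self-adjoint precisely with respect to the weight $e^{|\mathbf{x}|^{2}/4}$, so that this ``built-in'' quadratic exponent in the natural weighted measure $B_{\Sigma}$ supplies half of the desired decay essentially for free, while the iterated Caccioppoli scheme and the Bernstein-type asymptotic results on weakly conical ends supply the remainder.
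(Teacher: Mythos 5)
Your proposal follows essentially the same route as the paper's proof: reduce the $L_{\Sigma,\mathbf{v}}^{\prime}$ case to the $L_{\Sigma}$ case by setting $U=(\mathbf{v}\cdot\mathbf{n}_{\Sigma})u$ (note the reduction is exact, since $L_{\Sigma,\mathbf{v}}^{\prime}u=\mu u$ is equivalent to $L_{\Sigma}U=\mu U$, not merely ``modulo a bounded potential perturbation''), invoke the asymptotic-structure results of \cite{Bernstein} on weakly conical ends to obtain $u\in W^{0}_{\beta}(E_{2R})$ for all $\beta<\tfrac{1}{2}$, and then upgrade to the weighted $C^{k,\alpha}$ bound via local boundedness for weak solutions together with interior Schauder estimates on unit geodesic balls of uniformly bounded geometry. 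The main difference is one of emphasis: you sketch a conjugated Caccioppoli/Agmon scheme as though it were an independent derivation of the weighted $L^{2}$ decay, but as you yourself observe the leading quadratic term $\beta(4\beta-1)|\mathbf{x}^{\top}|^{2}$ has the wrong sign for $\beta\geq\tfrac{1}{4}$, so that scheme stalls well short of $\tfrac{1}{2}$; the vague appeal to the ``built-in $e^{|\mathbf{x}|^{2}/4}$ weight'' does not by itself close that gap, and what actually carries you to $\beta<\tfrac{1}{2}$ is precisely the Bernstein asymptotics you cite at the end, which is what the paper invokes directly (Theorems 9.1 and 7.2 of \cite{Bernstein}). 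So the Caccioppoli detail is, in effect, a partial re-derivation of the cited black box rather than a substitute for it; the paper is more economical in simply citing the result, then finishing with \cite[Theorems 8.7 and 6.2]{GilbargTrudinger}.
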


\begin{proof}
Let $U=(\mathbf{v}\cdot\mathbf{n}_\Sigma) u$. Suppose $u\in W^{1,2}_{loc}\cap W^0_\gamma(E_R)$ and so is $U$. We first observe that $L_\Sigma, L_{\Sigma,\mathbf{v}}^\prime$ are uniformly elliptic operators with $C^{k-2,\alpha}_1$ coefficients (cf. (5.21) of \cite{BWBanachManifold}). If $L_\Sigma u=\mu u$ or $L_{\Sigma,\mathbf{v}}^\prime u=\mu u$ (i.e., $L_\Sigma U=\mu U$), then the standard elliptic regularity theory (see, e.g.,  \cite[Chapters 6 and 8]{GilbargTrudinger}) implies that $u\in C^{k,\alpha}_{loc}(E_{2R})$ and $u$ decays, in the pointwise sense, like $e^{-\frac{\gamma}{4}|\mathbf{x}|^2}$. Furthermore, it follows from \cite[Theorems 9.1 and 7.2]{Bernstein} that $u\in W_\beta^0(E_{2R})$ for all $\beta<1/2$. Hence, the claim follows from the local boundedness for weak solutions and the Schauder estimates (cf. \cite[Theorems 8.7 and 6.2]{GilbargTrudinger}).
\end{proof}

\section{A modification of the smooth dependence theorem from \cite{BWBanachManifold}} \label{ModifiedSmoothDependSec}
In this section we slightly modify the maps obtained in \cite[Theorem 7.1]{BWBanachManifold}. This modification brings the maps closer in line with those obtained in \cite[Theorem 3.2]{WhiteEI} and can be thought of as picking charts around each point of $\mathcal{ACE}^{k,\alpha}_n(\Gamma)$ different from the ones given in \cite{BWBanachManifold}. One important point of emphasis, our proof of the modified theorem uses  \cite[Theorem 7.1]{BWBanachManifold} and it is not clear that one could prove the modified theorem directly. The issue has to do with the rapid growth of the weight, which can only be tamed using decay coming from \cite[Theorem 7.1]{BWBanachManifold}.  

Suppose that $\Sigma$ is a $C^2$-hypersurface in $\mathbb{R}^{n+1}$ and $\mathbf{v}$ is a transverse section on $\Sigma$. If $\mathbf{f}\colon\Sigma\to\mathbb{R}^{n+1}$ is a $C^2$ proper embedding, then we define 
$$
\Xi_\mathbf{v}[\mathbf{f}]=\mathbf{v}\cdot\left(\mathbf{H}-\frac{\mathbf{x}^\perp}{2}\right)[\mathbf{f}] \mbox{ and } \hat{\Xi}_\mathbf{v}[\mathbf{f}]=(\mathbf{v}\cdot\mathbf{n}_\Sigma)^{-2}\Omega_{\mathbf{f}} \Xi_\mathbf{v}[\mathbf{f}].
$$

\begin{thm} \label{ModifiedSmoothDependThm}
Let $\Sigma\in\mathcal{ACH}^{k,\alpha}_n$ be a self-expander, and let $\mathbf{v}\in C^{k,\alpha}_0\cap C^{k}_{0,\mathrm{H}}(\Sigma;\mathbb{R}^{n+1})$ be a transverse section on $\Sigma$ so that $\mathcal{C}[\mathbf{v}]$ is a transverse section on $\mathcal{C}(\Sigma)$ and that 
$$
\Delta_\Sigma\mathbf{v}+\frac{1}{2}\mathbf{x}\cdot\nabla_\Sigma\mathbf{v}\in C^{k-2,\alpha}_{-2}(\Sigma;\mathbb{R}^{n+1}).
$$ 
Let 
$$
\mathcal{K}_\mathbf{v}=\set{\kappa\in W^1_{\frac{1}{4}}(\Sigma)\colon L_{\Sigma,\mathbf{v}}^\prime \kappa=0}.
$$
Then $\dim\mathcal{K}_\mathbf{v}<\infty$, and there exist smooth maps
\begin{align*}
\hat{F}_{\mathbf{v}}\colon \mathcal{U}_1\times\mathcal{U}_2 & \to \mathcal{ACH}_{n}^{k,\alpha}(\Sigma), \mbox{ and}, \\
\hat{G}_{\mathbf{v}}\colon \mathcal{U}_1\times\mathcal{U}_2 & \to \mathcal{K}_{\mathbf{v}},
\end{align*}
where $\mathcal{U}_1$ is some neighborhood of $\mathbf{x}|_{\mathcal{L}(\Sigma)}$ in $C^{k,\alpha}(\mathcal{L}(\Sigma); \mathbb{R}^{n+1})$ and $\mathcal{U}_2$ is some neighborhood of $0$ in $\mathcal{K}_{\mathbf{v}}$, such that the following hold:
\begin{enumerate}
\item \label{TraceFItem} For $(\varphi,\kappa)\in\mathcal{U}_1\times\mathcal{U}_2$, $\mathrm{tr}_\infty^1[\hat{F}_{\mathbf{v}}[\varphi,\kappa]]=\varphi$. 
\item \label{IdentityItem} $\hat{F}_{\mathbf{v}}[\mathbf{x}|_{\mathcal{L}(\Sigma)},0]=\mathbf{x}|_\Sigma$. 
\item \label{EStationaryItem} For $(\varphi,\kappa)\in\mathcal{U}_1\times\mathcal{U}_2$, $\hat{F}_{\mathbf{v}}[\varphi,\kappa]$ is $E$-stationary if and only if $\hat{G}_{\mathbf{v}}[\varphi,\kappa]=0$. 
\item \label{FDiffItem} For $\kappa\in\mathcal{K}_{\mathbf{v}}$, $D_2 \hat{F}_{\mathbf{v}}(\mathbf{x}|_{\mathcal{L}(\Sigma)},0)\kappa=\kappa\mathbf{v}$.
\item \label{SubmanifoldItem} $\hat{G}_{\mathbf{v}}^{-1}(0)$ is a {smooth} submanifold of codimension equal to $\dim \mathcal{K}_{\mathbf{v}}$. It contains $\{0\}\times\mathcal{K}_{\mathbf{v}}$ in its tangent space at $(\varphi,0)$. Equivalently, $D_1 \hat{G}_{\mathbf{v}}(\mathbf{x}|_{\mathcal{L}(\Sigma)},0)$ is of rank equal to $\dim\mathcal{K}_{\mathbf{v}}$ and $D_2 \hat{G}_{\mathbf{v}}(\mathbf{x}|_{\mathcal{L}(\Sigma)},0)=0$. 
\item \label{ExpanderMeanCurvatureItem} One has
$$
\hat{\Xi}_{ \mathbf{v}}[\hat{F}_{\mathbf{v}}[\varphi, \kappa]]=\hat{G}_{\mathbf{v}}[\varphi, \kappa]\in \mathcal{K}_{\mathbf{v}}.
$$
\end{enumerate}
\end{thm}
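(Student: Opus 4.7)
The plan is to deduce Theorem \ref{ModifiedSmoothDependThm} from \cite[Theorem 7.1]{BWBanachManifold}, which supplies an analogous smooth dependence statement phrased in terms of $\tilde{\Xi}_\mathbf{v}$ and the kernel of $L_{\Sigma,\mathbf{v}}$ (paired with the inner product $B_\Sigma$), rather than $\hat{\Xi}_\mathbf{v}$ and $\mathcal{K}_\mathbf{v}$ (paired with $B_{\Sigma,\mathbf{v}}$). Since $L_{\Sigma,\mathbf{v}}^\prime=(\mathbf{v}\cdot\mathbf{n}_\Sigma)^{-2}L_{\Sigma,\mathbf{v}}$ and $\mathbf{v}\cdot\mathbf{n}_\Sigma$ is nonvanishing on $\Sigma$, these two kernels agree as vector spaces of functions; the rephrasing amounts to a change of inner product together with the pointwise substitution $\hat{\Xi}_\mathbf{v}=(\mathbf{v}\cdot\mathbf{n}_\Sigma)^{-2}\tilde{\Xi}_\mathbf{v}$. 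The motivation is to match the setup of \cite[Theorem 3.2]{WhiteEI}, which is what the degree computation in later sections requires.

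The first step is to establish $\dim\mathcal{K}_\mathbf{v}<\infty$ and to pin down the regularity and decay of its elements. Proposition \ref{DecayEigenfunProp}, applied with $\mu=0$ to $\kappa\in\mathcal{K}_\mathbf{v}\subset W^1_{1/4}(\Sigma)$, produces $\kappa\in C^{k,\alpha}_{loc}$ with $e^{\beta|\mathbf{x}|^2}\kappa$ bounded in $C^{k,\alpha}$ on the end for every $\beta<1/2$. This super-exponential decay forces $\kappa\mathbf{v}\in C^{k,\alpha}_{1,0}(\Sigma;\mathbb{R}^{n+1})$, so that deforming $\mathbf{x}|_\Sigma$ by $\kappa\mathbf{v}$ stays inside $\mathcal{ACH}^{k,\alpha}_n(\Sigma)$ and leaves the trace at infinity fixed, which is what property (4) demands. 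Finite-dimensionality of $\mathcal{K}_\mathbf{v}$ then follows from standard elliptic theory combined with the decay.

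Next I would invoke \cite[Theorem 7.1]{BWBanachManifold} to obtain smooth maps $F_\mathbf{v}\colon\mathcal{U}_1\times\mathcal{U}_2\to\mathcal{ACH}_n^{k,\alpha}(\Sigma)$ and $G_\mathbf{v}\colon\mathcal{U}_1\times\mathcal{U}_2\to\mathcal{K}_\mathbf{v}$ satisfying the analogues of (1)--(6) with $\tilde{\Xi}_\mathbf{v}$ and $B_\Sigma$ in place of $\hat{\Xi}_\mathbf{v}$ and $B_{\Sigma,\mathbf{v}}$. Set $\hat{F}_\mathbf{v}=F_\mathbf{v}$ and define
$$
\hat{G}_\mathbf{v}[\varphi,\kappa]=\mathbf{P}_\mathbf{v}\,\hat{\Xi}_\mathbf{v}\bigl[\hat{F}_\mathbf{v}[\varphi,\kappa]\bigr],
$$
where $\mathbf{P}_\mathbf{v}$ is the $B_{\Sigma,\mathbf{v}}$-orthogonal projection onto $\mathcal{K}_\mathbf{v}$. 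Properties (1), (2), (4) are immediate from their counterparts for $F_\mathbf{v}$. Property (5) follows from the implicit function theorem combined with \eqref{2ndVarEqn}: at $(\mathbf{x}|_{\mathcal{L}(\Sigma)},0)$ one has $\Omega_{\mathbf{x}|_\Sigma,\mathbf{v}}=1$, so differentiating $\hat{\Xi}_\mathbf{v}$ in the $\kappa$-direction yields $L_{\Sigma,\mathbf{v}}^\prime$, which annihilates $\mathcal{K}_\mathbf{v}$ and gives $D_2\hat{G}_\mathbf{v}=0$; the rank of $D_1\hat{G}_\mathbf{v}$ then inherits $\dim\mathcal{K}_\mathbf{v}$ from the analogous property of $G_\mathbf{v}$.

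The main obstacle is property (6), namely $\hat{\Xi}_\mathbf{v}[\hat{F}_\mathbf{v}[\varphi,\kappa]]\in\mathcal{K}_\mathbf{v}$, which would make $\mathbf{P}_\mathbf{v}$ act as the identity there and deliver property (3). The analogous assertion for $\tilde{\Xi}_\mathbf{v}$ is built into \cite[Theorem 7.1]{BWBanachManifold}, but, as flagged at the start of the section, the fast growth of the weight $e^{|\mathbf{x}|^2/4}$ prevents a direct argument for $\hat{\Xi}_\mathbf{v}$. The resolution is to use the super-exponential decay furnished by Proposition \ref{DecayEigenfunProp} for elements in the range of $G_\mathbf{v}$: once that decay is in hand, multiplication by the bounded, smooth factor $(\mathbf{v}\cdot\mathbf{n}_\Sigma)^{-2}$ takes $\tilde{\Xi}_\mathbf{v}[F_\mathbf{v}[\varphi,\kappa]]$ to $\hat{\Xi}_\mathbf{v}[\hat{F}_\mathbf{v}[\varphi,\kappa]]$ while preserving membership in the relevant weighted $C^{k,\alpha}$ and $W^1_{1/4}$ spaces, so the pointwise identification $\ker L_{\Sigma,\mathbf{v}}=\ker L_{\Sigma,\mathbf{v}}^\prime$ upgrades to the image level.
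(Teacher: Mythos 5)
Your proposal contains a genuine gap at the crucial step, namely property (6), and the underlying issue is why the paper has to re-run the implicit function theorem rather than reuse $F_\mathbf{v}$.

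First, a small misreading: the paper's remark after Theorem \ref{ModifiedSmoothDependThm} makes clear that \cite[Theorem 7.1]{BWBanachManifold} is phrased in terms of the \emph{unweighted} quantity $\Xi_\mathbf{v}$, i.e.\ it asserts $\Xi_\mathbf{v}[F_\mathbf{v}[\varphi,\kappa]]\in\mathcal{K}_\mathbf{v}$, not in terms of $\tilde{\Xi}_\mathbf{v}$. But even granting your premise, the argument for (6) does not go through. Set $\mathbf{f}=F_\mathbf{v}[\varphi,\kappa]$. You want $\hat{\Xi}_\mathbf{v}[\mathbf{f}]=(\mathbf{v}\cdot\mathbf{n}_\Sigma)^{-2}\Omega_\mathbf{f}\,\Xi_\mathbf{v}[\mathbf{f}]\in\mathcal{K}_\mathbf{v}$. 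The original theorem gives $\Xi_\mathbf{v}[\mathbf{f}]\in\mathcal{K}_\mathbf{v}$, but $\mathcal{K}_\mathbf{v}$ is the \emph{kernel of a differential operator}, a finite-dimensional linear subspace, and it is not closed under multiplication by the function $(\mathbf{v}\cdot\mathbf{n}_\Sigma)^{-2}\Omega_\mathbf{f}$. Concretely: $\kappa\in\mathcal{K}_\mathbf{v}$ means $L_\Sigma[(\mathbf{v}\cdot\mathbf{n}_\Sigma)\kappa]=0$, but there is no reason to expect $L_\Sigma[(\mathbf{v}\cdot\mathbf{n}_\Sigma)\cdot(\mathbf{v}\cdot\mathbf{n}_\Sigma)^{-2}\Omega_\mathbf{f}\kappa]=0$, and when $\varphi\neq\mathbf{x}|_{\mathcal{L}(\Sigma)}$ the factor $\Omega_\mathbf{f}$ itself contains $e^{(|\mathbf{f}|^2-|\mathbf{x}|^2)/4}$, which grows or decays exponentially. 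The pointwise equality $\ker L_{\Sigma,\mathbf{v}}=\ker L_{\Sigma,\mathbf{v}}^\prime$ tells you nothing about whether a given function that is not in the kernel gets mapped into the kernel by this multiplication.

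This is precisely why the paper cannot set $\hat{F}_\mathbf{v}=F_\mathbf{v}$. Its proof instead re-runs the implicit function theorem on the modified map $\hat{\Theta}_\mathbf{v}[\varphi,u]=\Omega_{F^\prime_\mathbf{v}[\varphi,u]}^{-1}\,\Pi^\perp_\mathbf{v}\circ\hat{\Xi}_\mathbf{v}[F^\prime_\mathbf{v}[\varphi,u]]$ with $F^\prime_\mathbf{v}[\varphi,u]=F_\mathbf{v}[\varphi,0]+u\mathbf{v}$, obtaining a new solution $\hat{F}_{\mathbf{v},*}[\varphi,\kappa]$ to $\Pi^\perp_\mathbf{v}\circ\hat{\Xi}_\mathbf{v}=0$, and then defining $\hat{F}_\mathbf{v}[\varphi,\kappa]=F_\mathbf{v}[\varphi,0]+(\kappa+\hat{F}_{\mathbf{v},*}[\varphi,\kappa])\mathbf{v}$. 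The resulting $\hat{F}_\mathbf{v}$ only agrees with $F_\mathbf{v}$ along the slice $\kappa=0$. The role of Proposition \ref{DecayEigenfunProp} and of the existing $F_\mathbf{v}$ in the paper's argument is not to finish the proof by a pointwise substitution, but to tame the growth of the weight so that the modified nonlinear map $\hat{\Theta}_\mathbf{v}$ is well-defined and smooth between the right Banach spaces (Lemma \ref{SmoothMapLem}), which is what makes the second implicit-function-theorem argument legitimate. You would need to reproduce that step to obtain (6) and hence (3).
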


\begin{rem}
Such a transverse section $\mathbf{v}$ always exists. For instance, outside a compact set of $\Sigma$, one may choose $\mathbf{v}$ to be $\mathbf{V}\circ\pi_{\mathbf{V}}|_\Sigma$ for $\mathbf{V}$ a homogeneous transverse section on $\mathcal{C}(\Sigma)$, and then extend $\mathbf{v}$ to be a transverse section on $\Sigma$. It is straightforward to verify that $\mathbf{v}$ satisfies the properties in Theorem \ref{ModifiedSmoothDependThm}.
\end{rem}

\begin{rem}
The only difference with \cite[Theorem 7.1]{BWBanachManifold} is the weight terms that appear in Item \eqref{ExpanderMeanCurvatureItem}. Namely, in that result one has 
$$
\Xi_{ \mathbf{v}}[{F}_{\mathbf{v}}[\varphi, \kappa]]={G}_{\mathbf{v}}[\varphi, \kappa]\in\mathcal{K}_{\mathbf{v}}.
$$
\end{rem}

To prove Theorem \ref{ModifiedSmoothDependThm} we will need several auxiliary propositions and lemmas. For $\beta,\mu\in\mathbb{R}$ we define
$$
\mathscr{L}_{\Sigma,\beta}^\mu=\Delta_\Sigma+2\beta\mathbf{x}\cdot\nabla_\Sigma-\mu.
$$
Observe that, for $f,g\in C^{2}_c(\Sigma)$,
$$
\int_\Sigma f (\mathscr{L}_{\Sigma,\beta}^0 g) e^{\beta |\mathbf{x}|^2} \, d\mathcal{H}^n = -\int_{\Sigma} \nabla_\Sigma f\cdot\nabla_\Sigma g \, e^{\beta |\mathbf{x}|^2} \, d\mathcal{H}^n.
$$
For brevity, we write $\mathscr{L}_\Sigma^\mu=\mathscr{L}_{\Sigma,\frac{1}{4}}^\mu$ and $\mathscr{L}_\Sigma=\mathscr{L}_{\Sigma,\frac{1}{4}}^{\frac{1}{2}}$.
\begin{prop} \label{ModifiedIsoProp}
Let $\Sigma\in\mathcal{ACH}^{k,\alpha}_n$ be a self-expander. For all $\beta\in \left[\frac{1}{4}, \frac{3}{8}\right]$,
$$
\mathscr{L}_{\Sigma}\colon W^2_\beta (\Sigma)\to W^0_\beta (\Sigma)
$$
is an isomorphism.
\end{prop}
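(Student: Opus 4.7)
The plan is to first establish the isomorphism at the natural weight $\beta = \tfrac{1}{4}$ by a Lax--Milgram argument, and then to bootstrap to the full range $\beta \in [\tfrac{1}{4}, \tfrac{3}{8}]$ by a weighted decay argument of the same kind as that underlying Proposition \ref{DecayEigenfunProp}. Boundedness of $\mathscr{L}_\Sigma \colon W^2_\beta(\Sigma) \to W^0_\beta(\Sigma)$ is immediate from the weighted norm definitions, since the drift coefficient $\tfrac{1}{2}\mathbf{x}$ is absorbed by the gradient term of $\Vert \cdot \Vert^{W,(\beta)}_{1;\Sigma}$.

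For the base case $\beta = \tfrac{1}{4}$, the key observation is the cancellation
\[
-\int_\Sigma f\, \mathscr{L}_\Sigma f \, e^{|\mathbf{x}|^2/4} \, d\mathcal{H}^n = \int_\Sigma |\nabla_\Sigma f|^2 e^{|\mathbf{x}|^2/4} \, d\mathcal{H}^n + \frac{1}{2}\int_\Sigma f^2 \, e^{|\mathbf{x}|^2/4} \, d\mathcal{H}^n \qquad (f \in C^\infty_c(\Sigma)),
\]
since the cross term produced by moving the Laplacian across $e^{|\mathbf{x}|^2/4}$ cancels exactly the one produced by the drift $\tfrac{1}{2}\mathbf{x} \cdot \nabla_\Sigma$. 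A density/cutoff argument adapted to the conical geometry extends the identity to $W^1_{1/4}(\Sigma)$, yielding coercivity of the associated symmetric bilinear form. Lax--Milgram then furnishes a unique $u \in W^1_{1/4}$ solving $\mathscr{L}_\Sigma u = F$ for each $F \in W^0_{1/4}$, and weighted elliptic regularity -- combining the weakly conical end estimates \eqref{REstimatesEqn}--\eqref{AEstimatesEqn} with interior Schauder theory -- upgrades $u$ to $W^2_{1/4}$ with the bound $\Vert u \Vert^{W,(1/4)}_{2;\Sigma} \leq C \Vert F \Vert^{W,(1/4)}_{0;\Sigma}$.

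For $\beta \in (\tfrac{1}{4}, \tfrac{3}{8}]$, injectivity is inherited from the case $\beta = \tfrac{1}{4}$ via the inclusion $W^2_\beta(\Sigma) \subset W^2_{1/4}(\Sigma)$. For surjectivity, given $F \in W^0_\beta \subset W^0_{1/4}$, take the solution $u \in W^2_{1/4}$ provided above and upgrade its decay: on the end $E_R$ with $R > R_0$, the estimates \eqref{REstimatesEqn}--\eqref{AEstimatesEqn} make $(E_R, g_\Sigma, r)$ a weakly conical end in the sense of \cite{Bernstein} and $\mathscr{L}_\Sigma$ is uniformly elliptic with $C^{k-2,\alpha}_1$ coefficients. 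Applying \cite[Theorems 7.2 and 9.1]{Bernstein} exactly as in Proposition \ref{DecayEigenfunProp}, but now to the inhomogeneous equation $\mathscr{L}_\Sigma u = F$, yields the weighted estimate $\Vert u \Vert^{W,(\beta)}_{0; E_{4R}} \leq C\bigl(\Vert F \Vert^{W,(\beta)}_{0;\Sigma} + \Vert u \Vert^{W,(1/4)}_{0; E_R}\bigr)$; combined with interior Schauder estimates and the base case this gives $u \in W^2_\beta$ and the corresponding quantitative bound.

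\textbf{Main obstacle.} The hardest step is the decay improvement for $\beta > \tfrac{1}{4}$. A naive weighted energy argument fails: integrating against $f e^{\beta |\mathbf{x}|^2}$ no longer produces the cancellation of the base case, and the leftover cross term $(2\beta - \tfrac{1}{2})\int_\Sigma f \, \mathbf{x}^T \cdot \nabla_\Sigma f \, e^{\beta |\mathbf{x}|^2} \, d\mathcal{H}^n$, after integration by parts, contributes a term of order $-(\beta - \tfrac{1}{4})\int_\Sigma f^2 |\mathbf{x}|^2 e^{\beta |\mathbf{x}|^2} \, d\mathcal{H}^n$ with the \emph{wrong} sign, which cannot be absorbed by the good terms. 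Only by invoking the sharp weakly-conical-end machinery of \cite{Bernstein}, which is tailored precisely to such weighted estimates for drift-Laplace operators on asymptotically conical ends, can this obstruction be overcome; the upper bound $\beta \leq \tfrac{3}{8}$ reflects the decay threshold appearing in those theorems.
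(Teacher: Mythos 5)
\textbf{There is a genuine gap in your proposal, and it is precisely where you announce the ``main obstacle.''} You claim that after integrating by parts against $f e^{\beta|\mathbf{x}|^2}$ the leftover cross term produces $-(\beta - \tfrac{1}{4})\int_\Sigma f^2|\mathbf{x}|^2 e^{\beta|\mathbf{x}|^2}\,d\mathcal{H}^n$ ``with the wrong sign, which cannot be absorbed by the good terms,'' and you therefore abandon the energy method for $\beta>\tfrac{1}{4}$. But it \emph{can} be absorbed, and that absorption is the heart of the paper's proof. The key is the Poincar\'{e} inequality \eqref{PI2eqn} of Proposition \ref{GradControlsL2Prop}: on a self-expander,
\[
(4\beta-1)\int_\Sigma |\mathbf{x}^\top|^2 f^2\, e^{\beta|\mathbf{x}|^2}\,d\mathcal{H}^n \leq 4\int_\Sigma |\nabla_\Sigma f|^2 e^{\beta|\mathbf{x}|^2}\,d\mathcal{H}^n,
\]
whose constant $(4\beta-1)$ exactly matches the size of the bad cross term. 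This is no accident --- \eqref{PI2eqn} is derived from the same self-expander identity $\mathscr{L}_\Sigma^0(|\mathbf{x}|^2+2n)=|\mathbf{x}|^2+2n$ that produces the cross term. Inserting \eqref{PI2eqn} after the Cauchy--Schwarz step leaves a coefficient $2-4\beta \geq \tfrac{1}{2}$ on $|\nabla_\Sigma u|^2$, which is exactly why the range of $\beta$ is cut off at $\tfrac{3}{8}$. In other words, the ``naive'' weighted energy argument is the correct one once it is paired with the right Poincar\'{e} inequality; there is no need to break the problem at $\beta=\tfrac{1}{4}$ and then bootstrap.

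Your proposed workaround does not fill the gap. Theorems 7.2 and 9.1 of \cite{Bernstein} concern (almost) \emph{eigenfunctions} of drift Laplacians --- solutions of $\mathscr{L}_\Sigma u = \mu u$ up to controlled error --- and they yield decay improvement because the equation is homogeneous (or nearly so) on the end. Applying them ``to the inhomogeneous equation $\mathscr{L}_\Sigma u=F$'' with $F$ merely in $W^0_\beta$ is not a step those theorems license; in general $u$ cannot decay faster than the source, and no such estimate as $\Vert u\Vert^{W,(\beta)}_{0;E_{4R}}\lesssim \Vert F\Vert^{W,(\beta)}_{0;\Sigma}+\Vert u\Vert^{W,(1/4)}_{0;E_R}$ follows without a dedicated argument, which is exactly what the Poincar\'{e}-based energy estimate supplies. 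In the paper, the Bernstein machinery appears only in a much more limited role: to establish that when $f$ is smooth with \emph{compact support} --- so that $\mathscr{L}_\Sigma u=0$ on the end --- the Lax--Milgram solution lands in $W^2_{7/16}(\Sigma)$, and this fast decay is used solely to justify the integrations by parts. The actual $W^1_\beta$ and $W^2_\beta$ bounds come from the weighted integration-by-parts computation (the second-order bound via a Bochner-type identity, not generic Schauder theory), and the extension to general $f\in W^0_\beta$ is by a cutoff/mollifier approximation plus a Cauchy-sequence argument. You should redo the $\beta>\tfrac{1}{4}$ case by the direct energy method with \eqref{PI1eqn} and \eqref{PI2eqn}, being careful to first reduce to compactly supported data so that Proposition \ref{DecayEigenfunProp} justifies the integrations by parts, and to treat the Hessian term via the Bochner identity rather than local Schauder estimates.
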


\begin{proof}
First the Lax-Milgram theorem  implies that given an $f\in W^0_\beta (\Sigma)$ there is a unique solution $u\in W^1_{\frac{1}{4}}(\Sigma)$ to $\mathscr{L}_\Sigma u=f$ in the distributional sense. Now suppose that $f$ is smooth and has compact support. As such, there is an $R^\prime\geq R_0(\Sigma)$ sufficiently large so that $\spt(f)\subset B_{R^\prime}$ and $\Sigma\setminus\bar{B}_{R^\prime}$ is a weakly conical end by the definition of $R_0$ given in Section \ref{DecayEigenfunSubsec}. By standard elliptic regularity -- see, e.g., Chapter 6 and 8 of \cite{GilbargTrudinger} -- $\Sigma$ is indeed smooth and so is $u$. As $\mathscr{L}_{\Sigma} u=0$ in $\Sigma\backslash B_{R^\prime}$ it follows from Proposition \ref{DecayEigenfunProp} that
\begin{equation} \label{WeightW2Eqn}
\int_{\Sigma} \left(|\nabla_\Sigma^2 u|^2+|\nabla_\Sigma u|^2 +u^2\right)  e^{\frac{7}{16}|\mathbf{x}|^2} \, d\mathcal{H}^n<\infty.
\end{equation}
In other words, $u\in W^{2}_{\frac{7}{16}}(\Sigma)$. As $\beta<\frac{7}{16}$, this fact will be used to justify integration by parts in several places.

Indeed, as
$$
\mathscr{L}_{\Sigma}^0 (u^2)=2 |\nabla_\Sigma u|^2 +2 u \mathscr{L}_{\Sigma}^0 u= 2 |\nabla_\Sigma u|^2+ u^2 +2u f,
$$
integrating by parts, which is justified by \eqref{WeightW2Eqn}, gives
\begin{align*}
0 &= \int_{\Sigma} \mathscr{L}_{\Sigma, \beta}^0(u^2) e^{\beta |\mathbf{x}|^2}\, d\mathcal{H}^n=\int_{\Sigma} \left( \mathscr{L}_{\Sigma}^0(u^2)+\left(2\beta-\frac{1}{2}\right) \mathbf{x}^\top \cdot \nabla_\Sigma u^2\right) e^{\beta |\mathbf{x}|^2} \, d\mathcal{H}^n\\
&= \int_{\Sigma} \left( 2 |\nabla_\Sigma u|^2+ u^2 +2u f+ \left( 4\beta-1\right) u \mathbf{x}^\top\cdot \nabla_\Sigma u \right) e^{\beta |\mathbf{x}|^2} \, d\mathcal{H}^n\\
&\geq  \int_{\Sigma} \left( 2 |\nabla_\Sigma u|^2+ u^2 -\frac{1}{2} u^2 -2 f^2-  \left( 4\beta-1\right) \left( |\nabla_\Sigma u|^2 +\frac{1}{4}|\mathbf{x}^\top|^2 u^2 \right) \right) e^{\beta |\mathbf{x}|^2} \, d\mathcal{H}^n,
\end{align*}
where the last inequality used the absorbing inequality and the fact that $\beta\geq \frac{1}{4}$. Hence, by \eqref{PI2eqn} of Proposition \ref{GradControlsL2Prop}, one obtains
$$
\int_{\Sigma} \left( 2 |\nabla_\Sigma u|^2+ \frac{1}{2} u^2 -2 f^2- 4\beta  |\nabla_\Sigma u|^2  \right) e^{\beta |\mathbf{x}|^2} \, d\mathcal{H}^n \leq 0.
$$
Hence, as $\beta\leq 3/8$, 
$$
\int_{\Sigma} \left(\frac{1}{2} |\nabla_{\Sigma} u|^2 +\frac{1}{2}u^2 \right) e^{\beta|\mathbf{x}|^2} \, d\mathcal{H}^n\leq 2 \int_{\Sigma} f^2 e^{\beta|\mathbf{x}|^2}d\mathcal{H}^n.
$$
That is,  for $\beta\in \left[\frac{1}{4},\frac{3}{8}\right]$, 
\begin{equation} \label{WeightW1Eqn}
\Vert u \Vert_{1}^{W, (\beta)} \leq 4\Vert f \Vert_0^{W, (\beta)}.
\end{equation}

Next consider the Bochner type formula
\begin{align*}
\mathscr{L}_{\Sigma}^0 \left(\frac{1}{2} |\nabla_\Sigma u|^2\right)& =\nabla_{\Sigma} u \cdot \nabla_{\Sigma}(\mathscr{L}_{\Sigma}^0 u)+  |\nabla^2_\Sigma u|^2+\mathrm{Ric}_\Sigma (\nabla_\Sigma u, \nabla_\Sigma u)\\
& \quad -\frac{\mathbf{x}\cdot \mathbf{n}_\Sigma}{2} A_{\Sigma}(\nabla_{\Sigma} u, \nabla_{\Sigma} u)-\frac{1}{2}|\nabla_{\Sigma}u |^2 \\
&\geq \nabla_{\Sigma} u \cdot \nabla_{\Sigma}(\mathscr{L}_{\Sigma}^0 u)+  |\nabla^2_\Sigma u|^2-\left(\frac{1}{2}+K_{\Sigma}\right)|\nabla_{\Sigma}u |^2
\end{align*}
where the inequality uses the Gauss equation, the self-expander equation and 
\begin{equation}\label{KSigmaDef}
K_{\Sigma}=\sup_{p\in\Sigma} (1+|\mathbf{x}(p)|^2)|A_{\Sigma}(p)|^2< \infty.
\end{equation}
Integrating by parts (which can be justified by \eqref{WeightW2Eqn} together with a standard cutoff argument) yields
\begin{align*}
\int_{\Sigma} |\nabla_\Sigma^2 u|^2 & e^{\beta |\mathbf{x}|^2} \, d\mathcal{H}^n \leq  \int_{\Sigma} \left( \mathscr{L}_{\Sigma}^0 \left(\frac{1}{2} |\nabla_\Sigma u|^2\right)- \nabla_\Sigma u\cdot\nabla_\Sigma (\mathscr{L}_\Sigma^0 u ) \right) e^{\beta |\mathbf{x}|^2} \, d\mathcal{H}^n \\
& + \left(K_\Sigma+\frac{1}{2} \right) \int_{\Sigma} |\nabla_{\Sigma} u|^2  e^{\beta |\mathbf{x}|^2} \, d\mathcal{H}^n\\
& \leq \int_{\Sigma} \left( \frac{\left( 1-4\beta\right)}{2}\mathbf{x}^\top \cdot \nabla_\Sigma \left( \frac{1}{2} |\nabla_\Sigma u|^2 \right)+ ( \mathscr{L}_{\Sigma, \beta}^0 u) (\mathscr{L}_\Sigma^0 u ) \right) e^{\beta |\mathbf{x}|^2} \, d\mathcal{H}^n \\
& + \left(K_\Sigma+\frac{1}{2} \right) \int_{\Sigma} |\nabla_{\Sigma} u|^2  e^{\beta |\mathbf{x}|^2} \, d\mathcal{H}^n.
\end{align*}

One computes,
\begin{align*}
\left|  \int_{\Sigma}\mathbf{x}^\top \cdot \nabla_\Sigma \left( \frac{1}{2} |\nabla_\Sigma u|^2 \right) e^{\beta |\mathbf{x}|^2} \, d\mathcal{H}^n\right| & = \left| \int_{\Sigma} \nabla^2_\Sigma u (\nabla_\Sigma u , \mathbf{x}^\top) e^{\beta |\mathbf{x}|^2} \, d\mathcal{H}^n\right|\\
&\leq  \int_{\Sigma} |\nabla^2_\Sigma u| |\mathbf{x}^\top | |\nabla_\Sigma u|e^{\beta |\mathbf{x}|^2} \, d\mathcal{H}^n\\
&\leq  \int_{\Sigma} \left(  |\nabla^2_\Sigma u| ^2 +\frac{1}{4} |\mathbf{x}^\top |^2 |\nabla_\Sigma u|^2 \right)e^{\beta |\mathbf{x}|^2} \, d\mathcal{H}^n.
\end{align*}
Hence, by \eqref{PI2eqn} of Proposition \ref{GradControlsL2Prop} applied to $|\nabla_\Sigma u|$ and the fact that $\frac{1}{4}\leq \beta\leq \frac{3}{8}$, 
\begin{align*}
\left( \frac{ 1-4\beta}{2}\right) \int_{\Sigma}\mathbf{x}^\top \cdot \nabla_\Sigma \left(\frac{1}{2} |\nabla_\Sigma u|^2 \right) e^{\beta |\mathbf{x}|^2} \, d\mathcal{H}^n &\leq 2\beta \int_{\Sigma} |\nabla^2_\Sigma u| ^2  e^{\beta |\mathbf{x}|^2} \, d\mathcal{H}^n\\
&\leq \frac{3}{4}\int_{\Sigma} |\nabla^2_\Sigma u| ^2  e^{\beta |\mathbf{x}|^2} \, d\mathcal{H}^n.
\end{align*}
Similarly, as $\frac{1}{4}\leq \beta\leq \frac{3}{8}$, 
\begin{align*}
\int_{\Sigma} ( \mathscr{L}_{\Sigma, \beta}^0 u) (\mathscr{L}_\Sigma^0 u ) & e^{\beta |\mathbf{x}|^2} d\mathcal{H}^n =\int_{\Sigma} \left( (2\beta-\frac{1}{2})\mathbf{x}^\top \cdot \nabla_\Sigma u + \frac{1}{2} u+f\right) \left(\frac{1}{2} u+f\right)e^{\beta |\mathbf{x}|^2} d\mathcal{H}^n \\
&\leq \int_{\Sigma}\left(\frac{1}{4} |\mathbf{x}^\top \cdot \nabla_\Sigma u| \left(\frac{1}{2}|u|+|f|\right) + \left(\frac{1}{2} u+f\right)^2\right) e^{\beta |\mathbf{x}|^2} \, d\mathcal{H}^n \\
&\leq  \int_{\Sigma} \left(\frac{1}{128} |\mathbf{x}^\top \cdot \nabla_\Sigma u|^2+ 3\left(\frac{1}{2}|u|+|f|\right)^2 \right) e^{\beta |\mathbf{x}|^2} \, d\mathcal{H}^n\\
&\leq \frac{1}{8} \int_{\Sigma}|\nabla^2_\Sigma u| ^2  e^{\beta |\mathbf{x}|^2} \, d\mathcal{H}^n + 2\int_{\Sigma}\left( u^2 +4 f^2 \right) e^{\beta |\mathbf{x}|^2}  \, d\mathcal{H}^n,
\end{align*}
where the last inequality uses \eqref{PI1eqn} of Proposition \ref{GradControlsL2Prop}. 

Hence, we have arrived at
\begin{align*}
 \frac{1}{8}\int_{\Sigma} |\nabla_\Sigma^2 u|^2 e^{\beta |\mathbf{x}|^2}d\mathcal{H}^n &\leq  \int_{\Sigma} \left\{2 \left( u^2 +4 f^2\right) + \left(K_\Sigma+\frac{1}{2} \right)  |\nabla_{\Sigma} u|^2\right\}  e^{\beta |\mathbf{x}|^2} \, d\mathcal{H}^n.
\end{align*}
Combining this with the previous estimate \eqref{WeightW1Eqn} gives 
$$
\Vert u \Vert_{2}^{W,(\beta)} \leq C(K_\Sigma) \Vert f \Vert_{0}^{W,(\beta)}.
$$

The proof is completed by observing that for a fixed $f\in W^{0}_{\beta}(\Sigma)$, by a standard cutoff and mollifier trick, there is a sequence of smooth, compactly supported functions on $\Sigma$, $f_j$, so that $f_j\to f$ in $W^{ 0}_{\beta}(\Sigma)$. Hence, if $u_j$ is the unique solution of $\mathscr{L}_\Sigma u_j=f_j$, then $\set{u_j}$ is a Cauchy sequence in $W^2_\beta(\Sigma)$. Hence, letting $j\to \infty$ one obtains a solution $u\in W^2_\beta(\Sigma)$ of $\mathscr{L}_\Sigma u=f$. The uniqueness of such solutions are ensured by the uniqueness of the solutions in $W^1_{\frac{1}{4}}(\Sigma)$.
\end{proof}

We recall the Banach space $\mathcal{D}^{k,\alpha}(\Sigma)$ introduced in Section 5 of \cite{BWBanachManifold}:
$$
\mathcal{D}^{k,\alpha}(\Sigma)=\set{f\in C^{k,\alpha}_1\cap C^{k-1,\alpha}_{0} \cap C^{k-2,\alpha}_{-1}(\Sigma)\colon \mathbf{x}\cdot\nabla_\Sigma f\in C^{k-2,\alpha}_{-1}(\Sigma)}
$$
equipped with the norm
$$
\Vert f\Vert^*_{k,\alpha}=\Vert f\Vert^{(-1)}_{k-2,\alpha}+\sum_{i=k-1}^k \Vert\nabla_\Sigma^i f\Vert^{(1-k)}_{\alpha}+\Vert\mathbf{x}\cdot\nabla_\Sigma f\Vert^{(-1)}_{k-2,\alpha}.
$$
We will adopt the convention that if $X_1$ and $X_2$ are Banach spaces, then $X_1\cap X_2$ is the Banach space with the norm
$$
\Vert f\Vert_{X_1\cap X_2}=\Vert f\Vert_{X_1}+\Vert f\Vert_{X_2}.
$$

\begin{cor} \label{FredholmCor}
Let $\Sigma$ and $\mathbf{v}$ be as given in Theorem \ref{ModifiedSmoothDependThm}. For $\beta\in \left[\frac{1}{4}, \frac{3}{8}\right]$ the maps
$$
L_{\Sigma}, L_{\Sigma, \mathbf{v}}^\prime\colon \mathcal{D}^{k, \alpha}\cap W^2_{\beta}(\Sigma)\to C^{k-2, \alpha}_{-1}\cap W^0_\beta (\Sigma)
$$
are both Fredholm of index $0$. 
\end{cor}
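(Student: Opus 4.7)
The plan is to realize both $L_\Sigma$ and $L_{\Sigma,\mathbf{v}}'$ as compact perturbations of the auxiliary operator $\mathscr{L}_\Sigma$ between the intersection spaces, and then invoke Proposition \ref{ModifiedIsoProp} together with the fact that a compact perturbation of a Banach space isomorphism is Fredholm of index $0$.

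First I would compute the two differences. One has $L_\Sigma-\mathscr{L}_\Sigma=|A_\Sigma|^2$, which is multiplication by a coefficient in $C^{k-2,\alpha}_{-2}(\Sigma)$ thanks to the asymptotically conical structure of $\Sigma$. For the second, setting $\nu=\mathbf{v}\cdot\mathbf{n}_\Sigma$, a direct product rule computation gives
\[
L_{\Sigma,\mathbf{v}}'-\mathscr{L}_\Sigma \;=\; 2\nu^{-1}\nabla_\Sigma\nu\cdot\nabla_\Sigma \;+\; \nu^{-1}\bigl(\Delta_\Sigma\nu+\tfrac{1}{2}\mathbf{x}\cdot\nabla_\Sigma\nu\bigr) \;+\; |A_\Sigma|^2.
\]
The hypotheses on $\mathbf{v}$ in Theorem \ref{ModifiedSmoothDependThm} (asymptotic homogeneity of degree $0$ at infinity and $\Delta_\Sigma\mathbf{v}+\tfrac{1}{2}\mathbf{x}\cdot\nabla_\Sigma\mathbf{v}\in C^{k-2,\alpha}_{-2}$), together with $\nu$ being bounded away from $0$ at infinity, imply that the first-order coefficient $2\nu^{-1}\nabla_\Sigma\nu$ lies in $C^{k-2,\alpha}_{-1}(\Sigma;T\Sigma)$ and that the zeroth-order coefficient lies in $C^{k-2,\alpha}_{-2}(\Sigma)$.

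Second I would verify that $\mathscr{L}_\Sigma\colon \mathcal{D}^{k,\alpha}\cap W^2_\beta(\Sigma)\to C^{k-2,\alpha}_{-1}\cap W^0_\beta(\Sigma)$ is itself an isomorphism. Injectivity is immediate from Proposition \ref{ModifiedIsoProp}. For surjectivity, given $f$ in the target, that proposition produces a unique $u\in W^2_\beta$ with $\mathscr{L}_\Sigma u=f$; interior Schauder estimates on unit balls give $u\in C^{k,\alpha}_{loc}$, and combining the weighted $L^2$ bound with local elliptic regularity as in the proof of Proposition \ref{DecayEigenfunProp} yields pointwise Gaussian decay of $u$ and its derivatives up to order $k$. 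Since Gaussian decay is vastly stronger than the polynomial rates encoded in the norm on $\mathcal{D}^{k,\alpha}$, this places $u$ in $\mathcal{D}^{k,\alpha}$.

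Third I would check that the two perturbations are compact from $\mathcal{D}^{k,\alpha}\cap W^2_\beta$ to $C^{k-2,\alpha}_{-1}\cap W^0_\beta$. Since the coefficients have strictly better decay than what is needed to map into the target, any bounded sequence in the domain has images whose tails outside a large ball become uniformly small; on any bounded region, Arzel\`a--Ascoli (for the H\"older component) and Rellich compactness (for the weighted $L^2$ component) extract convergent subsequences. The corollary then follows from stability of the Fredholm index under compact perturbation. The main obstacle I expect is the bookkeeping in the first step, namely checking carefully that each coefficient in $L_{\Sigma,\mathbf{v}}'-\mathscr{L}_\Sigma$ lies in the weighted H\"older space claimed using only the structural hypotheses imposed on $\mathbf{v}$; the regularity upgrade from $W^2_\beta$ to $\mathcal{D}^{k,\alpha}$ is a comparatively routine consequence of local Schauder theory once the weighted $L^2$ bound is in hand.
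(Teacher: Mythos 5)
Your overall strategy -- realizing both operators as compact perturbations of $\mathscr{L}_\Sigma$ and invoking Proposition \ref{ModifiedIsoProp} together with Fredholm stability -- is exactly the paper's. The interesting difference is how you handle $L_{\Sigma,\mathbf{v}}'$: the paper factors $L_{\Sigma,\mathbf{v}}'=M_{\mathbf{v}}\circ L_{\Sigma,\mathbf{v}}$, shows the multiplier $M_{\mathbf{v}}$ is an isomorphism of the target, and compares $L_{\Sigma,\mathbf{v}}$ with $\mathscr{L}_\Sigma$ via the technical Lemma 5.10 and Proposition 5.11 of \cite{BWBanachManifold}; you instead compute $L_{\Sigma,\mathbf{v}}'-\mathscr{L}_\Sigma$ directly. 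Your route makes transparent that the discrepancy is a genuinely first-order operator with no top-order mismatch, whereas comparing $L_{\Sigma,\mathbf{v}}$ to $\mathscr{L}_\Sigma$ naively produces a leading term $(\nu^2-1)\Delta_\Sigma$ that has to be disposed of by structural results the paper cites externally. Your decomposition is arguably cleaner and more self-contained.

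There are, however, two places where your sketch elides something real. First, the assertion that the zeroth-order coefficient $\nu^{-1}\bigl(\Delta_\Sigma\nu+\tfrac{1}{2}\mathbf{x}\cdot\nabla_\Sigma\nu\bigr)$ lies in $C^{k-2,\alpha}_{-2}$ is not an immediate consequence of the hypothesis on $\mathscr{L}_\Sigma^0\mathbf{v}$: expanding by the product rule,
$$
\mathscr{L}_\Sigma^0(\mathbf{v}\cdot\mathbf{n}_\Sigma)=(\mathscr{L}_\Sigma^0\mathbf{v})\cdot\mathbf{n}_\Sigma+2\langle\nabla_\Sigma\mathbf{v},\nabla_\Sigma\mathbf{n}_\Sigma\rangle+\mathbf{v}\cdot\mathscr{L}_\Sigma^0\mathbf{n}_\Sigma,
$$
and the last term has the needed decay only because the Codazzi identity $\Delta_\Sigma\mathbf{n}_\Sigma=\nabla_\Sigma H_\Sigma-|A_\Sigma|^2\mathbf{n}_\Sigma$ combines with the self-expander equation $H_\Sigma=-\tfrac{1}{2}\mathbf{x}\cdot\mathbf{n}_\Sigma$ to give the exact cancellation $\mathscr{L}_\Sigma^0\mathbf{n}_\Sigma=-|A_\Sigma|^2\mathbf{n}_\Sigma$. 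Without this cancellation the tangential terms $\nabla_\Sigma H_\Sigma$ and $A_\Sigma(\mathbf{x}^\top,\cdot)$ are only $O(1)$, not $O(|\mathbf{x}|^{-2})$; this is a key step and should be spelled out. Second, your claim of ``pointwise Gaussian decay of $u$ and its derivatives'' for the inhomogeneous problem $\mathscr{L}_\Sigma u=f$ cannot be right as stated: $f\in C^{k-2,\alpha}_{-1}\cap W^0_\beta$ has only polynomial pointwise decay, so $u$ inherits at best polynomial rates; the argument you gesture at is that of Proposition \ref{DecayEigenfunProp}, which treats the homogeneous equation and does not directly apply. What one actually gets -- and all that is needed -- are the polynomial rates built into $\mathcal{D}^{k,\alpha}$, via scaling-invariant Schauder estimates on dyadic annuli combined with the weighted $L^2$ bound from $W^2_\beta$. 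The paper sidesteps this by citing \cite[Theorem 5.7]{BWBanachManifold} for the $\mathcal{D}^{k,\alpha}$ isomorphism outright.
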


\begin{proof}
By Proposition \ref{ModifiedIsoProp} and \cite[Theorem 5.7]{BWBanachManifold} the map 
$$
\mathscr{L}_\Sigma\colon \mathcal{D}^{k,\alpha}\cap W^2_\beta(\Sigma)\to C^{k-2,\alpha}_{-1}\cap W^0_\beta(\Sigma)
$$
is an isomorphism. As $\Sigma$ is asymptotically conical, we have $|A_\Sigma|\in C^{k-2,\alpha}_{-1}(\Sigma)$. Thus the Poincar\'{e} inequality, \eqref{PI1eqn} of Proposition \ref{GradControlsL2Prop}, and the Arzel\`{a}-Ascoli theorem imply that $L_{\Sigma}$ and $\mathscr{L}_{\Sigma}$ differ by a compact operator. This proves  that $L_{\Sigma}$ is Fredholm of index zero. 

Observe that $L_{\Sigma,\mathbf{v}}^\prime=M_\mathbf{v}\circ L_{\Sigma,\mathbf{v}}$ where $M_{\mathbf{v}}[f]=(\mathbf{v}\cdot\mathbf{n}_\Sigma)^{-2} f$ is the operator given by multiplication by $(\mathbf{v}\cdot\mathbf{n}_\Sigma)^{-2}$. As $|\mathbf{v}\cdot\mathbf{n}_\Sigma|\in C^{k-1,\alpha}_0(\Sigma)$ and has a strictly positive lower bound, it is not hard to see that $M_\mathbf{v}$ is an isomorphism of $C^{k-2,\alpha}_{-1}\cap W_\beta^0(\Sigma)$. And the Poincar\'{e} inequality, \eqref{PI1eqn} of Proposition \ref{GradControlsL2Prop}, together with Lemma 5.10 and Proposition 5.11 of \cite{BWBanachManifold} implies that $L_{\Sigma,\mathbf{v}}$ and $\mathscr{L}_\Sigma$ differ by a compact operator from $\mathcal{D}^{k, \alpha}\cap W^2_{\beta}(\Sigma)$ to $C^{k-2, \alpha}_{-1}\cap W^0_\beta (\Sigma)$. Hence $L^\prime_{\Sigma,\mathbf{v}}$ is Fredholm of index $0$.
\end{proof}

\begin{lem}\label{CokerLem} 
Let $\Sigma$ and $\mathbf{v}$ be as given in Theorem \ref{ModifiedSmoothDependThm}. If $\kappa \in \mathcal{K}_\mathbf{v}\setminus\set{0}$, then there are no solutions in $ W^2_{\frac{1}{4}} (\Sigma)$ to $L_{\Sigma,\mathbf{v}}^\prime u=\kappa$.
\end{lem}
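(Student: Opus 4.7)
The strategy is the standard cokernel-obstruction argument: if $L_{\Sigma,\mathbf{v}}^\prime u = \kappa$ and $L_{\Sigma,\mathbf{v}}^\prime \kappa=0$, then by the symmetry of $L_{\Sigma,\mathbf{v}}^\prime$ with respect to $B_{\Sigma,\mathbf{v}}$ one should get
$$
B_{\Sigma,\mathbf{v}}[\kappa,\kappa] = B_{\Sigma,\mathbf{v}}[L_{\Sigma,\mathbf{v}}^\prime u,\kappa] = B_{\Sigma,\mathbf{v}}[u,L_{\Sigma,\mathbf{v}}^\prime \kappa]=0,
$$
which forces $\kappa\equiv 0$, contradicting $\kappa\neq 0$. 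The whole content of the proof is justifying the middle equality, since neither $u$ nor $\kappa$ is compactly supported and the weight $e^{|\mathbf{x}|^2/4}$ grows rapidly.

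First I would upgrade regularity. Since $L_{\Sigma,\mathbf{v}}^\prime$ has $C^{k-2,\alpha}$ coefficients with the leading part uniformly elliptic, classical Schauder theory gives $u,\kappa\in C^{k,\alpha}_{\mathrm{loc}}(\Sigma)$. The crucial input is Proposition \ref{DecayEigenfunProp} applied to $\kappa\in W^1_{1/4}\subset W^0_{1/4}(E_R)$ with $\mu=0$: for every $\beta<1/2$,
$$
\Vert e^{\beta|\mathbf{x}|^2}\kappa\Vert_{k,\alpha;E_{4R}}<\infty.
$$
This super-Gaussian decay of $\kappa$ (and its derivatives) is precisely what makes the weighted integration by parts work.

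Next I would pick a cutoff $\eta_R\in C^\infty_c(\Sigma)$ with $\eta_R\equiv 1$ on $B_R$, $\mathrm{spt}\,\eta_R\subset B_{2R}$, $|\nabla_\Sigma\eta_R|\leq C/R$, and $|\nabla^2_\Sigma\eta_R|\leq C/R^2$. Since $\eta_R\kappa\in C^{k,\alpha}_c(\Sigma)$ and $u\in W^2_{1/4}\cap C^{k,\alpha}_{\mathrm{loc}}$, transferring derivatives onto the compactly supported factor is legitimate and yields
$$
B_{\Sigma,\mathbf{v}}[\kappa,\eta_R\kappa] = B_{\Sigma,\mathbf{v}}[L_{\Sigma,\mathbf{v}}^\prime u,\eta_R\kappa] = B_{\Sigma,\mathbf{v}}[u,L_{\Sigma,\mathbf{v}}^\prime(\eta_R\kappa)].
$$
Setting $K=(\mathbf{v}\cdot\mathbf{n}_\Sigma)\kappa$ and using $L_{\Sigma,\mathbf{v}}^\prime\kappa=0$ (equivalently $L_\Sigma K=0$), the commutator expands as
$$
L_{\Sigma,\mathbf{v}}^\prime(\eta_R\kappa) = (\mathbf{v}\cdot\mathbf{n}_\Sigma)^{-1}\!\left[\,2\nabla_\Sigma\eta_R\cdot\nabla_\Sigma K + \bigl(\Delta_\Sigma\eta_R + \tfrac{1}{2}\mathbf{x}^\top\!\cdot\nabla_\Sigma\eta_R\bigr)K\,\right],
$$
which is supported in the annulus $B_{2R}\setminus B_R$. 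As $R\to\infty$, the left-hand side tends to $B_{\Sigma,\mathbf{v}}[\kappa,\kappa]$ by dominated convergence, since $\kappa\in W^0_{1/4}$.

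Finally I would show the right-hand side tends to $0$. On $B_{2R}\setminus B_R$ one has $|\mathbf{x}||\nabla_\Sigma\eta_R|\leq C$, and $|K|,|\nabla_\Sigma K|$ are bounded pointwise by $Ce^{-\beta|\mathbf{x}|^2}$ for any $\beta<1/2$. Thus $|K|^2e^{|\mathbf{x}|^2/4}$ and $|\nabla_\Sigma K|^2e^{|\mathbf{x}|^2/4}$ are integrable with tails on $B_{2R}\setminus B_R$ that vanish as $R\to\infty$. By Cauchy--Schwarz against $u\in W^0_{1/4}$,
$$
\bigl|B_{\Sigma,\mathbf{v}}[u,L_{\Sigma,\mathbf{v}}^\prime(\eta_R\kappa)]\bigr|\leq C\Vert u\Vert^{W,(1/4)}_0\left(\int_{B_{2R}\setminus B_R}(|K|^2+|\nabla_\Sigma K|^2)e^{|\mathbf{x}|^2/4}\,d\mathcal{H}^n\right)^{\!1/2}\!\!\to 0.
$$
Combining gives $B_{\Sigma,\mathbf{v}}[\kappa,\kappa]=0$, hence $\kappa\equiv 0$, a contradiction.

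The main obstacle is the $\tfrac{1}{2}\mathbf{x}^\top\!\cdot\nabla_\Sigma\eta_R$ term in the commutator: unlike the $\nabla^2\eta_R$ and $\nabla\eta_R\cdot\nabla K$ pieces it carries no $1/R$ smallness, only boundedness on the annulus. Without the super-Gaussian decay of $\kappa$ provided by Proposition \ref{DecayEigenfunProp}, this term would not vanish against the $e^{|\mathbf{x}|^2/4}$ weight and the argument would fail; it is precisely the gap between $e^{-\beta|\mathbf{x}|^2}$ for $\beta<1/2$ and $e^{-|\mathbf{x}|^2/4}$ that makes the limit converge.
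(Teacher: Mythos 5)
Your proof is correct and follows essentially the same route as the paper: the argument is the standard cokernel obstruction $B_{\Sigma,\mathbf{v}}[\kappa,\kappa]=B_{\Sigma,\mathbf{v}}[L_{\Sigma,\mathbf{v}}^\prime u,\kappa]=B_{\Sigma,\mathbf{v}}[u,L_{\Sigma,\mathbf{v}}^\prime\kappa]=0$, with the integration by parts justified by $u\in W^2_{\frac{1}{4}}(\Sigma)$ together with the super-Gaussian decay of $\kappa$ from Proposition \ref{DecayEigenfunProp}. The paper condenses the justification into a single sentence, whereas you supply the cutoff computation and correctly single out the $\tfrac{1}{2}\mathbf{x}^\top\cdot\nabla_\Sigma\eta_R$ commutator term as the one with no $1/R$ gain, which is the genuine reason the decay from Proposition \ref{DecayEigenfunProp} is indispensable rather than merely convenient.
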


\begin{proof}
We argue by contradictions. If there were such a solution, then integration by parts would be valid by the fact that $u\in W^2_{\frac{1}{4}}(\Sigma)$ and Proposition \ref{DecayEigenfunProp}. Thus,
$$
B_{\Sigma, \mathbf{v}}[\kappa, \kappa]=B_{\Sigma, \mathbf{v}}[\kappa, L_{\Sigma, \mathbf{v}}^\prime u]=-Q_{\Sigma, \mathbf{v}}[\kappa, u]=B_{\Sigma, \mathbf{v}}[L_{\Sigma, \mathbf{v}}^\prime  \kappa, u]=0.
$$
That is, $\kappa$ is identically zero.
\end{proof}

For $\Sigma$ and $\mathbf{v}$ as given in Theorem \ref{ModifiedSmoothDependThm}, by Corollary \ref{FredholmCor}, $\dim \mathcal{K}_\mathbf{v}<\infty$. Let $\Pi_{\mathbf{v}}$ be the orthogonal projection to $\mathcal{K}_\mathbf{v}$ with respect to $B_{\Sigma,\mathbf{v}}$ and $\Pi^\perp_{\mathbf{v}}=\mathrm{Id}-\Pi_{\mathbf{v}}$. Recall the smooth map obtained in \cite[Theorem 7.1]{BWBanachManifold}, $F_\mathbf{v}$ from a neighborhood of $(\mathbf{x}|_{\mathcal{L}(\Sigma)},0)$ in $C^{k,\alpha}(\mathcal{L}(\Sigma); \mathbb{R}^{n+1})\times\mathcal{K}_\mathbf{v}$ to a neighborhood of $\mathbf{x}|_\Sigma$ in $\mathcal{ACH}^{k,\alpha}_n(\Sigma)$ so that
$$
F_\mathbf{v}[\mathbf{x}|_{\mathcal{L}(\Sigma)},0]=\mathbf{x}|_\Sigma, \Xi_\mathbf{v}[F_\mathbf{v}[\varphi,\kappa]]\in\mathcal{K}_\mathbf{v} \mbox{ and }  \mathscr{L}_\Sigma F_{\mathbf{v}}[\varphi,\kappa]\in C^{k-2,\alpha}_{-1}(\Sigma; \mathbb{R}^{n+1}).
$$
Denote by $\mathcal{B}_R(f; X)$ the open ball in Banach space $X$ with radius $R$ and center $f$. 

\begin{lem} \label{SmoothMapLem}
Let $\Sigma$ and $\mathbf{v}$ be as given in Theorem \ref{ModifiedSmoothDependThm}. For $\beta\in \left(0,\frac{1}{2}\right)$ and sufficiently small $r_0>0$ the map 
$$
\hat{\Theta}_\mathbf{v}\colon \mathcal{B}_{r_0}(\mathbf{x}|_{\mathcal{L}(\Sigma)}; C^{k,\alpha}(\mathcal{L}(\Sigma);\mathbb{R}^{n+1}))\times \mathcal{B}_{r_0}(0; \mathcal{D}^{k,\alpha}\cap W^2_\beta(\Sigma)) \to C^{k-2,\alpha}_{-1}\cap W^0_\beta(\Sigma)
$$
given by 
$$
\hat{\Theta}_\mathbf{v}[\varphi,u]=\Omega_{F_\mathbf{v}^\prime[\varphi,u]}^{-1}\Pi^\perp_{\mathbf{v}}\circ\hat{\Xi}_\mathbf{v}[F^\prime_\mathbf{v}[\varphi,u]] \mbox{ where $F_\mathbf{v}^\prime[\varphi,u]=F_\mathbf{v}[\varphi,0]+u\mathbf{v}$}
$$
is a well-defined smooth map.
\end{lem}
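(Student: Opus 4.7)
The key algebraic identity is
\[
\Omega_\mathbf{f}^{-1}\hat{\Xi}_\mathbf{v}[\mathbf{f}]=(\mathbf{v}\cdot\mathbf{n}_\Sigma)^{-2}\Xi_\mathbf{v}[\mathbf{f}],
\]
so $\hat{\Theta}_\mathbf{v}$ decomposes as
\[
\hat{\Theta}_\mathbf{v}[\varphi,u]=(\mathbf{v}\cdot\mathbf{n}_\Sigma)^{-2}\Xi_\mathbf{v}[F_\mathbf{v}^\prime[\varphi,u]]-\Omega_{F_\mathbf{v}^\prime[\varphi,u]}^{-1}\,\Pi_\mathbf{v}\bigl(\hat{\Xi}_\mathbf{v}[F_\mathbf{v}^\prime[\varphi,u]]\bigr).
\]
I would handle the two summands by different mechanisms: the first is weight-free thanks to the cancellation of $\Omega_{F_\mathbf{v}^\prime}$, while the second exploits the maximal exponential decay of $\mathcal{K}_\mathbf{v}$.

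First, for $r_0$ small one checks that $F_\mathbf{v}^\prime[\varphi,u]=F_\mathbf{v}[\varphi,0]+u\mathbf{v}$ lies in $\mathcal{ACH}^{k,\alpha}_n(\Sigma)$ with $\mathrm{tr}_\infty^1[F_\mathbf{v}^\prime[\varphi,u]]=\varphi$: $F_\mathbf{v}[\varphi,0]$ already does by \cite{BWBanachManifold}, and $u\mathbf{v}$ is a $\mathcal{D}^{k,\alpha}$-small perturbation with trivial trace at infinity since $u\in\mathcal{D}^{k,\alpha}\subset C^{k-2,\alpha}_{-1}$ decays; the proper embedding property and transversality are preserved by continuity. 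For the first summand in the decomposition, \cite{BWBanachManifold} yields $\Xi_\mathbf{v}[F_\mathbf{v}[\varphi,0]]\in\mathcal{K}_\mathbf{v}$, which by Proposition \ref{DecayEigenfunProp} lies in $W^0_\beta$ for every $\beta<\frac{1}{2}$. Taylor expanding $\Xi_\mathbf{v}$ about $F_\mathbf{v}[\varphi,0]$ in the direction $u\mathbf{v}$ produces a linear-in-$u$ term built from a second-order elliptic operator whose coefficients are close to those of $L_{\Sigma,\mathbf{v}}^\prime$ (and so, by the argument in Corollary \ref{FredholmCor}, maps $\mathcal{D}^{k,\alpha}\cap W^2_\beta$ boundedly into $C^{k-2,\alpha}_{-1}\cap W^0_\beta$), plus a smooth quadratic-and-higher remainder in $u$ estimated via the explicit algebraic form of $\Xi_\mathbf{v}$. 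Hence the first summand lies in $C^{k-2,\alpha}_{-1}\cap W^0_\beta$.

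For the second summand, $\Pi_\mathbf{v}$ projects onto the finite-dimensional space $\mathcal{K}_\mathbf{v}$, and by Proposition \ref{DecayEigenfunProp} every $\kappa\in\mathcal{K}_\mathbf{v}$ satisfies $\|e^{\beta''|\mathbf{x}|^2}\kappa\|_{k,\alpha}<\infty$ for every $\beta''<\frac{1}{2}$. Since $F_\mathbf{v}^\prime-\mathbf{x}$ has linear growth with coefficient $O(r_0)$,
\[
\bigl||F_\mathbf{v}^\prime|^2-|\mathbf{x}|^2\bigr|\le C(r_0)(1+|\mathbf{x}|^2),
\]
with $C(r_0)\to 0$ as $r_0\to 0$, so $\Omega_{F_\mathbf{v}^\prime}^{-1}\le C'e^{C(r_0)|\mathbf{x}|^2/4}$. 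Choosing $\beta''$ sufficiently close to $\frac{1}{2}$ and $r_0$ small enough that $2\beta''>\beta+\tfrac{1}{2}C(r_0)$, the product $\Omega_{F_\mathbf{v}^\prime}^{-1}\Pi_\mathbf{v}(\hat{\Xi}_\mathbf{v}[F_\mathbf{v}^\prime])$ is dominated by $e^{-\gamma|\mathbf{x}|^2}$ for some $\gamma>\beta/2$, placing it in $C^{k-2,\alpha}_{-1}\cap W^0_\beta$. Smoothness of $\hat{\Theta}_\mathbf{v}$ then follows from smoothness of $(\varphi,u)\mapsto F_\mathbf{v}^\prime$, smoothness of $\mathbf{f}\mapsto\Xi_\mathbf{v}[\mathbf{f}]$ and $\mathbf{f}\mapsto\Omega_\mathbf{f}$ in the required Banach topologies (a standard consequence of their explicit dependence on $\mathbf{f}$, already used in \cite{BWBanachManifold}), and smoothness of the finite-rank projection $\Pi_\mathbf{v}$ given by $B_{\Sigma,\mathbf{v}}$-pairing with the exponentially decaying basis of $\mathcal{K}_\mathbf{v}$.

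\textbf{Main obstacle.} A direct attempt to bound $\Omega_{F_\mathbf{v}^\prime}^{-1}\hat{\Xi}_\mathbf{v}[F_\mathbf{v}^\prime]$ in the $W^0_\beta$-norm is defeated by the uncontrolled exponential weight $e^{(|F_\mathbf{v}^\prime|^2-|\mathbf{x}|^2)/4}$ hidden inside $\hat{\Xi}_\mathbf{v}$. The entire proof rests on the splitting above combined with Proposition \ref{DecayEigenfunProp}: cancellation removes the exponential weight from the first summand entirely, while in the second summand the maximal decay rate $e^{-\beta''|\mathbf{x}|^2}$ available from $\mathcal{K}_\mathbf{v}$ strictly dominates the quadratic-exponential residue from $\Omega_{F_\mathbf{v}^\prime}^{-1}$. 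This is precisely the mechanism flagged at the start of the section by which the smooth-dependence result of \cite{BWBanachManifold} is leveraged to tame the fast-growing weight that obstructs a direct construction.
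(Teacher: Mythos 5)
Your proof is correct and follows the same strategy as the paper's: rewrite $\Pi^\perp_\mathbf{v}=\mathrm{Id}-\Pi_\mathbf{v}$ so the identity piece cancels $\Omega_{F_\mathbf{v}^\prime}^{-1}$ against the weight hidden inside $\hat{\Xi}_\mathbf{v}$, subtract off $\Xi_\mathbf{v}[F_\mathbf{v}[\varphi,0]]\in\mathcal{K}_\mathbf{v}$ to handle the resulting weight-free piece, and use Proposition~\ref{DecayEigenfunProp} to show that the near-$e^{-|\mathbf{x}|^2/2}$ decay of $\mathcal{K}_\mathbf{v}$ dominates the at-most-small-Gaussian growth of $\Omega_{F_\mathbf{v}^\prime}^{-1}$ in the two $\mathcal{K}_\mathbf{v}$-valued terms. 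The paper cites Lemmas 7.2--7.3 of \cite{BWBanachManifold} for the smoothness of the difference map whereas you sketch the Taylor expansion directly, but the decomposition and the mechanism for taming the weight are identical.
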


\begin{proof}
First, we write
\begin{align*}
\hat{\Theta}_\mathbf{v}[\varphi,u] & =(\mathbf{v}\cdot\mathbf{n}_\Sigma)^{-2} \left(\Xi_\mathbf{v}[F_\mathbf{v}^\prime[\varphi,u]]-\Xi_\mathbf{v}[F_\mathbf{v}[\varphi,0]]\right)+(\mathbf{v}\cdot\mathbf{n}_\Sigma)^{-2}\Xi_\mathbf{v}[F_\mathbf{v}[\varphi,0]] \\
& \quad -\Omega^{-1}_{F_\mathbf{v}^\prime[\varphi,u]} \Pi_{\mathbf{v}}\circ\hat{\Xi}_\mathbf{v}[F_\mathbf{v}^\prime[\varphi,u]].
\end{align*}
By Proposition \ref{DecayEigenfunProp}, $\mathcal{K}_{\mathbf{v}}\subset\mathcal{D}^{k,\alpha}\cap W^2_\beta(\Sigma)$. By \cite[Lemmas 7.2 and 7.3]{BWBanachManifold}, the map 
$$
(\varphi,u)\mapsto\Xi_\mathbf{v}[F_\mathbf{v}^\prime[\varphi,u]]-\Xi_\mathbf{v}[F_\mathbf{v}[\varphi,0]]
$$
is a smooth map from a neighborhood of $(\mathbf{x}|_{\mathcal{L}(\Sigma)},0)$ in $C^{k,\alpha}(\mathcal{L}(\Sigma))\times (\mathcal{D}^{k,\alpha}\cap W^2_\beta(\Sigma))$ to $C^{k-2,\alpha}_{-1}\cap W^0_\beta(\Sigma)$. Similarly, one shows that for $r_0>0$ sufficiently small the maps
$$
\varphi\mapsto \Xi_\mathbf{v}[F_\mathbf{v}[\varphi,0]] \mbox{ and } (\varphi,u)\mapsto\Pi_{\mathbf{v}}\circ\hat{\Xi}_\mathbf{v}[F_\mathbf{v}^\prime[\varphi,u]]
$$
are both smooth maps into $\mathcal{K}_\mathbf{v}$. Finally, we observe that, in view of Proposition \ref{DecayEigenfunProp}, the operator given by multiplication by $\Omega_{\mathbf{f}}^{-1}$ for $\mathbf{f}\in\mathcal{ACH}^{k,\alpha}_n(\Sigma)$ with $\mathrm{tr}^1_\infty[\mathbf{f}]$ sufficiently close to $\mathbf{x}|_{\mathcal{L}(\Sigma)}$ is a smooth map from $\mathcal{K}_\mathbf{v}$ to $C^{k-2,\alpha}_{-1}\cap W^0_\beta(\Sigma)$, and that the operator given by multiplication by $(\mathbf{v}\cdot\mathbf{n}_\Sigma)^{-2}$ is an isomorphism of $C^{k-2,\alpha}_{-1}\cap W^0_\beta(\Sigma)$. Therefore we have shown $\hat{\Theta}_\mathbf{v}$ is a smooth map.
\end{proof}

We are now ready to prove the main result of this section.

\begin{proof}[Proof of Theorem \ref{ModifiedSmoothDependThm}]
First, Corollary \ref{FredholmCor} gives $\dim \mathcal{K}_\mathbf{v}<\infty$, and Proposition \ref{DecayEigenfunProp} gives $\mathcal{K}_\mathbf{v}\subset\mathcal{D}^{k,\alpha}\cap W^2_{\frac{3}{8}}(\Sigma)$. Let
\begin{align*}
\hat{\mathcal{K}}_{\mathbf{v}}^\perp & =\left\{f\in C^{k-2,\alpha}_{-1}\cap W^0_{\frac{3}{8}} (\Sigma)\colon B_{\Sigma, \mathbf{v}}[f, \kappa]=0 \mbox{ for all $\kappa\in\mathcal{K}_{\mathbf{v}}$}\right\}, \mbox{ and}, \\
\hat{\mathcal{K}}_{\mathbf{v},*}^\perp & =\left\{f\in\mathcal{D}^{k,\alpha}\cap W^2_{\frac{3}{8}}(\Sigma)\colon B_{\Sigma, \mathbf{v}}[f, \kappa]=0 \mbox{ for all $\kappa\in\mathcal{K}_{\mathbf{v}}$}\right\}.
\end{align*}
We consider the map
$$
\hat{\Theta}_{\mathbf{v}}^\prime\colon \mathcal{B}_{r_0} (\mathbf{x}|_{\mathcal{L}(\Sigma)}; C^{k,\alpha}(\mathcal{L}(\Sigma); \mathbb{R}^{n+1}))\times\mathcal{B}_{\frac{r_0}{2}}(0; \mathcal{K}_{\mathbf{v}})\times\mathcal{B}_{\frac{r_0}{2}} (0; \hat{\mathcal{K}}^\perp_{\mathbf{v},*}) \to \hat{\mathcal{K}}^\perp_{\mathbf{v}}
$$
defined by 
$$
\hat{\Theta}_{\mathbf{v}}^\prime [\varphi,\kappa,u]=\hat{\Theta}_\mathbf{v}[\varphi,\kappa+u].
$$
By Lemma \ref{SmoothMapLem}, this is a well-defined smooth map.  
 
By \eqref{2ndVarEqn}, one sees that $D_3\hat{\Theta}_{\mathbf{v}}^\prime (\mathbf{x}|_{\mathcal{L}(\Sigma)},0,0)$ is given by $L_{\Sigma,\mathbf{v}}'$ restricted to $\hat{\mathcal{K}}^\perp_{\mathbf{v},*}$. Thus, it follows from Corollary \ref{FredholmCor} and Lemma \ref{CokerLem} that $D_3\hat{\Theta}_{\mathbf{v}}^\prime (\mathbf{x}|_{\mathcal{L}(\Sigma)},0,0)$ is an isomorphism between $\hat{\mathcal{K}}^\perp_{\mathbf{v},*}$ and $\hat{\mathcal{K}}^\perp_{\mathbf{v}}$. Hence, by the implicit function theorem, there is a neighborhood $\mathcal{U}=\mathcal{U}_1\times\mathcal{U}_2$ of $(\mathbf{x}|_{\mathcal{L}(\Sigma)},0)$ in $C^{k,\alpha}(\mathcal{L}(\Sigma); \mathbb{R}^{n+1})\times\mathcal{K}_{\mathbf{v}}$ and a {smooth} map $\hat{F}_{\mathbf{v},*}\colon \mathcal{U}\to\hat{\mathcal{K}}^\perp_{\mathbf{v},*}$ that gives the unique solution in a neighborhood of $0\in\hat{\mathcal{K}}_{\mathbf{v},*}^\perp$ to
$$
\hat{\Theta}_{\mathbf{v}}^\prime[\varphi,\kappa,\hat{F}_{\mathbf{v},*}[\varphi,\kappa]]=0.
$$

Now let 
\begin{align*}
\hat{F}_{\mathbf{v}}[\varphi,\kappa] & =F_{\mathbf{v}}[\varphi, 0]+(\kappa+\hat{F}_{\mathbf{v},*}[\varphi,\kappa])\mathbf{v}, \mbox{ and}, \\
\hat{G}_{\mathbf{v}}[\varphi,\kappa] & =\Pi_{\mathbf{v}}\circ\hat{\Xi}_{ \mathbf{v}}[\hat{F}_{\mathbf{v}}[\varphi, \kappa]].
\end{align*}
 Items \eqref{TraceFItem}, \eqref{IdentityItem}, \eqref{EStationaryItem} and \eqref{ExpanderMeanCurvatureItem} follow directly from the definitions of $\hat{F}_{\mathbf{v}}$ and $\hat{G}_{\mathbf{v}}$. Furthermore, Items  \eqref{FDiffItem} and \eqref{SubmanifoldItem} follow the exact same proofs as in \cite[Theorem 7.1]{BWBanachManifold}; for readers' convenience we include them below.

To establish Item \eqref{FDiffItem} it suffices to show $D_2\hat{F}_{\mathbf{v},*}(\mathbf{x}|_{\mathcal{L}(\Sigma)},0)=0$. Take any $\kappa\in\mathcal{K}_\mathbf{v}$. Observe that for $|s|<\epsilon$,
\begin{equation}  \label{ProjExpanderMCEqn}
\Pi^\perp_\mathbf{v}\circ\hat{\Xi}_\mathbf{v}[\mathbf{x}|_\Sigma+(s\kappa+\hat{F}_{\mathbf{v},*}[\mathbf{x}|_{\mathcal{L}(\Sigma)},s\kappa])\mathbf{v}]=0.
\end{equation}
Differentiating \eqref{ProjExpanderMCEqn} at $s=0$, we use \eqref{2ndVarEqn} and the chain rule to get
$$
\Pi^\perp_\mathbf{v}\circ L_{\Sigma,\mathbf{v}}^\prime\circ D_2\hat{F}_{\mathbf{v},*}(\mathbf{x}|_{\mathcal{L}(\Sigma)},0)\kappa=0.
$$
It follows from Lemma \ref{CokerLem} and the definition of $\hat{F}_{\mathbf{v},*}$ that 
$$
D_2\hat{F}_{\mathbf{v},*}(\mathbf{x}|_{\mathcal{L}(\Sigma)},0)\kappa\in\mathcal{K}_\mathbf{v}\cap\mathcal{K}_{\mathbf{v},*}^\perp=\set{0},
$$
proving the claim.

Next, Items \eqref{FDiffItem} and \eqref{ExpanderMeanCurvatureItem} together with \eqref{2ndVarEqn} imply that
$$
D_2\hat{G}_\mathbf{v}(\mathbf{x}|_{\mathcal{L}(\Sigma)},0)\kappa=L^\prime_{\Sigma,\mathbf{v}}[\mathbf{v}\cdot D_2\hat{F}_{\mathbf{v}}(\mathbf{x}|_{\mathcal{L}(\Sigma)},0)\kappa]=L^\prime_{\Sigma,\mathbf{v}}\kappa=0.
$$
Similarly, one uses the more general second variation formula, \cite[Proposition 4.2]{BWBanachManifold}, to compute that, for $\zeta\in C^{k,\alpha}(\mathcal{L}(\Sigma))$,
$$
D_1\hat{G}_\mathbf{v}(\mathbf{x}|_{\mathcal{L}(\Sigma)},0)[\zeta\mathbf{V}]=(\mathbf{v}\cdot\mathbf{n}_\Sigma)^{-1}L_{\Sigma}[\mathbf{n}_\Sigma\cdot D_1\hat{F}_\mathbf{v}(\mathbf{x}|_{\mathcal{L}(\Sigma)},0)[\zeta\mathbf{V}]].
$$
Thus, arguing as in \cite[Theorem 7.1]{BWBanachManifold}, it follows that $D_1\hat{G}_{\mathbf{v}}(\mathbf{x}|_{\mathcal{L}(\Sigma)},0)$ is of rank equal to $\dim \mathcal{K}_\mathbf{v}$. Hence we have shown Item \eqref{SubmanifoldItem}.
\end{proof}

It is convenient to let
$$
\hat{\mathcal{F}}_{\mathbf{v}}[\varphi, \kappa]=\mathbf{v}\cdot \left(\hat{F}_{\mathbf{v}}[\varphi, \kappa]-\hat{F}_{\mathbf{v}}[\varphi, 0]\right).
$$
With this notation
$$
\hat{F}_{\mathbf{v}}[\varphi, \kappa]=\hat{F}_{\mathbf{v}}[\varphi, 0]+ \hat{\mathcal{F}}_{\mathbf{v}}[\varphi, \kappa] \mathbf{v}.
$$
It follows from the proof of Theorem \ref{ModifiedSmoothDependThm}, that
\begin{equation} \label{OrthogFcalEqn}
\hat{\mathcal{F}}_{\mathbf{v}}[\varphi, \kappa]-\kappa=\hat{F}_{\mathbf{v}, *}[\varphi, \kappa]-\hat{F}_{\mathbf{v},*}[\varphi,0]\in \hat{\mathcal{K}}_{\mathbf{v}, *}^\perp\subset \mathcal{D}^{k, \alpha}\cap W^2_{\frac{3}{8}}(\Sigma).
\end{equation}
Moreover, 
$$
\hat{\Xi}_{\hat{F}_{\mathbf{v}}[\varphi, 0], \mathbf{v}}[\hat{\mathcal{F}}_{\mathbf{v}}[\varphi, \kappa]]=\hat{\Xi}_{\hat{F}_{\mathbf{v}}[\varphi, \kappa], \mathbf{v}}[0]=\hat{G}_{\mathbf{v}}[\varphi, \kappa].
$$
In particular, if $\hat{G}_\mathbf{v}[\varphi,\kappa]=0$, i.e., $\hat{F}_\mathbf{v}[\varphi,\kappa]$ is $E$-stationary, then
\begin{equation}\label{DiffIdentEqn}
\begin{split}
D_2 \hat{G}_{\mathbf{v}}(\varphi,\kappa) & =D\hat{\Xi}_{\hat{F}_{\mathbf{v}}[\varphi, \kappa],\mathbf{v}}(0)\circ D_2 \hat{\mathcal{F}}_{\mathbf{v}}(\varphi,\kappa) \\
& =\Omega_{\hat{F}_{\mathbf{v}}[\varphi, \kappa],\mathbf{v}}L_{\hat{F}_{\mathbf{v}}[\varphi, \kappa], \mathbf{v}}^\prime\circ D_2 \hat{\mathcal{F}}_{\mathbf{v}}(\varphi,\kappa).
\end{split}
\end{equation}

\section{Index and nullity} \label{IndexSec}
We recall the notion of index and nullity for asymptotically conical self-expanders and relate these integers to certain other spectral invariants. As observed in Section \ref{VariationSubsec}, the expander equation \eqref{ExpanderEqn} is an elliptic variational problem and so, as is usual for such problems, we define the \emph{(Morse) index} of an asymptotically conical  self-expander, $\Sigma$, to be
$$
\mathrm{ind}(\Sigma)=\sup \set{\dim V\colon V\subset C^{2}_c(\Sigma) \mbox{ so that } Q_{\Sigma}[u]<0, \forall u\in V\backslash \set{0}}.
$$
The \emph{nullity} of $\Sigma$, $\mathrm{null}(\Sigma)$ is defined to be the dimension of the kernel of $L_\Sigma$,
$$
\mathcal{K}=\set{\kappa\in W^{1}_{\frac{1}{4}}(\Sigma)\colon L_\Sigma\kappa=0}.
$$
Observe, that if $\mathbf{v}\in C^{k,\alpha}_0\cap C^{k}_{0,\mathrm{H}}(\Sigma;\mathbb{R}^{n+1})$ is a transverse section on $\Sigma$ so that $\mathcal{C}[\mathbf{v}]$ is transverse to $\mathcal{C}(\Sigma)$, then $\dim\mathcal{K}_\mathbf{v}=\dim\mathcal{K}$. One readily checks that $B_{\Sigma}$ and $B_{\Sigma, \mathbf{v}}$ extend to $W^0_{\frac{1}{4}}(\Sigma)$, and $D_{\Sigma}$, $D_{\Sigma, \mathbf{v}}$ $Q_{\Sigma}$ and $Q_{\Sigma, \mathbf{v}}$ extend to $W_{\frac{1}{4}}^1(\Sigma)$. 

\begin{lem} \label{SpectrumLem}
Let $\Sigma\in\mathcal{ACH}^{k,\alpha}_n$ be a self-expander, and let $\mathbf{v}\in C^{k,\alpha}_0\cap C^{k}_{0,\mathrm{H}}(\Sigma;\mathbb{R}^{n+1})$ be a transverse section on $\Sigma$ so that $\mathcal{C}[\mathbf{v}]$ is transverse to $\mathcal{C}(\Sigma)$. The operators 
$$
-L_\Sigma, -L_{\Sigma,\mathbf{v}}^\prime \colon W^{2}_{\frac{1}{4}}(\Sigma)\to W^{0}_{\frac{1}{4}}(\Sigma)
$$
are self-adjoint in $\left(W^0_{\frac{1}{4}}(\Sigma),B_\Sigma\right)$ and $\left(W^0_{\frac{1}{4}}(\Sigma),B_{\Sigma,\mathbf{v}}\right)$, respectively, and both have discrete spectrums. 
\end{lem}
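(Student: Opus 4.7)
Both operators are handled in parallel, so I focus on $-L_\Sigma$; the argument for $-L_{\Sigma,\mathbf{v}}^\prime$ runs identically upon replacing $(B_\Sigma, Q_\Sigma)$ by $(B_{\Sigma,\mathbf{v}}, Q_{\Sigma,\mathbf{v}})$ and noting that $\mathbf{v}\cdot\mathbf{n}_\Sigma$ is bounded uniformly away from $0$ and $\infty$. The plan is to realize $-L_\Sigma$ as the Friedrichs extension of the closed, semi-bounded form $Q_\Sigma$ on $W^1_{1/4}(\Sigma)$, identify its operator domain with $W^2_{1/4}(\Sigma)$ via the weighted $L^2$ theory of Section \ref{ModifiedSmoothDependSec}, and deduce discreteness of the spectrum from compactness of the inclusion $W^1_{1/4}(\Sigma) \hookrightarrow W^0_{1/4}(\Sigma)$.

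Symmetry of $Q_\Sigma$ is immediate from its expression in Section \ref{BilinearFormSubsec}. The bound $|A_\Sigma|^2\leq K_\Sigma(1+|\mathbf{x}|^2)^{-1}$ coming from \eqref{AEstimatesEqn}--\eqref{KSigmaDef} gives $Q_\Sigma[u]\geq -C B_\Sigma[u]$ for some $C=C(\Sigma)$; closedness on $W^1_{1/4}(\Sigma)$ follows since $Q_\Sigma+(C+1)B_\Sigma$ is equivalent to the $W^1_{1/4}$ norm. The Friedrichs construction then yields a self-adjoint operator $A$ on $(W^0_{1/4}(\Sigma), B_\Sigma)$ characterized by $B_\Sigma[Au,v]=Q_\Sigma[u,v]$ for $u$ in its operator domain and $v\in W^1_{1/4}(\Sigma)$. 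The inclusion $W^2_{1/4}(\Sigma)\subseteq\mathrm{dom}(A)$ is then verified by computing $L_\Sigma u$ term-by-term (the gradient-drift contribution $\tfrac{1}{2}\mathbf{x}\cdot\nabla_\Sigma u$ is controlled using Proposition \ref{GradControlsL2Prop}) and integrating by parts against $v\in C^2_c(\Sigma)$, which is dense in $W^1_{1/4}$ via a cutoff-mollification argument. For the reverse inclusion, if $u\in\mathrm{dom}(A)$ with $Au=f\in W^0_{1/4}$, then $u$ is a weak solution of $L_\Sigma u=-f$; interior Schauder theory (applicable since $L_\Sigma$ is uniformly elliptic with $C^{k-2,\alpha}_1$ coefficients, as in the proof of Proposition \ref{DecayEigenfunProp}) gives interior regularity, and the weighted $L^2$ estimates underlying Proposition \ref{ModifiedIsoProp} at $\beta=\tfrac{1}{4}$, combined with the compact-perturbation argument of Corollary \ref{FredholmCor}, promote the gradient bound to control of $\nabla_\Sigma^2 u$ in $W^0_{1/4}$, so $u\in W^2_{1/4}$.

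For discreteness of the spectrum, it suffices to show that the inclusion $W^1_{1/4}(\Sigma)\hookrightarrow W^0_{1/4}(\Sigma)$ is compact; then $(A+\mu)^{-1}\colon W^0_{1/4}\to W^0_{1/4}$ is compact for $\mu>0$ large, and the spectral theorem for compact self-adjoint operators applies. Given a bounded sequence $\{u_j\}\subset W^1_{1/4}$, Rellich-Kondrachov on each $\Sigma\cap B_R$ produces a subsequence converging in $L^2$ (unweighted, and hence weighted on compacta) on every compact subset. Uniform smallness of the tail $\int_{\Sigma\setminus B_R}u_j^2 e^{|\mathbf{x}|^2/4}\,d\mathcal{H}^n$ follows by applying the weighted Poincar\'{e} inequality in Proposition \ref{GradControlsL2Prop} on $\Sigma\setminus B_R$ together with the asymptotic $|\mathbf{x}^\top|\sim|\mathbf{x}|$ from \eqref{REstimatesEqn}, which yields a prefactor tending to $0$ as $R\to\infty$; combined with local convergence this gives a Cauchy subsequence in $W^0_{1/4}$.

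The main obstacle is the precise identification of the Friedrichs domain with $W^2_{1/4}(\Sigma)$: the rapidly growing weight $e^{|\mathbf{x}|^2/4}$ places the second-derivative control beyond the reach of standard pointwise Schauder estimates, and the argument must route through the weighted $L^2$ isomorphism for $\mathscr{L}_\Sigma$ from Proposition \ref{ModifiedIsoProp} and absorb the zero-order term $|A_\Sigma|^2-\tfrac{1}{2}$ as a compact perturbation exactly as in Corollary \ref{FredholmCor}. A secondary technical point is the density of $C^2_c(\Sigma)$ in $W^1_{1/4}(\Sigma)$ and in $W^2_{1/4}(\Sigma)$: the derivatives of cutoffs produce error terms integrated against $e^{|\mathbf{x}|^2/4}$ at infinity, which must be controlled using the full $W^2_{1/4}$ norm of the function being approximated, again exploiting Proposition \ref{GradControlsL2Prop}.
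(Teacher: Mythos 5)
Your proposal is correct and follows essentially the same route as the paper: both construct the resolvent $(-L_\Sigma+\gamma)^{-1}$ from the weighted $L^2$ isomorphism of Proposition \ref{ModifiedIsoProp} (you package this as the Friedrichs extension of $Q_\Sigma$, the paper builds the resolvent directly via Lax--Milgram and then reads off self-adjointness from $-L_\Sigma=R_\gamma^{-1}-\gamma$), and both deduce discreteness from compactness of the weighted Sobolev embedding, which rests on the Poincar\'{e} inequality \eqref{PI1eqn}. The only cosmetic difference is that you argue compactness of $W^1_{1/4}\hookrightarrow W^0_{1/4}$ while the paper invokes compactness of $W^2_{1/4}\hookrightarrow W^0_{1/4}$; the former implies the latter and either suffices.
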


\begin{proof}
First we observe that $W^2_{\frac{1}{4}}(\Sigma)$ is dense in $\left(W^0_{\frac{1}{4}}(\Sigma),B_\Sigma\right)$. If we choose $\gamma>0$ so 
$$
\sup_{\Sigma}|A_\Sigma|^2 \leq \gamma,
$$
then the Lax-Milgram theorem and Proposition \ref{ModifiedIsoProp} imply, that given an $f\in W^0_{\frac{1}{4}}(\Sigma)$ there is a unique solution $u\in W^2_{\frac{1}{4}}(\Sigma)$ to $(-L_\Sigma+\gamma)u=f$. Moreover, 
$$
\Vert u\Vert^{W,(\frac{1}{4})}_2 \leq C(\gamma) \Vert f\Vert_{0}^{W,(\frac{1}{4})}.
$$
That is, one can define a bounded operator, $R_\gamma=(-L_\Sigma+\gamma)^{-1}$ in $\left(W^0_{\frac{1}{4}}(\Sigma), B_\Sigma\right)$. Moreover, $R_\gamma$ is self-adjoint because $-L_\Sigma$ is symmetric with respect to $B_\Sigma$. Now one writes $-L_\Sigma=R_\gamma^{-1}-\gamma$. It is easy to check (see, e.g., \cite[pages 107-108]{Grigoryan}) that $R_\gamma^{-1}$ is a self-adjoint operator in $\left(W^0_{\frac{1}{4}}(\Sigma), B_\Sigma\right)$ and so is $-L_\Sigma$. 

To prove the spectrum of $-L_\Sigma$ is discrete, we follow the arguments in \cite[Theorem 10.20]{Grigoryan}. As the natural embedding of $W^2_{\frac{1}{4}}(\Sigma)$ into $W^0_{\frac{1}{4}}(\Sigma)$ is compact by the Poincar\'{e} inequality, \eqref{PI1eqn} of Proposition \ref{GradControlsL2Prop}, it follows that $R_\gamma$ is a compact operator. As the kernel of $R_\gamma$ is trivial, the Hilbert-Schmidt theorem implies that there is a countable orthonormal basis $\set{\psi_i}$ of $\left(W^0_{\frac{1}{4}}(\Sigma), B_\Sigma\right)$ that consists of the eigenfunctions $\psi_i$ of $R_\gamma$ with eigenvalues $\rho_i>0$ so $\rho_i\to 0$. Clearly, the $\psi_i$ are also eigenfunctions of $-L_\Sigma$ with eigenvalues $\mu_i=\rho_i^{-1}-\gamma$, and $\set{\mu_i}$ has no accumulating points and $\mu_i\to\infty$. Observe, that for $\mu\neq\mu_i$, if $f=\sum_{i} a_i \psi_i$, then
$$
(L_\Sigma-\mu)^{-1} f=\sum_{i} \frac{a_i}{\mu_i-\mu}\psi_i
$$
and it is bounded as $\inf_i |\mu_i-\mu|>0$. Hence the entire spectrum of $L_\Sigma$ is given by $\set{\mu_i}$, implying the discreteness of the spectrum of $L_\Sigma$.

As long as one observes that $a_{\Sigma,\mathbf{v}}^\prime$ is uniformly bounded (cf. \cite[(5.21)]{BWBanachManifold}), the proof for $L_{\Sigma,\mathbf{v}}^\prime$ is essentially the same.
\end{proof}

This allows us to relate the index to spectral properties of $L_{\Sigma}$ and $L_{\Sigma,\mathbf{v}}^\prime$. Indeed, 

\begin{prop} \label{IndexProp}
Let $\Sigma\in\mathcal{ACH}^{k,\alpha}_n$ be a self-expander, and let $\mathbf{v}\in C^{k,\alpha}_0\cap C^{k}_{0,\mathrm{H}}(\Sigma;\mathbb{R}^{n+1})$ be a transverse section on $\Sigma$ so that $\mathcal{C}[\mathbf{v}]$ is transverse to $\mathcal{C}(\Sigma)$. Then one has that $\mathrm{ind}(\Sigma)$ is equal to:
\begin{enumerate}
\item The maximum dimension of subspaces of $W^1_\beta(\Sigma)$, $\beta\geq \frac{1}{4}$ on which ${Q}_{\Sigma}$ or ${Q}_{\Sigma,\mathbf{v}}$ is negative definite.
\item The number of negative eigenvalues (counted with multiplicities) of $-L_{\Sigma}$ or $-L_{\Sigma,\mathbf{v}}^\prime$ in $W^1_\beta(\Sigma)$ for any $\beta\in \left(0,\frac{1}{2}\right)$.
\end{enumerate}
\end{prop}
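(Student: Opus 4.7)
The plan is to reduce all of the quantities in (1) and (2) to a single canonical count: the number $N$ of strictly negative eigenvalues of $-L_\Sigma$ on $W^2_{1/4}(\Sigma)$. First I would eliminate the $\mathbf{v}$-dependence. Since $\mathbf{v}\cdot\mathbf{n}_\Sigma$ is uniformly bounded above and bounded below away from $0$ (transversality of $\mathbf{v}$ on $\Sigma$ and of $\mathcal{C}[\mathbf{v}]$ on $\mathcal{C}(\Sigma)$, together with the homogeneous asymptotics of $\mathbf{v}$), multiplication by it is an isomorphism of each $W^1_\beta(\Sigma)$. The identity $Q_{\Sigma,\mathbf{v}}[u,v]=Q_\Sigma[(\mathbf{v}\cdot\mathbf{n}_\Sigma)u,(\mathbf{v}\cdot\mathbf{n}_\Sigma)v]$ from Section \ref{BilinearFormSubsec} then puts negative-definite subspaces for $Q_{\Sigma,\mathbf{v}}$ in dimension-preserving bijection with those for $Q_\Sigma$; likewise $-L'_{\Sigma,\mathbf{v}}$ and $-L_\Sigma$ are conjugate under this multiplication and so have identical negative spectra (with multiplicities). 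Hence I need only work with $Q_\Sigma$ and $L_\Sigma$.

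By Lemma \ref{SpectrumLem}, $-L_\Sigma$ has a discrete spectrum $\mu_1\le\mu_2\le\cdots\to\infty$ on $W^2_{1/4}(\Sigma)$ with a $B_\Sigma$-orthonormal eigenbasis $\{\psi_i\}$. The standard min-max principle for the Friedrichs form $Q_\Sigma$ on $W^1_{1/4}$ identifies $N$ with the maximal dimension of a subspace of $W^1_{1/4}$ on which $Q_\Sigma$ is negative definite, realized by $V_-=\mathrm{span}\{\psi_i:\mu_i<0\}$. To extend (1) to arbitrary $\beta\ge 1/4$: the continuous embedding $W^1_\beta\hookrightarrow W^1_{1/4}$ gives the upper bound, and Proposition \ref{DecayEigenfunProp} places each $\psi_i$ (hence $V_-$) in $W^1_\beta$ for every $\beta<1/2$, giving the matching lower bound. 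For (2), any weak eigenfunction of $L_\Sigma$ in $W^1_\beta$ for some $\beta\in(0,1/2)$ lies in $W^0_\gamma(E_R)$ for any $\gamma<\beta$, so Proposition \ref{DecayEigenfunProp} forces it into $W^2_{1/4}$, and it must therefore be one of the $\psi_i$; the negative eigenvalue count is thus independent of $\beta\in(0,1/2)$ and equals $N$.

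Finally I would prove $\mathrm{ind}(\Sigma)=N$. The upper bound $\mathrm{ind}(\Sigma)\le N$ is immediate from $C^2_c(\Sigma)\subset W^1_{1/4}$ combined with the preceding paragraph. For the lower bound, elliptic regularity gives $\psi_i\in C^2_{\mathrm{loc}}(\Sigma)$, and a standard radial cutoff at scale $R$ produces $\psi_i^R\in C^2_c(\Sigma)$ with $\psi_i^R\to\psi_i$ in $W^1_{1/4}$; the decisive input here is the sharp $e^{-\beta|\mathbf{x}|^2}$ decay (for any $\beta<1/2$) of the $\psi_i$ from Proposition \ref{DecayEigenfunProp}, which dominates the rapidly growing weight $e^{|\mathbf{x}|^2/4}$ appearing in $Q_\Sigma$ and kills both the truncation tails and the gradient-of-cutoff contributions. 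Continuity of $Q_\Sigma$ on $W^1_{1/4}$ (using the uniform bound on $|A_\Sigma|^2$) together with its strict negative definiteness on $V_-$ then implies that $V_-^R=\mathrm{span}\{\psi_1^R,\ldots,\psi_N^R\}$ is $N$-dimensional and $Q_\Sigma$-negative-definite for $R$ large, giving $\mathrm{ind}(\Sigma)\ge N$. This last approximation is the main obstacle: without the sharp decay of Proposition \ref{DecayEigenfunProp}, the quartic exponential weight in $Q_\Sigma$ would prevent any compactly supported truncation from preserving negative definiteness.
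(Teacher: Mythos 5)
Your proposal follows essentially the same route as the paper's proof: reduce to the $\mathbf{v}$-free case via the multiplication isomorphism $M_{\mathbf{v}}$; appeal to Lemma \ref{SpectrumLem} and the min-max principle to identify $\mathrm{ind}(\Sigma)$ with the negative eigenvalue count of $-L_\Sigma$ on $W^1_{1/4}(\Sigma)$; and use Proposition \ref{DecayEigenfunProp} to make the count independent of $\beta$. Two points where the argument as written is not quite right, however.

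First, there is a genuine gap in part (1). You obtain the lower bound $\mathrm{ind}(\Sigma)\le(\text{max dim in }W^1_\beta)$ by showing the negative eigenspace $V_-$ lies in $W^1_\beta$, which Proposition \ref{DecayEigenfunProp} only delivers for $\beta$ strictly below $\tfrac12$ (even with the stronger pointwise decay it gives, the corresponding integral condition caps out below $1$). But the statement asserts the equality for \emph{all} $\beta\ge\tfrac14$, and for $\beta$ large the eigenfunctions need not lie in $W^1_\beta$, so your lower bound breaks. The paper's fix is elementary and bypasses eigenfunctions entirely: $C^\infty_c(\Sigma)\subset W^1_\beta(\Sigma)$ for every $\beta$, and $\mathrm{ind}(\Sigma)$ is by definition a supremum over compactly supported test spaces, so $\mathrm{ind}(\Sigma)\le(\text{max dim in }W^1_\beta)$ holds trivially for all $\beta\ge\tfrac14$; combined with $W^1_\beta\hookrightarrow W^1_{1/4}$ and the min-max identification this gives equality.

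Second, a conceptual misattribution: you assert that the sharp $e^{-\beta|\mathbf{x}|^2}$ decay of the $\psi_i$ is the "decisive input" for the cutoff convergence $\psi_i^R\to\psi_i$ in $W^1_{1/4}$, and that "without the sharp decay... the quartic exponential weight... would prevent any compactly supported truncation from preserving negative definiteness." That is not the case. The $W^1_{1/4}$ norm already carries the weight $e^{|\mathbf{x}|^2/4}$, so a standard radial cutoff converges in $W^1_{1/4}$ for \emph{every} element of $W^1_{1/4}$ — the truncation tails die by dominated convergence against the built-in weight, and the $\psi_i\nabla\psi_R$ term is controlled because $\psi_i\in W^0_{1/4}$ with $|\nabla\psi_R|\lesssim 1/R$. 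This is exactly the general density of $C^2_c(\Sigma)$ in $W^1_{1/4}(\Sigma)$ that the paper invokes for this step, together with the uniform bound on $|A_\Sigma|^2$ to make $Q_\Sigma$ continuous on $W^1_{1/4}$; no appeal to Proposition \ref{DecayEigenfunProp} is needed here. The sharp decay of the eigenfunctions is doing real work elsewhere — in showing the eigenvalue count is the same across $W^1_\beta$ for $\beta\in(0,\tfrac12)$, as in your part (2) — but not in the cutoff approximation.
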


\begin{proof}
As $C^2_c(\Sigma)$ is dense in $W^1_{\frac{1}{4}}(\Sigma)$ and $|A_\Sigma|$ is uniformly bounded, it follows from the dominated convergence theorem that $\mathrm{ind}(\Sigma)$ is equal to the maximum dimension of subspaces of $W^1_{\frac{1}{4}}(\Sigma)$ on which $Q_\Sigma$ is negative definite. Observe, that the map $M_\mathbf{v}$ given by the multiplication by $(\mathbf{v}\cdot\mathbf{n}_\Sigma)^{-1}$ is an isomorphism of $W^1_{\frac{1}{4}}(\Sigma)$ and $Q_{\Sigma,\mathbf{v}}[M_\mathbf{v} u, M_\mathbf{v} v]=Q_{\Sigma}[u,v]$. Thus, $Q_\Sigma$ is negative definite on a subspace $V\subset W^1_{\frac{1}{4}}(\Sigma)$ if and only if $Q_{\Sigma,\mathbf{v}}$ is so on $M_\mathbf{v}(V)$. Hence the first claim follows from the trivial inclusions for $\beta\geq \frac{1}{4}$, $C^\infty_c(\Sigma)\subset W^1_\beta(\Sigma)\subset W^1_{\frac{1}{4}}(\Sigma)$.

To prove the second, we appeal to Lemma \ref{SpectrumLem} and the min-max principle for self-adjoint operators. It follows that $\mathrm{ind}(\Sigma)$ is equal to the number of negative eigenvalues of $-L_\Sigma$ or $-L_{\Sigma,\mathbf{v}}^\prime$ in $W^1_{\frac{1}{4}}(\Sigma)$. Invoking Proposition \ref{DecayEigenfunProp}, we have that eigenfunctions of $-L_\Sigma$ or $-L_{\Sigma,\mathbf{v}}^\prime$ in $W^1_{\frac{1}{4}}(\Sigma)$ are in $W^1_{\beta}(\Sigma)$ for any $\beta\in\left(0,\frac{1}{2}\right)$ and vice versa. This proves the second claim. 
\end{proof}

We extend the definition of index and nullity to parameterizations in a natural way. For $\Sigma\in\mathcal{ACH}^{k,\alpha}_n$, if $\mathbf{f}\in\mathcal{ACH}^{k,\alpha}_n(\Sigma)$ is $E$-stationary, i.e., $\Lambda=\mathbf{f}(\Sigma)$ is a self-expander, then we define 
$$
\mathrm{ind}(\mathbf{f})=\mathrm{ind}(\Lambda) \mbox{ and }\mathrm{null}(\mathbf{f})=\mathrm{null}(\Lambda),
$$
and $\mathrm{ind}([\mathbf{f}])$ and $\mathrm{null}([\mathbf{f}])$ are defined likewise. Similarly, we have the following:

\begin{lem} \label{ParaSpectrumLem}
Let $\Sigma\in\mathcal{ACH}^{k,\alpha}_n$ and $\mathbf{v}\in C^{k,\alpha}_0\cap C^{k}_{0,\mathrm{H}}(\Sigma;\mathbb{R}^{n+1})$ a transverse section on $\Sigma$ so that $\mathcal{C}[\mathbf{v}]$ is transverse to $\mathcal{C}(\Sigma)$. If $\mathbf{f}\in\mathcal{ACH}^{k,\alpha}_n(\Sigma)$ is $E$-stationary, and $\mathbf{v}\circ\mathbf{f}^{-1}$ and $\mathcal{C}[\mathbf{v}]\circ\mathcal{C}[\mathbf{f}]^{-1}$ are transverse to, respectively, $\Lambda=\mathbf{f}(\Sigma)$ and $\mathcal{C}(\Lambda)$, then the operators
$$
-L_{\mathbf{f}}, -L_{\mathbf{f},\mathbf{v}}^\prime\colon \mathbf{f}^* W^2_{\frac{1}{4}}(\Lambda)\to \mathbf{f}^* W^0_{\frac{1}{4}}(\Lambda)
$$
are self-adjoint in $\left( \mathbf{f}^* W^0_{\frac{1}{4}}(\Lambda), B_\mathbf{f}\right)$ and $\left( \mathbf{f}^* W^0_{\frac{1}{4}}(\Lambda), B_{\mathbf{f},\mathbf{v}}\right)$, respectively, and both have discrete spectrums.
\end{lem}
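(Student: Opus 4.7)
The plan is to reduce to Lemma \ref{SpectrumLem} by pulling back through $\mathbf{f}$. Let $\Lambda=\mathbf{f}(\Sigma)$ and $\mathbf{w}=\mathbf{v}\circ\mathbf{f}^{-1}$. Since $\mathbf{f}\in\mathcal{ACH}^{k,\alpha}_n(\Sigma)$ and $\mathbf{f}(\Sigma)$ is $E$-stationary, $\Lambda$ is an asymptotically conical self-expander, i.e., $\Lambda\in\mathcal{ACH}^{k,\alpha}_n$; the asymptotic cone $\mathcal{C}(\Lambda)$ is parameterized by $\mathcal{C}[\mathbf{f}]$. The hypotheses on $\mathbf{v}$ and the transversality assumptions on $\mathbf{v}\circ\mathbf{f}^{-1}$ and $\mathcal{C}[\mathbf{v}]\circ\mathcal{C}[\mathbf{f}]^{-1}$ ensure that $\mathbf{w}\in C^{k,\alpha}_0\cap C^{k}_{0,\mathrm{H}}(\Lambda;\mathbb{R}^{n+1})$ is a transverse section on $\Lambda$ with $\mathcal{C}[\mathbf{w}]$ transverse to $\mathcal{C}(\Lambda)$. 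Thus Lemma \ref{SpectrumLem} applies on $\Lambda$ and $-L_\Lambda$ and $-L^\prime_{\Lambda,\mathbf{w}}$ are self-adjoint with discrete spectra on $\left(W^0_{\frac{1}{4}}(\Lambda),B_\Lambda\right)$ and $\left(W^0_{\frac{1}{4}}(\Lambda),B_{\Lambda,\mathbf{w}}\right)$ respectively.

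Next I would observe that the pullback $T_\mathbf{f}\colon u\mapsto u\circ\mathbf{f}$ is, by the very definitions in Section \ref{BilinearFormSubsec}, an isometry
\[
T_\mathbf{f}\colon \left(W^0_{\frac{1}{4}}(\Lambda),B_\Lambda\right)\to \left(\mathbf{f}^*W^0_{\frac{1}{4}}(\Lambda),B_\mathbf{f}\right),
\qquad
T_\mathbf{f}\colon \left(W^0_{\frac{1}{4}}(\Lambda),B_{\Lambda,\mathbf{w}}\right)\to \left(\mathbf{f}^*W^0_{\frac{1}{4}}(\Lambda),B_{\mathbf{f},\mathbf{v}}\right),
\]
which restricts to an isomorphism on the $W^2_{\frac{1}{4}}$-scale. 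Moreover $T_\mathbf{f}$ conjugates $L_\Lambda$ with $L_\mathbf{f}$ and $L^\prime_{\Lambda,\mathbf{w}}$ with $L^\prime_{\mathbf{f},\mathbf{v}}$ by definition. Hence self-adjointness and discreteness of the spectrum transfer directly.

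To verify that $T_\mathbf{f}$ is indeed an isometry on the $W^0_{\frac{1}{4}}$ and $W^2_{\frac{1}{4}}$ spaces, I would use that $\mathbf{f}\in\mathcal{ACH}^{k,\alpha}_n(\Sigma)$ implies $\mathbf{f}$ is asymptotically homogeneous of degree one, so $\Omega_\mathbf{f}$, the components of $g_\mathbf{f}^{-1}$ in a natural frame, and the ratios $(\mathbf{v}\cdot(\mathbf{n}_\Lambda\circ\mathbf{f}))/(\mathbf{v}\cdot\mathbf{n}_\Sigma)$ are uniformly bounded above and below, while the expander equation combined with asymptotic conicity gives $|\mathbf{f}|^2-|\mathbf{x}|^2$ is uniformly bounded on $\Sigma$. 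Consequently the weights $e^{|\mathbf{f}|^2/4}$ and $e^{|\mathbf{x}|^2/4}$ are comparable, and the change-of-variables identity $B_\Lambda[u\circ\mathbf{f}^{-1},v\circ\mathbf{f}^{-1}]=B_\mathbf{f}[u,v]$ (and likewise for $B_{\Lambda,\mathbf{w}}$) gives the asserted isometry; the analogous statement on the $W^2$-level follows from the $C^{k,\alpha}$ regularity of $\mathbf{f}$ and $\mathbf{f}^{-1}$.

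The main (mild) obstacle is verifying the comparability of the weighted Sobolev norms under pullback by $\mathbf{f}$ — equivalently, that the weight $e^{(|\mathbf{f}|^2-|\mathbf{x}|^2)/4}\Omega_\mathbf{f}$ is bounded and bounded below. Once that book-keeping is in place, conjugation by $T_\mathbf{f}$ transports the conclusions of Lemma \ref{SpectrumLem} to $L_\mathbf{f}$ and $L^\prime_{\mathbf{f},\mathbf{v}}$, completing the proof.
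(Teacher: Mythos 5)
Your overall strategy is the right one and presumably what the authors intend (they state the lemma without proof): transport the conclusions of Lemma \ref{SpectrumLem} on $\Lambda=\mathbf{f}(\Sigma)$ back to $\Sigma$ via the conjugation $T_{\mathbf f}\colon u\mapsto u\circ\mathbf f$, using that $L_{\mathbf f}=T_{\mathbf f}L_\Lambda T_{\mathbf f}^{-1}$ and $B_{\mathbf f}[u,v]=B_\Lambda[T_{\mathbf f}^{-1}u,T_{\mathbf f}^{-1}v]$ by the definitions in Section \ref{BilinearFormSubsec}. Since the lemma is phrased in the pullback spaces $\mathbf f^*W^l_{1/4}(\Lambda)$ and the pullback inner product $B_{\mathbf f}$, the map $T_{\mathbf f}$ is a unitary isomorphism \emph{tautologically}: $u\in\mathbf f^*W^0_{1/4}(\Lambda)$ means precisely $u\circ\mathbf f^{-1}\in W^0_{1/4}(\Lambda)$, and $B_{\mathbf f}$ is defined as the pullback of $B_\Lambda$. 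Conjugation of a self-adjoint operator with discrete spectrum by a unitary map preserves both properties, so the lemma follows with no further estimates.

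That said, the final paragraph contains a factual error: neither $\Omega_{\mathbf f}$ nor $|\mathbf f|^2-|\mathbf x|^2$ is uniformly bounded on $\Sigma$ in general. The asymptotic cones $\mathcal C(\Sigma)$ and $\mathcal C(\Lambda)$ can be different (and will be, unless $\mathrm{tr}^1_\infty[\mathbf f]$ has unit norm on $\mathcal L(\Sigma)$), in which case $|\mathbf f(p)|^2-|\mathbf x(p)|^2$ grows like $c\,|\mathbf x(p)|^2$ and $\Omega_{\mathbf f}$ blows up or decays exponentially in $|\mathbf x|^2$. This is exactly the ``fast growth of the weight'' difficulty that the paper emphasizes in the introduction and in Section \ref{StableFormSec}. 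Fortunately your claim is harmless here because it is unnecessary: you are trying to \emph{verify} that $T_{\mathbf f}$ is an isometry, but no verification is needed — the isometry is built into the definitions of $\mathbf f^*W^l_{1/4}(\Lambda)$ and $B_{\mathbf f}$. Were the lemma instead stated in terms of $W^l_{1/4}(\Sigma)$ (with weight $e^{|\mathbf x|^2/4}$ on $\Sigma$), then a genuine comparability issue would arise, and the bound you assert would be both needed and false; the authors' choice to work in the pullback spaces sidesteps this. So: keep the conjugation argument, delete the claimed bounds on $\Omega_{\mathbf f}$ and $|\mathbf f|^2-|\mathbf x|^2$, and just cite the definitions of $\mathbf f^*W^l_{1/4}(\Lambda)$ and $B_{\mathbf f}$.
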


\begin{prop} \label{ParaIndexProp}
Let $\Sigma\in\mathcal{ACH}^{k,\alpha}_n$ and $\mathbf{v}\in C^{k,\alpha}_0\cap C^{k}_{0,\mathrm{H}}(\Sigma;\mathbb{R}^{n+1})$ a transverse section on $\Sigma$ so that $\mathcal{C}[\mathbf{v}]$ is transverse to $\mathcal{C}(\Sigma)$. There is an $\epsilon_0>0$ sufficiently small, depending only on $K_\Sigma$ and $\inf_\Sigma|\mathbf{v}\cdot\mathbf{n}_\Sigma|$, so that if $\mathbf{f}\in \mathcal{B}_{\epsilon_0}(\mathbf{x}|_\Sigma; \mathcal{ACH}^{k,\alpha}_n(\Sigma))$ and is $E$-stationary, then $\mathrm{ind}(\mathbf{f})$ is equal to
\begin{enumerate}
\item The maximum dimension of subspaces of $C^2_c(\Sigma)$ on which ${Q}_{\mathbf{f}}$ is negative definite.
\item The maximum dimension of subspaces of $W^1_{\beta}(\Sigma)$, $\beta\geq \frac{3}{8}$ on which ${Q}_{\mathbf{f}}$ or ${Q}_{\mathbf{f},\mathbf{v}}$ is negative definite.
\item The number of negative eigenvalues (counted with multiplicities) of $-L_{\mathbf{f}}$ or $-L_{\mathbf{f},\mathbf{v}}^\prime$ in $W^1_{\beta}(\Sigma)$ for any $\beta\in \left[\frac{1}{8}, \frac{3}{8}\right]$.
\end{enumerate}
\end{prop}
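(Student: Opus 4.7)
The plan is to reduce everything to Proposition \ref{IndexProp} applied to the asymptotically conical self-expander $\Lambda=\mathbf{f}(\Sigma)$. Set $\mathbf{w}=\mathbf{v}\circ\mathbf{f}^{-1}$. By the very definitions in Section \ref{BilinearFormSubsec}, the bilinear forms $Q_\mathbf{f}, Q_{\mathbf{f},\mathbf{v}}$ and the operators $L_\mathbf{f},L_{\mathbf{f},\mathbf{v}}^\prime$ are pullbacks (via $\mathbf{f}$) of $Q_\Lambda,Q_{\Lambda,\mathbf{w}}$ and $L_\Lambda,L_{\Lambda,\mathbf{w}}^\prime$, while $\mathrm{ind}(\mathbf{f})=\mathrm{ind}(\Lambda)$ by definition. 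So the task is to translate each of the characterizations of $\mathrm{ind}(\Lambda)$ supplied by Proposition \ref{IndexProp} on $\Lambda$ into the corresponding characterization on $\Sigma$.

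First I would check that shrinking $\epsilon_0$ (in a manner depending on $\inf_\Sigma|\mathbf{v}\cdot\mathbf{n}_\Sigma|$ and $K_\Sigma$) forces $\mathbf{w}$ to be transverse to $\Lambda$ and $\mathcal{C}[\mathbf{w}]$ to $\mathcal{C}(\Lambda)$, using that $\mathbf{f}\to\mathbf{x}|_\Sigma$ yields convergence of $\mathbf{n}_\Lambda\circ\mathbf{f}\to\mathbf{n}_\Sigma$, so Propositions \ref{DecayEigenfunProp} and \ref{IndexProp} and Lemma \ref{SpectrumLem} apply to the pair $(\Lambda,\mathbf{w})$. The key comparison is this: since $\mathbf{f}$ is $\epsilon_0$-close to $\mathbf{x}|_\Sigma$ in $\mathcal{ACH}^{k,\alpha}_n(\Sigma)$, a direct computation gives a pointwise bound $(1-\eta)|\mathbf{x}|^2-\eta\leq|\mathbf{f}|^2\leq (1+\eta)|\mathbf{x}|^2+\eta$ on $\Sigma$ with $\eta=\eta(\epsilon_0)\to 0$ as $\epsilon_0\to 0$. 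Combined with two-sided $C^0$-bounds on the Jacobian $\Omega_\mathbf{f}$ and on $g_\mathbf{f}/g_\Sigma$ near $1$, change of variables produces, for any $\beta>0$, the two-sided inclusion
$$
W^1_{\beta(1+\eta)}(\Sigma)\subset \mathbf{f}^* W^1_\beta(\Lambda)\subset W^1_{\beta(1-\eta)}(\Sigma),
$$
with the analogous statement for $W^0$ and $W^2$ spaces and for the $\mathbf{v}$-weighted versions after composing with the isomorphism given by multiplication by $(\mathbf{v}\cdot\mathbf{n}_\Sigma)^{-1}$, exactly as in the proof of Proposition \ref{IndexProp}.

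Item (1) is then immediate: $u\mapsto u\circ\mathbf{f}^{-1}$ is a bijection of $C^2_c(\Sigma)$ with $C^2_c(\Lambda)$ under which $Q_\mathbf{f}$ and $Q_\Lambda$ coincide, so the maximum negative-definite dimension for $Q_\mathbf{f}$ on $C^2_c(\Sigma)$ equals $\mathrm{ind}(\Lambda)$ by definition. For item (2), choose $\epsilon_0$ so small that $\beta\geq 3/8$ implies $\beta(1\pm\eta)\geq 1/4$; Proposition \ref{IndexProp} on $\Lambda$ guarantees that the maximum negative-definite dimension of $Q_\Lambda$ in $W^1_\gamma(\Lambda)$ is $\mathrm{ind}(\Lambda)$ for every such $\gamma$, and the inclusion above sandwiches the max negdef dimension of $Q_\mathbf{f}$ on $W^1_\beta(\Sigma)$ between two copies of $\mathrm{ind}(\Lambda)$; the same argument applied to $Q_{\mathbf{f},\mathbf{v}}$ via $M_\mathbf{v}$ completes (2). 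For item (3), Lemma \ref{ParaSpectrumLem} provides discreteness of the spectra of $-L_\mathbf{f}$ and $-L_{\mathbf{f},\mathbf{v}}^\prime$, while Proposition \ref{DecayEigenfunProp} applied to $\Lambda$ forces every eigenfunction in $W^1_{1/4}(\Lambda)$ to lie in $W^1_\sigma(\Lambda)$ for all $\sigma<1/2$; pulled back via $\mathbf{f}$, such an eigenfunction lies in $W^1_\beta(\Sigma)$ for every $\beta\in[1/8,3/8]$ once $\epsilon_0$ is small, so the negative-eigenvalue count in any such $W^1_\beta(\Sigma)$ is the same and, by the min-max principle combined with item (2), equals $\mathrm{ind}(\Lambda)$.

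The main obstacle is the weight comparison: because the weight $e^{|\mathbf{x}|^2/4}$ grows Gaussianly, even $C^0_1$-small perturbations of $|\mathbf{f}|$ from $|\mathbf{x}|$ produce multiplicative distortions of order $e^{\eta|\mathbf{x}|^2}$ in the weight, and these cannot be absorbed without slack. This is exactly why the allowed ranges $\beta\geq 3/8$ in (2) and $\beta\in[1/8,3/8]$ in (3) lie strictly inside the ranges $\gamma\geq 1/4$ and $\gamma\in(0,1/2)$ supplied by Proposition \ref{IndexProp}, and why $\epsilon_0$ must be taken small in terms of $K_\Sigma$ and $\inf_\Sigma|\mathbf{v}\cdot\mathbf{n}_\Sigma|$ rather than purely universally.
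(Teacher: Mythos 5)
The paper states Proposition \ref{ParaIndexProp} without proof (following Lemma \ref{ParaSpectrumLem}, which is likewise only asserted to hold ``similarly''), so there is no explicit argument to compare against; your reduction --- applying Propositions \ref{IndexProp} and \ref{DecayEigenfunProp} to the self-expander $\Lambda=\mathbf{f}(\Sigma)$ with transverse section $\mathbf{w}=\mathbf{v}\circ\mathbf{f}^{-1}$, and then using the pointwise comparison $|\mathbf{f}|^2=(1\pm\eta)|\mathbf{x}|^2\pm\eta$ to sandwich $W^1_\beta(\Sigma)$ between pullbacks of $W^1_\gamma(\Lambda)$ for $\gamma$ in the ranges that \ref{IndexProp} covers --- is exactly what the paper's ``similarly'' points to, and it is correct, including your explanation of why the $\beta$-ranges shrink. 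One wording slip worth fixing: $\Omega_{\mathbf{f}}$ as defined in the paper carries the factor $e^{(|\mathbf{f}|^2-|\mathbf{x}|^2)/4}$ and is therefore not $C^0$-bounded (cf.\ the estimate $C^{-1}e^{-|\mathbf{x}|^2/32}\leq\Omega_{\mathbf{f}}\leq Ce^{|\mathbf{x}|^2/32}$ used in Lemma \ref{EigenEst}); what you actually want two-sided $C^0$-bounds on is the area Jacobian $\sqrt{\det g_{\mathbf{f}}/\det g_\Sigma}$, with the exponential distortion absorbed into the shift $\beta\mapsto\beta(1\pm\eta)$, which is in fact what the rest of your argument does.
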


\section{Concentration estimates for eigenfunctions and stability of certain quadratic forms}  \label{ConcentrationSec}
The main goal of this section is to show that, for eigenfunctions of $-L_\Sigma$ with small eigenvalue, one can prove effective estimates on concentration of such eigenfunctions in a compact ball.  Here $\Sigma$ is an asymptotically conical self-expander as in Section \ref{DecayEigenfunSubsec} and $R_0=R_0(\Sigma)$ is chosen so \eqref{REstimatesEqn} and \eqref{AEstimatesEqn} hold.  We also let $K_\Sigma$ be given by \eqref{KSigmaDef}. The estimates will depend only on $K_\Sigma, n$ and the measure of smallness of the eigenvalue. This will be crucial to showing that the quadratic forms from Section \ref{NotationSec}, are stable for perturbations of $\Sigma$, even those that change the asymptotic cone, when one entry is such an eigenfunction.

Let $S_\rho=\Sigma\cap\partial B_\rho$ and
$$
L(\rho)=\int_{S_{\rho}} u^2 |\nabla_\Sigma r| \, d\mathcal{H}^{n-1},
$$
and for $\gamma\in \Real$,
$$
L_\gamma(\rho)=e^{\gamma \rho^2}L(\rho).  
$$
Notice that, by the co-area formula, when $\rho\geq R_0$, 
\begin{equation}\label{CoAreaEqn}
 \frac{1}{2}\int_{\bar{E}_\rho} u^2   e^{\gamma r^2} d\mathcal{H}^{n} \leq \int_{\bar{E}_\rho} u^2 e^{\gamma r^2} |\nabla_\Sigma r|^2 d\mathcal{H}^{n}=\int_\rho^\infty L_\gamma(t) dt \leq  \int_{\bar{E}_\rho} u^2 e^{\gamma r^2} d\mathcal{H}^{n}.
\end{equation}

\begin{lem}\label{ConcentrationLem}
Let $\Sigma\in\mathcal{ACH}^{k,\alpha}_n$ be a self-expander. Fix an $\epsilon>0$ and $\mu\leq \frac{1}{4}$. If $u\in W^1_{\frac{1}{4}}(\Sigma)$ satisfies 
$$
-L_{\Sigma} u=\mu u,
$$
then there is an $R_1=R_1(K_\Sigma, n, \epsilon)\geq R_0$ so that 
$$
\int_{\Sigma\backslash B_{R_1}} \left(|\nabla_\Sigma u|^2+u^2\right) e^{\frac{3}{8} |\mathbf{x}|^2} \, d\mathcal{H}^n\leq \epsilon \int_{\Sigma} u^2  e^{\frac{|\mathbf{x}|^2}{4}} \, d\mathcal{H}^n.
$$
\end{lem}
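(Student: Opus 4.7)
The goal is an \emph{effective} Gaussian decay estimate for eigenfunctions of $-L_\Sigma$ at eigenvalue $\mu\le 1/4$, with the radius $R_1$ depending only on the geometric quantity $K_\Sigma$, the dimension $n$, and the tolerance $\epsilon$. The plan is to normalize $\Vert u\Vert^{W,(1/4)}_{0;\Sigma}=1$ and prove the two inequalities in the conclusion separately: first the $u^2$-bound via an Agmon-type argument tailored to the end, then the $|\nabla_\Sigma u|^2$-bound via a Caccioppoli estimate.

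First I would derive an effective pointwise Gaussian bound
$$
|u(p)|^2\le C(K_\Sigma,n)\,e^{-|\mathbf{x}(p)|^2/2}\Vert u\Vert^{W,(1/4)}_{0;\Sigma}\qquad \text{for }p\in E_{R_0},
$$
using only the weakly conical structure of $E_{R_0}$. The idea is Agmon: given a weight $\phi(r)=e^{\tau(r)}$ with $\tau$ to be chosen, multiply $-L_\Sigma u=\mu u$ by $u\phi^{2}e^{|\mathbf{x}|^2/4}$ and integrate by parts (justified by Proposition~\ref{DecayEigenfunProp}). The quadratic form $Q_\Sigma$ appearing on the right acquires an extra term $\int |\nabla_\Sigma \tau|^2 u^2\phi^2 e^{|\mathbf{x}|^2/4}$ of indefinite sign, but the effective potential at infinity is $\tfrac12-|A_\Sigma|^2\ge \tfrac12-K_\Sigma r^{-2}$ by \eqref{KSigmaDef}. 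Since $\mu\le 1/4$, the gap $\tfrac12-\mu\ge \tfrac14$ yields a coercive inequality of the form
$$
\int_{E_{R_0}} u^2\phi^2 \bigl(\tfrac12-\mu-|A_\Sigma|^2-|\nabla_\Sigma\tau|^2\bigr) e^{|\mathbf{x}|^2/4}\,d\mathcal{H}^n\le (\text{boundary data at }\partial E_{R_0}),
$$
and the critical Agmon weight is $\tau(r)=r^2/4$ (for which $|\nabla_\Sigma\tau|^2=|\mathbf{x}^\top|^2/4\le r^2/4$, using \eqref{REstimatesEqn}); a shrinking factor $(1-\delta)$ keeps the coefficient positive for $r$ large. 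This gives $\int_{E_{R_0}}u^2 e^{|\mathbf{x}|^2/2} e^{|\mathbf{x}|^2/4}\,d\mathcal H^n$ bounded effectively. Combined with local $L^\infty$ estimates for $L_\Sigma u+\mu u=0$ on unit-scale balls (Moser/De~Giorgi, with constants depending only on $K_\Sigma$, $n$, and $\mu$), this upgrades to the pointwise bound.

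Given the pointwise Gaussian bound, the $u^2$ half of the lemma follows immediately:
$$
\int_{\Sigma\setminus B_{R_1}} u^2 e^{3|\mathbf{x}|^2/8}\,d\mathcal H^n
\le C(K_\Sigma,n)\Vert u\Vert^{W,(1/4)}_{0;\Sigma}\!\!\int_{\Sigma\setminus B_{R_1}}\!\! e^{-|\mathbf{x}|^2/8}\,d\mathcal H^n
\le C(K_\Sigma,n)\,R_1^{\,n-1}e^{-R_1^2/8}\,\Vert u\Vert^{W,(1/4)}_{0;\Sigma},
$$
where the last step uses the effective Euclidean-type volume growth $\mathcal H^n(\Sigma\cap B_\rho)\le C(K_\Sigma,n)\rho^n$ on the weakly conical end (a consequence of \eqref{REstimatesEqn}). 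Choosing $R_1$ so the scalar factor is $\le \epsilon$ gives the claim. For the gradient term, I would use the standard Caccioppoli trick: pick a cutoff $\eta$ that is $0$ on $B_{R_1-1}$ and $1$ on $\Sigma\setminus B_{R_1}$, test the equation against $u\eta^2 e^{3|\mathbf{x}|^2/8}$, and integrate by parts using the identity from the proof of Proposition~\ref{ModifiedIsoProp}. The result, after absorbing drift and weight-derivative cross terms via the absorbing inequality together with \eqref{REstimatesEqn}--\eqref{AEstimatesEqn}, is
$$
\int_{\Sigma\setminus B_{R_1}} |\nabla_\Sigma u|^2 e^{3|\mathbf{x}|^2/8}\,d\mathcal H^n \le C(K_\Sigma,n)\!\!\int_{\Sigma\setminus B_{R_1-1}}\!\! (1+|\mathbf{x}|^2)u^2\,e^{3|\mathbf{x}|^2/8}\,d\mathcal H^n,
$$
and the right-hand side is reduced to the $u^2$-case (with a slightly larger shell and weight $e^{3|\mathbf{x}|^2/8+\epsilon' r^2}$ absorbed by the Gaussian decay of Step~1) so that it is $\le \epsilon\Vert u\Vert^{W,(1/4)}_{0;\Sigma}$ for $R_1$ enlarged.

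The main obstacle is the Agmon step: one must choose the weight $\tau$ sharply enough to reach the full Gaussian decay rate $e^{-r^2/4}$ (so that the margin over $3/8$ leaves integrable $e^{-r^2/8}$), while keeping all constants expressible through $K_\Sigma$, $n$ and the universal gap $\tfrac12-\mu\ge \tfrac14$. The weight gradient $|\nabla_\Sigma e^{r^2/4}|=\tfrac12|\mathbf{x}^\top|e^{r^2/4}$ produces a term of size $r^2$ that is exactly borderline with the coercive $(\tfrac12-\mu)$; this is why the hypothesis $\mu\le 1/4$ (strict in the limit) is critical, and why the weight in the conclusion cannot be pushed beyond $3/8$. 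Extra care is needed to route the estimates through the boundary $\partial E_{R_0}$, since the interior of $\Sigma$ is not controlled by $K_\Sigma$ alone; this is handled by bounding the relevant boundary integrals on $\partial B_{R_0}$ by $\Vert u\Vert^{W,(1/4)}_{0;\Sigma}$ via co-area and the trace estimate \eqref{CoAreaEqn}, which is permitted because the final $R_1$ is only required to satisfy $R_1\ge R_0$.
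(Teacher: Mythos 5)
Your Agmon strategy is a genuinely different route from the paper's, which instead derives the monotonicity $\frac{d}{d\rho}\bigl(\rho^{-n}e^{\gamma\rho^2}L(\rho)\bigr)\leq 0$ for $\gamma\leq\tfrac12$ and bootstraps from it via the co-area formula. Unfortunately the central coercivity step of your proposal does not hold as stated. Multiplying $-L_\Sigma u=\mu u$ by $u\phi^2 e^{|\mathbf{x}|^2/4}$, with $\phi=e^\tau$, and integrating by parts yields
$$
\int_\Sigma |\nabla_\Sigma(\phi u)|^2 e^{\frac{|\mathbf{x}|^2}{4}}\,d\mathcal H^n + \int_\Sigma\Bigl(\tfrac12-\mu-|A_\Sigma|^2-|\nabla_\Sigma\tau|^2\Bigr)\phi^2 u^2 e^{\frac{|\mathbf{x}|^2}{4}}\,d\mathcal H^n=0,
$$
and you claim the second coefficient is coercive because $\tfrac12-\mu\geq\tfrac14$. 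But for $\tau=(1-\delta)r^2/4$ one has $|\nabla_\Sigma\tau|^2=\tfrac{(1-\delta)^2}{4}r^2|\nabla_\Sigma r|^2\sim\tfrac{(1-\delta)^2}{4}r^2\to\infty$, so the coefficient tends to $-\infty$; a shrinking factor $(1-\delta)$ multiplies the divergence by a constant but does not keep the coefficient positive for $r$ large. The remark that the $r^2$ term is ``exactly borderline with the coercive $(\tfrac12-\mu)$'' confuses a quadratically growing quantity with a bounded one.

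What is missing — and what actually makes any Agmon argument close here — is the Poincar\'e inequality \eqref{PI1eqn} (Proposition \ref{GradControlsL2Prop} at $\beta=\tfrac14$): the positive term satisfies
$$
\int_\Sigma|\nabla_\Sigma(\phi u)|^2 e^{\frac{|\mathbf{x}|^2}{4}}\,d\mathcal H^n\geq\tfrac1{16}\int_\Sigma\bigl(4n+|\mathbf{x}|^2\bigr)\phi^2u^2 e^{\frac{|\mathbf{x}|^2}{4}}\,d\mathcal H^n,
$$
and only this quadratic-in-$r$ gain can absorb $|\nabla_\Sigma\tau|^2$. Once a cutoff in the transition region is included, the absorption requires $4\beta^2 r^2|\nabla_\Sigma r|^2$ to sit below a fixed fraction of $r^2/16$, which (using $|\nabla_\Sigma r|\leq\tfrac54$ from \eqref{REstimatesEqn}) forces $\beta$ below roughly $\tfrac1{10}$ for $\tau=\beta r^2$. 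Consequently your intermediate pointwise target $|u|^2\lesssim e^{-|\mathbf{x}|^2/2}$ with a constant depending only on $K_\Sigma,n$ — which would require $\beta=\tfrac14$ — is out of reach by this method, and the Caccioppoli step that relies on it inherits the same problem. The lemma itself survives because it only needs $\int u^2 e^{2\beta r^2}e^{|\mathbf{x}|^2/4}$ controlled for some $\beta>\tfrac1{16}$ (since $e^{3r^2/8}=e^{r^2/8}\,e^{r^2/4}$, and any margin $2\beta-\tfrac18>0$ yields the small prefactor), and the gradient part then follows directly from $\int|\nabla_\Sigma(\phi u)|^2 e^{|\mathbf{x}|^2/4}<\infty$ rather than from a Caccioppoli estimate. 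So the Agmon route can be salvaged by inserting the Poincar\'e step and replacing $\tau=(1-\delta)r^2/4$ by $\tau=\beta r^2$ with $\beta\in(\tfrac1{16},\tfrac1{10})$, dropping the pointwise Gaussian bound; as written, however, the proof does not go through.
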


\begin{proof}
First observe that, by Proposition \ref{DecayEigenfunProp}, $u\in W^2_{\beta}(\Sigma)$ for all $\beta\in \left(0, \frac{1}{2}\right)$. This fact, with $\beta=\frac{7}{16}$, will justify several integrations by parts. The first variation formula, \eqref{REstimatesEqn} and \eqref{AEstimatesEqn} give, for $\rho\geq R_0$ and $\gamma\leq \frac{1}{2}$, that
\begin{align*}
{L}'_\gamma(\rho)&\leq 2 e^{\gamma \rho^2} \int_{S_\rho} u \nabla_\Sigma u\cdot\frac{\nabla_\Sigma r}{|\nabla_\Sigma r|} \, d\mathcal{H}^{n-1}+\frac{n}{\rho} {L}_\gamma(\rho)+2\gamma \rho L_{\gamma}(\rho)  \\
&=-2 e^{\left(\gamma-\frac{1}{4}\right) \rho^2 } \int_{\bar{E}_\rho}\left( |\nabla_\Sigma u|^2  +u\mathscr{L}_\Sigma^0 u\right) e^{\frac{r^2}{4}} \, d\mathcal{H}^{n} +\frac{n}{\rho} {L}_\gamma(\rho)+2\gamma \rho L_{\gamma}(\rho)  \\
&=  -2 e^{\left(\gamma-\frac{1}{4}\right) \rho^2 }\int_{\bar{E}_\rho}\left( |\nabla_\Sigma u|^2  -|A_\Sigma|^2 u^2 +\frac{1}{2} u^2 +u L_\Sigma u\right) e^{\frac{r^2}{4}} \, d\mathcal{H}^{n}\\
& +\frac{n}{\rho} {L}_\gamma(\rho)+2\gamma \rho L_{\gamma}(\rho) \\
&\leq  -2(1-2\gamma) e^{\left(\gamma-\frac{1}{4}\right) \rho^2 }\int_{\bar{E}_\rho} |\nabla_\Sigma u|^2  e^{\frac{r^2}{4}} \, d\mathcal{H}^{n}+\frac{n}{\rho}{L}_\gamma(\rho)\\
&+2 e^{\left(\gamma-\frac{1}{4}\right) \rho^2 }\int_{\bar{E}_\rho} \left( |A_{\Sigma}|^2+\mu -\frac{1}{2}\right) u^2 e^{\frac{r^2}{4}} \, d\mathcal{H}^{n},
\end{align*}
where the  last inequality follows from the Poincar\'{e} inequality, Proposition \ref{GradControlsBoundaryProp}.  As $\gamma\leq \frac{1}{2}$, $\mu\leq \frac{1}{4}$ and $|A_{\Sigma}|^2\leq\frac{1}{4} $ on $E_{R_0}$ this means that, for $\rho\geq R_0$,
\begin{equation}\label{hatLdecay}
\frac{d}{d\rho}\left(\rho^{-n} {L}_\gamma(\rho)\right) \leq -2(1-2\gamma)\rho^{-n} e^{\left(\gamma-\frac{1}{4}\right) \rho^2 }\int_{\bar{E}_\rho} |\nabla_\Sigma u|^2  e^{\frac{r^2}{4}} \, d\mathcal{H}^{n}\leq 0.
\end{equation}
Hence, for $\rho\geq \rho_0\geq R_0$ and $\gamma\leq \frac{1}{2}$,
\begin{equation}\label{hatLdecayEst2}
\rho^{-n} L_{\gamma}(\rho)\leq \rho_0^{-n} L_{\gamma}(\rho_0) \mbox{ and } \frac{e^{\gamma\rho^2}}{\rho^n} \int_{S_\rho} u^2 \, d\mathcal{H}^{n-1} \leq \frac{2 e^{\gamma\rho_0^2}}{\rho_0^n} \int_{S_{\rho_0}} u^2 \, d\mathcal{H}^{n-1}.
\end{equation}
A consequence of this, the co-area formula and the mean value theorem is that, for $\rho\geq \rho_0 +1\geq R_0+1$ and $\gamma<\frac{1}{2}$,
\begin{equation}\label{TraceEst}
\rho^{-n}  e^{\gamma\rho^2} \int_{S_\rho} u^2 \, d\mathcal{H}^{n-1} \leq 2 \rho_0^{-n} \int_{\bar{E}_{\rho_0}} u^2  e^{\gamma r^2} \, d\mathcal{H}^{n} \leq 2 \rho_0^{-n} \int_{\Sigma} u^2  e^{\gamma r^2} \, d\mathcal{H}^{n} .
\end{equation}
Furthermore, combining \eqref{CoAreaEqn} and \eqref{hatLdecayEst2} one has for $\rho\geq \rho_1\geq R_0$ that
\begin{align*}
\int_{\bar{E}_\rho} u^2 e^{\frac{7}{16} {r^2}} \, d\mathcal{H}^n & \leq 2\int_{\rho}^\infty L_{7/16}(t) \, dt=2\int_{\rho}^\infty t^{-n} L_{1/2}(t) t^n e^{-\frac{t^2}{16}}\, dt \\
&\leq  2 \rho_{1}^{-n} L_{1/2}(\rho_1) \int_{\rho}^\infty t^n e^{-\frac{t^2}{16}} \, dt \\
&\leq C(n) \rho^{n-1} \rho_1^{-n}  e^{-\frac{\rho^2}{16}+\frac{\rho^2_1}{2}} \int_{S_{\rho_1}} u^2 \, d\mathcal{H}^{n-1}.
\end{align*}
Combining this with \eqref{TraceEst} for $\gamma=\frac{1}{4}$ and one has for $\rho\geq\rho_1\geq  \rho_0+1\geq R_0+1$, 
\begin{equation} \label{RefinedEst}
\begin{split}
\int_{\bar{E}_\rho} u^2 e^{\frac{7}{16} {r^2}} \, d\mathcal{H}^n &\leq 2 C(n) \rho^{n-1} \rho_0^{-n}  e^{-\frac{\rho^2}{16} +\frac{\rho^2_1}{4}}\int_{\bar{E}_{\rho_0}} u^2  e^{\frac{r^2}{4}} \, d\mathcal{H}^n.
\end{split}
\end{equation}
In particular, for any $\epsilon>0$, there is an $R=R(\epsilon, K_\Sigma,n) \geq R_0+1$ so that
$$
\int_{\bar{E}_{R}} u^2 e^{\frac{7}{16} r^2} \, d\mathcal{H}^n\leq \frac{\epsilon}{2} \int_{\Sigma} u^2 e^{\frac{r^2}{4}} \, d\mathcal{H}^n.
$$
	
To complete the proof we observe that computing as above gives that, for $\rho\geq R_0$, 
\begin{align*}
\frac{d}{d\rho} \left(\rho^{-n} L_{\frac{3}{8}}(\rho)\right) &\leq \frac{3}{4} \rho^{-n+1} L_{\frac{3}{8}}(\rho)+2\rho^{-n} e^{\frac{3}{8} \rho^2} \int_{S_\rho} u \nabla_\Sigma u\cdot\frac{\nabla_\Sigma r}{|\nabla_\Sigma r|} \, d\mathcal{H}^{n-1}\\
&= \frac{3}{4} \rho^{-n+1} e^{\frac{3}{8} \rho^2} L(\rho)-\frac{3}{2} \rho^{-n}e^{\frac{\rho^2}{8} }  \int_{\bar{E}_{\rho}} \left(|\nabla_\Sigma u|^2 +u \mathscr{L}_{\Sigma}^0 u \right) e^{\frac{r^2}{4} }  \, d\mathcal{H}^n
\\ & -\frac{1}{2}\rho^{-n} \int_{\bar{E}_{\rho}} \left( |\nabla_\Sigma u|^2 +  u \mathscr{L}_\Sigma^0 u + \frac{1}{4} r u \partial_r u \right)  e^{\frac{3}{8} r^2} \, d\mathcal{H}^n\\
&\leq -\frac{1}{2} \rho^{-n} \int_{\bar{E}_{\rho}} |\nabla_\Sigma u|^2 e^{\frac{3}{8} r^2} \, d\mathcal{H}^n-\frac{1}{8}\rho^{-n}\int_{\bar{E}_\rho} r u \partial_r u \, e^{\frac{3}{8} r^2} \, d\mathcal{H}^n\\
&-\frac{3}{2} \rho^{-n}e^{\frac{ \rho^2}{8}}  \int_{\bar{E}_{\rho}} u (\mathscr{L}_{\Sigma}^0 u) e^{\frac{r^2}{4} } \, d\mathcal{H}^n-\frac{1}{2}\rho^{-n} \int_{\bar{E}_{\rho}}  u (\mathscr{L}_\Sigma^0 u) e^{\frac{3}{8} r^2} \, d\mathcal{H}^n
\end{align*}
where the last inequality uses the Poincar\'{e} inequality, Proposition \ref{GradControlsBoundaryProp} and the integration by parts is justified by Proposition \ref{DecayEigenfunProp}. Hence, as $\mu\leq \frac{1}{4}$ and $|A_{\Sigma}|^2\leq \frac{1}{4}$,
\begin{align*}
\frac{d}{d\rho} \left(\rho^{-n} L_{\frac{3}{8}}(\rho)\right) &\leq -\frac{1}{2} \rho^{-n} \int_{\bar{E}_{\rho}} |\nabla_\Sigma u|^2 e^{\frac{3}{8} r^2} \, d\mathcal{H}^n-\frac{1}{8} \rho^{-n}\int_{\bar{E}_\rho} r u \partial_r u \, e^{\frac{3}{8} r^2} \, d\mathcal{H}^n\\
&+\frac{3}{2} \rho^{-n}e^{\frac{\rho^2}{8} } \int_{\bar{E}_{\rho}}\left( -u L_{\Sigma} u +|A_\Sigma|^2 u^2-\frac{1}{2}u^2\right)e^{\frac{r^2}{4} } \, d\mathcal{H}^n\\
&+\frac{1}{2}\rho^{-n} \int_{\bar{E}_{\rho}}  \left(  -u L_{\Sigma} u +|A_\Sigma|^2 u^2-\frac{1}{2}u^2\right) e^{\frac{3}{8} r^2} \, d\mathcal{H}^n\\
&\leq -\frac{1}{2} \rho^{-n} \int_{\bar{E}_{\rho}} |\nabla_\Sigma u|^2 e^{\frac{3}{8} r^2} \, d\mathcal{H}^n-\frac{1}{8}\rho^{-n}\int_{\bar{E}_\rho} r u \partial_r u \, e^{\frac{3}{8} r^2} \, d\mathcal{H}^n.
\end{align*}

Rearranging terms and using the absorbing inequality give
$$
\frac{1}{4} \rho^{-n} \int_{\bar{E}_{\rho}} |\nabla_\Sigma u|^2 e^{\frac{3}{8} r^2} \, d\mathcal{H}^n \leq -\frac{d}{d\rho} \left(\rho^{-n} L_{\frac{3}{8}}(\rho)\right)+\frac{1}{64} \rho^{-n}\int_{\bar{E}_\rho} r^2 u^2 e^{\frac{3}{8} r^2}\, d\mathcal{H}^n.
$$
Pick $\tau>0$ so that $\rho^2 e^{-\frac{\rho^2}{16}}\leq 1$ for $\rho\geq \tau$.  Hence, for $\rho_2\geq R+\tau +\rho_1 +1\geq R_0\geq 2$, if $2\rho_2\geq \rho\geq \rho_2$, this together with \eqref{RefinedEst} gives,
\begin{align*}
\frac{1}{4} (2\rho_2)^{-n} & \int_{\bar{E}_{2\rho_2}} |\nabla_\Sigma u|^2 e^{\frac{3}{8} r^2} \, d\mathcal{H}^n \leq \frac{1}{4} \rho^{-n} \int_{\bar{E}_{\rho}}  |\nabla_\Sigma u|^2 e^{\frac{3}{8} r^2} \, d\mathcal{H}^n\\
&\leq-\frac{d}{d\rho} \left(\rho^{-n}L_{\frac{3}{8}}(\rho)\right)+\frac{1}{64} \rho^{-n} \int_{\bar{E}_\rho} u^2 e^{\frac{7}{16} r^2} \, d\mathcal{H}^n\\
&\leq -\frac{d}{d\rho} \left(\rho^{-n} L_{\frac{3}{8}}(\rho)\right)+C(n) \rho^{-1} \rho_0^{-n}  e^{-\frac{\rho^2}{16} +\frac{\rho^2_1}{4}} \int_{\Sigma} u^2 e^{\frac{r^2}{4}} \, d\mathcal{H}^n\\
&\leq -\frac{d}{d\rho} \left(\rho^{-n} L_{\frac{3}{8}}(\rho)\right)+C(n) \rho_2^{-1} \rho_0^{-n}  e^{-\frac{\rho^2_2}{16} +\frac{\rho^2_1}{4}} \int_{\Sigma} u^2 e^{\frac{r^2}{4}} \, d\mathcal{H}^n.
\end{align*}
	
Pick $R^{\prime}=R^{\prime}(n, K_\Sigma,\epsilon)\geq R+\tau+\rho_1+2$ so that if $\rho_2\geq R^{\prime}\geq 2$, then 
$$
C(n) \rho_2^{-1} \rho_0^{-n}  e^{-\frac{\rho_2^2}{16}+\frac{\rho^2_1}{4}}\leq  \frac{1}{64} (2\rho_2)^{-n} \epsilon.
$$
Hence, for such $\rho_2$ and $2\rho_2\geq \rho \geq \rho_2$, 
$$
\frac{1}{4} (2\rho_2)^{-n} \int_{\bar{E}_{2\rho_2}} |\nabla_\Sigma u|^2 e^{\frac{3}{8} r^2} \, d\mathcal{H}^n\leq -\frac{d}{d\rho} \left(\rho^{-n}L_{\frac{3}{8}}(\rho)\right)+ \frac{\epsilon}{64} (2\rho_2)^{-n}\int_{\Sigma} u^2 e^{\frac{r^2}{4}} \, d\mathcal{H}^n.
$$
	
Integrating the above inequality in $\rho$ over $[\rho_2+1, \rho_2+2]\subset [\rho_2, 2\rho_2]$ yields
$$
\frac{1}{4} (2\rho_2)^{-n} \int_{\bar{E}_{2\rho_2}} |\nabla_\Sigma u|^2 e^{\frac{3}{8} r^2} \, d\mathcal{H}^n\leq (\rho_2+1)^{-n} L_{\frac{3}{8}}(\rho_2+1) +\frac{\epsilon}{64} (2\rho_2)^{-n} \int_{\Sigma} u^2 e^{\frac{r^2}{4}} \, d\mathcal{H}^n.
$$
Applying \eqref{TraceEst} with $\rho=\rho_2+1$ and $\gamma=\frac{3}{8}$ gives, 
$$
\frac{1}{4} (2\rho_2)^{-n} \int_{\bar{E}_{2\rho_2}} |\nabla_\Sigma u|^2 e^{\frac{3}{8} r^2} \, d\mathcal{H}^n\leq 2 \rho_2^{-n} \int_{\bar{E}_{\rho_2}} u^2 e^{\frac{3}{8}r^2} \, d\mathcal{H}^n +\frac{\epsilon}{64}(2\rho_2)^{-n} \int_{\Sigma} u^2 e^{\frac{r^2}{4}} \, d\mathcal{H}^n.
$$
Applying \eqref{RefinedEst}, gives 
\begin{align*}
\rho_2^{-n}\int_{\bar{E}_{\rho_2}} u^2 e^{\frac{3}{8}r^2} d\mathcal{H}^n  & \leq 2 C(n) \rho_2^{-1} \rho_0^{-n}  e^{-\frac{\rho_2^2}{16} +\frac{\rho^2_1}{4}}\int_{\Sigma}u^2  e^{\frac{r^2}{4}} \, d\mathcal{H}^n\\
&\leq \frac{\epsilon}{32}(2\rho_2)^{-n} \int_{\Sigma}u^2  e^{\frac{r^2}{4}} \, d\mathcal{H}^n,
\end{align*}
Therefore, 
$$
\int_{\bar{E}_{2\rho_2}} |\nabla_\Sigma u|^2 e^{\frac{3}{8} r^2} \, d\mathcal{H}^n\leq \frac{\epsilon}{2} \int_{\Sigma}u^2  e^{\frac{r^2}{4}} \, d\mathcal{H}^n.
$$
The result is thus proved by setting $R_1=\max\set{2R,R^\prime}$.
\end{proof}

We next prove estimates on certain integrals that will be needed in studying perturbation results for the quadratic forms.

\begin{lem}\label{EigenEst} 
Let $\Sigma\in\mathcal{ACH}^{k,\alpha}_n$ be a self-expander. Fix a $\mu\leq \frac{1}{4}$. Given $\delta>0$, there is an $\epsilon_1=\epsilon_1(K_{\Sigma}, n, \delta)<\frac{1}{16}$ and a $C_1=C_1(K_{\Sigma}, n)$ so that for $\mathbf{f}\in \mathcal{B}_{\epsilon_1}(\mathbf{x}|_\Sigma; \mathcal{ACH}^{k,\alpha}_n(\Sigma))$ and $u\in W^1_{\frac{1}{4}}(\Sigma)$ that satisfies 
$$
-L_{\Sigma} u=\mu u,
$$
one has
$$
\int_{\Sigma} \left(|\nabla_\Sigma u|^2+u^2\right)\left( \Omega_{\mathbf{f}} +\Omega_{\mathbf{f}}^{-1}\right) e^{\frac{|\mathbf{x}|^2}{4}} \, d\mathcal{H}^n \leq C^2_1 \int_{\Sigma} u^2  e^{\frac{|\mathbf{x}|^2}{4}} \, d\mathcal{H}^n
$$
and
$$
\int_{\Sigma} \left(|\nabla_\Sigma u|^2+ u^2\right) (1-\Omega_{\mathbf{f}})^2(1+\Omega_{\mathbf{f}}^{-1})  e^{\frac{|\mathbf{x}|^2}{4}} \, d\mathcal{H}^n\leq \delta^2 \int_{\Sigma} u^2  e^{\frac{|\mathbf{x}|^2}{4}} \, d\mathcal{H}^n.
$$
\end{lem}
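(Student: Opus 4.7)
The plan is to combine the effective compactification of eigenfunctions from Lemma~\ref{ConcentrationLem} with a pointwise estimate on $\Omega_\mathbf{f}$ coming from the smallness $\|\mathbf{f} - \mathbf{x}|_\Sigma\|_{k,\alpha}^{(1)} \leq \epsilon_1$. Writing $\mathbf{f} = \mathbf{x}|_\Sigma + \mathbf{h}$ with $|\mathbf{h}| \leq \epsilon_1(|\mathbf{x}|+1)$ and $|\nabla_\Sigma \mathbf{h}| \leq \epsilon_1$, the identity $|\mathbf{f}|^2 - |\mathbf{x}|^2 = 2\mathbf{x}\cdot\mathbf{h} + |\mathbf{h}|^2$ and a standard metric comparison yield, for some absolute constant $c_0$,
$$
\tfrac{1}{2} e^{-c_0\epsilon_1(|\mathbf{x}|+1)^2} \leq \Omega_\mathbf{f}(p) \leq 2 e^{c_0\epsilon_1(|\mathbf{x}|+1)^2}, \qquad |\Omega_\mathbf{f}(p) - 1| \leq c_0 \epsilon_1 (|\mathbf{x}(p)|+1)^2 \, e^{c_0\epsilon_1(|\mathbf{x}(p)|+1)^2}.
$$
By enforcing an absolute smallness threshold on $\epsilon_1$, one obtains globally $\Omega_\mathbf{f}^{\pm 2}(p) \leq C e^{|\mathbf{x}(p)|^2/16}$, a growth strictly slower than the excess decay factor $e^{|\mathbf{x}|^2/8} = e^{3|\mathbf{x}|^2/8}/e^{|\mathbf{x}|^2/4}$ built into Lemma~\ref{ConcentrationLem}. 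This strict inequality between growth rates is the reason the far-end contributions will close up.

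A second ingredient is the global gradient bound obtained by testing $-L_\Sigma u = \mu u$ against $u$ (justified by the fast decay from Proposition~\ref{DecayEigenfunProp}): the identity $Q_\Sigma[u,u] = \mu B_\Sigma[u,u]$ combined with $|A_\Sigma|^2 \leq K_\Sigma$ and $|\mu| \leq 1/4$ yields $\int_\Sigma |\nabla_\Sigma u|^2 e^{|\mathbf{x}|^2/4}\,d\mathcal{H}^n \leq C(K_\Sigma) \int_\Sigma u^2 e^{|\mathbf{x}|^2/4}\,d\mathcal{H}^n$. For estimate (1), I would apply Lemma~\ref{ConcentrationLem} with the $\delta$-independent choice $\epsilon = 1$ to extract $R^* = R^*(K_\Sigma, n)$ and split the integral at $R^*$. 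On $B_{R^*}$, once $\epsilon_1$ is small relative to $(R^*+1)^{-2}$, $\Omega_\mathbf{f}^{\pm 1}$ is bounded by a constant depending only on $K_\Sigma, n$, and the gradient bound delivers the required control. On $\Sigma \setminus B_{R^*}$, the paragraph-one bound gives $\Omega_\mathbf{f}^{\pm 1} e^{|\mathbf{x}|^2/4} \leq C e^{5|\mathbf{x}|^2/16} \leq C e^{3|\mathbf{x}|^2/8}$, and Lemma~\ref{ConcentrationLem} (with $\epsilon = 1$) bounds the far contribution; this gives (1) with $C_1 = C_1(K_\Sigma, n)$.

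For estimate (2), the algebraic identity $(1-\Omega_\mathbf{f})^2(1+\Omega_\mathbf{f}^{-1}) = (1-\Omega_\mathbf{f})^2 + \Omega_\mathbf{f} + \Omega_\mathbf{f}^{-1} - 2 \leq 2 + \Omega_\mathbf{f}^2 + \Omega_\mathbf{f}^{-1}$ is used together with Lemma~\ref{ConcentrationLem} applied with $\epsilon = \delta^2/(4C)$, producing $R_1 = R_1(K_\Sigma, n, \delta)$. On $\Sigma \setminus B_{R_1}$, the pointwise estimate yields $(2 + \Omega_\mathbf{f}^2 + \Omega_\mathbf{f}^{-1}) e^{|\mathbf{x}|^2/4} \leq C e^{3|\mathbf{x}|^2/8}$, so the concentration estimate bounds the far contribution by $\tfrac{1}{2} \delta^2 \int_\Sigma u^2 e^{|\mathbf{x}|^2/4}$. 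On $B_{R_1}$, the Taylor-type pointwise estimate of paragraph one gives $(1-\Omega_\mathbf{f})^2(1+\Omega_\mathbf{f}^{-1}) \leq C(R_1) \epsilon_1^2$, and with the gradient bound the compact contribution is at most $C(K_\Sigma, R_1) \epsilon_1^2 \int_\Sigma u^2 e^{|\mathbf{x}|^2/4}$; choosing $\epsilon_1 = \epsilon_1(K_\Sigma, n, \delta)$ small enough forces this to be at most $\tfrac{1}{2} \delta^2 \int_\Sigma u^2 e^{|\mathbf{x}|^2/4}$, completing (2). The main obstacle is the dependence hierarchy ensuring $C_1$ depends only on $K_\Sigma, n$: the absolute smallness of $\epsilon_1$ (needed to keep $\Omega_\mathbf{f}^{\pm 2}$ growth strictly below $e^{|\mathbf{x}|^2/8}$) must be maintained globally, while further $\delta$-dependent smallness of $\epsilon_1$ handles only the compact contribution to (2). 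Orchestrating these nested smallness conditions — absolute first, then $\delta$-dependent — is the delicate point of the argument.
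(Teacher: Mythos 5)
Your proposal is correct and follows essentially the same route as the paper: derive the pointwise growth bound $\Omega_\mathbf{f}^{\pm 1} \leq C e^{|\mathbf{x}|^2/32}$ (equivalently $\Omega_\mathbf{f}^{\pm 2}\leq C^2 e^{|\mathbf{x}|^2/16}$) from the $\epsilon_1$-smallness, obtain $D_\Sigma[u]\leq K_\Sigma B_\Sigma[u]$ from $Q_\Sigma[u]=\mu B_\Sigma[u]$, and invoke Lemma~\ref{ConcentrationLem} to control the far end where the $e^{|\mathbf{x}|^2/8}$ gap between $e^{3|\mathbf{x}|^2/8}$ and $e^{|\mathbf{x}|^2/4}$ absorbs the $\Omega_\mathbf{f}$ growth, with $\epsilon=1$ for the first inequality (so $C_1$ depends only on $K_\Sigma,n$) and $\epsilon=\epsilon(\delta)$ followed by shrinking $\epsilon_1$ on the compact piece for the second. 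The only differences from the paper are expository — you make the pointwise $\Omega_\mathbf{f}$ estimate and its $\epsilon_1$-dependence quantitative where the paper merely asserts their existence — but the decomposition, the role of Lemma~\ref{ConcentrationLem}, and the dependence hierarchy are identical.
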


\begin{proof}
First observe that, by taking $\epsilon_1<\frac{1}{16}$ one has a constant $C=C(n,K_{\Sigma})>2+K_\Sigma$ so that if $\mathbf{f}\in \mathcal{B}_{\epsilon_1}(\mathbf{x}|_\Sigma; \mathcal{ACH}^{k,\alpha}_n(\Sigma))$, then
$$
C^{-1}e^{-\frac{|\mathbf{x}|^2}{32}}\leq \Omega_{\mathbf{f}} \leq C e^{\frac{|\mathbf{x}|^2}{32}}
$$
and
$$
C^{-1} g_{\Sigma}^{-1}\leq g_{\mathbf{f}}^{-1}\leq C g_{\Sigma}^{-1}.
$$
We observe that, by the hypotheses, 
$$
Q_{\Sigma}[u]=-B_{\Sigma}[u, L_\Sigma u]=\mu B_\Sigma[u]\leq \frac{1}{4} B_\Sigma[u] 
$$
and hence
$$
D_{\Sigma}[u]\leq  Q_{\Sigma}[u] +\left(K_\Sigma-\frac{1}{2}\right) B_{\Sigma}[u]\leq K_\Sigma B_\Sigma[u].
$$		
		
Let $\epsilon=\epsilon(K_\Sigma,n,\delta)>0$ be a small constant whose precise value will be chosen at different points in the argument. By Lemma \ref{ConcentrationLem}, there is an $R_1=R_1(K_\Sigma,n,\epsilon)>0$ so that
\begin{equation}\label{kappaEst}
\int_{\Sigma\backslash B_{R_1}}  \left(|\nabla_\Sigma u|^2 +u^2\right) e^{\frac{3}{8}|\mathbf{x}|^2} \, d\mathcal{H}^n\leq \epsilon^2\int_{\Sigma}  u^2 e^{\frac{|\mathbf{x}|^2}{4}} \, d\mathcal{H}^n=\epsilon^2 B_\Sigma[u].
\end{equation}
With $\epsilon=1$, one has
\begin{align*}
&\int_{\Sigma} \left(|\nabla_\Sigma u|^2+u^2\right) \left( \Omega_{\mathbf{f}} +\Omega_{\mathbf{f}}^{-1}\right) e^{\frac{|\mathbf{x}|^2}{4}} \, d\mathcal{H}^n \leq 2C\int_{\Sigma} \left(|\nabla_\Sigma u|^2+u^2\right) e^{\frac{5}{16}|\mathbf{x}|^2} \, d\mathcal{H}^n\\
&\leq 2Ce^{\frac{R_1^2}{16}}\int_{\Sigma\cap \bar{B}_{R}} \left(|\nabla_\Sigma u|^2+u^2\right) e^{\frac{|\mathbf{x}|^2}{4}} \, d\mathcal{H}^n+ 2C \int_{\Sigma\backslash B_{R_1}} \left(|\nabla_\Sigma u|^2+u^2\right) e^{\frac{5}{16}|\mathbf{x}|^2} \, d\mathcal{H}^n\\
&\leq  2Ce^{\frac{R_1^2}{16}}\left( D_\Sigma[u]+ B_\Sigma[u]\right) + 2C B_{\Sigma}[u]\\
&\leq 2C\left(e^{\frac{R_1^2}{16}}(K_\Sigma+1)+1\right)  B_\Sigma[u].
\end{align*}		
The first inequality follows by taking 
$$
C_1^2=2C\left(e^{\frac{R_1^2}{16}}(K_\Sigma+1)+1\right)
$$
and observing that $R_1$ and $C$ depend only on $n$ and $K_\Sigma$.

We next observe that, up to increasing $C$ a bit we have
$$
(1-\Omega_{\mathbf{f}})^2(1+\Omega_{\mathbf{f}}^{-1})e^{\frac{|\mathbf{x}|^2}{4}} \leq 2 C^2 e^{ \frac{11}{32}|\mathbf{x}|^2}\leq2 C^2 e^{ \frac{3}{8}|\mathbf{x}|^2}\ .
$$ 
In particular,  by \eqref{kappaEst} one has for a large enough $R_1=R_1(K_\Sigma,n,\epsilon^2)>0$,
$$
\int_{\Sigma\backslash B_{R_1}} \left(|\nabla_\Sigma u|^2+ u^2\right) (1-\Omega_{\mathbf{f}})^2 (1+\Omega_{\mathbf{f}}^{-1})  e^{\frac{|\mathbf{x}|^2}{4}} d\mathcal{H}^n\leq \epsilon^2 B_\Sigma[u]
$$
where we will again fix $\epsilon=\epsilon(K_\Sigma, n, \delta)$ later. Notice that by taking $\epsilon_1=\epsilon_1(K_{\Sigma},n,\epsilon)$ sufficiently small one has that on $B_{R_1}$,
$$
(1-\Omega_{\mathbf{f}})^2 (1+\Omega_{\mathbf{f}}^{-1})\leq \epsilon^2.
$$
Hence,
\begin{align*}
&\int_{\Sigma\cap \bar{B}_{R_1}} \left(|\nabla_\Sigma u|^2+ u^2\right) (1-\Omega_{\mathbf{f}})^2 (1+\Omega_{\mathbf{f}}^{-1}) e^{\frac{|\mathbf{x}|^2}{4}} \, d\mathcal{H}^n \\
& \leq \epsilon^2 \int_{\Sigma\cap \bar{B}_{R_1}} \left(|\nabla_\Sigma u|^2+ u^2\right)  e^{\frac{|\mathbf{x}|^2}{4}} \, d\mathcal{H}^n \\
& \leq \epsilon^2 \left( D_{\Sigma}[u]+B_\Sigma[u]\right)\leq \epsilon^2 (K_\Sigma+1) B_{\Sigma}[u].
\end{align*}
As such, the second inequality follows by taking 
$
\epsilon^2= \frac{\delta^2}{2(K_\Sigma+1)}< \frac{\delta^2}{2}.
$
\end{proof}

\begin{prop}\label{EstProp}
Let $\Sigma\in\mathcal{ACH}^{k,\alpha}_n$ be a self-expander. Fix a $\beta\in \left[\frac{1}{4}, \frac{3}{8}\right], \mu\leq \frac{1}{4}$ and $\delta\in\left(0, \frac{1}{2}\right)$. Let $u\in W^1_{\frac{1}{4}}(\Sigma)\setminus\set{0}$ satisfy 
$$
-L_{\Sigma} u=\mu u.
$$
There is an $\epsilon_1=\epsilon_1(K_\Sigma, n, \delta)>0$ so that if $\mathbf{f}\in \mathcal{B}_{\epsilon_1}(\mathbf{x}|_\Sigma;\mathcal{ACH}^{k,\alpha}_n(\Sigma))$, then, for any $v\in W^1_\beta(\Sigma)$,
$$
\left| B_{\mathbf{f}}[u,v]-B_{\Sigma}[u,v]\right|^2\leq \delta^2 B_{\Sigma} [u] B_{\Sigma}[v]
$$
and
$$
\left| B_{\mathbf{f}}[u,v]-B_{\Sigma}[u,v]\right|^2\leq \delta^2 B_{\Sigma} [u] B_{\mathbf{f}}[v]\leq 2\delta^2 B_{\mathbf{f}}[u]B_{\mathbf{f}}[v].
$$
In addition, one has
$$
\left| Q_{\mathbf{f}}[u,v]-Q_{\Sigma}[u,v]\right|^2\leq \delta^2 B_{\Sigma} [u] \left( D_{\Sigma}[v]+B_\Sigma[v]\right)
$$
and
$$
\left| Q_{\mathbf{f}}[u,v]-Q_{\Sigma}[u,v]\right|^2\leq \delta^2 B_{\Sigma} [u] \left( D_{\mathbf{f}}[v]+B_{\mathbf{f}}[v]\right).
$$
\end{prop}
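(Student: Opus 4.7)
The plan is to reduce every estimate to a weighted Cauchy--Schwarz inequality whose eigenfunction-side factor is controlled by Lemma \ref{EigenEst} (and, where needed, by Lemma \ref{ConcentrationLem}). The $B$-form bounds drop out essentially immediately; the $Q$-form bounds require a decomposition into gradient, curvature, and zeroth-order pieces, with the curvature piece being the delicate one.

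For the $B$-form, I would write
$$B_\mathbf{f}[u,v] - B_\Sigma[u,v] = \int_\Sigma uv(\Omega_\mathbf{f}-1) e^{|\mathbf{x}|^2/4} \, d\mathcal{H}^n$$
and apply Cauchy--Schwarz in two distributions of the weight: either symmetrically in $e^{|\mathbf{x}|^2/4}$ or with a factor of $\Omega_\mathbf{f}^{-1/2}$ on the $u$-side and $\Omega_\mathbf{f}^{1/2}$ on the $v$-side. In each case, using $(1-\Omega_\mathbf{f})^2 \leq (1-\Omega_\mathbf{f})^2(1+\Omega_\mathbf{f}^{-1})$, the $u$-factor is dominated by $\delta^2 B_\Sigma[u]$ via the second estimate of Lemma \ref{EigenEst}, while the $v$-factor is $B_\Sigma[v]$ or $B_\mathbf{f}[v]$, respectively. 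This yields the first two inequalities. For the refinement $|B_\mathbf{f}[u,v]-B_\Sigma[u,v]|^2 \leq 2\delta^2 B_\mathbf{f}[u] B_\mathbf{f}[v]$, I would apply the same Cauchy--Schwarz argument to $B_\mathbf{f}[u]-B_\Sigma[u]$ to obtain $|B_\mathbf{f}[u]-B_\Sigma[u]| \leq \delta B_\Sigma[u]$; since $\delta<1/2$, this forces $B_\Sigma[u] \leq 2 B_\mathbf{f}[u]$.

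Next, for the $Q$-form I would decompose the difference as
$$Q_\mathbf{f}[u,v]-Q_\Sigma[u,v] = \int_\Sigma \Big(A_\mathbf{f}^{ij}\nabla_i u \, \nabla_j v - b_\mathbf{f} \, uv + \tfrac{1}{2}(\Omega_\mathbf{f}-1)uv\Big)e^{|\mathbf{x}|^2/4} \, d\mathcal{H}^n,$$
where $A_\mathbf{f}^{ij} = (g_\mathbf{f}^{-1})^{ij}\Omega_\mathbf{f} - (g_\Sigma^{-1})^{ij}$ and $b_\mathbf{f} = (|A_\Lambda|^2\circ\mathbf{f})\Omega_\mathbf{f} - |A_\Sigma|^2$. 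The last term is exactly half the $B$-form difference. For the first two, I would split each coefficient as a sum of a piece proportional to $(\Omega_\mathbf{f}-1)$ (with proportionality $(g_\Sigma^{-1})^{ij}$ or $|A_\Sigma|^2$, both uniformly bounded) plus a piece proportional to $\Omega_\mathbf{f}$ (with the metric or curvature \emph{difference} in front). Cauchy--Schwarz handles the $v$-factor as $D_\Sigma[v]+B_\Sigma[v]$ (or the $\mathbf{f}$-analogue after an extra $\Omega_\mathbf{f}^{\pm 1/2}$). On the $u$-side, the $(\Omega_\mathbf{f}-1)$-pieces are again directly controlled by Lemma \ref{EigenEst}; the difference-pieces require the pointwise bounds $|g_\mathbf{f}^{-1}-g_\Sigma^{-1}|\leq C\epsilon_1$ and $||A_\Lambda|^2\circ\mathbf{f}-|A_\Sigma|^2|\leq C\epsilon_1(|\mathbf{x}|+1)^{-2}$ coming from the $C^{k,\alpha}_1$-closeness of $\mathbf{f}$ to $\mathbf{x}|_\Sigma$, paired against weighted integrals of the form $\int (|\nabla u|^2+u^2)\Omega_\mathbf{f}^2 e^{|\mathbf{x}|^2/4}$. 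These last are not directly covered by Lemma \ref{EigenEst}, but since $\Omega_\mathbf{f}^2 e^{|\mathbf{x}|^2/4}\leq C e^{9|\mathbf{x}|^2/32}\leq Ce^{3|\mathbf{x}|^2/8}$, a cutoff argument combining uniform bounds on a compact ball with Lemma \ref{ConcentrationLem}'s estimate yields a bound of the form $C(K_\Sigma,n)B_\Sigma[u]$. Selecting $\epsilon_1$ small in terms of $\delta, K_\Sigma, n$ gives the $B_\Sigma$-weighted inequality, and repeating the Cauchy--Schwarz trick of inserting $\Omega_\mathbf{f}^{\pm 1/2}$ gives the $B_\mathbf{f}$-weighted variant.

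The main obstacle is the curvature term, because $|A_\Lambda|^2\circ\mathbf{f}$ depends on $\nabla^2\mathbf{f}$ and so its difference with $|A_\Sigma|^2$ cannot be controlled by the $C^1_1$-norm alone. One must exploit the weighted second-derivative decay $|\nabla^2(\mathbf{f}-\mathbf{x}|_\Sigma)|\leq \epsilon_1(|\mathbf{x}|+1)^{-1}$ that is built into the $C^{k,\alpha}_1$-norm (which is available since $k\geq 2$), combined with $|A_\Sigma|\leq K_\Sigma^{1/2}(|\mathbf{x}|+1)^{-1}$ from \eqref{KSigmaDef}, to obtain the pointwise estimate $||A_\Lambda|^2\circ\mathbf{f}-|A_\Sigma|^2|\leq C(K_\Sigma)\epsilon_1(|\mathbf{x}|+1)^{-2}$. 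Once this is in hand, pairing against the $\Omega_\mathbf{f}^2$-weighted integrals above via Cauchy--Schwarz completes the argument.
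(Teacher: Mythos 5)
Your overall strategy coincides with the paper's — weighted Cauchy--Schwarz with the eigenfunction factor tamed by Lemma~\ref{EigenEst} — and your treatment of the $B$-form inequalities is essentially identical. For the $Q$-form you use a genuinely different algebraic split of the coefficients. The paper writes $\Omega_\mathbf{f}\, g_\mathbf{f}^{-1} - g_\Sigma^{-1} = g_\mathbf{f}^{-1}(\Omega_\mathbf{f}-1) + (g_\mathbf{f}^{-1}-g_\Sigma^{-1})$ and similarly for the curvature piece, so that after Cauchy--Schwarz every $u$-factor carries weight $\Omega_\mathbf{f}^{0}$, $\Omega_\mathbf{f}^{\pm 1}$ or $(\Omega_\mathbf{f}-1)^2(1+\Omega_\mathbf{f}^{-1})$, all of which Lemma~\ref{EigenEst} handles directly. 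You instead push the $\Omega_\mathbf{f}$ onto the difference piece, $A_\mathbf{f}^{ij}=(g_\Sigma^{-1})^{ij}(\Omega_\mathbf{f}-1) + \Omega_\mathbf{f}\bigl(g_\mathbf{f}^{-1}-g_\Sigma^{-1}\bigr)^{ij}$, which for the $D_\Sigma[v]$-weighted inequality produces a $u$-factor of the form $\int |\nabla_\Sigma u|^2 \Omega_\mathbf{f}^2 e^{|\mathbf{x}|^2/4}$ that is \emph{not} in the scope of Lemma~\ref{EigenEst} and requires the separate cutoff-plus-Lemma~\ref{ConcentrationLem} argument you describe. That argument does work (and is in effect a re-derivation of the first inequality of Lemma~\ref{EigenEst} with a different power of $\Omega_\mathbf{f}$), so your route is correct, but the paper's split is a little slicker in that it never leaves the already-proved lemma. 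Your worry about the curvature term is reasonable to flag, though slightly over-dramatized: the paper gets by with only the uniform bound $\bigl||A_{\mathbf{f}(\Sigma)}|^2\circ\mathbf{f}-|A_\Sigma|^2\bigr|\leq\delta'$, which follows from the same $C^{k,\alpha}_1$ second-derivative control you invoke; the extra $(|\mathbf{x}|+1)^{-2}$ decay you establish is what the estimate naturally gives but is not needed for the Cauchy--Schwarz step. Finally, a tiny arithmetic slip: from $\Omega_\mathbf{f}\leq Ce^{|\mathbf{x}|^2/32}$ you should get $\Omega_\mathbf{f}^2 e^{|\mathbf{x}|^2/4}\leq C^2 e^{5|\mathbf{x}|^2/16}$, not $Ce^{9|\mathbf{x}|^2/32}$, but since $5/16<3/8$ the conclusion you draw from it still holds.
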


\begin{proof}
Let $\delta^\prime=\delta^\prime(K_\Sigma, n, \delta)$ be a small positive number which we will specify in the course of the proof. Using this $\delta^\prime$, pick $\epsilon_1=\epsilon_1(K_\Sigma, n, \delta^\prime)<\frac{1}{16}$ as in Lemma \ref{EigenEst} for the given $u$. As long as $\delta^\prime\leq \delta$, one has
\begin{align*}
\left| B_{\mathbf{f}}[u,v]-B_\Sigma[u,v]\right|^2 &=\left| \int_{\Sigma} u v (\Omega_{\mathbf{f}}-1) e^{\frac{|\mathbf{x}|^2}{4}} \, d\mathcal{H}^n\right|^2\\
&\leq  \int_{\Sigma} u^2 (\Omega_{\mathbf{f}}-1)^2 e^{\frac{|\mathbf{x}|^2}{4}} \, d\mathcal{H}^n\int_{\Sigma}  v^2 e^{\frac{|\mathbf{x}|^2}{4}} \, d\mathcal{H}^n \\
&\leq (\delta')^2 B_\Sigma[u] B_\Sigma[v]\leq  \delta^2 B_\Sigma[u] B_\Sigma[v].  
\end{align*}
Here the last inequality follows from Lemma \ref{EigenEst}. Similarly, one has 
\begin{align*}
\left| B_{\mathbf{f}}[u,v]-B_\Sigma[u,v]\right|^2 &=\left| \int_{\Sigma} u v (\Omega_{\mathbf{f}}-1) e^{\frac{|\mathbf{x}|^2}{4}} \, d\mathcal{H}^n\right|^2\\
&\leq  \int_{\Sigma} u^2 \Omega_{\mathbf{f}}^{-1} (\Omega_{\mathbf{f}}-1)^2 e^{\frac{|\mathbf{x}|^2}{4}} \, d\mathcal{H}^n\int_{\Sigma}  v^2 \Omega_{\mathbf{f}} e^{\frac{|\mathbf{x}|^2}{4}}\, d\mathcal{H}^n \\
&\leq (\delta')^2 B_\Sigma[u] B_\mathbf{f}[v]\leq \delta^2 B_\Sigma[u] B_\mathbf{f}[v].
\end{align*}
Here again the first part of the second inequality follows from Lemma \ref{EigenEst}.  The final inequality follows by observing that by taking $u=v$ what we have already shown gives
\begin{align*}
\left| B_{\mathbf{f}}[u]-B_{\Sigma}[u]\right|^2 &\leq \delta^2 B_{\mathbf{f}}[u] B_{\Sigma}[u] \leq \delta^2 B_{\mathbf{f}}[u]^2 + \delta^2 B_{\mathbf{f}}[u] \left( B_{\Sigma}[u] -B_{\mathbf{f}}[u]\right)\\
&\leq \frac{3}{2} \delta^2B_{\mathbf{f}}[u]^2 +\frac{1}{2} \left| B_{\mathbf{f}}[u]-B_{\Sigma}[u]\right|^2
\end{align*}
where the last inequality follows from the absorbing inequality. Hence, 
$$
\left| B_{\mathbf{f}}[u]-B_{\Sigma}[u]\right|^2\leq 3 \delta^2 B_{\mathbf{f}}[u]^2\leq  B_{\mathbf{f}}[u]^2
$$
and so
$$
B_{\Sigma}[u]\leq B_{\mathbf{f}}[u] + B_{\mathbf{f}}[u]\leq 2  B_{\mathbf{f}}[u].
$$
This verifies the second part of the inequality.

To prove the second pair of inequalities one first observes that, by shrinking $\epsilon_1$ and $\delta^\prime$ as needed, one can ensure that
$$
\frac{1}{2} g_\Sigma^{-1} \leq \left(1- \delta' \right) g_\Sigma^{-1} \leq g_{\mathbf{f}}^{-1}\leq \left(1+ \delta^\prime\right) g_\Sigma^{-1}\leq  2 g_\Sigma^{-1} 
$$
and
$$
-\delta^\prime+|A_{\Sigma}|^2\leq |A_{\mathbf{f}(\Sigma)}\circ \mathbf{f}|^2 \leq \delta^\prime+|A_{\Sigma}|^2\leq K_{\Sigma}+1.
$$
As such, one computes,
\begin{align*}
& \left|D_{\mathbf{f}}[u,v]-D_\Sigma[u,v]\right| = \left|\int_{\Sigma} \left( \Omega_{\mathbf{f} }g_{\mathbf{f}}^{-1} -g_{\mathbf{f}}^{-1}+g_{\mathbf{f}}^{-1}-g_\Sigma^{-1}\right)^{ij}\nabla_i u \nabla_j v \, e^{\frac{|\mathbf{x}|^2}{4}} \, d\mathcal{H}^n \right|\\
&\leq \int_{\Sigma} \left|(g_{\mathbf{f}}^{-1})^{ij}  \nabla_i u \nabla_j v \right| \left| \Omega_{\mathbf{f}}-1\right| e^{\frac{|\mathbf{x}|^2}{4}} \, d\mathcal{H}^n +\int_{\Sigma}  \left| \left(g_{\mathbf{f}}^{-1} -g_{\Sigma}^{-1}\right)^{ij} \nabla_i u \nabla_j v \right|  e^{\frac{|\mathbf{x}|^2}{4}} \, d\mathcal{H}^n\\
&\leq 2\int_{\Sigma} |\nabla_{\Sigma} u| |\nabla_\Sigma v|  \left| \Omega_{\mathbf{f}}-1\right| e^{\frac{|\mathbf{x}|^2}{4}}  \, d\mathcal{H}^n+\delta^\prime\int_{\Sigma}  \left|  \nabla_\Sigma u \right| \left|\nabla_\Sigma v \right|  e^{\frac{|\mathbf{x}|^2}{4}} \, d\mathcal{H}^n\\
&\leq 2  D_{\Sigma}[v]^{\frac{1}{2}} \left(\int_{\Sigma}|\nabla_\Sigma u|^2 (\Omega_{\mathbf{f}}-1)^2  e^{\frac{|\mathbf{x}|^2}{4}} \, d\mathcal{H}^n\right)^{\frac{1}{2}}+\delta^\prime D_\Sigma[v]^{\frac{1}{2}} D_\Sigma[u]^{\frac{1}{2}}\\
&\leq \delta^\prime \left(2+K_\Sigma^{\frac{1}{2}}\right)  D_{\Sigma}[v]^{\frac{1}{2}}B_\Sigma[u]^{\frac{1}{2}}. 
\end{align*}
Similarly, one has
\begin{align*}
\left|D_\mathbf{f}[u,v]-D_\Sigma[u,v]\right| & \leq 2D_\mathbf{f}[v]^{\frac{1}{2}} \left(\int_\Sigma |\nabla_\Sigma u|^2 (\Omega_\mathbf{f}-1)^2\Omega_\mathbf{f}^{-1} e^{\frac{|\mathbf{x}|^2}{4}} \, d\mathcal{H}^n\right)^{\frac{1}{2}}\\
&+2\delta^\prime D_\mathbf{f}[v]^{\frac{1}{2}} \left(\int_\Sigma |\nabla_\Sigma u|^2 \Omega^{-1}_\mathbf{f} e^{\frac{|\mathbf{x}|^2}{4}} \, d\mathcal{H}^n\right)^{\frac{1}{2}} \\
& \leq 2\delta^\prime (C_1+1)D_\mathbf{f}[v]^{\frac{1}{2}} B_\Sigma[u]^{\frac{1}{2}}.
\end{align*}
Hence, by taking 
$$
\delta^\prime\leq \frac{\delta}{8}\min\set{\left(2+K_\Sigma^{1/2}\right)^{-1}, (C_1+1)^{-1}},
$$
one has
$$
|D_{\mathbf{f}}[u,v]-D_\Sigma[u,v]| \leq \frac{\delta}{4}  B_\Sigma[u]^{\frac{1}{2}} \min\set{D_{\Sigma}[v]^{\frac{1}{2}},D_\mathbf{f}[v]^{\frac{1}{2}}}.
$$
	
To complete the proof of the third inequality we observe that
\begin{align*}
\left| Q_{\mathbf{f}}[u,v]-Q_{\Sigma}[u,v]\right| &\leq \left|D_\mathbf{f}[u,v]-D_{\Sigma}[u,v]\right| +\frac{1}{2} \left| B_\mathbf{f}[u,v]-B_{\Sigma}[u,v]\right|\\
&+\left| \int_{\Sigma} uv \left(|A_{\mathbf{f}(\Sigma)}\circ \mathbf{f}|^2 \Omega_{\mathbf{f}} - |A_{\Sigma}|^2\right) e^{\frac{|\mathbf{x}|^2}{4}} d\mathcal{H}^n\right|\\
& \leq \left|D_\mathbf{f}[u,v]-D_{\Sigma}[u,v]\right| +\frac{1}{2} \left| B_\mathbf{f}[u,v]-B_{\Sigma}[u,v]\right|\\
&+(K_\Sigma+1) \int_{\Sigma}  \left|uv(\Omega_{\mathbf{f}}-1)\right| e^{\frac{|\mathbf{x}|^2}{4}} d\mathcal{H}^n+ \delta' \int_{\Sigma}|uv|  e^{\frac{|\mathbf{x}|^2}{4}} \, d\mathcal{H}^n.
\end{align*}
Combining the previous estimates with the Cauchy-Schwarz inequality and Lemma \ref{EigenEst} gives
$$
\left| Q_{\mathbf{f}}[u,v]-Q_{\Sigma}[u,v]\right| \leq \frac{\delta}{4}  B_\Sigma[u]^{\frac{1}{2}} D_{\Sigma}[v]^{\frac{1}{2}} +\left(\frac{\delta^\prime}{2}+(K_{\Sigma}+2) \delta^\prime\right) B_\Sigma[u]^{\frac{1}{2}} B_\Sigma[v]^{\frac{1}{2}}.
$$
Taking $\delta^\prime\leq \frac{\delta}{4(K_{\Sigma}+2)}  \leq \frac{\delta}{4}$ one has
\begin{align*}
\left| Q_{\mathbf{f}}[u,v]-Q_{\Sigma}[u,v]\right| &\leq  \frac{\delta}{2} B_\Sigma[u]^{\frac{1}{2}} \left(D_{\Sigma}[v]^{\frac{1}{2}} + B_\Sigma[v]^{\frac{1}{2}}\right)\\
&\leq \delta B_\Sigma[u]^{\frac{1}{2}} \left(D_{\Sigma}[v] + B_\Sigma[v]\right)^{\frac{1}{2}},
\end{align*}
which proves the third inequality.
  
The fourth inequality is proved in a similar fashion. Namely, one estimates as previous:
\begin{align*}
\left|Q_\mathbf{f}[u,v]-Q_\Sigma[u,v]\right| & \leq \left|D_\mathbf{f}[u,v]-D_{\Sigma}[u,v]\right| +\frac{1}{2} \left| B_\mathbf{f}[u,v]-B_{\Sigma}[u,v]\right| \\
& +(K_\Sigma+1) B_\mathbf{f}[v]^{\frac{1}{2}} \left(\int_\Sigma u^2 (\Omega_\mathbf{f}-1)^2\Omega_\mathbf{f}^{-1} e^{\frac{|\mathbf{x}|^2}{4}} \, d\mathcal{H}^n\right)^{\frac{1}{2}} \\
& + \delta^\prime B_\mathbf{f}[v]^{\frac{1}{2}} \left( \int_\Sigma u^2 \Omega_\mathbf{f}^{-1} e^{\frac{|\mathbf{x}|^2}{4}} \, d\mathcal{H}^n\right)^{\frac{1}{2}} \\
& \leq \frac{\delta}{4} B_\Sigma[u]^{\frac{1}{2}} D_\mathbf{f}[v]^{\frac{1}{2}} +\left(\frac{\delta^\prime}{2}+(K_\Sigma+1+C_1^{1/2})\delta^\prime\right) B_\Sigma[u]^{\frac{1}{2}} B_\mathbf{f}[v]^{\frac{1}{2}}.
\end{align*}
Hence, taking $\delta^\prime \leq \frac{\delta}{4\left(K_\Sigma+1+C_1^{1/2}\right)}\leq\frac{\delta}{4}$ one has the fourth inequality by the Cauchy-Schwarz inequality.
\end{proof}

\begin{cor} \label{EigenSpacePerturbCor}
Let $\Sigma\in\mathcal{ACH}^{k,\alpha}_n$ be a self-expander, and let $\mathbf{v}\in C^{k,\alpha}_0\cap C^{k}_{0,\mathrm{H}}(\Sigma;\mathbb{R}^{n+1})$ be a transverse section on $\Sigma$ so that $\mathcal{C}[\mathbf{v}]$ is transverse to $\mathcal{C}(\Sigma)$. Let 
$$
C_{\Sigma,\mathbf{v}}=\sup_{\Sigma} |\mathbf{v}\cdot\mathbf{n}_\Sigma|^{-1}+\Vert\mathbf{v}\Vert_{1}.
$$
Fix a $\beta\in \left[\frac{1}{4}, \frac{3}{8}\right], \mu\leq \frac{1}{4}$ and $\delta\in \left(0,\frac{1}{2}\right)$. Suppose $u\in W^1_{\frac{1}{4}}(\Sigma)\setminus\set{0}$ satisfies 
$$
-L_{\Sigma, \mathbf{v}}^\prime u=\mu u.
$$
There is an $\epsilon_2=\epsilon_2(K_\Sigma, C_{\Sigma,\mathbf{v}}, n, \delta)>0$ so that if $\mathbf{f}\in \mathcal{B}_{\epsilon_2}(\mathbf{x}|_\Sigma; \mathcal{ACH}^{k,\alpha}_n(\Sigma))$, then, for any $v\in W^1_{\beta}(\Sigma)$,
$$
\left|B_{\mathbf{f}, \mathbf{v}}[u,v]-B_{\Sigma, \mathbf{v}}[u,v]\right|^2\leq \delta^2 B_{\Sigma, \mathbf{v}} [u] B_{\Sigma, \mathbf{v}}[v]
$$
and
$$
\left|B_{\mathbf{f}, \mathbf{v}}[u,v]-B_{\Sigma, \mathbf{v}}[u,v]\right|^2\leq \delta^2 B_{\Sigma, \mathbf{v}} [u] B_{\mathbf{f}, \mathbf{v}}[v]\leq  2\delta^2 B_{\mathbf{f},\mathbf{v}}[u]B_{\mathbf{f},\mathbf{v}}[v].
$$
Likewise, 
$$
\left| Q_{\mathbf{f}, \mathbf{v}}[u,v]-Q_{\Sigma, \mathbf{v}}[u,v]\right|^2\leq  \delta^2 B_{\Sigma, \mathbf{v}} [u] \left(D_{\Sigma,\mathbf{v}}[v]+B_{\Sigma, \mathbf{v}}[v]\right)
$$
and
$$
\left| Q_{\mathbf{f}, \mathbf{v}}[u,v]-Q_{\Sigma, \mathbf{v}}[u,v]\right|^2\leq  \delta^2 B_{\Sigma, \mathbf{v}} [u] \left(D_{\mathbf{f},\mathbf{v}}[v]+B_{\mathbf{f}, \mathbf{v}}[v]\right).
$$
\end{cor}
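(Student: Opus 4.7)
The plan is to reduce Corollary~\ref{EigenSpacePerturbCor} to Proposition~\ref{EstProp} via the pointwise substitution $U := (\mathbf{v}\cdot\mathbf{n}_\Sigma)u$, $V := (\mathbf{v}\cdot\mathbf{n}_\Sigma)v$. By the relations $L_{\Sigma,\mathbf{v}}^\prime = (\mathbf{v}\cdot\mathbf{n}_\Sigma)^{-2}L_{\Sigma,\mathbf{v}}$ and $L_{\Sigma,\mathbf{v}} = (\mathbf{v}\cdot\mathbf{n}_\Sigma)L_\Sigma((\mathbf{v}\cdot\mathbf{n}_\Sigma)\times)$ from Section~\ref{BilinearFormSubsec}, $U$ satisfies $-L_\Sigma U = \mu U$ and lies in $W^1_{1/4}(\Sigma)$, and one has the identities $B_{\Sigma,\mathbf{v}}[u,v] = B_\Sigma[U,V]$ and $Q_{\Sigma,\mathbf{v}}[u,v] = Q_\Sigma[U,V]$. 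Setting $\theta := \mathbf{v}\cdot\mathbf{n}_\Sigma$ and $\theta_\mathbf{f} := \mathbf{v}\cdot(\mathbf{n}_\Lambda\circ\mathbf{f})$ where $\Lambda = \mathbf{f}(\Sigma)$, a direct computation gives $B_{\mathbf{f},\mathbf{v}}[u,v] = B_\mathbf{f}[\theta_\mathbf{f} u, \theta_\mathbf{f} v]$ and $Q_{\mathbf{f},\mathbf{v}}[u,v] = Q_\mathbf{f}[\theta_\mathbf{f} u, \theta_\mathbf{f} v]$. Since $\sup_\Sigma|\theta|^{-1}$ and $\Vert\mathbf{v}\Vert_1$ are both controlled by $C_{\Sigma,\mathbf{v}}$, multiplication by $\theta$ is a bi-Lipschitz isomorphism of $W^1_\beta(\Sigma)$, yielding equivalences $B_\Sigma[V] \simeq B_{\Sigma,\mathbf{v}}[v]$ and $D_\Sigma[V]+B_\Sigma[V] \simeq D_{\Sigma,\mathbf{v}}[v]+B_{\Sigma,\mathbf{v}}[v]$ with constants depending only on $C_{\Sigma,\mathbf{v}}$.

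The key splitting is
\begin{align*}
B_{\mathbf{f},\mathbf{v}}[u,v] - B_{\Sigma,\mathbf{v}}[u,v] &= \bigl(B_\mathbf{f}[\theta_\mathbf{f} u, \theta_\mathbf{f} v] - B_\mathbf{f}[U,V]\bigr) + \bigl(B_\mathbf{f}[U,V] - B_\Sigma[U,V]\bigr),
\end{align*}
and analogously for $Q$. The second bracket is exactly what Proposition~\ref{EstProp} controls when applied to the eigenfunction $U$: for any auxiliary $\delta^\prime \leq \delta$ there is $\epsilon_1 = \epsilon_1(K_\Sigma,n,\delta^\prime)$ so that for $\mathbf{f}\in\mathcal{B}_{\epsilon_1}(\mathbf{x}|_\Sigma;\mathcal{ACH}^{k,\alpha}_n(\Sigma))$ one has $|B_\mathbf{f}[U,V] - B_\Sigma[U,V]|^2\leq (\delta^\prime)^2 B_\Sigma[U]B_\Sigma[V]$, and the right-hand side is equivalent to $(\delta^\prime)^2 B_{\Sigma,\mathbf{v}}[u]B_{\Sigma,\mathbf{v}}[v]$ by the equivalences above; likewise for the $Q$-analog.

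The first bracket is the new contribution and is handled by expanding bilinearly as
\[
B_\mathbf{f}[\theta_\mathbf{f} u, \theta_\mathbf{f} v] - B_\mathbf{f}[U,V] = B_\mathbf{f}[(\theta_\mathbf{f}-\theta)u, \theta_\mathbf{f} v] + B_\mathbf{f}[U, (\theta_\mathbf{f}-\theta)v],
\]
and applying Cauchy--Schwarz, together with the comparability $C^{-1}\leq \Omega_\mathbf{f}\leq C$ valid for $\epsilon_2<1/16$. This reduces matters to pointwise bounds on $\theta_\mathbf{f}-\theta$ (and, for the $Q$-version, on $\nabla_\Sigma(\theta_\mathbf{f}-\theta)$). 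These follow from the smooth dependence of $\mathbf{n}_\Lambda\circ\mathbf{f}$ on $\mathbf{f}\in\mathcal{ACH}^{k,\alpha}_n(\Sigma)$ in the $C^{k-1,\alpha}$ topology and the identity $\mathbf{n}_\Lambda\circ\mathbf{f} = \mathbf{n}_\Sigma$ when $\mathbf{f}=\mathbf{x}|_\Sigma$, combined with $\Vert\mathbf{v}\Vert_1\leq C_{\Sigma,\mathbf{v}}$; they give $\Vert\theta_\mathbf{f}-\theta\Vert_\infty + \Vert\nabla_\Sigma(\theta_\mathbf{f}-\theta)\Vert_\infty \leq C(C_{\Sigma,\mathbf{v}})\epsilon_2$. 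Using the Dirichlet bound $D_\Sigma[U]\leq K_\Sigma B_\Sigma[U]$ (which follows from $-L_\Sigma U = \mu U$ and $\mu\leq 1/4$, as in the proof of Lemma~\ref{EigenEst}), the first bracket is dominated by $C(K_\Sigma,C_{\Sigma,\mathbf{v}},n)\,\epsilon_2\,B_{\Sigma,\mathbf{v}}[u]^{1/2}(D_{\Sigma,\mathbf{v}}[v]+B_{\Sigma,\mathbf{v}}[v])^{1/2}$.

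Taking $\epsilon_2 = \epsilon_2(K_\Sigma,C_{\Sigma,\mathbf{v}},n,\delta)$ small enough to absorb the constants into $\delta$ yields the first, third, and fourth inequalities directly; the inequalities with $B_{\mathbf{f},\mathbf{v}}[v]$ on the right follow from those with $B_{\Sigma,\mathbf{v}}[v]$ by the identical absorbing trick used at the end of the proof of Proposition~\ref{EstProp} (applied after substituting $v=u$ to compare $B_{\mathbf{f},\mathbf{v}}[u]$ with $B_{\Sigma,\mathbf{v}}[u]$). I do not expect a genuine obstacle here; the main technical nuisance will be tracking constants and establishing the $C^1$-smallness of $\theta_\mathbf{f}-\theta$ uniformly on $\Sigma$, for which the existing smooth dependence machinery from \cite{BWBanachManifold} (already invoked in Section~\ref{ModifiedSmoothDependSec}) is more than sufficient.
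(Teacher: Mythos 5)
Your overall strategy coincides with the paper's: substitute $U=(\mathbf{v}\cdot\mathbf{n}_\Sigma)u$, $V=(\mathbf{v}\cdot\mathbf{n}_\Sigma)v$, observe $-L_\Sigma U=\mu U$, rewrite $B_{\mathbf{f},\mathbf{v}}[u,v]=B_\mathbf{f}[\theta_\mathbf{f}u,\theta_\mathbf{f}v]$, split off $B_\mathbf{f}[U,V]-B_\Sigma[U,V]$ and feed it to Proposition~\ref{EstProp}, and treat the remainder via the $C^1$-smallness of $\theta_\mathbf{f}-\theta$. The reduction is correct, and the dependence $\epsilon_2=\epsilon_2(K_\Sigma,C_{\Sigma,\mathbf{v}},n,\delta)$ comes out as claimed.

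However, there is a genuine error at the heart of your treatment of the ``new'' bracket: you assert the uniform two-sided bound $C^{-1}\leq\Omega_\mathbf{f}\leq C$ for $\epsilon_2<1/16$. This is false and is precisely the ``fast growth of the weight'' emphasized in the introduction. Recall $\Omega_\mathbf{f}=\sqrt{\det g_\mathbf{f}/\det g_\Sigma}\,e^{(|\mathbf{f}|^2-|\mathbf{x}|^2)/4}$; for $\mathbf{f}$ a $C^{k,\alpha}_1$-small perturbation of $\mathbf{x}|_\Sigma$ one has $|\mathbf{f}|^2-|\mathbf{x}|^2=O(\epsilon_2|\mathbf{x}|^2)$, so $\Omega_\mathbf{f}$ is only controlled in the form $C^{-1}e^{-|\mathbf{x}|^2/32}\leq\Omega_\mathbf{f}\leq Ce^{|\mathbf{x}|^2/32}$ (as recorded at the start of the proof of Lemma~\ref{EigenEst}); it is \emph{not} uniformly comparable to~$1$. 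As a consequence, if you apply Cauchy--Schwarz to $B_\mathbf{f}[(\theta_\mathbf{f}-\theta)u,\theta_\mathbf{f}v]$ in the naive way, the $v$-factor you get is $B_\mathbf{f}[\theta_\mathbf{f}v]^{1/2}=B_{\mathbf{f},\mathbf{v}}[v]^{1/2}$, which cannot be dominated by $B_{\Sigma,\mathbf{v}}[v]^{1/2}$ because $v$ is an arbitrary element of $W^1_\beta(\Sigma)$, not an eigenfunction. This breaks your derivation of the inequalities with $B_{\Sigma,\mathbf{v}}[v]$ (or $D_{\Sigma,\mathbf{v}}[v]+B_{\Sigma,\mathbf{v}}[v]$) on the right-hand side.

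The correct way to handle this bracket --- and it is what the paper does --- is to first establish the pointwise bound $|B_{\mathbf{f},\mathbf{v}}[u,v]-B_\mathbf{f}[U,V]|\leq C'\epsilon_2\,B_\mathbf{f}[|U|,|V|]$ and then control $B_\mathbf{f}[|U|,|V|]$ by writing $\Omega_\mathbf{f}=1+(\Omega_\mathbf{f}-1)$ and applying Cauchy--Schwarz so that the unbounded weight $\Omega_\mathbf{f}-1$ lands entirely on the eigenfunction term: $B_\mathbf{f}[|U|,|V|]\leq B_\Sigma[U(\Omega_\mathbf{f}-1)]^{1/2}B_\Sigma[V]^{1/2}+B_\Sigma[U]^{1/2}B_\Sigma[V]^{1/2}$. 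The factor $B_\Sigma[U(\Omega_\mathbf{f}-1)]$ is then controlled by the concentration estimate of Lemma~\ref{EigenEst} (this uses that $U$ is an eigenfunction with eigenvalue $\leq 1/4$; it is not a generic function), giving $B_\mathbf{f}[|U|,|V|]\leq 2B_\Sigma[U]^{1/2}B_\Sigma[V]^{1/2}$. You invoke Lemma~\ref{EigenEst} only for the Dirichlet bound $D_\Sigma[U]\leq K_\Sigma B_\Sigma[U]$, but its real role is exactly this weighted control. If you replace the false uniform comparability with this weight-shifting argument (and the analogous one with $\Omega_\mathbf{f}^{-1/2}$ inserted for the inequalities with $B_{\mathbf{f},\mathbf{v}}[v]$ on the right), the rest of your outline goes through.
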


\begin{proof}
We first observe that if $-L_{\Sigma,\mathbf{v}}^\prime u=\mu u$, then $U=(\mathbf{v}\cdot\mathbf{n}_\Sigma) u$ satisfies
$$
-L_\Sigma U=\mu U. 
$$
If $V=(\mathbf{v}\cdot\mathbf{n}_\Sigma) v$, then 
$$
B_{\Sigma,\mathbf{v}}[u,v]=B_\Sigma[U,V], D_{\Sigma,\mathbf{v}}[u,v]=D_\Sigma[U,V] \mbox{ and } Q_{\Sigma,\mathbf{v}}[u,v]=Q_\Sigma[U,V].
$$
Let $\Lambda=\mathbf{f}(\Sigma)$. There is a $C=C(K_\Sigma,C_{\Sigma,\mathbf{v}},n)$ so that 
$$
\Vert\mathbf{v}\cdot(\mathbf{n}_\Lambda\circ\mathbf{f})-\mathbf{v}\cdot\mathbf{n}_\Sigma\Vert_1\leq C\epsilon_2.
$$
Thus, by choosing $\epsilon_2$ sufficiently small, we have that
$$
\frac{1}{4}B_\mathbf{f}[V]\leq B_{\mathbf{f},\mathbf{v}}[v] \leq 4 B_\mathbf{f}[V]
$$
and
$$
\frac{1}{4}\left(B_{\mathbf{f}}[V]+D_\mathbf{f}[V]\right) \leq B_{\mathbf{f},\mathbf{v}}[v]+D_{\mathbf{f},\mathbf{v}}[v] \leq 4\left(B_{\mathbf{f}}[V]+D_\mathbf{f}[V]\right).
$$
And there is a $C^\prime$ depending on $K_\Sigma,C_{\Sigma,\mathbf{v}}$ and $n$ so that 
\begin{equation} \label{EigenPerturbEqn}
\left|B_{\mathbf{f},\mathbf{v}}[u,v]-B_\mathbf{f}[U,V]\right| \leq C^\prime\epsilon_2 B_\mathbf{f}[|U|,|V|]
\end{equation}
and 
\begin{align*}
\left|Q_{\mathbf{f},\mathbf{v}}[u,v]-Q_\mathbf{f}[U,V]\right| \leq C^\prime\epsilon_2 & \left(B_\mathbf{f}[|U|,|V|]+B_\mathbf{f}[|\nabla_\Sigma U|,|V|]\right. \\
& \left.+B_\mathbf{f}[|U|,|\nabla_\Sigma V|]+B_\mathbf{f}[|\nabla_\Sigma U|,|\nabla_\Sigma V|]\right).
\end{align*}

By the Cauchy-Schwarz inequality and Lemma \ref{EigenEst}, for $\epsilon_2$ sufficiently small
$$
B_\mathbf{f}[|U|,|V|] \leq B_\Sigma[U(\Omega_\mathbf{f}-1)]^{\frac{1}{2}} B_\Sigma [V]^{\frac{1}{2}}+B_\Sigma[U]^{\frac{1}{2}} B_\Sigma[V]^{\frac{1}{2}} \leq 2B_\Sigma[U]^{\frac{1}{2}} B_\Sigma[V]^{\frac{1}{2}}.
$$
Thus, by choosing $\epsilon_2$ sufficiently small
$$
\left|B_{\mathbf{f},\mathbf{v}}[u,v]-B_\mathbf{f}[U,V]\right| \leq \frac{\delta}{2} B_\Sigma[U]^{\frac{1}{2}} B_\Sigma[V]^{\frac{1}{2}}.
$$
Hence, by Proposition \ref{EstProp} and possibly shrinking $\epsilon_2$,
\begin{align*}
\left| B_{\mathbf{f},\mathbf{v}}[u,v]-B_{\Sigma,\mathbf{v}}[u,v]\right| & \leq \left|B_{\mathbf{f},\mathbf{v}}[u,v]-B_\mathbf{f}[U,V]\right|+\left| B_\mathbf{f}[U,V]-B_\Sigma[U,V]\right| \\
& \leq \delta B_\Sigma[U]^{\frac{1}{2}} B_\Sigma [V]^{\frac{1}{2}}=\delta B_{\Sigma,\mathbf{v}}[u]^{\frac{1}{2}} B_{\Sigma,\mathbf{v}}[v]^{\frac{1}{2}},
\end{align*}
proving the first inequality.

To prove the second inequalities, one again uses the Cauchy-Schwarz inequality and Lemma \ref{EigenEst} to compute,
\begin{align*}
B_\mathbf{f}[|U|,|V|] & \leq B_\Sigma\left[U(\Omega_\mathbf{f}-1)\Omega_\mathbf{f}^{-\frac{1}{2}}\right]^{\frac{1}{2}} B_\mathbf{f}[V]^{\frac{1}{2}}+B_\Sigma \left[U\Omega_\mathbf{f}^{-\frac{1}{2}}\right]^{\frac{1}{2}} B_\mathbf{f}[V]^{\frac{1}{2}} \\
& \leq (1+C_1)B_\Sigma[U]^{\frac{1}{2}} B_\mathbf{f}[V]^{\frac{1}{2}}.
\end{align*}
Thus, by choosing $\epsilon_2$ sufficiently small \eqref{EigenPerturbEqn} gives
$$
\left|B_{\mathbf{f},\mathbf{v}}[u,v]-B_\mathbf{f}[U,V]\right| \leq \frac{\delta}{4} B_\Sigma[U]^{\frac{1}{2}} B_\mathbf{f}[V]^{\frac{1}{2}}.
$$
Hence, invoking Proposition \ref{EstProp} and shrinking $\epsilon_2$ (if needed) one has
$$
\left| B_{\mathbf{f},\mathbf{v}}[u,v]-B_{\Sigma,\mathbf{v}}[u,v]\right| \leq \frac{\delta}{2} B_\Sigma[U]^{\frac{1}{2}} B_\mathbf{f}[V]^{\frac{1}{2}} \leq \delta B_{\Sigma,\mathbf{v}}[u]^{\frac{1}{2}} B_{\mathbf{f},\mathbf{v}}[v]^{\frac{1}{2}}.
$$
Now taking $u=v$ it follows that
\begin{align*}
\left|B_{\mathbf{f},\mathbf{v}}[u]-B_{\Sigma,\mathbf{v}}[u]\right|^2 & \leq \delta^2 B_{\Sigma,\mathbf{v}}[u]B_{\mathbf{f},\mathbf{v}}[u]\leq \delta^2 B_{\mathbf{f},\mathbf{v}}[u]^2+\delta^2 B_{\mathbf{f},\mathbf{v}}[u]\left|B_{\mathbf{f},\mathbf{v}}[u]-B_{\Sigma,\mathbf{v}}[u]\right| \\
& \leq \frac{3}{2} \delta^2 B_{\mathbf{f},\mathbf{v}}[u]^2+\frac{1}{2} \left|B_{\mathbf{f},\mathbf{v}}[u]-B_{\Sigma,\mathbf{v}}[u]\right|^2.
\end{align*}
Thus, as $\delta<1/2$ we have
$$
\left|B_{\mathbf{f},\mathbf{v}}[u]-B_{\Sigma,\mathbf{v}}[u]\right|^2 \leq 3\delta^2 B_{\mathbf{f},\mathbf{v}}[u]^2 \leq B_{\mathbf{f},\mathbf{v}}[u]^2
$$
and so
$$
B_{\Sigma,\mathbf{v}}[u] \leq B_{\mathbf{f},\mathbf{v}}[u]+\left|B_{\mathbf{f},\mathbf{v}}[u]-B_{\Sigma,\mathbf{v}}[u]\right| \leq 2B_{\mathbf{f},\mathbf{v}}[u].
$$
This completes the proof of the second inequalities.

The other pair of inequalities are proved in a similar fashion.
\end{proof}

\begin{cor}\label{PerturbCor}
Let $\Sigma\in\mathcal{ACH}^{k,\alpha}_n$ be a self-expander, and let $\mathbf{v}\in C^{k,\alpha}_0\cap C^{k}_{0,\mathrm{H}}(\Sigma;\mathbb{R}^{n+1})$ be a transverse section on $\Sigma$ so that $\mathcal{C}[\mathbf{v}]$ is transverse to $\mathcal{C}(\Sigma)$. Let
$$
C_{\Sigma,\mathbf{v}}=\sup_{\Sigma}|\mathbf{v}\cdot\mathbf{n}_\Sigma|^{-1}+\Vert\mathbf{v}\Vert_1.
$$
Fix a $\beta\in \left[\frac{1}{4}, \frac{3}{8}\right]$, $\mu\leq \frac{1}{4}$ and $\delta\in \left(0,\frac{1}{2}\right)$. Let $V$ be a subspace of $W^1_{\frac{1}{4}}(\Sigma)$ spanned by $N$ linearly independent eigenfunctions of $-L_{\Sigma,\mathbf{v}}^\prime$ with eigenvalues less than or equal to $\mu$. There is an $\epsilon_3=\epsilon_3(K_\Sigma, C_{\Sigma,\mathbf{v}},n, \delta, N)>0$ so that if $\mathbf{f}\in \mathcal{B}_{\epsilon_3}(\mathbf{x}|_\Sigma;\mathcal{ACH}^{k,\alpha}_n(\Sigma))$, then, for any $u\in V\setminus\set{0}$ and $v\in W^1_\beta(\Sigma)$,
$$
\left|B_{\mathbf{f}, \mathbf{v}}[u,v]-B_{\Sigma, \mathbf{v}}[u,v]\right|^2\leq \delta^2 B_{\Sigma, \mathbf{v}} [u] B_{\Sigma, \mathbf{v}}[v]
$$
and
$$
\left|B_{\mathbf{f}, \mathbf{v}}[u,v]-B_{\Sigma, \mathbf{v}}[u,v]\right|^2\leq \delta^2 B_{\Sigma, \mathbf{v}}[u]B_{\mathbf{f}, \mathbf{v}} [v]\leq 2\delta^2 B_{\mathbf{f}, \mathbf{v}} [u]B_{\mathbf{f}, \mathbf{v}} [v].
$$
Likewise,
$$
\left| Q_{\mathbf{f}, \mathbf{v}}[u,v]-Q_{\Sigma, \mathbf{v}}[u,v]\right|^2\leq  \delta^2 B_{\Sigma, \mathbf{v}} [u] \left(D_{\Sigma,\mathbf{v}}[v]+B_{\Sigma, \mathbf{v}}[v]\right)
$$
and
$$
\left| Q_{\mathbf{f}, \mathbf{v}}[u,v]-Q_{\Sigma, \mathbf{v}}[u,v]\right|^2\leq  \delta^2 B_{\Sigma, \mathbf{v}} [u] \left(D_{\mathbf{f},\mathbf{v}}[v]+B_{\mathbf{f}, \mathbf{v}}[v]\right).
$$
\end{cor}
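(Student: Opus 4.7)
The plan is to reduce the estimates on the subspace $V$ to the one-eigenfunction estimates established in Corollary \ref{EigenSpacePerturbCor}, paying for this reduction with a factor of $\sqrt{N}$ coming from Cauchy--Schwarz. The key observation is that the constant $\epsilon_2$ in Corollary \ref{EigenSpacePerturbCor} depends only on $K_\Sigma, C_{\Sigma,\mathbf{v}}, n$ and the tolerance, and \emph{not} on the particular eigenfunction $u$ or its eigenvalue $\mu_i \leq \mu \leq \frac{1}{4}$; hence it is legitimate to use a uniform $\epsilon_2$ for all elements of any chosen basis of $V$.

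First, I would pick a convenient basis of $V$. Since $-L_{\Sigma,\mathbf{v}}^\prime$ is self-adjoint with respect to $B_{\Sigma,\mathbf{v}}$ (Lemma \ref{SpectrumLem}), and $B_{\Sigma,\mathbf{v}}$ is a positive definite inner product on the finite dimensional subspace $V$, I can replace the given spanning set of eigenfunctions with a $B_{\Sigma,\mathbf{v}}$-orthonormal basis $u_1, \dots, u_N$, each still an eigenfunction of $-L_{\Sigma,\mathbf{v}}^\prime$ with eigenvalue $\mu_i \leq \mu \leq \frac{1}{4}$. (Eigenfunctions in distinct eigenspaces are automatically $B_{\Sigma,\mathbf{v}}$-orthogonal, and within each eigenspace I Gram--Schmidt.) Now set $\delta' = \delta/\sqrt{N}$ and take $\epsilon_3 = \epsilon_2(K_\Sigma, C_{\Sigma,\mathbf{v}}, n, \delta')$ from Corollary \ref{EigenSpacePerturbCor}. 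Then $\epsilon_3$ depends only on $K_\Sigma, C_{\Sigma,\mathbf{v}}, n, \delta$ and $N$, as required.

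For any $u = \sum_{i=1}^N a_i u_i \in V$ one has $B_{\Sigma,\mathbf{v}}[u] = \sum_{i=1}^N a_i^2$ by orthonormality. Writing $T[\cdot, v]$ for any one of the differences $B_{\mathbf{f},\mathbf{v}}[\cdot, v] - B_{\Sigma,\mathbf{v}}[\cdot, v]$ or $Q_{\mathbf{f},\mathbf{v}}[\cdot, v] - Q_{\Sigma,\mathbf{v}}[\cdot, v]$, linearity, the triangle inequality and Cauchy--Schwarz give, for any $\mathbf{f}\in\mathcal{B}_{\epsilon_3}(\mathbf{x}|_\Sigma;\mathcal{ACH}^{k,\alpha}_n(\Sigma))$,
$$
\bigl| T[u,v] \bigr| \leq \sum_{i=1}^N |a_i|\, \bigl| T[u_i, v]\bigr| \leq \delta' \sqrt{N}\, \Bigl(\sum_{i=1}^N a_i^2\Bigr)^{\!1/2} R[v]^{1/2} = \delta\, B_{\Sigma,\mathbf{v}}[u]^{1/2} R[v]^{1/2},
$$
where $R[v]$ stands for $B_{\Sigma,\mathbf{v}}[v]$, $B_{\mathbf{f},\mathbf{v}}[v]$, $D_{\Sigma,\mathbf{v}}[v]+B_{\Sigma,\mathbf{v}}[v]$, or $D_{\mathbf{f},\mathbf{v}}[v]+B_{\mathbf{f},\mathbf{v}}[v]$, according to which of the four inequalities of Corollary \ref{EigenSpacePerturbCor} is being invoked on each $u_i$ (with tolerance $\delta'$). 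This directly yields the first, third and fourth inequalities of the corollary, as well as the first half of the second inequality.

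For the second half of the second inequality, i.e.\ the bound by $2\delta^2 B_{\mathbf{f},\mathbf{v}}[u] B_{\mathbf{f},\mathbf{v}}[v]$, one takes $u=v$ in what has just been proved: with $a=B_{\Sigma,\mathbf{v}}[u]^{1/2}$ and $b=B_{\mathbf{f},\mathbf{v}}[u]^{1/2}$ one has $|a^2-b^2| \leq \delta\, ab$, so (since $\delta<\tfrac12$) $a^2 \leq \tfrac{1+\delta/2}{1-\delta/2} b^2 \leq 2 b^2$, i.e.\ $B_{\Sigma,\mathbf{v}}[u] \leq 2 B_{\mathbf{f},\mathbf{v}}[u]$; substituting this into the already proved inequality closes the argument. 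The only real point of care is choosing $\delta'=\delta/\sqrt{N}$ so that the $\sqrt{N}$ loss from Cauchy--Schwarz is absorbed, which is why $\epsilon_3$ legitimately depends on $N$; since this is already allowed in the statement, no further obstacle arises.
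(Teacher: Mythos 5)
Your proof is correct and follows essentially the same route as the paper: pick a $B_{\Sigma,\mathbf{v}}$-orthonormal basis of $V$ consisting of eigenfunctions, invoke Corollary \ref{EigenSpacePerturbCor} with tolerance $\delta/\sqrt{N}$ on each basis element (using, as you rightly note, that $\epsilon_2$ there does not depend on the individual eigenvalue), and absorb the $\sqrt{N}$ loss from Cauchy--Schwarz. The only cosmetic difference is that the paper applies Cauchy--Schwarz in the form $\left|\sum a_iT_i\right|^2\leq N\sum a_i^2T_i^2$ before bounding each $T_i^2$, whereas you bound each $|T_i|$ first and then apply Cauchy--Schwarz to $\sum|a_i|$; these are the same estimate.
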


\begin{proof}
Pick a $B_{\Sigma, \mathbf{v}}$-orthonormal basis, $u_1, \ldots, u_N$, of $V$ of eigenfunctions of $-L_{\Sigma, \mathbf{v}}^\prime$. This is possible as $-L_{\Sigma, \mathbf{v}}^\prime$ is self-adjoint with respect to $B_{\Sigma, \mathbf{v}}$. In particular, $Q_{\Sigma, \mathbf{v}}[u_i,u_j]=\mu_i \delta_{ij}$ where $\mu_i$ is the eigenvalue of $u_i$. That is, this orthonormal basis also diagonalizes $Q_{\Sigma, \mathbf{v}}$. Pick $\epsilon_3$ so for any $u_i$ one has 
$$
\left|B_{\mathbf{f}, \mathbf{v}}[u_i,v]-B_{\Sigma, \mathbf{v}}[u_i,v]\right|^2\leq \frac{\delta^2}{N} B_{\Sigma, \mathbf{v}}[u_i] B_{\Sigma, \mathbf{v}}[v].
$$
	
Write $u=\sum_{i=1}^N a_i u_i$. One has, by the Cauchy-Schwarz inequality,
\begin{align*}
\left|B_{\mathbf{f}, \mathbf{v}}[u,v]-B_{\Sigma, \mathbf{v}}[u,v]\right|^2 &= \left| \sum_{i=1}^N  a_i \left(B_{\mathbf{f}, \mathbf{v}}[u_i,v]- B_{\Sigma, \mathbf{v}}[u_i,v]\right)\right|^2\\
&\leq N \sum_{i=1}^N a_i^2 \left|B_{\mathbf{f}, \mathbf{v}}[u_i,v]-B_{\Sigma, \mathbf{v}}[u_i,v]\right|^2\\
&\leq N \sum_{i=1}^N a_i^2 \frac{\delta^2}{N} B_{\Sigma, \mathbf{v}}[u_i] B_{\Sigma, \mathbf{v}}[v]=\delta^2 B_{\Sigma, \mathbf{v}}[u] B_{\Sigma, \mathbf{v}}[v]
\end{align*}
where the last equality used that $\set{u_i}$ is an orthonormal basis.
	
The proof of remaining inequalities proceed in a similar fashion.
\end{proof}

\section{Stability of bilinear form} \label{StableFormSec}
In this section we show the stability of the positive and negative spaces of the bilinear form $Q_{\mathbf{f},\mathbf{v}}$ as one perturbs $\mathbf{f}$ around $\mathbf{x}|_{\Sigma}$ in $\mathcal{ACH}^{k,\alpha}_n(\Sigma)$ for $\Sigma$ an asymptotically conical self-expander. This is a non-trivial result as such a perturbation can radically change the weight that appears.  In what follows we will use the (easily checked) fact that if $\mathbf{f}\in \mathcal{B}_{\frac{1}{8}}(\mathbf{x}|_\Sigma; \mathcal{ACH}^{k,\alpha}_n(\Sigma))$, then for all $u\in W^1_{\frac{3}{8}}(\Sigma)$ one has
$$
Q_{\mathbf{f}, \mathbf{v}}[u]+D_{\mathbf{f}, \mathbf{v}}[u]+ B_{\mathbf{f}, \mathbf{v}}[u]<\infty.
$$
This explains the choice of weighted space that will appear below.

We define 
$$
\mathcal{E}_{\Sigma, \mathbf{v}}^\prime[\mu]=\mathrm{span} \set{u\in W^1_{\frac{1}{4}}(\Sigma)\colon -L_{\Sigma, \mathbf{v}}^\prime u = \mu^\prime u \mbox{ for some $\mu^\prime\leq \mu$}}
$$
to be the subspace of $W^1_{\frac{1}{4}}(\Sigma)$ spanned by eigenfunctions of $-L_{\Sigma, \mathbf{v}}^\prime$ with eigenvalues less than or equal to $\mu$. We also let
$$
\mathcal{E}_{\Sigma, \mathbf{v}}^\prime(\mu)=\mathrm{span} \set{u\in W^1_{\frac{1}{4}}(\Sigma)\colon -L_{\Sigma, \mathbf{v}}^\prime u = \mu^\prime u \mbox{ for some $\mu^\prime< \mu$}}
$$
be the space spanned by eigenfunctions with eigenvalues strictly less than $\mu$. Let $\mathcal{N}_{\Sigma, \mathbf{v}}^\prime=\mathcal{E}_{\Sigma, \mathbf{v}}^\prime(0)$ be the space spanned by the eigenfunctions with negative eigenvalues and let 
$$
\mathcal{K}_{\Sigma, \mathbf{v}}^\prime=\mathcal{E}_{\Sigma, \mathbf{v}}^\prime [0]\setminus \mathcal{E}_{\Sigma, \mathbf{v}}^\prime(0)=\mathcal{K}_{\mathbf{v}}
$$
be the space of elements in $W^1_{\frac{1}{4}}(\Sigma)$ in the kernel of $L_{\Sigma, \mathbf{v}}^\prime$. Observe that, by Proposition \ref{DecayEigenfunProp}, for any $\mu$,
$$
\mathcal{E}_{\Sigma, \mathbf{v}}^\prime(\mu)\subset \mathcal{E}_{\Sigma, \mathbf{v}}^\prime[\mu]\subset W^1_{\frac{3}{8}}(\Sigma).
$$
and so the same is true of $\mathcal{N}_{\Sigma, \mathbf{v}}^\prime$ and $\mathcal{K}_{\Sigma, \mathbf{v}}^\prime$. Using this fact, let
$$
\mathcal{P}_{\Sigma, \mathbf{v}}^\prime=\set{f\in W^1_{\frac{3}{8}}(\Sigma)\colon B_{\Sigma, \mathbf{v}}[f, u]=0, \forall u\in \mathcal{E}_{\Sigma, \mathbf{v}}^\prime[0]}. 
$$
By construction and the fact that $-L_{\Sigma, \mathbf{v}}^\prime$ is self-adjoint with respect to $B_{\Sigma, \mathbf{v}}$ and has a discrete spectrum,
$$
W^1_{\frac{3}{8}}(\Sigma)=\mathcal{N}_{\Sigma, \mathbf{v}}^\prime\oplus\mathcal{K}_{\Sigma, \mathbf{v}}^\prime\oplus\mathcal{P}_{\Sigma, \mathbf{v}}^\prime
$$
and these three subspaces are orthogonal with respect to $B_{\Sigma, \mathbf{v}}$. Moreover, for all $u\in \mathcal{N}_{\Sigma, \mathbf{v}}^\prime$,
$$
Q_{\Sigma, \mathbf{v}}[u]\leq \mu_{\Sigma, \mathbf{v}}^- B_{\Sigma, \mathbf{v}}[u]\leq 0,
$$
and for all $u\in\mathcal{P}_{\Sigma, \mathbf{v}}^\prime$,
$$
Q_{\Sigma, \mathbf{v}}[u]\geq \mu_{\Sigma, \mathbf{v}}^+ B_{\Sigma, \mathbf{v}}[u]\geq 0.
$$
Here $ \mu_{\Sigma, \mathbf{v}}^- <0< \mu_{\Sigma, \mathbf{v}}^+$ are, respectively, the largest negative and smallest positive eigenvalues of $-L_{\Sigma, \mathbf{v}}^\prime$.  Likewise, for $u\in \mathcal{K}_{\Sigma,\mathbf{v}}^\prime$ and $v\in W^1_{\frac{3}{8}}(\Sigma)$,  $Q_{\Sigma, \mathbf{v}}[u, v]=0$.  

First, a simple consequence of the inverse function theorem in H\"older spaces.

\begin{lem} \label{InverseLem}  
Fix $\Sigma\in\mathcal{ACH}^{k,\alpha}_n$ and $\epsilon>0$. There is an $\eta=\eta(\Sigma,\epsilon)>0$ so that if $\mathbf{f}\in \mathcal{B}_{\eta}(\mathbf{x}|_\Sigma;\mathcal{ACH}^{k,\alpha}_n(\Sigma))$, then 
$$
\mathbf{f}^{-1} \in \mathcal{B}_{\epsilon}(\mathbf{x}|_{\mathbf{f}(\Sigma)};\mathcal{ACH}^{k,\alpha}_n(\mathbf{f}(\Sigma))).
$$
\end{lem}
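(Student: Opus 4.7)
My plan is to apply the standard implicit function theorem, exploiting the scale-invariant structure of the weighted H\"older norms together with the asymptotic homogeneity condition. First I would observe that since $\mathbf{f}\in\mathcal{ACH}^{k,\alpha}_n(\Sigma)$, the map is by definition an embedding and the image $\Lambda=\mathbf{f}(\Sigma)$ is itself an element of $\mathcal{ACH}^{k,\alpha}_n$; hence $\mathbf{f}^{-1}\colon\Lambda\to\Sigma\subset\mathbb{R}^{n+1}$ is a well-defined $C^{k,\alpha}$ diffeomorphism and $\mathbf{x}|_\Lambda\in\mathcal{ACH}^{k,\alpha}_n(\Lambda)$ is the natural reference point.

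Next, setting $\mathbf{h}=\mathbf{f}-\mathbf{x}|_\Sigma$, the basic identity
$$
(\mathbf{f}^{-1}-\mathbf{x}|_\Lambda)\circ\mathbf{f}=-\mathbf{h}
$$
reduces bounding $\mathbf{f}^{-1}-\mathbf{x}|_\Lambda$ in the weighted norm on $\Lambda$ to estimating $-\mathbf{h}\circ\mathbf{f}^{-1}$. For $\eta$ sufficiently small, the hypothesis $\Vert\mathbf{h}\Vert^{(1)}_{k,\alpha}<\eta$ yields three crucial facts: (i) $|\mathbf{f}(p)|$ is comparable to $|p|$ up to a factor $(1\pm O(\eta))$, so the weights $(1+|q|)^d$ on $\Lambda$ at $q=\mathbf{f}(p)$ are uniformly equivalent to the weights $(1+|p|)^d$ on $\Sigma$; (ii) the differential $d\mathbf{f}$ is invertible with $d\mathbf{f}^{-1}=\mathrm{Id}+O(\eta)$ pointwise (by a Neumann series); (iii) the pullback metric $\mathbf{f}^*g_{\mathbb{R}^{n+1}}$ and its covariant derivatives up to order $k-1$ are controlled by $\Vert\mathbf{h}\Vert^{(1)}_{k,\alpha}$ in the appropriate scale-invariant norms.

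Third, applying the chain rule iteratively to $\mathbf{f}\circ\mathbf{f}^{-1}=\mathrm{Id}_\Lambda$ produces explicit formulas for $\nabla^j_\Lambda \mathbf{f}^{-1}$ in terms of $\nabla^i_\Sigma\mathbf{h}$ and $d\mathbf{f}^{-1}$ for $1\leq j\leq k$; estimating each such term in the weighted H\"older norms on $\Lambda$ via the equivalences above gives
$$
\Vert\mathbf{f}^{-1}-\mathbf{x}|_\Lambda\Vert^{(1)}_{k,\alpha;\Lambda}\leq C(\Sigma)\,\Vert\mathbf{h}\Vert^{(1)}_{k,\alpha;\Sigma}.
$$
For the remaining asymptotic homogeneity requirement $\mathbf{f}^{-1}-\mathbf{x}|_\Lambda\in C^k_{1,\mathrm{H}}(\Lambda;\mathbb{R}^{n+1})$, I would observe that the asymptotic cone $\mathcal{C}(\Lambda)$ is parameterized by $\mathscr{E}^{\mathrm{H}}_1\circ\mathrm{tr}^1_\infty[\mathbf{f}]$, a small $C^{k,\alpha}$-perturbation of $\mathbf{x}|_{\mathcal{C}(\Sigma)}$; the classical inverse function theorem applied on the compact link $\mathcal{L}(\Sigma)$ then produces a trace at infinity for $\mathbf{f}^{-1}$ close to $\mathbf{x}|_{\mathcal{L}(\Lambda)}$, and the homogeneous structure propagates this to $\mathbf{f}^{-1}$ itself through its asymptotic expansion.

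The hard part will be the careful bookkeeping of weighted H\"older norms under composition with the close-to-identity map $\mathbf{f}$: one must match the weight at a point $q\in\Lambda$ with the weight at $\mathbf{f}^{-1}(q)\in\Sigma$ while also comparing covariant derivatives computed with two different induced metrics, and do so uniformly in the scaling parameter $|\mathbf{x}|$. However, the scale invariance of the norms $\Vert\cdot\Vert^{(d)}_{k,\alpha}$ combined with the smallness of $\mathbf{h}$ at every scale makes all these comparisons uniform, and choosing $\eta=\eta(\Sigma,\epsilon)$ so that $C(\Sigma)\eta<\epsilon$ yields the conclusion.
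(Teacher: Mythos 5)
Your proposal is correct and follows essentially the same route as the paper: the central identity $(\mathbf{f}^{-1}-\mathbf{x}|_{\Lambda})\circ\mathbf{f}=-(\mathbf{f}-\mathbf{x}|_{\Sigma})$ plus a composition estimate in the weighted $C^{k,\alpha}_1$ norm, together with a first inverse-function-theorem step to bound $\Vert\mathbf{f}^{-1}\Vert_{k,\alpha}^{(1)}$. The only difference is that you sketch the weight-comparison and chain-rule bookkeeping directly, whereas the paper simply cites \cite[Propositions 3.1(4) and 3.3]{BWBanachManifold} for precisely those composition estimates and for $\Lambda\in\mathcal{ACH}^{k,\alpha}_n$.
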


\begin{proof}
Let $\Lambda=\mathbf{f}(\Sigma)$. For $\eta$ sufficiently small, $\mathbf{f}\in \mathcal{ACH}^{k,\alpha}_n(\Sigma)$ and so $\Lambda\in\mathcal{ACH}^{k,\alpha}_n$. We first observe that as long as $\eta\leq \frac{1}{8}$, then by the implicit function theorem in $C^{k,\alpha}_1$
$$
\Vert \mathbf{f}^{-1}\Vert_{k,\alpha}^{(1)}\leq C(n)
$$
Observe that
$$
\mathbf{f}^{-1}-\mathbf{x}|_{\Gamma}=\mathbf{x}|_{\Sigma} \circ  \mathbf{f}^{-1}-\mathbf{f}\circ \mathbf{f}^{-1},
$$
and so by \cite[Proposition 3.1 (4)]{BWBanachManifold}
$$
\Vert \mathbf{f}^{-1}-\mathbf{x}|_{\Lambda}\Vert_{k,\alpha}^{(1)} \leq  C^\prime\Vert \mathbf{f}-\mathbf{x}|_{\Sigma}\Vert_{k,\alpha}^{(1)}
$$
for some $C^\prime=C^\prime(\Sigma,C(n))$. The claim follows immediately.
\end{proof}

We now show the stability of the positive and negative spaces for the quadratic form $Q_{\mathbf{f}, \mathbf{v}}$ for $\mathbf{f}$ a small perturbation of $\mathbf{x}|_{\Sigma}$. The main challenge in proving this result is the fact that the weight is allowed to change a large amount near infinity and we cannot directly use the estimates of Section \ref{ConcentrationSec} on the positive part.

\begin{prop}\label{QuadFormProp}
Let $\Sigma\in\mathcal{ACH}^{k,\alpha}_n$ be a self-expander, and let $\mathbf{v}\in C^{k,\alpha}_0\cap C^{k}_{0,\mathrm{H}}(\Sigma;\mathbb{R}^{n+1})$ a transverse section on $\Sigma$ so that $\mathcal{C}[\mathbf{v}]$ is transverse to $\mathcal{C}(\Sigma)$. There is an $\epsilon^\prime=\epsilon^\prime(\Sigma,\mathbf{v})$ so that if $\mathbf{f}\in \mathcal{B}_{\epsilon^\prime}(\mathbf{x}|_\Sigma;\mathcal{ACH}^{k,\alpha}_n(\Sigma))$ is $E$-stationary (i.e., $\mathbf{f}(\Sigma)$ is a self-expander), then
$$
Q_{\mathbf{f}, \mathbf{v}}[u]< 0,
$$
for any $u\in \mathcal{N}_{\Sigma, \mathbf{v}}^\prime\setminus\set{0}$ and
$$
Q_{\mathbf{f}, \mathbf{v}}[u] >0,
$$
for any $u\in \mathcal{P}_{\Sigma,\mathbf{v}}^\prime\setminus\set{0}$. 
\end{prop}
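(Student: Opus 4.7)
The plan is to establish the two halves of Proposition \ref{QuadFormProp} separately, using rather different ideas, with the negative part serving as essential input to the harder positive part.

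For the negative direction I would apply Corollary \ref{PerturbCor} directly to the finite-dimensional eigenspace $\mathcal{N}:=\mathcal{N}_{\Sigma,\mathbf{v}}^\prime$, which is spanned by eigenfunctions of $-L_{\Sigma,\mathbf{v}}^\prime$ with eigenvalues $\le \mu_{\Sigma,\mathbf{v}}^-<0<\tfrac14$. Since $\mathcal{N}$ is finite-dimensional, all norms are equivalent on it, so there is a constant $C_\mathcal{N}=C_\mathcal{N}(\Sigma,\mathbf{v})$ with $D_{\Sigma,\mathbf{v}}[u]\le C_\mathcal{N} B_{\Sigma,\mathbf{v}}[u]$ for $u\in\mathcal{N}$. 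Taking $\delta$ in Corollary \ref{PerturbCor} strictly smaller than $|\mu_{\Sigma,\mathbf{v}}^-|/\sqrt{C_\mathcal{N}+1}$ and choosing $\epsilon^\prime\le \epsilon_3$ correspondingly gives, for any $u\in\mathcal{N}\setminus\{0\}$,
$$
Q_{\mathbf{f},\mathbf{v}}[u]\le Q_{\Sigma,\mathbf{v}}[u]+\delta\sqrt{C_\mathcal{N}+1}\, B_{\Sigma,\mathbf{v}}[u]\le \bigl(\mu_{\Sigma,\mathbf{v}}^-+\delta\sqrt{C_\mathcal{N}+1}\bigr) B_{\Sigma,\mathbf{v}}[u]<0.
$$

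For the positive direction I would compare the $\Sigma$-decomposition with the analogous $\mathbf{f}$-decomposition. Since $\mathbf{f}$ is $E$-stationary, $\mathbf{f}(\Sigma)$ is itself a self-expander and by Lemma \ref{ParaSpectrumLem} the operator $-L_{\mathbf{f},\mathbf{v}}^\prime$ is self-adjoint with discrete spectrum, providing a decomposition $W^1_{3/8}(\Sigma)=\mathcal{N}_{\mathbf{f},\mathbf{v}}^\prime\oplus\mathcal{K}_{\mathbf{f},\mathbf{v}}^\prime\oplus\mathcal{P}_{\mathbf{f},\mathbf{v}}^\prime$ orthogonal with respect to $B_{\mathbf{f},\mathbf{v}}$. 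For $u\in\mathcal{P}_{\Sigma,\mathbf{v}}^\prime$, write $u=u_N^{\mathbf{f}}+u_K^{\mathbf{f}}+u_P^{\mathbf{f}}$; then $Q_{\mathbf{f},\mathbf{v}}[u]=Q_{\mathbf{f},\mathbf{v}}[u_N^{\mathbf{f}}]+Q_{\mathbf{f},\mathbf{v}}[u_P^{\mathbf{f}}]$. The plan is to show that the ``wrong'' pieces $u_N^{\mathbf{f}}$ and $u_K^{\mathbf{f}}$ are of size $O(\delta)$ in $B_{\mathbf{f},\mathbf{v}}$-norm, so that the positive contribution $Q_{\mathbf{f},\mathbf{v}}[u_P^{\mathbf{f}}]\ge \mu_{\mathbf{f},\mathbf{v}}^+ B_{\mathbf{f},\mathbf{v}}[u_P^{\mathbf{f}}]$ dominates the negative contribution $|Q_{\mathbf{f},\mathbf{v}}[u_N^{\mathbf{f}}]|\le |\mu_{\mathbf{f},\mathbf{v}}^{\min}| B_{\mathbf{f},\mathbf{v}}[u_N^{\mathbf{f}}]$. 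The smallness of the projections is obtained by taking a $B_{\mathbf{f},\mathbf{v}}$-orthonormal eigenbasis $\{\varphi_i^{\mathbf{f}}\}$ of $\mathcal{N}_{\mathbf{f},\mathbf{v}}^\prime\oplus\mathcal{K}_{\mathbf{f},\mathbf{v}}^\prime$ (finite-dimensional), writing
$$
B_{\mathbf{f},\mathbf{v}}[u,\varphi_i^{\mathbf{f}}]=B_{\Sigma,\mathbf{v}}[u,\varphi_i^{\mathbf{f}}]+O(\delta)
$$
via Corollary \ref{PerturbCor} applied with $\varphi_i^{\mathbf{f}}$ in the eigenfunction slot, and then using that $u$ is $B_{\Sigma,\mathbf{v}}$-orthogonal to $\mathcal{N}_{\Sigma,\mathbf{v}}^\prime\oplus\mathcal{K}_{\Sigma,\mathbf{v}}^\prime$ to bound $B_{\Sigma,\mathbf{v}}[u,\varphi_i^{\mathbf{f}}]$ by the $B_{\Sigma,\mathbf{v}}$-distance from $\varphi_i^{\mathbf{f}}$ to $\mathcal{N}_{\Sigma,\mathbf{v}}^\prime\oplus\mathcal{K}_{\Sigma,\mathbf{v}}^\prime$.

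The main obstacle is thus controlling this last distance, i.e., showing that the whole $\mathbf{f}$-non-positive eigenspace is close to the $\Sigma$-non-positive eigenspace despite the fact that the perturbation $\mathbf{f}$ can radically change the weight near infinity -- the very issue flagged in the introduction. Two ingredients are needed. First, a dimension bound $\dim(\mathcal{N}_{\mathbf{f},\mathbf{v}}^\prime\oplus\mathcal{K}_{\mathbf{f},\mathbf{v}}^\prime)\le \dim(\mathcal{N}_{\Sigma,\mathbf{v}}^\prime\oplus\mathcal{K}_{\Sigma,\mathbf{v}}^\prime)$, obtained by the min-max characterization of Proposition \ref{ParaIndexProp} combined with the already-proven negative part applied in reverse, viewing $\mathbf{f}^{-1}$ (close to $\mathbf{x}|_{\mathbf{f}(\Sigma)}$ by Lemma \ref{InverseLem}) as a perturbation of the self-expander $\mathbf{f}(\Sigma)$. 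Second, a quantitative closeness of individual eigenfunctions: because any $\varphi_i^{\mathbf{f}}$ satisfies $-L_{\mathbf{f},\mathbf{v}}^\prime\varphi_i^{\mathbf{f}}=\mu_i^{\mathbf{f}}\varphi_i^{\mathbf{f}}$ with $\mu_i^{\mathbf{f}}\le 0<\tfrac14$, Proposition \ref{DecayEigenfunProp} provides the fast $W^0_{3/8}$-decay needed to invoke the concentration estimate Lemma \ref{ConcentrationLem} and hence to replace $\varphi_i^{\mathbf{f}}$ by its $B_{\Sigma,\mathbf{v}}$-orthogonal projection onto $\mathcal{N}_{\Sigma,\mathbf{v}}^\prime\oplus\mathcal{K}_{\Sigma,\mathbf{v}}^\prime$ up to an $O(\delta)$ error. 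The technical heart of the argument is thus the careful iteration of Corollary \ref{PerturbCor}, rather than any soft operator-theoretic convergence, precisely because the weight change obstructs standard spectral-perturbation machinery.
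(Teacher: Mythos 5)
Your negative direction is essentially the paper's argument; the paper bounds $D_{\Sigma,\mathbf{v}}[u]$ by $K_\Sigma B_{\Sigma,\mathbf{v}}[u]$ using the negativity of $Q_{\Sigma,\mathbf{v}}$ on $\mathcal{N}_{\Sigma,\mathbf{v}}^\prime$ rather than by a finite-dimensionality constant $C_\mathcal{N}$, but the two routes are interchangeable.

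The positive direction as you sketch it has a genuine gap, and it is precisely the one you flag as ``the main obstacle.'' Your plan hinges on showing that a $B_{\mathbf{f},\mathbf{v}}$-normalized eigenfunction $\varphi_i^{\mathbf{f}}$ of $-L_{\mathbf{f},\mathbf{v}}^\prime$ with eigenvalue $\leq 0$ is $O(\delta)$-close, in $B_{\Sigma,\mathbf{v}}$-norm, to $\mathcal{E}_{\Sigma,\mathbf{v}}^\prime[0]$. Write $\varphi_i^{\mathbf{f}}=w_0+w_+$ with $w_0\in\mathcal{E}_{\Sigma,\mathbf{v}}^\prime[0]$, $w_+\in\mathcal{P}_{\Sigma,\mathbf{v}}^\prime$. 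Applying Corollary \ref{PerturbCor} on $\Lambda=\mathbf{f}(\Sigma)$ (via Lemma \ref{InverseLem}) to the pair $(\varphi_i^{\mathbf{f}},w_+)$ gives $\mu_{\Sigma,\mathbf{v}}^+B_{\Sigma,\mathbf{v}}[w_+]\leq Q_{\Sigma,\mathbf{v}}[\varphi_i^{\mathbf{f}},w_+]\leq Q_{\mathbf{f},\mathbf{v}}[\varphi_i^{\mathbf{f}},w_+]+\delta(D_{\Sigma,\mathbf{v}}[w_+]+B_{\Sigma,\mathbf{v}}[w_+])^{1/2}$, and the problem is that there is no a priori control of $D_{\Sigma,\mathbf{v}}[w_+]$ by $B_{\Sigma,\mathbf{v}}[w_+]$ — $w_+$ lies in the infinite-dimensional positive eigenspace of $-L_{\Sigma,\mathbf{v}}^\prime$, which has unbounded Rayleigh quotients. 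The invocation of Proposition \ref{DecayEigenfunProp} and Lemma \ref{ConcentrationLem} gives decay of $\varphi_i^{\mathbf{f}}$ itself but does not transfer to a bound on the positive projection $w_+$ in terms of $B_{\Sigma,\mathbf{v}}[w_+]$. Moreover, even granting that closeness, your final step requires $Q_{\mathbf{f},\mathbf{v}}[u_P^{\mathbf{f}}]\geq\mu_{\mathbf{f},\mathbf{v}}^+B_{\mathbf{f},\mathbf{v}}[u_P^{\mathbf{f}}]$ to dominate the $O(\delta^2)$ ``wrong'' contribution, so you would additionally need either a uniform-in-$\mathbf{f}$ positive lower bound on $\mu_{\mathbf{f},\mathbf{v}}^+$ or comparability of $B_{\Sigma,\mathbf{v}}$ and $B_{\mathbf{f},\mathbf{v}}$ on arbitrary $u\in\mathcal{P}_{\Sigma,\mathbf{v}}^\prime$. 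Neither is available: the latter is exactly the non-comparability of weights at infinity that the section's preamble warns about, and the former is not established by anything you cite.

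The paper avoids both difficulties via a contradiction and min-max argument: one assumes $Q_{\mathbf{f},\mathbf{v}}[v_0]\leq 0$ for some $v_0\in\mathcal{P}_{\Sigma,\mathbf{v}}^\prime\setminus\{0\}$, which \emph{supplies} the missing Dirichlet-energy bound $D_{\mathbf{f},\mathbf{v}}[v_0]\leq(K_\Sigma+1)B_{\mathbf{f},\mathbf{v}}[v_0]$ needed to control cross-terms in Corollary \ref{PerturbCor}. One then gets $Q_{\mathbf{f},\mathbf{v}}\leq\mu^+B_{\mathbf{f},\mathbf{v}}$ on the $(N+1)$-dimensional space $\mathrm{span}\{v_0\}\oplus\mathcal{E}_{\Sigma,\mathbf{v}}^\prime[0]$, hence $\mu^{N+1}_{\mathbf{f},\mathbf{v}}\leq\mu^+$ by min-max, then transports back via Corollary \ref{PerturbCor} on $\Lambda$ to conclude $\mu^{N+1}_{\Sigma,\mathbf{v}}\leq 3\mu^+<\mu_{\Sigma,\mathbf{v}}^+$, a contradiction. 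This tracks only eigenvalue counts, never individual eigenfunctions of $-L_{\mathbf{f},\mathbf{v}}^\prime$ or spectral projections, and thereby never needs the eigenspace-closeness your proof relies on.
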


\begin{proof}
First observe that if $u\in \mathcal{N}_{\Sigma,\mathbf{v}}^\prime\setminus\set{0}$, then we have
$$
Q_{\Sigma, \mathbf{v}}[u]<\frac{1}{2} \mu_{\Sigma, \mathbf{v}}^-B_{\Sigma, \mathbf{v}}[u]<0.
$$
Moreover, as 
$$
Q_{\Sigma,\mathbf{v}}[u]\geq D_{\Sigma,\mathbf{v}}[u]-K_\Sigma B_{\Sigma,\mathbf{v}}[u]
$$
one has
$$
D_{\Sigma,\mathbf{v}}[u]<K_\Sigma B_{\Sigma,\mathbf{v}}[u].
$$
Thus, by Corollary \ref{PerturbCor} with $\delta= -\frac{\mu_{\Sigma, \mathbf{v}}^-}{4\sqrt{K_\Sigma+1}}>0$ one has an $\epsilon_3$ given in the corollary so that if $\epsilon<\epsilon_3$, then
$$
\left| Q_{\mathbf{f}, \mathbf{v}}[u]-Q_{\Sigma, \mathbf{v}}[u]\right| \leq \delta \sqrt{K_\Sigma+1}\, B_{\Sigma, \mathbf{v}}[u].
$$
Hence, 
\begin{align*}
Q_{\mathbf{f},\mathbf{v}}[u] & \leq\delta \sqrt{K_\Sigma+1}\,B_{\Sigma, \mathbf{v}}[u] +Q_{\Sigma, \mathbf{v}}[u] \\
& <\left(\frac{1}{2} \mu_{\Sigma, \mathbf{v}}^-+\delta \sqrt{K_\Sigma+1}\right)B_{\Sigma, \mathbf{v}}[u]= \frac{1}{4} \mu_{\Sigma, \mathbf{v}}^-B_{\Sigma, \mathbf{v}}[u]<0.
\end{align*}
This proves the first part of the proposition.
	
For the second part, first observe that, by definition, for all $u\in \mathcal{E}_{\Sigma, \mathbf{v}}^\prime[0]=\mathcal{N}_{\Sigma,\mathbf{v}}^\prime\oplus \mathcal{K}_{\Sigma,\mathbf{v}}^\prime$ one has $Q_{\Sigma,\mathbf{v}}[u]\leq 0$. Hence, up to shrinking $\epsilon_3$,  Corollary \ref{PerturbCor} gives, as above, that
$$
Q_{\mathbf{f},\mathbf{v}}[u]\leq \frac{1}{4} \mu^+ B_{\mathbf{f},\mathbf{v}}[u],
$$
where
$$
\mu^+=\frac{1}{4}\min \set{\mu_{\Sigma, \mathbf{v}}^+,\frac{1}{4}}>0.
$$
We now argue by contradiction. That is, suppose there were a $v_0\in \mathcal{P}^\prime_{\Sigma, \mathbf{v}}\setminus\set{0}$ with $Q_{\mathbf{f},\mathbf{v}}[v_0]\leq 0$. Observe that, by definition, $v_0$ is orthogonal (with respect to $B_{\Sigma, \mathbf{v}}$) to $\mathcal{E}_{\Sigma, \mathbf{v}}^\prime[0]$. Up to further shrinking $\epsilon_3$, we may assume that for all $\mathbf{f}\in \mathcal{B}_{\epsilon_3}(\mathbf{x}|_\Sigma;\mathcal{ACH}^{k,\alpha}_n(\Sigma))$ if $\Lambda=\mathbf{f}(\Sigma)$, then
$$
K_{\Lambda}\leq K_\Sigma+1.
$$
Moreover, as 
$$
0\geq Q_{\mathbf{f}, \mathbf{v}}[v_0]\geq D_{\mathbf{f},\mathbf{v}}[v_0]-K_\Lambda B_{\mathbf{f}, \mathbf{v}}[v_0]
$$
one has
$$
D_{\mathbf{f},\mathbf{v}}[v_0]\leq \left(K_\Sigma+1\right) B_{\mathbf{f}, \mathbf{v}}[v_0].
$$
Now let
$$
V=\mathrm{span}\set{v_0}\oplus\mathcal{E}_{\Sigma, \mathbf{v}}^\prime[0]=\mathrm{span}\set{v_0}\oplus \mathcal{N}_{\Sigma,\mathbf{v}}^\prime\oplus\mathcal{K}_{\Sigma,\mathbf{v}}^\prime.
$$
The choice of $v_0$ ensures that  
$$
\dim V= 1+ \dim \mathcal{E}_{\Sigma, \mathbf{v}}^\prime [0]=1+ \dim \mathcal{N}_{\Sigma,\mathbf{v}}^\prime+\dim \mathcal{K}_{\Sigma,\mathbf{v}}^\prime=1+N.
$$
	
We claim that, up to shrinking $\epsilon_3$, for any $v\in V$ one has
$$
Q_{\mathbf{f}, \mathbf{v}}[v]\leq \mu^+ B_{\mathbf{f},\mathbf{v}}[v].
$$
Indeed, write
$$
v=c_0 v_0+ u
$$
where $c_0\in \Real$ and $u\in \mathcal{E}_{\Sigma, \mathbf{v}}^\prime[0]$. Using Corollary \ref{PerturbCor} and the fact that $Q_{\Sigma, \mathbf{v}}[v_0,u]=0$, one computes,
\begin{align*}
|Q_{\mathbf{f}, \mathbf{v}}[v_0, u]|^2 & \leq \delta^2 \left(D_{\mathbf{f}, \mathbf{v}}[v_0]+B_{\mathbf{f}, \mathbf{v}}[v_0]\right) B_{\Sigma, \mathbf{v}}[u] \\
& \leq 2\delta^2  \left( K_{\Sigma}+2\right) B_{\mathbf{f}, \mathbf{v}}[v_0]  B_{\mathbf{f}, \mathbf{v}}[u].
\end{align*}
Hence, picking $\delta$ (and hence $\epsilon_3$) small enough so $\delta\leq \frac{\mu^+}{16\sqrt{K_{\Sigma}+2}}$ yields
\begin{align*}
Q_{\mathbf{f}, \mathbf{v}}[v] &=c_0^2 Q_{\mathbf{f}, \mathbf{v}}[v_0]+2c_0 Q_{\mathbf{f}, \mathbf{v}}[v_0, u]+Q_{\mathbf{f}, \mathbf{v}}[u] \\
& \leq \frac{1}{4} |c_0| \mu^+ B_{\mathbf{f}, \mathbf{v}}[v_0]^{1/2}  B_{\mathbf{f}, \mathbf{v}}[u]^{1/2}+\frac{1}{4} \mu^+ B_{\mathbf{f},\mathbf{v}}[u]\\
&\leq  \frac{1}{2}\mu^+ \left( B_{\mathbf{f}, \mathbf{v}}[c_0 v_0]+ B_{\mathbf{f},\mathbf{v}}[u]\right)
\\& =  \frac{1}{2}\mu^+ \left( B_{\mathbf{f}, \mathbf{v}}[c_0 v_0+u]-2B_{\mathbf{f}, \mathbf{v}}[c_0v_0, u]\right).
\end{align*}
As $B_{\Sigma, \mathbf{v}}[v_0,u]=0$, up to shrinking $\delta$ some more one has
\begin{align*}
|B_{\mathbf{f},\mathbf{v}}[c_0 v_0,u]| &\leq \frac{1}{2} B_{\mathbf{f},\mathbf{v}}[c_0 v_0]^{\frac{1}{2}} B_{\mathbf{f},\mathbf{v}}[u]^{\frac{1}{2}}\leq \frac{1}{4} \left( B_{\mathbf{f},\mathbf{v}}[c_0 v_0]+ B_{\mathbf{f},\mathbf{v}}[u]\right)\\
& \leq \frac{1}{4} \left( B_{\mathbf{f},\mathbf{v}}[c_0 v_0+u]+2|B_{\mathbf{f},\mathbf{v}}[c_0 v_0, u]|\right). 
\end{align*}
That is,
$$
|B_{\mathbf{f},\mathbf{v}}[c_0 v_0,u]| \leq \frac{1}{2}  B_{\mathbf{f},\mathbf{v}}[c_0 v_0+u],
$$
and so
$$
Q_{\mathbf{f}, \mathbf{v}}[v]\leq \mu^+ B_{\mathbf{f}, \mathbf{v}}[c_0 v_0+u]=\mu^+ B_{\mathbf{f}, \mathbf{v}}[v],
$$
verifying the claim.
	
Hence, there is an $N+1$ dimensional subspace, $V\subset W^1_{\frac{3}{8}}(\Sigma)$, so that, for all $v\in V$,
$$
Q_{\mathbf{f}, \mathbf{v}}[v]\leq \mu^+B_{\mathbf{f}, \mathbf{v}}[v].
$$
By Lemma \ref{ParaSpectrumLem}, $-L_{\mathbf{f},\mathbf{v}}^\prime$ is self-adjoint with respect to $B_{\mathbf{f},\mathbf{v}}$ and has a discrete spectrum. As such, it follows from the min-max characterization of eigenvalues that the $(N+1)$-st eigenvalue (counted with multiplicities), $\mu_{\mathbf{f},\mathbf{v}}^{N+1}$, of $-L_{\mathbf{f},\mathbf{v}}'$ satisfies $\mu_{\mathbf{f},\mathbf{v}}^{N+1}\leq \mu^+$. Now let 
$$
\mathcal{E}_{\mathbf{f},\mathbf{v}}^\prime[\mu_{\mathbf{f},\mathbf{v}}^{N+1}]=\mathrm{span} \set{u\in W^1_{\frac{3}{8}}(\Sigma)\colon -L_{\mathbf{f},\mathbf{v}}^\prime u=\mu^\prime u \mbox{ for some $\mu^\prime\leq \mu^{N+1}_{\mathbf{f},\mathbf{v}}$}},
$$
and choose $W\subset \mathcal{E}_{\mathbf{f},\mathbf{v}}^\prime [\mu_{\mathbf{f},\mathbf{v}}^{N+1}]$ spanned by $(N+1)$ linearly independent eigenfunctions. 
	
Using $K_{\Lambda}$ and $\delta=\frac{\mu^+}{4(K_\Lambda+1)}$ let $\epsilon_3^\prime$ be given by Corollary \ref{PerturbCor} with $\Lambda$ in place of $\Sigma$. Observe $\epsilon_3^\prime=\epsilon_3^\prime(K_\Lambda, \mu^+, n, N+1)$ depends only on $\Sigma$. By Lemma \ref{InverseLem}, we may shrink $\epsilon_3$, in a manner depending only on $\Sigma$, so that $\mathbf{f}^{-1}\in \mathcal{B}_{\epsilon_3^\prime}(\mathbf{x}|_\Lambda;\mathcal{ACH}^{k,\alpha}_n(\Lambda))$. As such, we may apply Corollary \ref{PerturbCor}, with $\Lambda$ in place of $\Sigma$, to $w\in W$ to see that
\begin{align*}
Q_{\Sigma, \mathbf{v}}[w]&=Q_{\mathbf{f}^{-1}, \mathbf{v}\circ \mathbf{f}^{-1}} [w\circ \mathbf{f}^{-1}]-Q_{\mathbf{f}, \mathbf{v} }[w]+Q_{\mathbf{f}, \mathbf{v} }[w]\\
&= Q_{\mathbf{f}^{-1}, \mathbf{v}\circ \mathbf{f}^{-1}} [w\circ \mathbf{f}^{-1}]-Q_{\Lambda,  \mathbf{v}\circ \mathbf{f}^{-1}}[w\circ \mathbf{f}^{-1}]+Q_{\mathbf{f}, \mathbf{v} }[w]\\
&\leq \frac{1}{2} \mu^+ B_{\Lambda,  \mathbf{v}\circ \mathbf{f}^{-1}}[w\circ \mathbf{f}^{-1}]+Q_{\mathbf{f}, \mathbf{v}}[w]\\
&=\frac{1}{2}\mu^+ B_{\mathbf{f},\mathbf{v}}[w]+Q_{\mathbf{f}, \mathbf{v}}[w]
\leq \frac{3}{2} \mu^+ B_{\mathbf{f}, \mathbf{v}}[w].
\end{align*}
Similarly, applying Corollary \ref{PerturbCor}, with $\Lambda$ in place of $\Sigma$, gives
$$
B_{\mathbf{f}, \mathbf{v}}[w]= B_{\Lambda, \mathbf{v}\circ\mathbf{f}^{-1}}[w\circ\mathbf{f}^{-1}]\leq 2 B_{\mathbf{f}^{-1}, \mathbf{v}\circ\mathbf{f}^{-1}}[w\circ\mathbf{f}^{-1}]=2 B_{\Sigma, \mathbf{v}}[w].
$$
Hence, as $\mu^+> 0$, 
$$
Q_{\Sigma, \mathbf{v}}[w]\leq 3 \mu^+B_{\Sigma, \mathbf{v}}[w].
$$
Recall, by Lemma \ref{SpectrumLem}, $-L_{\Sigma,\mathbf{v}}^\prime$ is self-adjoint with respect to $B_{\Sigma,\mathbf{v}}$ and has a discrete spectrum. As such, by the min-max characterization of eigenvalues, one has that $\mu_{\Sigma,\mathbf{v}}^{N+1}$, the $(N+1)$-st eigenvalue of $-L_{\Sigma, \mathbf{v}}^\prime$ satisfies $\mu_{\Sigma,\mathbf{v}}^{N+1}\leq 3 \mu^+\leq \frac{3}{4}\mu_{\Sigma, \mathbf{v}}^+$. However, as $N$ is the dimension of $\mathcal{E}_{\Sigma, \mathbf{v}}^\prime[0]$, one must have $\mu_{\Sigma,\mathbf{v}}^{N+1}=\mu_{\Sigma,\mathbf{v}}^+>\frac{3}{4}\mu_{\Sigma, \mathbf{v}}^+>0$. This contradiction proves the second part of the proposition and completes the arguments. 
\end{proof}

Ultimately, we will show an extension of \cite[Proposition 4]{WhiteEI} for asymptotically conical self-expanders. However, before stating the theorem we introduce some notation inspired by \cite{WhiteVM}. Fix a self-expander $\Sigma\in \mathcal{ACH}^{k,\alpha}_n$ and a transverse section on $\Sigma$, $\mathbf{v}\in C^{k,\alpha}_0\cap C^{k}_{0,\mathrm{H}}(\Sigma;\mathbb{R}^{n+1})$ so that $\mathcal{C}[\mathbf{v}]$ is transverse to $\mathcal{C}(\Sigma)$ and $\mathscr{L}_\Sigma^0\mathbf{v}\in C^{k-2,\alpha}_{-2}(\Sigma;\mathbb{R}^{n+1})$. Let $\hat{G}_{\mathbf{v}}$ and $\hat{F}_{\mathbf{v}}$ be given in Theorem \ref{ModifiedSmoothDependThm}, and $(\varphi, \kappa)$ is in a small neighborhood of $(\mathbf{x}|_{\mathcal{L}(\Sigma)},0)$ in $C^{k,\alpha}(\mathcal{L}(\Sigma); \Real^{n+1})\times \mathcal{K}_{\mathbf{v}}$. Formally, define the following function (which is infinite valued)
$$
\hat{g}_{\mathbf{v}}[\varphi, \kappa]=\int_{\hat{F}_{\mathbf{v}}[\varphi,\kappa](\Sigma)} e^{\frac{|\mathbf{x}|^2}{4}} \, d\mathcal{H}^n=E[\hat{F}_{\mathbf{v}}[\varphi,\kappa]].
$$
Differentiating (formally) in the second variable gives the following well-defined function $D_2 \hat{g}_{\mathbf{v}}$ on $\mathcal{K}_{\mathbf{v}}$:
\begin{align*}
D_2 \hat{g}_{\mathbf{v}}(\varphi, \kappa)\kappa_1&=-B_{\Sigma, \mathbf{v}}[\hat{\Xi}_{\hat{F}_{\mathbf{v}}[\varphi, \kappa],\mathbf{v}}[0], D_2\hat{\mathcal{F}}_\mathbf{v}(\varphi, \kappa)\kappa_1]\\
&=-B_{\Sigma, \mathbf{v}}[\hat{G}_{\mathbf{v}}[\varphi, \kappa], D_2\hat{\mathcal{F}}_\mathbf{v}(\varphi, \kappa)\kappa_1]\\
&=-B_{\Sigma, \mathbf{v}}[\hat{G}_{\mathbf{v}}[\varphi, \kappa], \kappa_1]
\end{align*}
where the last equality follows from the fact that $\hat{G}_{\mathbf{v}}[\phi, \kappa]\in \mathcal{K}_{\mathbf{v}}$ and \eqref{OrthogFcalEqn}.  An immediate consequence of this is that 
$$
D_2 \hat{g}_{\mathbf{v}}(\varphi, \kappa)=0 \iff \hat{G}_{\mathbf{v}}[\varphi, \kappa]=0.
$$ 
That is, $\hat{F}_{\mathbf{v}}[\varphi, \kappa]$ is $E$-stationary if and only if $D_2 \hat{g}_{\mathbf{v}}(\varphi, \kappa)=0$. 

Finally, by differentiating again at a $(\varphi, \kappa)$ where $\hat{G}_{\mathbf{v}}[\varphi, \kappa]=0$, that is where $\hat{F}_{\mathbf{v}}[\varphi, \kappa]$ is $E$-stationary, one has the bilinear form on $\mathcal{K}_{\mathbf{v}}$ given by 
\begin{align*}
(D_{22}^2 \hat{g}_{\mathbf{v}}(\varphi, \kappa))[\kappa_1, \kappa_2]&=-B_{\Sigma, \mathbf{v}}[D\hat{\Xi}_{\hat{F}_{\mathbf{v}}[\varphi, \kappa]}(0)\circ D_2\hat{\mathcal{F}}_{\mathbf{v}}(\varphi,\kappa)\kappa_1,  D_2\hat{\mathcal{F}}_{\mathbf{v}}(\varphi,\kappa)\kappa_2]\\
&=-B_{\Sigma, \mathbf{v}}[\Omega_{\hat{F}_{\mathbf{v}}[\varphi, \kappa],\mathbf{v}}L_{\hat{F}_{\mathbf{v}}[\varphi, \kappa], \mathbf{v}}^\prime\circ D_2\hat{\mathcal{F}}_{\mathbf{v}}(\varphi,\kappa)\kappa_1,  D_2\hat{\mathcal{F}}_{\mathbf{v}}(\varphi,\kappa)\kappa_2]\\
&=Q_{\hat{F}_{\mathbf{v}}[\varphi, \kappa],\mathbf{v}}[D_2\hat{\mathcal{F}}_{\mathbf{v}}(\varphi, \kappa)\kappa_1,D_2\hat{\mathcal{F}}_{\mathbf{v}}(\varphi, \kappa)\kappa_2].
\end{align*}

\begin{thm}\label{MainTechThm}
Let $\Sigma\in \mathcal{ACH}^{k,\alpha}_n$ be a self-expander, and let $\mathbf{v}$ be a transverse section on $\Sigma$ so that $\mathcal{C}[\mathbf{v}]$ is transverse to $\mathcal{C}(\Sigma)$ and $\mathscr{L}_\Sigma^0\mathbf{v}\in C^{k-2,\alpha}_{-2}(\Sigma;\mathbb{R}^{n+1})$. There is an $\bar{\epsilon}=\bar{\epsilon}(\Sigma, \mathbf{v})>0$ so that if $(\varphi, \kappa)\in C^{k,\alpha}(\mathcal{L}(\Sigma); \Real^{n+1}) \times \mathcal{K}_{\mathbf{v}}$ satisfies $\hat{G}_{\mathbf{v}}[\varphi, \kappa]=0$ and  $\Vert \varphi -\mathbf{x}|_{\mathcal{L}(\Sigma)}\Vert_{k, \alpha}+\Vert \kappa \Vert_{\mathcal{D}^{k,\alpha}\cap W^2_{3/8}} <\bar{\epsilon}$, then 
\begin{enumerate}
\item $\mathrm{ind} (\hat{F}_{\mathbf{v}}[\varphi, \kappa])= \mathrm{ind}(\Sigma)+ \mathrm{ind}(D_{22}^2 \hat{g}_{\mathbf{v}}(\varphi,\kappa))$.  
\item $\mathrm{null}(\hat{F}_{\mathbf{v}}[\varphi, \kappa])= \mathrm{null}(D_{22}^2 \hat{g}_{\mathbf{v}}(\varphi,\kappa))$.
\end{enumerate}
\end{thm}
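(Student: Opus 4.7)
My plan is a Lyapunov--Schmidt--style reduction. Set $\mathbf{f}=\hat{F}_{\mathbf{v}}[\varphi,\kappa]$, so by Theorem~\ref{ModifiedSmoothDependThm}(3) and the hypothesis $\hat{G}_{\mathbf{v}}[\varphi,\kappa]=0$, $\mathbf{f}(\Sigma)$ is a self-expander. By Proposition~\ref{ParaIndexProp} (applicable once $\bar{\epsilon}$ is small by Theorem~\ref{ModifiedSmoothDependThm}(1)--(2)), computing $\mathrm{ind}(\mathbf{f})$ and $\mathrm{null}(\mathbf{f})$ reduces to counting negative and zero directions of the quadratic form $Q_{\mathbf{f},\mathbf{v}}$ on $W^1_{3/8}(\Sigma)$. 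I would decompose this space via the spectral splitting $W^1_{3/8}(\Sigma)=\mathcal{N}_{\Sigma,\mathbf{v}}^\prime\oplus\mathcal{K}_{\Sigma,\mathbf{v}}^\prime\oplus\mathcal{P}_{\Sigma,\mathbf{v}}^\prime$ introduced in Section~\ref{StableFormSec}. Proposition~\ref{QuadFormProp} then makes $Q_{\mathbf{f},\mathbf{v}}$ negative-definite on $\mathcal{N}_{\Sigma,\mathbf{v}}^\prime$ and positive-definite on $\mathcal{P}_{\Sigma,\mathbf{v}}^\prime$, so $\mathrm{ind}(\Sigma)=\dim\mathcal{N}_{\Sigma,\mathbf{v}}^\prime$ is already a lower bound for $\mathrm{ind}(\mathbf{f})$ and all remaining index and all of the nullity must come from the middle block.

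The heart of the argument is to identify that middle block with $D_{22}^2\hat{g}_{\mathbf{v}}(\varphi,\kappa)$ via the map $T=D_2\hat{\mathcal{F}}_{\mathbf{v}}(\varphi,\kappa)\colon\mathcal{K}_{\mathbf{v}}\to W^1_{3/8}(\Sigma)$. Differentiating \eqref{OrthogFcalEqn} in $\kappa$ yields $T\kappa_1-\kappa_1\in\hat{\mathcal{K}}_{\mathbf{v},*}^\perp$, which sits in $\mathcal{N}_{\Sigma,\mathbf{v}}^\prime\oplus\mathcal{P}_{\Sigma,\mathbf{v}}^\prime$ by the definition of $\mathcal{P}_{\Sigma,\mathbf{v}}^\prime$; thus $T$ is injective and $T(\mathcal{K}_{\mathbf{v}})$ is a graph over $\mathcal{K}_{\Sigma,\mathbf{v}}^\prime$, giving the algebraic decomposition $W^1_{3/8}(\Sigma)=\mathcal{N}_{\Sigma,\mathbf{v}}^\prime\oplus T(\mathcal{K}_{\mathbf{v}})\oplus\mathcal{P}_{\Sigma,\mathbf{v}}^\prime$. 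The formula computed immediately before the statement of Theorem~\ref{MainTechThm} reads
$$
(D_{22}^2\hat{g}_{\mathbf{v}}(\varphi,\kappa))[\kappa_1,\kappa_2]=Q_{\mathbf{f},\mathbf{v}}[T\kappa_1,T\kappa_2],
$$
so $T$ intertwines $D_{22}^2\hat{g}_{\mathbf{v}}(\varphi,\kappa)$ with the restriction of $Q_{\mathbf{f},\mathbf{v}}$ to $T(\mathcal{K}_{\mathbf{v}})$. For the cross terms with $\xi\in\mathcal{N}_{\Sigma,\mathbf{v}}^\prime\oplus\mathcal{P}_{\Sigma,\mathbf{v}}^\prime$, \eqref{DiffIdentEqn} gives $Q_{\mathbf{f},\mathbf{v}}[T\kappa_1,\xi]=-B_{\mathbf{f},\mathbf{v}}[D_2\hat{G}_{\mathbf{v}}(\varphi,\kappa)\kappa_1,\xi]$, and since $D_2\hat{G}_{\mathbf{v}}(\varphi,\kappa)\kappa_1\in\mathcal{K}_{\mathbf{v}}$ consists of zero-eigenfunctions of $-L_{\Sigma,\mathbf{v}}^\prime$ which are $B_{\Sigma,\mathbf{v}}$-orthogonal to $\xi$, Corollary~\ref{PerturbCor} (with $N=\dim\mathcal{K}_{\mathbf{v}}$) makes these cross terms as small as one wishes relative to the natural norms of the two factors.

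The last step is a perturbative block-diagonalization. Because $Q_{\mathbf{f},\mathbf{v}}$ is uniformly definite on $\mathcal{N}_{\Sigma,\mathbf{v}}^\prime$ and $\mathcal{P}_{\Sigma,\mathbf{v}}^\prime$ (gap controlled by $\mu_{\Sigma,\mathbf{v}}^\pm$), the restriction of $Q_{\mathbf{f},\mathbf{v}}$ to $\mathcal{N}_{\Sigma,\mathbf{v}}^\prime\oplus\mathcal{P}_{\Sigma,\mathbf{v}}^\prime$ is invertible with uniformly bounded inverse. Using this inverse to solve a linear system, one adjusts $T$ to a map $\tilde{T}\colon\mathcal{K}_{\mathbf{v}}\to W^1_{3/8}(\Sigma)$ with $\tilde{T}\kappa-T\kappa\in\mathcal{N}_{\Sigma,\mathbf{v}}^\prime\oplus\mathcal{P}_{\Sigma,\mathbf{v}}^\prime$ so that $\tilde{T}(\mathcal{K}_{\mathbf{v}})$ is genuinely $Q_{\mathbf{f},\mathbf{v}}$-orthogonal to $\mathcal{N}_{\Sigma,\mathbf{v}}^\prime\oplus\mathcal{P}_{\Sigma,\mathbf{v}}^\prime$. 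Because the correction is small, $Q_{\mathbf{f},\mathbf{v}}|_{\tilde{T}(\mathcal{K}_{\mathbf{v}})}$ has the same index and nullity as $D_{22}^2\hat{g}_{\mathbf{v}}(\varphi,\kappa)$, and adding the contributions from the three mutually $Q_{\mathbf{f},\mathbf{v}}$-orthogonal blocks $\mathcal{N}_{\Sigma,\mathbf{v}}^\prime$, $\tilde{T}(\mathcal{K}_{\mathbf{v}})$, $\mathcal{P}_{\Sigma,\mathbf{v}}^\prime$ gives exactly the two claimed identities.

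I expect the main obstacle to be making this final block-diagonalization quantitative. Corollary~\ref{PerturbCor} measures the cross terms in $B_{\Sigma,\mathbf{v}}$-norms of eigenfunctions with small eigenvalue, whereas the definiteness on the infinite-dimensional $\mathcal{P}_{\Sigma,\mathbf{v}}^\prime$ from Proposition~\ref{QuadFormProp} is phrased in $B_{\mathbf{f},\mathbf{v}}$-norm and is gap-bounded only by $\mu_{\Sigma,\mathbf{v}}^+$. Reconciling these scales---so that $\tilde{T}-T$ is truly smaller than the spectral gap and hence leaves the signature unchanged---forces $\delta$ in Corollary~\ref{PerturbCor} to be chosen small relative to $\mu_{\Sigma,\mathbf{v}}^\pm$ and then $\bar{\epsilon}$ small enough to realize that $\delta$. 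The fast-decay control from Proposition~\ref{DecayEigenfunProp}, transmitted through Corollary~\ref{PerturbCor}, is the essential (and delicate) ingredient that lets the cross-term estimates beat the exponential change of weight in passing between $\Sigma$ and $\mathbf{f}(\Sigma)$.
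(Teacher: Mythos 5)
Your structural scaffolding matches the paper's: the same three-block splitting $W^1_{3/8}(\Sigma)=\mathcal{N}'_{\Sigma,\mathbf{v}}\oplus\mathcal{K}'_{\Sigma,\mathbf{v}}\oplus\mathcal{P}'_{\Sigma,\mathbf{v}}$, the observation from \eqref{OrthogFcalEqn} that $T\kappa-\kappa\in\hat{\mathcal{K}}^\perp_{\mathbf{v},*}$ so $T$ is injective and $T(\mathcal{K}_{\mathbf{v}})$ is a graph over $\mathcal{K}'_{\Sigma,\mathbf{v}}$, and the intertwining identity $(D_{22}^2\hat{g}_{\mathbf{v}}(\varphi,\kappa))[\kappa_1,\kappa_2]=Q_{\mathbf{f},\mathbf{v}}[T\kappa_1,T\kappa_2]$. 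Your graph argument for the direct sum $W^1_{3/8}(\Sigma)=\mathcal{N}'_{\Sigma,\mathbf{v}}\oplus T(\mathcal{K}_{\mathbf{v}})\oplus\mathcal{P}'_{\Sigma,\mathbf{v}}$ is actually a bit more direct than the paper's (which uses the exact cross-term vanishing plus an injectivity argument for the projection). All of that is fine.

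The gap is in the cross-term calculation, and it is a real gap because the perturbative fix you sketch cannot give the nullity identity. First, the identity you wrote is not quite right: the form you need is the $\mathbf{f}$-\emph{independent} pairing. Setting $J'=-\Omega_{\mathbf{f},\mathbf{v}}L'_{\mathbf{f},\mathbf{v}}$ one has, from $Q_{\mathbf{f},\mathbf{v}}[u,v]=-B_{\mathbf{f},\mathbf{v}}[u,L'_{\mathbf{f},\mathbf{v}}v]=-B_{\Sigma,\mathbf{v}}[u,\Omega_{\mathbf{f},\mathbf{v}}L'_{\mathbf{f},\mathbf{v}}v]$,
$$
Q_{\mathbf{f},\mathbf{v}}[T\kappa_1,\xi]=B_{\Sigma,\mathbf{v}}[J'T\kappa_1,\xi]=-B_{\Sigma,\mathbf{v}}[D_2\hat{G}_{\mathbf{v}}(\varphi,\kappa)\kappa_1,\xi],
$$
with $B_{\Sigma,\mathbf{v}}$, not $B_{\mathbf{f},\mathbf{v}}$ --- this is precisely the significance of the symmetry statement \eqref{2ndVarEqn}, namely that $\Omega_{\mathbf{f},\mathbf{v}}L'_{\mathbf{f},\mathbf{v}}$ is $B_{\Sigma,\mathbf{v}}$-symmetric. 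Since $D_2\hat{G}_{\mathbf{v}}(\varphi,\kappa)\kappa_1\in\mathcal{K}_{\mathbf{v}}$ and $\xi\in\mathcal{N}'_{\Sigma,\mathbf{v}}\oplus\mathcal{P}'_{\Sigma,\mathbf{v}}$ is $B_{\Sigma,\mathbf{v}}$-orthogonal to $\mathcal{K}_{\mathbf{v}}$, the cross term is \emph{exactly} zero. No block-diagonalization is needed: $Q_{\mathbf{f},\mathbf{v}}$ splits on the three blocks already, Proposition \ref{QuadFormProp} gives strict definiteness on $\mathcal{N}'_{\Sigma,\mathbf{v}}$ and $\mathcal{P}'_{\Sigma,\mathbf{v}}$, and one simply reads off $\mathrm{ind}$ and $\mathrm{null}$.

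If you instead let the cross term be merely small and try to correct $T$ to $\tilde{T}$, the Schur-complement form you land on is $D^2_{22}\hat{g}_{\mathbf{v}}(\varphi,\kappa)$ \emph{plus a nonzero error}, and nullity of a quadratic form is not stable under small perturbation. So the perturbative version of your final step can only ever prove the index identity, not the nullity identity --- there is no $\bar{\epsilon}$ small enough to repair this, because the issue is structural, not quantitative. The way out is to recognize the exact $B_{\Sigma,\mathbf{v}}$-orthogonality above; once you do, your own argument closes cleanly and is essentially the paper's proof.
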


\begin{proof}
Pick $\epsilon^\prime>0$ as in Proposition \ref{QuadFormProp} and choose $\bar{\epsilon}$ so $\hat{F}_{\mathbf{v}}[\varphi, \kappa]\in \mathcal{B}_{\epsilon^\prime}(\mathbf{x}|_\Sigma; \mathcal{ACH}^{k,\alpha}_n(\Sigma))$. For convenience, write $\mathbf{f}=\hat{F}_{\mathbf{v}}[\varphi, \kappa]$. Note that $\mathcal{K}_{\mathbf{v}}=\mathcal{K}_{\Sigma, \mathbf{v}}^\prime$. Set 
$$
\tilde{\mathcal{K}}=\set{D_2 \hat{\mathcal{F}}_{\mathbf{v}}(\varphi, \kappa) \kappa^\prime\colon \kappa^\prime\in \mathcal{K}_{\mathbf{v}}}=\mathrm{Im}(D_2 \hat{\mathcal{F}}_{\mathbf{v}}(\varphi, \kappa)).
$$
By definition, 
$$D_2\hat{\mathcal{F}}_{\mathbf{v}}(\varphi,\kappa)\kappa^\prime=\kappa^\prime+ D_2\hat{F}_{\mathbf{v},*}(\varphi, \kappa)\kappa^\prime,
$$
and so \eqref{OrthogFcalEqn} implies 
$$
\tilde{\mathcal{K}}\subset \mathcal{K}_{\mathbf{v}}+\hat{\mathcal{K}}_{\mathbf{v}, *}^\perp\subset \mathcal{D}^{k,\alpha}\cap W^{2}_{\frac{3}{8}}(\Sigma).
$$
Moreover, as 
$$
D_2 \hat{G}_{\mathbf{v}}(\varphi, \kappa)\colon \mathcal{K}_{\mathbf{v}}\to \mathcal{K}_{\mathbf{v}}
$$
it follows from \eqref{DiffIdentEqn} that, setting  $J^\prime=-\Omega_{\mathbf{f}, \mathbf{v}} L_{\mathbf{f}, \mathbf{v}}^\prime$, one has 
$$
D_2 \hat{G}_{\mathbf{v}}(\varphi, \kappa)=-J^\prime \circ D_2 \hat{\mathcal{F}}_{\mathbf{v}}(\varphi, \kappa)
$$
and so $J^\prime(\tilde{\mathcal{K}})\subset \mathcal{K}_{\mathbf{v}}=K_{\Sigma, \mathbf{v}}^\prime$. Furthermore, as $D_2\hat{F}_{\mathbf{v},*}(\varphi, \kappa)$ has image in $\hat{\mathcal{K}}_{\mathbf{v}, *}^\perp$, $\ker(D_2\hat{\mathcal{F}}_{\mathbf{v}})=\set{0}$ and so 
$$
l=\dim\tilde{\mathcal{K}}=\dim\mathcal{K}_{\mathbf{v}}=\mathrm{null}(\Sigma).
$$
Finally,
$$
Q_{\mathbf{f}, \mathbf{v}}[u, v]=B_{\Sigma, \mathbf{v}}[u, -\Omega_{\mathbf{f}, \mathbf{v}}L_{\mathbf{f}, \mathbf{v}}^\prime v]= B_{\Sigma, \mathbf{v}}[u, J^\prime v]= B_{\Sigma, \mathbf{v}}[J^\prime u, v].
$$

Now let $\tilde{Q}_{\mathbf{f}, \mathbf{v}}\colon \tilde{\mathcal{K}}\times \tilde{\mathcal{K}}\to \Real$ be the bilinear form defined, for $\tilde{\kappa}_1, \tilde{\kappa}_2\in \tilde{\mathcal{K}}$ by
$$
\tilde{Q}_{\mathbf{f}, \mathbf{v}}[\tilde{\kappa}_1, \tilde{\kappa}_2]=Q_{\mathbf{f}, \mathbf{v}}[\tilde{\kappa}_1, \tilde{\kappa}_2].
$$
Clearly, for $\kappa_1, \kappa_2\in\mathcal{K}_{\mathbf{v}}$,
$$
(D_{22}^2\hat{g}_{\mathbf{v}}(\varphi, \kappa))[\kappa_1, \kappa_2]=\tilde{Q}_{\mathbf{f}, \mathbf{v}}[D_2 \hat{\mathcal{F}}_{\mathbf{v}}(\varphi, \kappa)\kappa_1, D_2 \hat{\mathcal{F}}_{\mathbf{v}}(\varphi, \kappa)\kappa_2].
$$
As the map $\kappa^\prime\mapsto D_2 \hat{\mathcal{F}}_{\mathbf{v}}(\varphi, \kappa)\kappa^\prime$ is injective, one has
$$
\mathrm{null}(\tilde{Q}_{\mathbf{f}, \mathbf{v}})=\mathrm{null}(D_{22}^2\hat{g}_{\mathbf{v}}(\varphi, \kappa))
$$
and
$$
\mathrm{ind}(\tilde{Q}_{\mathbf{f}, \mathbf{v}})=\mathrm{ind}(D_{22}^2\hat{g}_{\mathbf{v}}(\varphi, \kappa)).
$$

We are now ready to complete the proof. First, consider the decomposition of $W^{1}_{\frac{3}{8}}(\Sigma)$ into
$$
 W^1_{\frac{3}{8}}(\Sigma)=\mathcal{N}_{\Sigma, \mathbf{v}}^\prime\oplus \mathcal{K}_{\Sigma, \mathbf{v}}^\prime\oplus \mathcal{P}_{\Sigma, \mathbf{v}}^\prime.
$$
By Proposition \ref{QuadFormProp}, for $\epsilon^\prime$ sufficiently small, one has $Q_{\mathbf{f}, \mathbf{v}}$ positive definite on $\mathcal{P}_{\Sigma, \mathbf{v}}^\prime$ and negative definite on $\mathcal{N}_{\Sigma, \mathbf{v}}^\prime$. A consequence of this is that
$$
W^1_{\frac{3}{8}}(\Sigma)=\mathcal{N}_{\Sigma, \mathbf{v}}^\prime\oplus\tilde{\mathcal{K}}\oplus \mathcal{P}_{\Sigma, \mathbf{v}}^\prime.
$$
Indeed, consider the projection map 
$$
\pi\colon W^1_{\frac{3}{8}}(\Sigma)\to W^1_{\frac{3}{8}}(\Sigma)/(\mathcal{N}_{\Sigma, \mathbf{v}}^\prime+\mathcal{P}_{\Sigma, \mathbf{v}}^\prime)=\pi( \mathcal{K}_{\Sigma, \mathbf{v}}^\prime).
$$
The orthogonal decomposition of  $W^{1}_{\frac{3}{8}}(\Sigma)$ implies $\pi( \mathcal{K}_{\Sigma, \mathbf{v}}^\prime) \simeq \Real^l$ where $l=\dim \mathcal{K}_{\Sigma, \mathbf{v}}^\prime=\dim \mathcal{K}_{\mathbf{v}}$. Hence, it suffices to show $\pi|_{\tilde{\mathcal{K}}}$ is surjective.  As $\dim \tilde{\mathcal{K}}=l$ this is equivalent to showing this map is injective. If $\tilde{\kappa}\in \ker \pi|_{\tilde{\mathcal{K}}}$, then $\tilde{\kappa}=\tilde{p}+\tilde{n}$ for $\tilde{p}\in \mathcal{P}_{\Sigma, \mathbf{v}}^\prime$ and $\tilde{n}\in \mathcal{N}_{\Sigma, \mathbf{v}}^\prime$.  
However, 
$$
Q_{\mathbf{f}, \mathbf{v}}[\tilde{n},\tilde{n}]+Q_{\mathbf{f}, \mathbf{v}}[\tilde{p}, \tilde{n}]=Q_{\mathbf{f}, \mathbf{v}}[\tilde{\kappa}, \tilde{n}]=B_{\Sigma, \mathbf{v}}[J^\prime\tilde{\kappa}, \tilde{n}]=0
$$
where the last equality follows from the fact that $J^\prime\tilde{\kappa}\in \mathcal{K}_{\mathbf{v}}=\mathcal{K}_{\Sigma,\mathbf{v}}^\prime$ which is orthogonal with respect to $B_{\Sigma, \mathbf{v}}$ to $\mathcal{N}_{\Sigma, \mathbf{v}}^\prime$. Similar reasoning shows
$$
Q_{\mathbf{f}, \mathbf{v}}[\tilde{p}, \tilde{p}]+ Q_{\mathbf{f}, \mathbf{v}}[\tilde{p}, \tilde{n}]= Q_{\mathbf{f}, \mathbf{v}}[\tilde{p}, \tilde{\kappa}]=B_{\Sigma, \mathbf{v}}[\tilde{p}, J^\prime\tilde{\kappa}]=0.
$$
Subtracting the first equality from the second gives
$$
Q_{\mathbf{f}, \mathbf{v}}[\tilde{p}, \tilde{p}]-Q_{\mathbf{f}, \mathbf{v}}[\tilde{n}, \tilde{n}]=0.
$$
As $Q_{\mathbf{f}, \mathbf{v}}$ is positive definite on $\mathcal{P}_{\Sigma, \mathbf{v}}^\prime$ and negative definite on  $\mathcal{N}_{\Sigma, \mathbf{v}}^\prime$ one must have $\tilde{p}=\tilde{n}=0$. That is, $\tilde{\kappa}=0$, which proves the claim.

Now decompose $\tilde{\mathcal{K}}$ into
$$
\tilde{\mathcal{K}}=\mathcal{N}^\prime\oplus \mathcal{K}^\prime\oplus\mathcal{P}^\prime
$$
where each subspace $\mathcal{N}^\prime, \mathcal{K}^\prime, \mathcal{P}^\prime$ is orthogonal (with respect to $B_{\Sigma, \mathbf{v}}$) and $\tilde{Q}_{\mathbf{f}, \mathbf{v}}$ is, respectively, negative definite, zero and positive definite on $\mathcal{N}^\prime, \mathcal{K}^\prime$ and $\mathcal{P}^\prime$. Now consider $p\in \mathcal{P}^\prime_{\Sigma, \mathbf{v}}$ and $p^\prime\in \mathcal{P}^\prime$ one has
\begin{align*}
Q_{\mathbf{f}, \mathbf{v}}[p+p^\prime, p+p^\prime] &= Q_{\mathbf{f}, \mathbf{v}}[p,p]+2Q_{\mathbf{f},\mathbf{v}}[p,p^\prime]+Q_{\mathbf{f},\mathbf{v}}[p^\prime,p^\prime] \\
&=Q_{\mathbf{f},\mathbf{v}}[p,p]+2B_{\Sigma,\mathbf{v}}[p,J^\prime p^\prime]+Q_{\mathbf{f},\mathbf{v}}[p^\prime,p^\prime]\\
&=Q_{\mathbf{f}, \mathbf{v}}[p,p]+\tilde{Q}_{\mathbf{f}, \mathbf{v}}[p^\prime, p^\prime].
\end{align*}
Here the last equality uses the fact that $J^\prime p^\prime\in \mathcal{K}_{\mathbf{v}}=\mathcal{K}_{\Sigma, \mathbf{v}}'$ is orthogonal to $p$ and that $J^\prime$ is self-adjoint with respect to $B_{\Sigma, \mathbf{v}}$. By Proposition \ref{QuadFormProp} and the fact that $p^\prime\in\mathcal{N}$ one has for $p+p^\prime\neq 0$ that
$$
Q_{\mathbf{f}, \mathbf{v}}[p+p^\prime, p+p^\prime]>0
$$
That is, $Q_{\mathbf{f}, \mathbf{v}}$ is positive definite on $\mathcal{P}_{\Sigma, \mathbf{v}}\oplus\mathcal{P}^\prime$. Similar reasoning shows that $Q_{\mathbf{f}, \mathbf{v}}$ is negative definite on $\mathcal{N}_{\Sigma, \mathbf{v}}\oplus\mathcal{N}^\prime$,  negative semi-definite on $\mathcal{N}_{\Sigma, \mathbf{v}}^\prime\oplus\mathcal{N}^\prime\oplus\mathcal{K}^\prime$ and positive semi-definite on $\mathcal{P}_{\Sigma, \mathbf{v}}\oplus\mathcal{P}^\prime\oplus\mathcal{K}^\prime$. As a consequence, 
\begin{align*}
\mathrm{ind}(\hat{F}_{\mathbf{v}}[\varphi, \kappa])&= \mathrm{ind}(\mathbf{f})=\mathrm{ind}(Q_{\mathbf{f}, \mathbf{v}})\\
&= \dim (\mathcal{N}_{\Sigma, \mathbf{v}}^\prime\oplus\mathcal{N}^\prime)=\dim\mathcal{N}_{\Sigma, \mathbf{v}}^\prime+\dim\mathcal{N}^\prime\\
&= \mathrm{ind}(\Sigma)+\mathrm{ind}(\tilde{Q}_{\mathbf{f}, \mathbf{v}})=\mathrm{ind}(\Sigma)+\mathrm{ind}(D^2_{22}\hat{g}_{\mathbf{v}}(\varphi,\kappa)).
\end{align*}
One further concludes that, for $u\in W^1_{\frac{3}{8}}(\Sigma)$, $J^\prime u=0$ if and only if $u\in \mathcal{K}^\prime$ and so
$$
\mathrm{null}(\hat{F}_{\mathbf{v}}[\varphi, \kappa])=\dim\mathcal{K}^\prime=\mathrm{null}(D^2_{22}\hat{g}_{\mathbf{v}}(\varphi,\kappa)).
$$
This completes the proof of the result.
\end{proof}

\section{Proof of Theorem \ref{MainThm}} \label{MainThmSec}
We now include a proof of theorem \ref{MainThm}.  We adapt the proof from \cite{WhiteEI}, using Theorem \ref{MainTechThm} in place of \cite[Proposition 4]{WhiteEI}. The details are included for the sake of the reader.

First of all, for $\Gamma\in\mathcal{ACH}^{k,\alpha}_n$ and $l\geq 1$, let
$$
\mathcal{S}_l=\set{[\mathbf{f}]\in \mathcal{ACE}_n^{k,\alpha}(\Gamma)\colon \mathrm{null}(\mathbf{f})\geq l}.
$$
We record some technical facts proved in \cite{WhiteEI} that readily adapt to the setting of this paper.

\begin{lem}\label{BaireLem}
For $\Gamma\in\mathcal{ACH}^{k,\alpha}_n$ the following is true:
\begin{enumerate}
\item $\Pi(\mathcal{S}_1)$ is of first Baire category in $C^{k,\alpha}(\mathcal{L}(\Gamma); \Real^{n+1})$. 
\item For each $\varphi_0, \varphi_1\in \mathcal{V}^{k,\alpha}_{\mathrm{emb}}(\Gamma)$ a generic, in the sense of Baire category, curve $\Phi\in C^2([0,1]; \mathcal{V}_{\mathrm{emb}}^{k,\alpha}(\Gamma))$ joining $\varphi_0$ to $\varphi_1$ will be embedded and transverse to $\Pi$.  
\end{enumerate}
\end{lem}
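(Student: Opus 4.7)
The plan is to recognize both statements as applications of the Sard--Smale theorem \cite{Smale} to the smooth Fredholm map $\Pi$ of index $0$. The key preliminary step is identifying the set $\mathcal{S}_1$ with the critical set of $\Pi$: given $[\mathbf{f}]\in\mathcal{ACE}^{k,\alpha}_n(\Gamma)$, set $\Sigma=\mathbf{f}(\Gamma)$ and pick a transverse section $\mathbf{v}$ on $\Sigma$ as in Theorem \ref{ModifiedSmoothDependThm}. In the chart provided by $(\hat{F}_\mathbf{v},\hat{G}_\mathbf{v})$, the space $\mathcal{ACE}^{k,\alpha}_n(\Gamma)$ is locally identified with the smooth submanifold $\hat{G}_\mathbf{v}^{-1}(0)$ of $\mathcal{U}_1\times\mathcal{U}_2$, and $\Pi$ becomes (modulo the diffeomorphism) the projection to the first factor. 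From Item \eqref{SubmanifoldItem} of Theorem \ref{ModifiedSmoothDependThm} (namely $D_1\hat{G}_\mathbf{v}(\mathbf{x}|_{\mathcal{L}(\Sigma)},0)$ has rank $\dim\mathcal{K}_\mathbf{v}$ and $D_2\hat{G}_\mathbf{v}(\mathbf{x}|_{\mathcal{L}(\Sigma)},0)=0$), one reads off $\ker D\Pi([\mathbf{f}])\cong \mathcal{K}_\mathbf{v}$; combined with the fact that $\Pi$ has index $0$ this gives $\dim\ker D\Pi([\mathbf{f}])=\dim\mathrm{coker}\, D\Pi([\mathbf{f}])=\mathrm{null}(\mathbf{f})$. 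Thus $[\mathbf{f}]\in\mathcal{S}_1$ if and only if $D\Pi([\mathbf{f}])$ fails to be surjective, so that $\Pi(\mathcal{S}_1)$ is exactly the set of critical values of $\Pi$. Item (1) now follows immediately from the Sard--Smale theorem, which asserts that this set is of first Baire category in $C^{k,\alpha}(\mathcal{L}(\Gamma);\mathbb{R}^{n+1})$.

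For item (2), let $\mathcal{C}=\mathcal{C}_{\varphi_0,\varphi_1}$ denote the affine Banach manifold of $C^2$ curves $\Phi\colon [0,1]\to\mathcal{V}^{k,\alpha}_{\mathrm{emb}}(\Gamma)$ with $\Phi(0)=\varphi_0$, $\Phi(1)=\varphi_1$. The transversality to $\Pi$ is established by the usual parametric Sard--Smale argument. Consider the smooth universal evaluation map
$$
\Psi\colon \mathcal{ACE}^{k,\alpha}_n(\Gamma)\times (0,1)\times \mathcal{C}\to C^{k,\alpha}(\mathcal{L}(\Gamma);\mathbb{R}^{n+1}),\quad \Psi([\mathbf{f}],t,\Phi)=\Pi([\mathbf{f}])-\Phi(t).
$$
Since for each interior $t_0\in(0,1)$ one may perturb $\Phi$ freely in a bump-supported direction without affecting the endpoint conditions, $\Psi$ is submersive onto the target at every zero, hence transverse to $\{0\}$; so $\Psi^{-1}(0)$ is a smooth Banach submanifold. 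Its projection to $\mathcal{C}$ is a smooth Fredholm map of index $1$ (inheriting Fredholmness from $\Pi$ with a $+1$ correction from the $t$-factor), and by Sard--Smale a residual set of $\Phi\in\mathcal{C}$ consists of regular values. For such $\Phi$, $0$ is a regular value of $\Psi(\cdot,\cdot,\Phi)$ on $\mathcal{ACE}^{k,\alpha}_n(\Gamma)\times(0,1)$, which is exactly the statement that $\Phi$ is transverse to $\Pi$ on the interior. At the endpoints one uses item (1) to reduce to the case when $\varphi_0,\varphi_1$ are regular values (if they are not, one is done by hypothesis or by perturbing slightly).

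Embeddedness is handled by an analogous but simpler argument: on $\{(s,t)\in[0,1]^2\colon s<t\}\times\mathcal{C}$ the map $(s,t,\Phi)\mapsto \Phi(t)-\Phi(s)$ is submersive at each zero since $\Phi$ may be perturbed by any difference of bumps concentrated near $s$ and $t$, so by Sard--Smale (or by direct infinite-dimensional codimension count in the target $C^{k,\alpha}(\mathcal{L}(\Gamma);\mathbb{R}^{n+1})$) a residual set of $\Phi\in\mathcal{C}$ have no self-intersections and, by a parallel argument on the first-jet extension, are immersions. Intersecting these residual sets with the residual set from the previous paragraph, and with the open set of curves avoiding $\Pi(\mathcal{S}_2)$ (where $D\Pi$ has cokernel dimension $\geq 2$ and transversality to $\Pi$ with a $1$-dimensional probe is impossible), produces the desired generic class.

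The main obstacle is the identification of the critical set of $\Pi$ with $\mathcal{S}_1$, which requires unwinding Theorem \ref{ModifiedSmoothDependThm} to compute $D\Pi$ intrinsically and then using the Fredholm index zero property. Beyond that, the parametric Sard--Smale argument is standard, provided one verifies the smoothness and Fredholmness of $\Psi$ restricted to each slice; this in turn follows from smoothness of $\Pi$, smoothness of evaluation $\Phi\mapsto\Phi(t)$, and the index bookkeeping.
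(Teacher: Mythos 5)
Your proposal is correct and follows the same strategy as the paper: the paper simply cites White's Theorem 5.1 (proved by exactly the Sard--Smale arguments you give), noting that the proof carries over verbatim; you have essentially unwound that citation, correctly identifying $\mathcal{S}_1$ with the critical set of $\Pi$ via Theorem \ref{ModifiedSmoothDependThm} and the index-zero Fredholm property, and then applying the standard parametric transversality argument for item (2).
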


\begin{proof}
This is proved in \cite[Theorem 5.1]{WhiteEI} using only properties of the spaces and of $\Pi$ that are the same as in our setting.
\end{proof}

\begin{lem} \label{ComposeLem}
Let $\Sigma\in\mathcal{ACH}^{k,\alpha}_n$ be a self-expander, and let $\mathbf{f}\in\mathcal{ACH}^{k,\alpha}_n(\Sigma)$ satisfy $\mathscr{L}_\Sigma\mathbf{f}\in C^{k-2,\alpha}_{-1}(\Sigma;\mathbb{R}^{n+1})$. Let $\Lambda=\mathbf{f}(\Sigma)$. Then the following is true:
\begin{enumerate} 
\item $\mathscr{L}_\Lambda\mathbf{f}^{-1}\in C^{k-2,\alpha}_{-1}(\Lambda;\mathbb{R}^{n+1})$.
\item If $u\in C^{k,\alpha}_{0}(\Sigma)$ satisfies $\mathscr{L}^0_\Sigma u\in C^{k-2,\alpha}_{-2}(\Sigma)$, then $\mathscr{L}_{\Lambda}^0(u\circ\mathbf{f}^{-1}) \in C^{k-2,\alpha}_{-2}(\Lambda).
$
\end{enumerate}
\end{lem}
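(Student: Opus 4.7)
The plan is to express $\mathscr{L}_\Lambda$ in the $\Sigma$-coordinates provided by $\mathbf{f}$ and compare it term by term with $\mathscr{L}_\Sigma$. Since $\mathbf{f}^{-1}\circ\mathbf{f}=\mathbf{x}|_\Sigma$, the Laplace--Beltrami pullback identity yields $(\Delta_\Lambda \mathbf{f}^{-1})\circ\mathbf{f}=\Delta_{g_\mathbf{f}}\mathbf{x}|_\Sigma$, where $g_\mathbf{f}=\mathbf{f}^*g_\Lambda$ is the pullback metric on $\Sigma$. A parallel identity handles the drift term, giving
$$
(\mathscr{L}_\Lambda \mathbf{f}^{-1})\circ\mathbf{f}
=\Delta_{g_\mathbf{f}}\mathbf{x}|_\Sigma+\tfrac{1}{2}(\mathbf{f}\cdot\nabla_{g_\mathbf{f}})\mathbf{x}|_\Sigma-\tfrac{1}{2}\mathbf{x}|_\Sigma
=\mathscr{L}_\Sigma\mathbf{x}|_\Sigma+R,
$$
where $R$ collects the error terms built from the metric difference $\Delta_{g_\mathbf{f}}-\Delta_\Sigma$ and from the drift difference $(\mathbf{f}-\mathbf{x}|_\Sigma)\cdot\nabla_\Sigma$, both applied to $\mathbf{x}|_\Sigma$. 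Because $\Sigma$ is a self-expander, $\mathscr{L}_\Sigma\mathbf{x}|_\Sigma=0$, so (1) reduces to showing $R\in C^{k-2,\alpha}_{-1}(\Sigma;\mathbb{R}^{n+1})$. Since $\mathbf{f}^{-1}$ is an asymptotically conical diffeomorphism, it induces an isomorphism on the weighted H\"older spaces $C^{l,\alpha}_d$, and transporting the estimate from $\Sigma$ to $\Lambda$ yields the claim.

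The crux is therefore estimating $R$. In local coordinates on the asymptotic end, the coefficients of $\Delta_{g_\mathbf{f}}-\Delta_\Sigma$ are polynomial in $g_\mathbf{f}^{-1}-g_\Sigma^{-1}$ and its first derivative together with the Christoffel difference -- all expressible through derivatives of the vector field $\mathbf{f}-\mathbf{x}|_\Sigma$, or more intrinsically $\mathbf{f}-\mathscr{E}^\mathrm{H}_1\circ\mathrm{tr}_\infty^1[\mathbf{f}]$. The hypothesis $\mathscr{L}_\Sigma\mathbf{f}\in C^{k-2,\alpha}_{-1}(\Sigma;\mathbb{R}^{n+1})$ is exactly the right input here: the weighted Schauder-type theory for $\mathscr{L}_\Sigma$ developed in \cite{BWBanachManifold} (in particular the Banach space $\mathcal{D}^{k,\alpha}(\Sigma)$ and the isomorphism of \cite[Theorem 5.7]{BWBanachManifold}) promotes the crude decay of $\mathbf{f}-\mathscr{E}^\mathrm{H}_1\circ\mathrm{tr}_\infty^1[\mathbf{f}]$ coming from membership in $C^{k,\alpha}_1\cap C^k_{1,\mathrm{H}}$ to precisely the rate needed to place each contribution to $R$ in $C^{k-2,\alpha}_{-1}$. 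I expect this bookkeeping of weights to be the main technical obstacle.

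Part (2) is handled by exactly the same scheme with $\mathbf{x}|_\Sigma$ replaced by $u$ and $\mathscr{L}$ by $\mathscr{L}^0$. One obtains
$$
(\mathscr{L}^0_\Lambda(u\circ\mathbf{f}^{-1}))\circ\mathbf{f}=\mathscr{L}^0_\Sigma u+R^0,
$$
where $R^0$ has the same structure as $R$ but applied to $u$ rather than to $\mathbf{x}|_\Sigma$. Since $u\in C^{k,\alpha}_0$, its derivatives decay one full order faster than those of $\mathbf{x}|_\Sigma$, so each contribution to $R^0$ gains an extra factor of $r^{-1}$ relative to $R$. Combined with $\mathscr{L}^0_\Sigma u\in C^{k-2,\alpha}_{-2}$, this places $R^0$, and hence $\mathscr{L}^0_\Lambda(u\circ\mathbf{f}^{-1})$ after transport to $\Lambda$, in $C^{k-2,\alpha}_{-2}(\Lambda)$, as required.
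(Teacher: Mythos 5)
Your overall strategy (pull $\mathscr{L}_\Lambda$ back to $\Sigma$ via $\mathbf{f}$, use $\mathscr{L}_\Sigma\mathbf{x}|_\Sigma=0$, and estimate the remainder) is the same as the paper's, but the execution misses the two observations that make the argument go through, and substitutes for them a mechanism that does not actually apply.

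First, for the Laplacian part you propose to estimate $(\Delta_{g_\mathbf{f}}-\Delta_\Sigma)\mathbf{x}|_\Sigma$ via the metric difference $g_\mathbf{f}^{-1}-g_\Sigma^{-1}$ and ``derivatives of $\mathbf{f}-\mathbf{x}|_\Sigma$.'' This is unnecessary and somewhat misleading: $g_\mathbf{f}$ and $g_\Sigma$ differ at order $O(1)$ out to infinity (because $\Lambda$ generally has a different asymptotic cone from $\Sigma$), so there is no smallness in the operator coefficients to exploit. The paper's route is simply that $\mathbf{f}^{-1}\in\mathcal{ACH}^{k,\alpha}_n(\Lambda)\subset C^{k,\alpha}_1(\Lambda;\mathbb{R}^{n+1})$, and $\Delta_\Lambda$ drops the weight by two, so $\Delta_\Lambda\mathbf{f}^{-1}\in C^{k-2,\alpha}_{-1}(\Lambda)$ with no differencing and no hypothesis on $\mathscr{L}_\Sigma\mathbf{f}$ at all. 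The whole content of part (1) is then reduced to the drift term $\mathbf{x}\cdot\nabla_\Lambda\mathbf{f}^{-1}-\mathbf{f}^{-1}$.

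Second, and more seriously, your explanation of how the hypothesis $\mathscr{L}_\Sigma\mathbf{f}\in C^{k-2,\alpha}_{-1}$ enters --- namely that ``the weighted Schauder-type theory for $\mathscr{L}_\Sigma$ \ldots promotes the crude decay of $\mathbf{f}-\mathscr{E}^{\mathrm{H}}_1\circ\mathrm{tr}^1_\infty[\mathbf{f}]$'' --- is not what happens, and it is unclear how it would apply: $\mathbf{f}$ has linear growth, hence is not in the weighted Sobolev domain of \cite[Theorem 5.7]{BWBanachManifold}, and no elliptic regularity is invoked. The hypothesis is used purely algebraically: since $\Delta_\Sigma\mathbf{f}\in C^{k-2,\alpha}_{-1}$ automatically (as $\mathbf{f}\in C^{k,\alpha}_1$), the hypothesis is equivalent to the Euler-operator estimate $\mathbf{x}\cdot\nabla_\Sigma\mathbf{f}-\mathbf{f}\in C^{k-2,\alpha}_{-1}$. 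This is plugged into the chain-rule identity
$$
\mathbf{x}\cdot\nabla_\Sigma(\mathbf{f}^{-1}\circ\mathbf{f})-\mathbf{f}^{-1}\circ\mathbf{f}=(\nabla_\Lambda\mathbf{f}^{-1}\circ\mathbf{f})\cdot(\mathbf{x}\cdot\nabla_\Sigma\mathbf{f}-\mathbf{f})+(\mathbf{x}\cdot\nabla_\Lambda\mathbf{f}^{-1}-\mathbf{f}^{-1})\circ\mathbf{f},
$$
whose left-hand side equals $\mathbf{x}\cdot\nabla_\Sigma\mathbf{x}|_\Sigma-\mathbf{x}|_\Sigma=-\mathbf{x}^\perp=-2\mathbf{H}_\Sigma\in C^{k-2,\alpha}_{-1}(\Sigma)$ because $\Sigma$ is a self-expander. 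Solving for $(\mathbf{x}\cdot\nabla_\Lambda\mathbf{f}^{-1}-\mathbf{f}^{-1})\circ\mathbf{f}$ finishes part (1). This cancellation of the linear growths is the crux of the argument, and your proposal does not produce it --- it is not recoverable by coefficient estimates on a generic ``$R$''. Your proposed pullback formula is also slightly off: the drift vector pulls back to $d\mathbf{f}^{-1}$ of the tangential projection of $\mathbf{f}$ onto $T\Lambda$, not to ``$\mathbf{f}\cdot\nabla_{g_\mathbf{f}}$''. For part (2) the same remarks apply; moreover the paper's argument for (2) explicitly uses the drift estimate from part (1) (applied to $\mathbf{f}^{-1}$), which your ``exactly the same scheme'' gloss skips.
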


\begin{proof}
For $\mathbf{f}\in\mathcal{ACH}^{k,\alpha}_n(\Sigma)$, by \cite[Proposition 3.3]{BWBanachManifold}, $\Lambda\in\mathcal{ACH}^{k,\alpha}_n$ and $\mathbf{f}^{-1}\in\mathcal{ACH}^{k,\alpha}_n(\Lambda)$. Consequently, $\Delta_\Lambda\mathbf{f}^{-1}\in C^{k-2,\alpha}_{-1}(\Lambda;\mathbb{R}^{n+1})$. To prove the first claim, it suffices to show $\mathbf{x}\cdot\nabla_\Lambda\mathbf{f}^{-1}-\mathbf{f}^{-1}\in C^{k-2,\alpha}_{-1}(\Lambda;\mathbb{R}^{n+1})$. As $\Sigma\in\mathcal{ACH}^{k,\alpha}_n$ is a self-expander, one has $\mathscr{L}_\Sigma\mathbf{x}=0$ and so $\mathbf{x}\cdot\nabla_\Sigma\mathbf{x}-\mathbf{x}\in C^{k-2,\alpha}_{-1}(\Sigma;\mathbb{R}^{n+1})$. Substituting $\mathbf{x}|_\Sigma=\mathbf{f}^{-1}\circ\mathbf{f}$ and applying the chain rule give
$$
\mathbf{x}\cdot\nabla_\Sigma (\mathbf{f}^{-1}\circ\mathbf{f})-\mathbf{f}^{-1}\circ\mathbf{f}=(\nabla_\Lambda\mathbf{f}^{-1}\circ\mathbf{f})\cdot(\mathbf{x}\cdot\nabla_\Sigma\mathbf{f}-\mathbf{f})+(\mathbf{x}\cdot\nabla_\Lambda\mathbf{f}^{-1}-\mathbf{f}^{-1})\circ\mathbf{f}.
$$
As $\mathbf{f}\in C^{k,\alpha}_1(\Sigma;\mathbb{R}^{n+1})$ and $\mathscr{L}_\Sigma\mathbf{f}\in C^{k-2,\alpha}_{-1}(\Sigma;\mathbb{R}^{n+1})$, the claim follows by invoking \cite[Propositions 3.1 and 3.3]{BWBanachManifold}.

If $u\in C^{k,\alpha}_0(\Sigma)$ and $\mathbf{f}\in\mathcal{ACH}^{k,\alpha}_n(\Sigma)$, then, by \cite[Proposition 3.3]{BWBanachManifold}, $u\circ\mathbf{f}^{-1}\in C^{k,\alpha}_0(\Lambda)$ and so $\Delta_\Lambda (u\circ\mathbf{f}^{-1})\in C^{k-2,\alpha}_{-2}(\Lambda)$. One further computes,
$$
\mathbf{x}\cdot\nabla_\Lambda(u\circ\mathbf{f}^{-1})=(\nabla_\Sigma u\circ\mathbf{f}^{-1})\cdot(\mathbf{x}\cdot\nabla_\Lambda\mathbf{f}^{-1}-\mathbf{f}^{-1})+(\mathbf{x}\cdot\nabla_\Sigma u)\circ\mathbf{f}^{-1},
$$
which, by what we have shown and our hypotheses on $u$, is an element of $C^{k-2,\alpha}_{-2}(\Lambda)$. Thus, the second claim follows easily from this.
\end{proof}

\begin{lem} \label{OrientationLem}
Let $\Sigma\in\mathcal{ACH}^{k,\alpha}_n$ be a self-expander, and let $\mathbf{v}\in C^{k,\alpha}_0\cap C^{k}_{0,\mathrm{H}}(\Sigma;\mathbb{R}^{n+1})$ be a transverse section on $\Sigma$ so that $\mathcal{C}[\mathbf{v}]$ is transverse to $\mathcal{C}(\Sigma)$ and $\mathscr{L}_\Sigma^0\mathbf{v}\in C^{k-2,\alpha}_{-2}(\Sigma;\mathbb{R}^{n+1})$. Suppose that $\Pi^{-1}(\mathbf{x}|_{\mathcal{L}(\Sigma)})\cap\mathcal{W}$ is a connected one-dimensional submanifold of $\mathcal{ACE}^{k,\alpha}_n(\Sigma)$ for $\mathcal{W}$ a neighborhood of $[\mathbf{x}|_\Sigma]\in\mathcal{ACE}^{k,\alpha}_n(\Sigma)$ and that $\kappa\in\mathcal{K}_\mathbf{v}$ with $B_{\Sigma,\mathbf{v}}[\kappa]=1$. Given $\varphi\in C^{k,\alpha}(\mathcal{L}(\Sigma);\mathbb{R}^{n+1})$ there is an $\epsilon=\epsilon(\Sigma,\mathbf{v},\varphi)>0$ so that the following holds:
\begin{enumerate}
\item If $|s|<\epsilon$, then $\Lambda_s=\hat{F}_{\mathbf{v}}[\mathbf{x}|_{\mathcal{L}(\Sigma)},s\kappa](\Sigma)\in\mathcal{ACH}^{k,\alpha}_n$ is a self-expander with $\mathcal{L}(\Lambda_s)=\mathcal{L}(\Sigma)$ and  $\mathbf{w}_s=\mathbf{v}\circ \hat{F}_{\mathbf{v}}[\mathbf{x}|_{\mathcal{L}(\Sigma)},s\kappa]^{-1}$ is a transverse section to $\Lambda_s$. Moreover, setting
$$
\kappa_s=\left( \frac{d}{ds}\hat{\mathcal{F}}_\mathbf{v}[\mathbf{x}|_{\mathcal{L}(\Sigma)},s\kappa]\right)\circ\hat{F}_\mathbf{v}[\mathbf{x}|_{\mathcal{L}(\Sigma)},s\kappa]^{-1}\in \mathcal{K}_{\mathbf{w}_s},
$$
  $B_{\Sigma,\mathbf{v}}[D_1\hat{G}_\mathbf{v}(\mathbf{x}|_{\mathcal{L}(\Sigma)},0)\varphi,\kappa]>0$ implies $B_{\Lambda_s,\mathbf{w}_s}[D_1\hat{G}_{\mathbf{w}_s}(\mathbf{x}|_{\mathcal{L}(\Lambda_s)},0)\varphi, \kappa_s]>0$. 
\item For $\mathbf{v}^\prime\in C^{k,\alpha}_0\cap C^{k}_{0,\mathrm{H}}(\Sigma;\mathbb{R}^{n+1})$ satisfying that $\Vert\mathbf{v}^\prime-\mathbf{v}\Vert^{(0)}_{k,\alpha}<\epsilon$ and $\mathscr{L}_\Sigma^0\mathbf{v}^\prime\in C^{k-2,\alpha}_{-2}(\Sigma;\mathbb{R}^{n+1})$, setting 
$$
\kappa^\prime=\frac{(\mathbf{v}\cdot\mathbf{n}_\Sigma)\kappa}{\mathbf{v}^\prime\cdot\mathbf{n}_\Sigma} \in \mathcal{K}_{\mathbf{v}^\prime},
$$
if $B_{\Sigma,\mathbf{v}}[D_1\hat{G}_\mathbf{v}(\mathbf{x}|_{\mathcal{L}(\Sigma)},0)\varphi,\kappa]>0$, then $B_{\Sigma,\mathbf{v}}[D_1\hat{G}_{\mathbf{v}^\prime}(\mathbf{x}|_{\mathcal{L}(\Sigma)},0)\varphi,\kappa^\prime]>0$.
\end{enumerate}
Here, $\hat{F}_\mathbf{v}$, $\hat{G}_\mathbf{v}$, $\hat{G}_{\mathbf{v}^\prime}$, $\hat{G}_{\mathbf{w}_s}$, $\mathcal{K}_\mathbf{v}$, $\mathcal{K}_{\mathbf{v}^\prime}$, $\mathcal{K}_{\mathbf{w}_s}$ and $\hat{\mathcal{F}}_{\mathbf{v}}$ are given in Section \ref{ModifiedSmoothDependSec}.
\end{lem}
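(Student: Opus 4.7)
My plan is to prove both parts by continuity: I will show that the scalar pairing depends continuously on the perturbation parameter (in Part~(1) the parameter is $s$, in Part~(2) it is $\mathbf{v}'$), so that the open condition ``$>0$'' persists under small perturbations of the auxiliary data. The setup is essentially identical for the two parts and relies on the smooth dependence provided by Theorem~\ref{ModifiedSmoothDependThm}; the technical content lies in justifying that the natural limits make sense even though everything is weighted by $e^{|\mathbf{x}|^2/4}$.

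For Part~(1), I first check that the data on $\Lambda_s$ are as claimed. By Theorem~\ref{ModifiedSmoothDependThm}~(1) each $\mathbf{f}_s = \hat{F}_\mathbf{v}[\mathbf{x}|_{\mathcal{L}(\Sigma)},s\kappa]$ has trace at infinity $\mathbf{x}|_{\mathcal{L}(\Sigma)}$, so $\mathcal{C}(\Lambda_s)=\mathcal{C}(\Sigma)$ and $\mathcal{L}(\Lambda_s)=\mathcal{L}(\Sigma)$. Because $\kappa$ is tangent to the one-dimensional fibre $\Pi^{-1}(\mathbf{x}|_{\mathcal{L}(\Sigma)})\cap\mathcal{W}$, the curve $s\mapsto\mathbf{f}_s$ sits inside that fibre, so $\hat{G}_\mathbf{v}[\mathbf{x}|_{\mathcal{L}(\Sigma)},s\kappa]\equiv 0$ and Theorem~\ref{ModifiedSmoothDependThm}~(3) yields that $\Lambda_s$ is a self-expander. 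The pullback $\mathbf{w}_s = \mathbf{v}\circ\mathbf{f}_s^{-1}$ is transverse to $\Lambda_s$ with $\mathcal{C}[\mathbf{w}_s]=\mathcal{C}[\mathbf{v}]$ transverse to $\mathcal{C}(\Lambda_s)$; Lemma~\ref{ComposeLem} then gives $\mathscr{L}^0_{\Lambda_s}\mathbf{w}_s \in C^{k-2,\alpha}_{-2}$, so Theorem~\ref{ModifiedSmoothDependThm} applies to $(\Lambda_s,\mathbf{w}_s)$. Differentiating $\hat{\Xi}_\mathbf{v}[\mathbf{f}_s]\equiv 0$ in $s$ and using \eqref{2ndVarEqn} gives $L^\prime_{\mathbf{f}_s,\mathbf{v}}\bigl(\frac{d}{ds}\hat{\mathcal{F}}_\mathbf{v}[\mathbf{x}|_{\mathcal{L}(\Sigma)},s\kappa]\bigr)=0$, i.e., $\kappa_s\in\mathcal{K}_{\mathbf{w}_s}$. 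For Part~(2) the analogous algebraic check is immediate: the identity $L^\prime_{\Sigma,\mathbf{v}'}\kappa' = (\mathbf{v}'\cdot\mathbf{n}_\Sigma)^{-1}L_\Sigma[(\mathbf{v}'\cdot\mathbf{n}_\Sigma)\kappa'] = (\mathbf{v}'\cdot\mathbf{n}_\Sigma)^{-1}L_\Sigma[(\mathbf{v}\cdot\mathbf{n}_\Sigma)\kappa] = 0$ shows $\kappa'\in\mathcal{K}_{\mathbf{v}'}$, and $\mathbf{v}'\mapsto\kappa'$ is $C^{k-1,\alpha}_0$-continuous.

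The continuity arguments then proceed in parallel. For Part~(2), the coefficients of the operators appearing in the implicit function theorem of Theorem~\ref{ModifiedSmoothDependThm} are built tensorially from $\mathbf{v}'$; they depend continuously on $\mathbf{v}'$ in the relevant norms for $\|\mathbf{v}'-\mathbf{v}\|_{k,\alpha}^{(0)}$ small, so the solution map $\hat{G}_{\mathbf{v}'}$ and its partial $D_1\hat{G}_{\mathbf{v}'}(\mathbf{x}|_{\mathcal{L}(\Sigma)},0)\varphi$ vary continuously in $\mathbf{v}'$, as do $\kappa'$ and $B_{\Sigma,\mathbf{v}'}$; composition gives the claim. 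For Part~(1), pull the pairing on $\Lambda_s$ back by $\mathbf{f}_s$ to an integral over $\Sigma$ expressed via the bilinear forms $B_{\mathbf{f}_s,\mathbf{v}}$ introduced in Section~\ref{BilinearFormSubsec}. Since $\kappa\in\mathcal{K}_\mathbf{v}$ has super-exponential decay by Proposition~\ref{DecayEigenfunProp}, the perturbation $\mathbf{f}_s-\mathbf{x}|_\Sigma=\hat{\mathcal{F}}_\mathbf{v}[\mathbf{x}|_{\mathcal{L}(\Sigma)},s\kappa]\mathbf{v}$ also has super-exponential decay, so the weight ratio $e^{(|\mathbf{f}_s|^2-|\mathbf{x}|^2)/4}$ is uniformly close to $1$ on all of $\Sigma$. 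Combined with the $\mathcal{ACH}^{k,\alpha}_n(\Sigma)$-smoothness of $s\mapsto\mathbf{f}_s$ (which controls the pulled-back densities, normals, and Jacobi operators), dominated convergence yields continuity of the pairing in $s$, and the value at $s=0$ is the original positive number.

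The main obstacle I expect is a parametric version of Theorem~\ref{ModifiedSmoothDependThm}: we need that the implicit-function solution at $(\Lambda_s,\mathbf{w}_s)$ depends smoothly on $s$, not merely that for each fixed $s$ it is a smooth map of $(\varphi,\kappa)$. This amounts to checking that the linearization $L^\prime_{\mathbf{f}_s,\mathbf{v}}\colon \mathcal{D}^{k,\alpha}\cap W^2_{3/8}\to C^{k-2,\alpha}_{-1}\cap W^0_{3/8}$ depends continuously on $s$ in the operator norm, which follows from the uniform control on $\mathbf{f}_s$ near $\mathbf{x}|_\Sigma$, and that the Fredholm kernel and cokernel vary continuously with $s$ (a consequence of Corollary~\ref{FredholmCor} and stability of the index); with these ingredients in hand, an application of the implicit function theorem with parameters supplies the needed smoothness, and the continuity argument above carries through.
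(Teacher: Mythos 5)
Your setup is correct — $\Lambda_s$ is a self-expander with $\mathcal{L}(\Lambda_s)=\mathcal{L}(\Sigma)$, the hypotheses of Lemma~\ref{ComposeLem} give that Theorem~\ref{ModifiedSmoothDependThm} applies to $(\Lambda_s,\mathbf{w}_s)$, and the checks that $\kappa_s\in\mathcal{K}_{\mathbf{w}_s}$ and $\kappa'\in\mathcal{K}_{\mathbf{v}'}$ go through — but the heart of your argument is a parametric continuity statement that you flag as the main obstacle and then do not actually establish. The gap is real: $D_1\hat{G}_{\mathbf{w}_s}(\mathbf{x}|_{\mathcal{L}(\Lambda_s)},0)$ is defined through a fresh implicit function theorem at $(\Lambda_s,\mathbf{w}_s)$, and the constructions it rests on — the chart $F_{\mathbf{w}_s}$ from \cite[Theorem 7.1]{BWBanachManifold} (itself the output of an IFT at $\Lambda_s$), the $B_{\Lambda_s,\mathbf{w}_s}$-orthogonal complement $\hat{\mathcal{K}}_{\mathbf{w}_s,*}^\perp$, the projection $\Pi_{\mathbf{w}_s}$ — are set up separately at each $s$. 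Theorem~\ref{ModifiedSmoothDependThm} does not assert that these assemble into a jointly smooth family, and pointing to continuity of $L'_{\mathbf{f}_s,\mathbf{v}}$ and stability of the Fredholm index does not close this: one would need to reconstruct the whole nested IFT with an explicit $s$-dependence, which you have not done. The same issue appears in Part~(2) with $D_1\hat{G}_{\mathbf{v}'}$ as $\mathbf{v}'$ varies.

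The paper sidesteps this entirely by computing the pairing explicitly. Using the variational formulas of Section~\ref{VariationSubsec}, the chain rule, and an integration by parts at infinity (justified by the rapid decay of $\kappa$ and $\kappa_s$ together with \cite[Proposition 6.2]{BWBanachManifold}), one derives the boundary identity
\[
B_{\Sigma,\mathbf{v}}\bigl[D_1\hat{G}_\mathbf{v}(\mathbf{x}|_{\mathcal{L}(\Sigma)},0)\varphi,\kappa\bigr]
= \int_{\mathcal{L}(\Sigma)} \mathrm{tr}_\infty^*[(\mathbf{v}\cdot\mathbf{n}_{\Sigma})\kappa]\,(\varphi\cdot\mathbf{n}_{\mathcal{L}(\Sigma)})\, d\mathcal{H}^{n-1},
\]
which references only the self-expander, the transverse section, and the kernel element — not the IFT machinery. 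Applying this to $(\Lambda_s,\mathbf{w}_s,\kappa_s)$ and matching it against an intermediate step of the same computation yields
\[
B_{\Lambda_{s},\mathbf{w}_{s}}\bigl[D_1\hat{G}_{\mathbf{w}_{s}}(\mathbf{x}|_{\mathcal{L}(\Lambda_{s})},0)\varphi,\kappa_{s}\bigr]
= B_{\Sigma,\mathbf{v}}\bigl[D_1\hat{G}_\mathbf{v}(\mathbf{x}|_{\mathcal{L}(\Sigma)},s\kappa)\varphi,\kappa_{s}\circ \mathbf{f}_{s}\bigr],
\]
whose right-hand side is manifestly smooth in $s$ because it involves only the single fixed smooth map $\hat{G}_\mathbf{v}$ and the smooth curves $s\kappa$ and $\kappa_s\circ\mathbf{f}_s=\tfrac{d}{ds}\hat{\mathcal{F}}_\mathbf{v}[\mathbf{x}|_{\mathcal{L}(\Sigma)},s\kappa]$. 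Evaluating at $s=0$ gives Part~(1). For Part~(2), the boundary formula depends only on $(\mathbf{v}\cdot\mathbf{n}_\Sigma)\kappa=(\mathbf{v}'\cdot\mathbf{n}_\Sigma)\kappa'$, so one obtains an exact equality of pairings rather than a perturbative estimate. You should look for such an intrinsic characterization of the pairing rather than attempting a parametric IFT.
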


\begin{proof}
First, for simplicity, denote by $\mathbf{f}_s=\hat{F}_\mathbf{v}[\mathbf{x}|_{\mathcal{L}(\Sigma)},s\kappa]$. Observe that by our constructions, for $\epsilon$ sufficiently small $\mathbf{f}_s\in\mathcal{ACH}^{k,\alpha}_n(\Sigma)$ and $\mathscr{L}_\Sigma \mathbf{f}_s\in C^{k-2,\alpha}_{-1}(\Sigma;\mathbb{R}^{n+1})$. Thus, by Lemma \ref{ComposeLem}, for $\epsilon$ sufficiently small $\hat{F}_{\mathbf{w}_s}$ and $\hat{G}_{\mathbf{w}_s}$ are well defined. 

Using definitions of the maps from Theorem \ref{ModifiedSmoothDependThm} and the rapid decay of the various terms one has
$$
B_{\Sigma,\mathbf{v}}[D_1\hat{G}_\mathbf{v}(\mathbf{x}|_{\mathcal{L}(\Sigma)},s\kappa)\varphi,\kappa_s\circ \mathbf{f}_s]=B_{\Sigma, \mathbf{v}}\left[{\frac{d}{dt}\vline}_{t=0}\hat{\Xi}_{\mathbf{v}}[\hat{F}_{\mathbf{v}}[\mathbf{x}|_{\mathcal{L}(\Sigma)}+t\varphi ,s\kappa]], \kappa_s\circ \mathbf{f}_s\right].
$$
Arguing as in Section \ref{VariationSubsec}, the fact that the $\mathbf{f}_s$ are stationary for the functional $E$ and the second variation formula for $E$ give
\begin{align*}
 {\frac{d}{dt}\vline}_{t=0} & \hat{\Xi}_{\mathbf{v}}[\hat{F}_{\mathbf{v}}[\mathbf{x}|_{\mathcal{L}(\Sigma)}+t\varphi ,s\kappa]]\\
 &= {\frac{d}{dt}\vline}_{t=0}\hat{\Xi}_{\mathbf{f}_s, \mathbf{v}}[ t (\mathbf{v}\cdot (\mathbf{n}_{\Lambda_s}\circ \mathbf{f}_s))^{-1} D_1  \hat{F}_{\mathbf{v}}(\mathbf{x}|_{\mathcal{L}(\Sigma)} ,s\kappa)\varphi\cdot (\mathbf{n}_{\Lambda_s}\circ \mathbf{f}_s) ]\\
 &= D\hat{\Xi}_{\mathbf{f}_s, \mathbf{v}}(0) ( (\mathbf{v}\cdot (\mathbf{n}_{\Lambda_s}\circ \mathbf{f}_s))^{-1} D_1  \hat{F}_{\mathbf{v}}(\mathbf{x}|_{\mathcal{L}(\Sigma)} ,s\kappa)\varphi\cdot (\mathbf{n}_{\Lambda_s}\circ \mathbf{f}_s))\\
 &= \Omega_{\mathbf{f}_s, \mathbf{v}} L_{\mathbf{f}_s, \mathbf{v}}^\prime( (\mathbf{v}\cdot (\mathbf{n}_{\Lambda_s}\circ \mathbf{f}_s))^{-1} D_1  \hat{F}_{\mathbf{v}}(\mathbf{x}|_{\mathcal{L}(\Sigma)} ,s\kappa)\varphi\cdot (\mathbf{n}_{\Lambda_s}\circ \mathbf{f}_s))\\
 &=\Omega_{\mathbf{f}_s, \mathbf{v}}(\mathbf{v}\cdot (\mathbf{n}_{\Lambda_s}\circ \mathbf{f}_s))^{-1} L_{\mathbf{f}_s}(  D_1  \hat{F}_{\mathbf{v}}(\mathbf{x}|_{\mathcal{L}(\Sigma)} ,s\kappa)\varphi\cdot (\mathbf{n}_{\Lambda_s}\circ \mathbf{f}_s)).
\end{align*} 
Hence,
\begin{align*}
& B_{\Sigma,\mathbf{v}}[D_1\hat{G}_\mathbf{v}(\mathbf{x}|_{\mathcal{L}(\Sigma)},s\kappa)\varphi,\kappa_s\circ \mathbf{f}_s] \\
&= B_{\Sigma, \mathbf{v}}[  \Omega_{\mathbf{f}_s, \mathbf{v}} ( \mathbf{v}\cdot (\mathbf{n}_{\Lambda_s}\circ  \mathbf{f}_s)  )^{-1} L_{\mathbf{f}_s}( D_1  \hat{F}_{\mathbf{v}}(\mathbf{x}|_{\mathcal{L}(\Sigma)} ,s\kappa)\varphi\cdot (\mathbf{n}_{\Lambda_s}\circ \mathbf{f}_s)), \kappa_s\circ \mathbf{f}_s]\\
& =B_{\Sigma}[ \Omega_{\mathbf{f}_s} ( \mathbf{v}\cdot (\mathbf{n}_{\Lambda_s}\circ  \mathbf{f}_s) )  L_{\mathbf{f}_s}(D_1  \hat{F}_{\mathbf{v}}(\mathbf{x}|_{\mathcal{L}(\Sigma)} ,s\kappa)\varphi\cdot  (\mathbf{n}_{\Lambda_s}\circ \mathbf{f}_s)), \kappa_s\circ \mathbf{f}_s]\\
 &=B_{\mathbf{f}_s} [ L_{\mathbf{f}_s}(D_1  \hat{F}_{\mathbf{v}}(\mathbf{x}|_{\mathcal{L}(\Sigma)} ,s\kappa)\varphi \cdot  (\mathbf{n}_{\Lambda_s}\circ \mathbf{f}_s)),  ( \mathbf{v}\cdot ( \mathbf{n}_{\Lambda_s}\circ \mathbf{f}_s) ) \kappa_s\circ \mathbf{f}_s]\\
 &= B_{\Lambda_s} [( \mathbf{w}_s\circ \mathbf{n}_{\Lambda_s} ) \kappa_s , L_{\Lambda_s}((D_1  \hat{F}_{\mathbf{v}}(\mathbf{x}|_{\mathcal{L}(\Sigma)} ,s\kappa)\varphi\circ \mathbf{f}_s^{-1}) \cdot  \mathbf{n}_{\Lambda_s}]\\
 & = \int_{\Lambda_s} (\mathbf{w}_s\cdot\mathbf{n}_{\Lambda_s})\kappa_s L_{\Lambda_s}\left((D_1\hat{F}_{\mathbf{v}}(\mathbf{x}|_{\mathcal{L}(\Sigma)},s\kappa)\varphi\circ \mathbf{f}_s^{-1})\cdot\mathbf{n}_{\Lambda_s}\right) e^{\frac{|\mathbf{x}|^2}{4}} \, d\mathcal{H}^n.
\end{align*}
Furthermore, the above integral can be simplified by using integration by parts (which is justified by Proposition 6.2 of \cite{BWBanachManifold}) to
$$
\int_{\mathcal{L}(\Lambda_s)} \mathrm{tr}_\infty^*[(\mathbf{w}_s\cdot\mathbf{n}_{\Lambda_s})\kappa_s] \mathrm{tr}^1_\infty[(D_1\hat{F}_\mathbf{v}(\mathbf{x}|_{\mathcal{L}(\Sigma)},s\kappa)\varphi\circ\mathbf{f}_s^{-1})\cdot\mathbf{n}_{\Lambda_s}]\, d\mathcal{H}^{n-1}
$$
where $\mathrm{tr}_\infty^*$ is given in Section 6 of \cite{BWBanachManifold}. By Item \eqref{TraceFItem} of Theorem \ref{ModifiedSmoothDependThm} and the linearity of $\mathrm{tr}_\infty^1$, we have that $\mathcal{L}(\Lambda_s)=\mathcal{L}(\Sigma)$, $\mathrm{tr}_\infty^1[\mathbf{f}_s]=\mathbf{x}|_{\mathcal{L}(\Sigma)}$ and 
$$
\mathrm{tr}_\infty^1[(D_1\hat{F}_{\mathbf{v}}(\mathbf{x}|_{\mathcal{L}(\Sigma)},s\kappa)\varphi\circ\mathbf{f}_s^{-1})\cdot\mathbf{n}_{\Lambda_s}]=\varphi\cdot\mathbf{n}_{\mathcal{L}(\Lambda_s)}.
$$
Thus, it follows that 
$$
B_{\Sigma,\mathbf{v}}[D_1\hat{G}_\mathbf{v}(\mathbf{x}|_{\mathcal{L}(\Sigma)},s\kappa)\varphi,\kappa_s\circ \mathbf{f}_s]=\int_{\mathcal{L}(\Lambda_s)} \mathrm{tr}_\infty^*[(\mathbf{w}_s\cdot\mathbf{n}_{\Lambda_s})\kappa_s] (\varphi\cdot\mathbf{n}_{\mathcal{L}(\Lambda_s)}) \, d\mathcal{H}^{n-1}.
$$
In particular, evaluating the identity at $s=0$ gives
$$
B_{\Sigma,\mathbf{v}}[D_1\hat{G}_\mathbf{v}(\mathbf{x}|_{\mathcal{L}(\Sigma)},0)\varphi,\kappa]=\int_{\mathcal{L}(\Sigma)} \mathrm{tr}_\infty^*[(\mathbf{v}\cdot\mathbf{n}_{\Sigma})\kappa] (\varphi\cdot\mathbf{n}_{\mathcal{L}(\Sigma)}) \, d\mathcal{H}^{n-1}.
$$
Notice that, in the derivation of the above identity, we only use the fact that $\Sigma$ is an asymptotically conical self-expander embedded in a family of asymptotically conical self-expanders with the same asymptotic cone, that $\mathbf{v}$ is a transverse section on $\Sigma$ satisfying the hypotheses of the lemma and that $\kappa\in \mathcal{K}_{\mathbf{v}}$. Thus, the same identity holds true with $\Sigma$ replaced by $\Lambda_{s}$, $\mathbf{v}$ by $\mathbf{w}_{s}$ and $\kappa$ by $\kappa_s$, that is,
$$
B_{\Lambda_s,\mathbf{w}_s}[D_1\hat{G}_{\mathbf{w}_s}(\mathbf{x}|_{\mathcal{L}(\Lambda_s)},0)\varphi,\kappa_s]=\int_{\mathcal{L}(\Lambda_s)} \mathrm{tr}_\infty^*[(\mathbf{w}_s\cdot\mathbf{n}_{\Lambda_s})\kappa_s] (\varphi\cdot\mathbf{n}_{\mathcal{L}(\Lambda_s)}) \, d\mathcal{H}^{n-1}.
$$
Hence,
$$
B_{\Lambda_{s},\mathbf{w}_{s}}[D_1\hat{G}_{\mathbf{w}_{s}}(\mathbf{x}|_{\mathcal{L}(\Lambda_{s})},0)\varphi,\kappa_{s}]=B_{\Sigma,\mathbf{v}}[D_1\hat{G}_\mathbf{v}(\mathbf{x}|_{\mathcal{L}(\Sigma)},s\kappa)\varphi,\kappa_{s}\circ \mathbf{f}_{s}].
$$
Therefore,  after possibly shrinking $\epsilon$, the first claim follows from this and the fact that 
$$
s\mapsto B_{\Sigma,\mathbf{v}}[D_1\hat{G}_\mathbf{v}(\mathbf{x}|_{\mathcal{L}(\Sigma)},s\kappa)\varphi,\kappa_s\circ \mathbf{f}_s]
$$
is smooth near $s=0$. 

Finally, by what we have shown, 
$$
B_{\Sigma,\mathbf{v}}[D_1\hat{G}_\mathbf{v}(\mathbf{x}|_{\mathcal{L}(\Sigma)},0)\varphi,\kappa]=B_{\Sigma,\mathbf{v}^\prime}[D_1\hat{G}_{\mathbf{v}^\prime}(\mathbf{x}|_{\mathcal{L}(\Sigma)},0)\varphi,\kappa^\prime],
$$
proving the second claim.
\end{proof}

\begin{proof}[Proof of Theorem \ref{MainThm}]
Given two points $\varphi_0, \varphi_1 \in \mathcal{V}\backslash\Pi(\mathcal{S}_1)$ let $C=\Phi([0,1]])$ be the image of a generic curve in $\mathcal{V}$ connecting $\varphi_0$ to $\varphi_1$ as given by Lemma \ref{BaireLem}.  As $C$ is embedded and transverse to $\Pi$, $C^\prime=\Pi^{-1}(C)\cap \mathcal{U}$ is a (not necessarily connected) one-dimensional submanifold with boundary.  Moreover,
$$
\partial C^\prime=\Pi^{-1}(\partial C)\cap \mathcal{U}=\Pi^{-1}(\set{\varphi_0, \varphi_1})\cap \mathcal{U}.
$$
Notice that $C^\prime$ is compact as $\Pi|_{\mathcal{U}}$ is proper.  

Orient $C$ so $\varphi_0$ is the initial point. We now orient $C^\prime$ as follows:
\begin{enumerate}
\item If $[\mathbf{f}]\in C^\prime\backslash \mathcal{S}_1$, then $\Pi$ is a local diffeormorphism at $[\mathbf{f}]$ (by \cite[Theorem 1.1]{BWBanachManifold} or Theorem \ref{ModifiedSmoothDependThm}). That is, there is an open neighborhood $\mathcal{U}^\prime\subset \mathcal{U}$ of $[\mathbf{f}]$ so that $\Pi|_{\mathcal{U}^\prime}$ is a diffeomorphism onto its image. Notice $\Pi(C^\prime\cap \mathcal{U}^\prime)\subset C$. Let $I^\prime=\mathcal{U}^\prime\cap C^\prime$ and orient $I^\prime$ so $\Pi|_{I^\prime}$ is orientation preserving if $[\mathbf{f}]$ has even Morse index and $\Pi|_{I^\prime}$ is orientation-reversing if $[\mathbf{f}]$ has odd Morse index.
\item If $[\mathbf{f}]\in C^\prime\cap \mathcal{S}_1$, then first observe that as $C$ is transverse to $\Pi$, $[\mathbf{f}]\in \mathcal{S}_1\backslash \mathcal{S}_2$.  In this case a neighborhood, $I^\prime$, of $[\mathbf{f}]$ embeds in $\tilde{C}\times \Real$ as $\hat{G}_{\mathbf{v}}^{-1}(0)$ (with $[\mathbf{f}]$ corresponding to $(\mathbf{x}|_{\mathcal{L}(\mathbf{f}(\Gamma))},0)$) where 
$$
\tilde{C}=\set{\mathrm{tr}_\infty^1[\mathscr{E}^\mathrm{H}_1[\varphi]\circ\mathcal{C}[\mathbf{f}]^{-1}]\colon \varphi\in C}
$$
and
$$
D_{1} \hat{G}_{\mathbf{v}}(\mathbf{x}|_{\mathcal{L}(\mathbf{f}(\Gamma))},0)\neq 0.
$$
The orientation of $\tilde{C}$ and the standard one of $\Real$ determine an orientation on $\tilde{C}\times \Real$. If $\mathrm{ind}(\mathbf{f})$ is even, orient $I^\prime=\hat{G}_{\mathbf{v}}^{-1}(0)$ so it is the boundary of $\hat{G}_{\mathbf{v}}>0$ while if $\mathrm{ind}(\mathbf{f})$ is odd, give $I^\prime$ the reverse orientation.
\end{enumerate}
If $[\mathbf{f}]\in C^\prime\cap\mathcal{S}_1$ can be approximated by a sequence of regular points on $C^\prime$, then, by Theorem \ref{MainTechThm}, these two conventions are consistent. If $\Pi^{-1}(\mathrm{tr}_\infty^1[\mathbf{f}])$ has a component containing $[\mathbf{f}]$ that has a nonempty open subset of $C^\prime\cap\mathcal{S}_1$, then Lemma \ref{OrientationLem} ensures that the second convention orients this component independent of choices. Thus, the above convention orients $C^\prime$  and the degree $\mathrm{deg}(\Pi|_\mathcal{U}, \mathcal{V})$ is just the usual degree of the map $\Pi|_{C^\prime}\colon C^\prime\to C$. 
\end{proof}

\section{Proof of Theorem  \ref{ApplicationThm}} \label{ApplicationSec}
Finally, we prove Theorem \ref{ApplicationThm}. First, we use the shrinking rotationally symmetric torus described by Angenent \cite{Angenent} to prove the following non-existence result for connected asymptotically conical self-expanders whose asymptotic cone is a ``narrow" double cone.

\begin{lem} \label{NonExistLem}
There is a $\delta_0=\delta_0(n)>0$ so that: If $\mathcal{C}$ is a $C^{k,\alpha}$-regular cone in $\Real^{n+1}$ with both $\mathcal{C}\cap \set{x_{n+1}>0}$ and $\mathcal{C}\cap \set{x_{n+1}<0}$ non-empty and $\mathcal{C}\subset  \set{x_1^2+\ldots +x_n^2\leq \delta_0^2 x_{n+1}^2}$, then any self-expander, $\Sigma\in \mathcal{ACH}_n^{k,\alpha}$ with $\mathcal{C}(\Sigma)=\mathcal{C}$ is disconnected.
\end{lem}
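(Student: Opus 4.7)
The plan is to use Angenent's self-shrinking torus as a barrier, combined with the avoidance principle for mean curvature flow applied to the expanding flow $\Sigma_t = \sqrt{t}\,\Sigma$. Let $A\subset\mathbb{R}^{n+1}$ be Angenent's torus (assume $n\geq 2$ in what follows), rotationally symmetric about the $x_{n+1}$-axis; in the $(r,z)$-half plane with $r=(x_1^2+\cdots+x_n^2)^{1/2}$, it is generated by a simple closed curve contained in a box $[r_1,r_2]\times[-h,h]$ with $0<r_1<r_2$, and after a fixed rescaling $\{A_t\}_{t\in[0,T)}=\{\sqrt{T-t}\,A\}$ is a smooth mean curvature flow shrinking to $\mathbf{0}$ at $t=T$. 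I will choose $\delta_0=r_1/(2h)$. Then for every $|z|\leq\sqrt{T}\,h$ one has $\delta_0|z|<\sqrt{T}\,r_1$, so any cone $\mathcal{C}$ satisfying the hypothesis of the lemma is disjoint from $A_0=\sqrt{T}\,A$, with $\mathcal{C}$ threading through the hole of the torus.

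Since $\mathcal{C}(\Sigma)=\mathcal{C}$, the asymptotic cone definition gives $\sqrt{t}\,\Sigma\to\mathcal{C}$ in $C^{k,\alpha}_{loc}(\mathbb{R}^{n+1}\setminus\{\mathbf{0}\})$ as $t\to 0^+$, while $A_t$ varies continuously at $t=0$. Hence for $t_0>0$ sufficiently small, $A_{t_0}\cap\Sigma_{t_0}=\varnothing$; since both $\{A_t\}$ and $\{\Sigma_t\}$ are classical mean curvature flows on $[t_0,T)$, the avoidance principle gives $A_t\cap\Sigma_t=\varnothing$ for all $t\in[t_0,T)$, and letting $t_0\to 0^+$ extends the disjointness to all of $(0,T)$.

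Now suppose for contradiction that $\Sigma$ is connected. Then each $\Sigma_t=\sqrt{t}\,\Sigma$ is a connected properly embedded hypersurface with ends escaping to infinity, so it must lie in the unbounded component of $\mathbb{R}^{n+1}\setminus A_t$, i.e., outside the solid torus bounded by $A_t$. Intersecting with the hyperplane $\{x_{n+1}=0\}$ and rescaling gives that $\Sigma\cap\{x_{n+1}=0\}$ is disjoint from the annulus $\{\rho\, r_1\leq r\leq \rho\, r_2\}$, where $\rho=\sqrt{(T-t)/t}$. As $t$ ranges over $(0,T)$, $\rho$ ranges over all of $(0,\infty)$, and $\bigcup_{\rho>0}[\rho r_1,\rho r_2]=(0,\infty)$; therefore $\Sigma\cap\{x_{n+1}=0\}\subset\{\mathbf{0}\}$.

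Finally, I rule out $\mathbf{0}\in\Sigma$. If $\mathbf{0}\in\Sigma$, smoothness of $\Sigma$ together with $\Sigma\cap\{x_{n+1}=0,r>0\}=\varnothing$ forces the tangent plane of $\Sigma$ at $\mathbf{0}$ to coincide with $\{x_{n+1}=0\}$, since otherwise transversality would produce an $(n-1)$-dimensional intersection passing through points with $r>0$. Locally near $\mathbf{0}$, $\Sigma$ is thus the graph $x_{n+1}=f(x_1,\ldots,x_n)$ with $f(\mathbf{0})=0$, $\nabla f(\mathbf{0})=0$, and an isolated zero at $\mathbf{0}$; continuity then forces $f$ to be of one sign in a neighborhood of $\mathbf{0}$, so $\Sigma$ lies locally in one closed half-space, say $\{x_{n+1}\geq 0\}$. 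Since $\Sigma$ has an end asymptotic to $\mathcal{C}^-\subset\{x_{n+1}<0\}$, connectedness produces a continuous path in $\Sigma$ from that end to $\mathbf{0}$; examining the first parameter at which its $x_{n+1}$-coordinate vanishes yields a sequence in $\Sigma$ approaching $\mathbf{0}$ through points with $x_{n+1}<0$, contradicting the local graphical structure. Hence $\Sigma\cap\{x_{n+1}=0\}=\varnothing$, and $\Sigma=(\Sigma\cap\{x_{n+1}>0\})\sqcup(\Sigma\cap\{x_{n+1}<0\})$ with both pieces non-empty (by the assumption on $\mathcal{C}$), giving disconnectedness. The main technical point is confirming that the avoidance principle can be applied down to $t=0$ even though the expander flow is only asymptotic to $\mathcal{C}$ in that limit; this is routine once the fast convergence $\sqrt{t}\,\Sigma\to\mathcal{C}$ on compacts away from the origin is used to arrange initial disjointness at some $t_0>0$.
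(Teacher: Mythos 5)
Your proof uses the same core device as the paper: Angenent's self-shrinking torus, placed so as to avoid the narrow cone $\mathcal{C}$, together with the avoidance principle for the shrinking flow $\{\sqrt{T-t}\,A\}$ and the expanding flow $\{\sqrt{t}\,\Sigma\}$. The argument is correct, but the final contradiction is derived somewhat differently: the paper fixes a single critical time $t_*$ at which $\sqrt{t_*}\,(\Sigma\cap\{x_{n+1}=0\})$ first touches $\partial B_{\sqrt{1-t_*}\,R_-}$, using that the torus meets $\{x_{n+1}=0\}$ exactly in $\partial B_{R_-}\cup\partial B_{R_+}$, whereas you sweep the shrinking torus over all scales to conclude $\Sigma\cap\{x_{n+1}=0\}\subset\{\mathbf{0}\}$ and then separately rule out $\mathbf{0}\in\Sigma$. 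Two small points. First, the slice of the solid torus bounded by $A_t$ with $\{x_{n+1}=0\}$ is the annulus with radii given by the two $z=0$ crossings of Angenent's generating curve (the paper's $R_\pm$), not your bounding-box values $r_1,r_2$; this is harmless since both choices give an annulus with positive inner and outer radii, so the union over all $\rho>0$ still exhausts $\{r>0\}$. Second, your extra step ruling out $\mathbf{0}\in\Sigma$ is a real necessity — the paper's intermediate-value step also tacitly requires $\Sigma\cap\{x_{n+1}=0\}\neq\{\mathbf{0}\}$ — and your treatment via the tangent plane and path-connectivity is sound for $n\geq 2$ (which is in any case required for Angenent's construction); a cleaner alternative is to note that $\{x_{n+1}=0\}$ is itself a self-expander, so tangency at $\mathbf{0}$ with $\Sigma$ lying locally on one side would force $\Sigma=\{x_{n+1}=0\}$ by the strong maximum principle, which is incompatible with $\mathcal{C}(\Sigma)=\mathcal{C}$.
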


\begin{proof}
Let $\Lambda$ to the rotationally symmetric self-shrinker given by Angenent in \cite{Angenent} chosen so the axis of symmetry is $\ell_{n+1}$,  the $x_{n+1}$-axis. That is, $\Lambda$ is of the form
$$
\Lambda= \set{f(x_1^2+\ldots+ x_n^2, x_{n+1})=0}
$$
for some function $f\colon\Real^2\to \Real$. The construction of $\Lambda$ ensures there are $0<R_-<\sqrt{2n}< R_+$ so that
$$
 \Lambda \subset \bar{B}_{R_+}\backslash B_{R_-} \mbox{ and } \Lambda\cap \set{x_{n+1}=0}=\left(\partial B_{R_+} \cup \partial B_{R_-}\right)\cap \set{x_{n+1}=0}.
$$
As $\Lambda$ is disjoint from $\ell_{n+1}$, there is a $\delta_0=\delta_0(n)>0$ so that 
$$
\Lambda \cap \left(\set{x_1^2+\ldots +x_n^2\leq \delta^2_0 x_{n+1}^2}\cup B_{R_-}\right)=\emptyset.
$$
In particular, for the given cone $\mathcal{C}\cap \Lambda=\emptyset$. 

Consider the mean curvature flows $\mathcal{M}=\set{\sqrt{t}\, \Sigma}_{t>0}$ and $\tilde{\mathcal{M}}=\set{\sqrt{1-t}\, \Lambda}_{t<1}$ of $\Sigma$ and of $\Lambda$, respectively. As $\mathcal{C}(\Sigma)\cap \Lambda=\emptyset$ and $\Lambda$ is compact, the parabolic maximum principle implies that 
$$
\sqrt{t}\, \Sigma\cap \sqrt{1-t}\, \Lambda=\emptyset
$$
for all $t\in (0,1)$.  

If $\Sigma$ is connected, then $\Sigma_0=\Sigma\cap \set{x_{n+1}=0}$ is non-empty as $\set{x_{n+1}>0}\cap \Sigma$ and $\set{x_{n+1}<0}\cap \Sigma$ are both non-empty by the hypotheses on $\mathcal{C}=\mathcal{C}(\Sigma)$. Moreover, as $\mathcal{C}(\Sigma)\cap  \set{x_{n+1}=0}=\set{\mathbf{0}}$, $\Sigma_0$ is compact. In particular, for $t$ sufficiently small $\sqrt{t}\, \Sigma_0\subset B_{\sqrt{1-t}\, R_-}$. Hence, there is a $t_*\in (0,1)$ so $\sqrt{t_*}\, \Sigma_0\cap \partial B_{\sqrt{1-t_*} \, R_-}\neq \emptyset$. The definition of $R_-$ means $\sqrt{t_*}\, \Sigma_0\cap \partial B_{\sqrt{1-t_*} \, R_-}\subset \sqrt{1-t_*} \, \Lambda$. As this contradicts the disjointness of the flows, one must have that $\Sigma$ is not connected.
\end{proof}

We next use a construction of Angenent-Ilmanen-Chopp \cite{AIC} of connected rotationally symmetric self-expanders and a variational argument sketched by Ilmanen \cite{IlmanenNotes} and carried out by Ding \cite{Ding} in order to show existence of a connected self-expander asymptotic to a ``wide" double cone.

\begin{lem}\label{ExistLem}
Fix $2\leq n \leq 6$. There is a $\delta_1=\delta_1(n)>0$ so that: If $\mathcal{C}$ is a $C^{k,\alpha}$-regular cone in $\Real^{n+1}$ with $\mathcal{C}\subset  \set{x_1^2+\ldots +x_n^2\geq \delta^2_1 x_{n+1}^2}$ and so that $\mathcal{L}[\mathcal{C}]=\mathcal{L}_+\cup \mathcal{L}_-$ where $\mathcal{L}_\pm$ are connected, $\mathcal{L}_+ \subset \set{ x_{n+1}>0}$ and $\mathcal{L}_-\subset \set{x_{n+1}<0}$ and $0\neq [\mathcal{L}_-]=[\mathcal{L}_+]\in H^{n-1}(\Real^{n+1}\backslash \ell_{n+1}; \mathbb{Z}_2)$, where $\ell_{n+1}$ is the $x_{n+1}$-axis, then there is a connected asymptotically conical self-expander, $\Sigma\in \mathcal{ACH}_n^{k,\alpha}$ with $\mathcal{C}(\Sigma)=\mathcal{C}$. Moreover, this $\Sigma$ may be chosen to be (weakly) stable.
\end{lem}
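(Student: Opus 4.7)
The plan is to produce the desired self-expander as a weighted area minimizer in a suitable topological class, following the strategy sketched by Ilmanen and carried out by Ding. Three ingredients enter: the Angenent-Ilmanen-Chopp (AIC) construction furnishes barriers that confine the minimizer away from $\ell_{n+1}$; the homology hypothesis rules out a "disconnected" competitor; and the dimension bound $n \leq 6$ gives smoothness of the minimizer via the Federer-Almgren-Simons regularity theory (adapted to the weighted setting).

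First, I would choose $\delta_1 = \delta_1(n)$ larger than the AIC threshold $\delta_*(n)$, so that there is some $\delta \in (\delta_*, \delta_1)$ for which a connected rotationally symmetric self-expander $\Sigma_*$ asymptotic to $\mathcal{C}_{\delta}$ exists. Under the hypothesis $\mathcal{C} \subset \{x_1^2+\cdots+x_n^2 \geq \delta_1^2 x_{n+1}^2\}$, the cone $\mathcal{C}_{\delta}$ lies strictly inside the narrow forbidden region of $\mathcal{C}$, so $\Sigma_*$ is disjoint from $\mathcal{C}$. The homothetic mean curvature flow $\{\sqrt{t}\, \Sigma_*\}_{t > 0}$, together with suitable vertical translates and dilates, furnishes a family of barriers (via the avoidance principle for MCF, as in Lemma \ref{NonExistLem}) which confines any self-expander asymptotic to $\mathcal{C}$ to a tubular neighborhood of $\mathcal{C}$ disjoint from $\ell_{n+1}$.

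Next, I would minimize the expander functional $E[\Sigma] = \int_\Sigma e^{|\mathbf{x}|^2/4} \, d\mathcal{H}^n$ over integer-rectifiable (mod two) currents $\Sigma \subset \mathbb{R}^{n+1} \setminus \ell_{n+1}$ whose asymptotic cone is $\mathcal{C}$ and which represent a nonzero linking class with $\ell_{n+1}$ (as dictated by the hypothesis $0 \neq [\mathcal{L}_\pm]$). Concretely, one works with large-ball truncations: in each $B_R$, minimize $E$ among hypersurfaces with fixed boundary on $\mathcal{C} \cap \partial B_R$ (and a small inner boundary) subject to the prescribed linking constraint; standard BV-compactness, the weighted monotonicity formula, and the barrier family (to prevent escape into the narrow cone around $\ell_{n+1}$) yield a limiting rectifiable varifold $\Sigma$ asymptotic to $\mathcal{C}$ as $R \to \infty$. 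Since $2 \leq n \leq 6$, the weighted version of Federer-Almgren-Simons regularity produces a smooth minimizer away from infinity, and the asymptotic regularity $\Sigma \in \mathcal{ACH}_n^{k,\alpha}$ with $\mathcal{C}(\Sigma) = \mathcal{C}$ follows from standard blow-down analysis combined with the $C^{k,\alpha}$-regularity of $\mathcal{C}$.

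The main obstacle is verifying that the minimizer $\Sigma$ is actually connected. A priori the minimizer could split into an upper piece asymptotic to $\mathcal{L}_+$ and a lower piece asymptotic to $\mathcal{L}_-$; each piece alone would, in principle, be an admissible competitor. This is precisely what the homology hypothesis $0 \neq [\mathcal{L}_+] = [\mathcal{L}_-]$ is designed to rule out: the two candidate pieces, taken separately, fail to represent the prescribed nonzero linking class with $\ell_{n+1}$ in $H^{n-1}(\mathbb{R}^{n+1} \setminus \ell_{n+1}; \mathbb{Z}_2)$, so the admissible class excludes the disconnected competitor and forces the minimizer to have a component connecting $\mathcal{L}_+$ to $\mathcal{L}_-$; dropping the remaining components (if any) preserves minimality and yields a connected $\Sigma$. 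Finally, weak stability of $\Sigma$ is automatic from its $E$-minimizing property, which immediately gives $Q_\Sigma[u] \geq 0$ for all $u \in C^2_c(\Sigma)$.
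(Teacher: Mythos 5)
The proposal follows essentially the same strategy as the paper: use the AIC expander $\Lambda$ as an outer barrier, solve a Plateau-type problem for the $E$-functional with boundary $R\mathcal{L}[\mathcal{C}]$ inside the barrier region intersected with $B_R$ (the homology hypothesis guaranteeing both that $R\mathcal{L}[\mathcal{C}]$ bounds away from $\ell_{n+1}$, since $[\mathcal{L}_+]+[\mathcal{L}_-]=0$, and that the filling must connect $R\mathcal{L}_+$ to $R\mathcal{L}_-$, since each $[\mathcal{L}_\pm]\neq 0$), then let $R\to\infty$; stability is inherited from minimization. Your "linking class" phrasing is informal but the underlying topological argument is the right one.

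However, you gloss over what the paper explicitly flags as \emph{the main issue} in adapting Ding's argument: one must verify that the barriers used to pin down the asymptotic cone of the limit $\Sigma$ are compatible with the outer confinement by $\Lambda$. Your phrase "standard blow-down analysis" does not resolve this; Ding's own barriers were not constructed to respect the region $\Omega$. The paper handles it by introducing the explicit family $f_{\mathbf{v},\eta,h}(\mathbf{x}) = 2n + |\mathbf{x}|^2 - \left(1+\eta^{-2}\right)(\mathbf{x}\cdot\mathbf{v})^2 + h$, which satisfies $\mathscr{L}_\Sigma^0 f_{\mathbf{v},\eta,h}\leq f_{\mathbf{v},\eta,h}$ on any self-expander. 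The sublevel sets $E_{\mathbf{v},\eta,h}$ have boundary asymptotic to narrow rotationally symmetric cones $\mathcal{C}_{\mathbf{v},\eta}$ that touch $\mathcal{C}$ from either side at a given $p\in\mathcal{L}[\mathcal{C}]$ while remaining inside $\Omega\setminus\mathcal{C}$ and away from $\{x_{n+1}=0\}$; the strong maximum principle (using the sign of $f_{\mathbf{v},\eta,h}$ on $\partial\Sigma_R$ and on $\{x_{n+1}=0\}$) then gives $\Sigma_R\cap E_p=\emptyset$ uniformly in $R$. This is precisely the step that forces the limit to stay $C^0$-close to $\mathcal{C}$ and hence to satisfy $\mathcal{C}(\Sigma)=\mathcal{C}$. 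Without it, or some substitute argument, your claim that the limit has the correct asymptotic cone is incomplete: mass could a priori drift away inside $\Omega$ as $R\to\infty$, yielding a limit asymptotic to a smaller cone or none at all.
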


\begin{proof}
By the construction of Angenent-Ilmanen-Chopp \cite{AIC}, there is a $\delta>\delta_0>0$ so that there is a connected rotationally symmetric self-expander $\Lambda \in \mathcal{ACH}^{k,\alpha}_n$ with
$$
\mathcal{C}(\Lambda)=\set{x_1^2+\ldots+x_n^2=\delta^2 x_{n+1}^2}. 
$$
This self-expander has the additional property that $\Lambda\cap \ell_{n+1}=\emptyset$, that is, it is disjoint from the $x_{n+1}$-axis. As such we may choose $\Omega$ to be the connected component of $\Real^{n+1}\backslash \Lambda$ that is disjoint from $\ell_{n+1}$.

Now let $\delta_1=2\delta>0$. This choice ensures that there is an $R_0>0$ so for $R>R_0$, $R\mathcal{L}[\mathcal{C}]\subset \Omega\cap \partial B_R$. Using $\Lambda$ and $\partial B_R$ as barriers, one uses the direct method to find a smooth self-expander $\Sigma_R\subset \Omega\cap B_R$ with $\partial \Sigma_R= R\mathcal{L}[\mathcal{C}]$ -- here we use the hypotheses that $[\mathcal{L}[\mathcal{C}]]=[\mathcal{L}_+]+[\mathcal{L}_-]=2[\mathcal{L}_+]=0$ and so $\mathcal{L}[\mathcal{C}]$ is null-homologous in $H^{n-1}(\Real^{n+1}\backslash \ell_{n+1}; \mathbb{Z}_2)$.  As $\mathcal{L}_\pm$ are connected and there are no closed self-expanders,  $\Sigma_R$ is connected. 

Finally, by suitably modifying the argument of \cite{IlmanenNotes} (see also \cite[Theorem 6.3]{Ding} for full details) one verifies that, up to passing to a subsequence, one has $\lim_{R_i\to \infty} \Sigma_{R_i}=\Sigma$ with $\Sigma\in \mathcal{ACH}^{k,\alpha}_n$ and $\mathcal{C}(\Sigma)=\mathcal{C}$.  Clearly,  $\Sigma$ is connected and (weakly) stable.

For the sake of completeness, we include a brief explanation of how the argument of Ding in \cite[Theorem 6.3]{Ding} should be modified. The main issue is that Ding uses barriers to show that the limit $\Sigma$ is $C^0$ close to the cone $\mathcal{C}$ and one might worry that these are not consistent with the barriers used to construct $\Sigma_R$. To that end, consider the following family of functions depending on parameters $\mathbf{v}\in \mathbb{S}^n$, $\eta>0$ and $h\geq 0$ :
$$
f_{\mathbf{v}, \eta, h}(\mathbf{x})=2n+ |\mathbf{x}|^2-\left(1+\eta^{-2}\right) (\mathbf{x}\cdot \mathbf{v})^2+h.
$$
We observe that the connected closed set 
$$
E_{\mathbf{v}, \eta, h}=\set{\mathbf{x}\in\mathbb{R}^{n+1}\colon f_{\mathbf{v}, \eta, h}(\mathbf{x})\leq 0\mbox{ and } \mathbf{x}\cdot \mathbf{v}\geq 0}
$$
has boundary asymptotic to the connected, rotationally symmetric cone,
$$
\mathcal{C}_{\mathbf{v}, \eta}=\set{|\mathbf{x}|^2= \left(1+\eta^{-2}\right)(\mathbf{x}\cdot \mathbf{v})^2, \mathbf{x}\cdot \mathbf{v}\geq 0},
$$
that lies in the half-space $\mathbf{x\cdot \mathbf{v}\geq 0}$ and has axis parallel to $\mathbf{v}$ and cone angle $\eta^{-1}$. Furthermore, $E_{\mathbf{v}, \eta, h}$ lies entirely in the component of $\Real^{n+1}\backslash \mathcal{C}_{\mathbf{v}, \eta}$ that contains $\mathbf{v}$. Observe that, by construction, $f_{\mathbf{v}, \eta,h}>0$ on $\set{|\mathbf{x}\cdot \mathbf{v}|<\eta (2n+h)}$ and so
$$
E_{\mathbf{v}, \eta, h}\cap \set{\mathbf{x}\cdot \mathbf{v}<\eta (2n+h)}=\emptyset.
$$  
A straightforward computation gives that on any self-expander, $\Sigma$, 
$$
\mathscr{L}_{\Sigma}^0 f_{\mathbf{v}, \eta, h} -f_{\mathbf{v}, \eta, h} =-2\left(\eta^{-2}+1\right)|\nabla_\Sigma (\mathbf{v}\cdot\mathbf{x})|^2 -h\leq 0.
$$
 
One readily checks that for each $p\in \mathcal{L}(C)$ and choice of normal $\mathbf{N}$ to $\mathcal{L}[\mathcal{C}]$ at $p$, there are $\eta>0$, $h>0$ and $\mathbf{v}_\pm\in \mathbb{S}^{n}$ so that $E_p=E_{\mathbf{v}_\pm, \eta, h}$ satisfies $E_p\subset \Omega \backslash \mathcal{C}$, $E_p\subset H_p$ where $H_p$ is the component of $\set{x_{n+1}\neq 0}$ containing $p$ and $\mathcal{L}[\mathcal{C}_{\mathbf{v_\pm}, \eta}]$ lies on the $\pm \mathbf{N}$ side of $\mathcal{L}[\mathcal{C}]$ and touches $\mathcal{L}[\mathcal{C}]$ only at $p$. By construction, $f_{\mathbf{v}_\pm, \eta, h}>0$ on the component of $\partial \Sigma_R$ that lies in $H_p$. Moreover, $f_{\mathbf{v}_\pm, \eta,h}>0$ on $\set{x_{n+1}=0}$. Hence, by the strong maximum principle, $f_{\mathbf{v}_\pm, \eta, h}>0$ on $\Sigma_R\cap H_p$. That is, $\Sigma_R\cap E_p=\emptyset$. As 
$$
\left(\mathscr{L}_\Sigma^0-1\right)(|\mathbf{x}|^2+2n)=0
$$
the strong maximum principle and compactness of $\mathcal{L}[\mathcal{C}]$ imply that, there is an $R^\prime>0$ so that $B_{R^\prime}\cap E_p\neq\emptyset$ for all $p\in\mathcal{L}[\mathcal{C}]$ and that $\Sigma_R\setminus B_{R^\prime}$, $R>R^\prime$, has exactly two components, one of which contains $R\mathcal{L}_+$ and the other contains $R\mathcal{L}_-$. Thus, by construction and further enlarging $R^\prime$, we have that, for $p\in\Sigma_R\setminus B_{R^\prime}$, $\mathrm{dist}(p,\mathcal{C})=O(|\mathbf{x}(p)|^{-1})$. Hence one obtains from this that $\Sigma$ is $C^0$ asymptotic to $\mathcal{C}$.  The argument that $\mathcal{C}(\Sigma)=\mathcal{C}$ then proceeds exactly as in \cite[Theorem 6.3]{Ding}.
\end{proof}

We need to following refinement of a result of Fong-McGrath \cite{FongMcGrath}.

\begin{lem}\label{SymmetryLem}
Let $\Sigma\in \mathcal{ACH}^{k,\alpha}_n$ be a (weakly) stable self-expander. If $\mathcal{C}(\Sigma)$ is symmetric with respect to rotation around the $x_{n+1}$-axis, $\ell_{n+1}$,  then so is $\Sigma$.
\end{lem}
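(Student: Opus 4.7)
The plan is to follow the Fong--McGrath strategy for self-shrinkers \cite{FongMcGrath}, adapted to self-expanders. Let $X$ be an infinitesimal generator of the $SO(n)$-rotation fixing $\ell_{n+1}$, say $X = x_i\partial_j - x_j\partial_i$ for some $1\leq i<j\leq n$. Since $\mathcal{C}(\Sigma)$ is rotationally symmetric, $X$ is tangent to $\mathcal{C}(\Sigma)$, so $X\cdot \mathbf{n}_{\mathcal{C}(\Sigma)}\equiv 0$. I would introduce the candidate Jacobi function $u = X\cdot \mathbf{n}_\Sigma$ on $\Sigma$. Since the flow $R_t$ of $X$ acts by ambient isometries that preserve the weight $e^{|\mathbf{x}|^2/4}$, it preserves the self-expander equation \eqref{ExpanderEqn}; differentiating the one-parameter family $R_t(\Sigma)$ at $t=0$ then yields $L_\Sigma u = 0$. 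The goal becomes showing $u\equiv 0$, which is equivalent to $X$ being everywhere tangent to $\Sigma$ and hence to $\Sigma$ being rotationally invariant.

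The first main step is to place $u$ in the weighted Sobolev space $W^1_{1/4}(\Sigma)$. The tangency of $X$ to $\mathcal{C}(\Sigma)$ together with the $C^{k,\alpha}_*$-asymptotic conical structure of $\Sigma$ gives initial pointwise decay of $u$ at infinity; combined with the rapid decay of the graph function of $\Sigma$ over $\mathcal{C}(\Sigma)$ forced by the drift term in the expander equation, one obtains $u\in W^0_\gamma(\Sigma)$ for some small $\gamma>0$. Proposition \ref{DecayEigenfunProp} applied to $L_\Sigma u = 0$ then promotes this to pointwise $e^{-\beta|\mathbf{x}|^2}$-decay of $u$ and its derivatives for every $\beta<1/2$, in particular $u\in W^1_{1/4}(\Sigma)$. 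I would next invoke Lemma \ref{SpectrumLem} together with stability: $-L_\Sigma$ is self-adjoint on $W^0_{1/4}(\Sigma)$ with discrete spectrum, its smallest eigenvalue $\mu_1\geq 0$, and the strong maximum principle on each connected component produces a strictly positive first eigenfunction $\psi_1$. Thus $u$ lies in the kernel of $-L_\Sigma$; if $\mu_1>0$ then $u\equiv 0$ at once, and if $\mu_1=0$ then $u$ is a scalar multiple of $\psi_1$ on each component of $\Sigma$ and so has a constant sign there.

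The final step is to rule out $u$ having a nonzero constant sign on any connected component $\Sigma'$ of $\Sigma$. Assuming $u>0$ on $\Sigma'$ (the other case is symmetric), $X$ is nowhere tangent to $\Sigma'$, so $\Sigma'$ is disjoint from $\ell_{n+1}$. Since $\Sigma'$ is connected and properly embedded in $\mathbb{R}^{n+1}$, it is two-sided and separates $\mathbb{R}^{n+1}$ into two components $\Omega_\pm$. Any $X$-orbit through a point $p_0\in\Sigma'$ is a closed circle meeting $\Sigma'$ transversely at $p_0$; by $u>0$, every transverse intersection with $\Sigma'$ is a crossing from $\Omega_-$ to $\Omega_+$. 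But the signed intersection number of a null-homologous closed loop in $\mathbb{R}^{n+1}$ with the separating hypersurface $\Sigma'$ vanishes, so the number of $\Omega_-\to\Omega_+$ crossings must equal the number of $\Omega_+\to\Omega_-$ crossings, contradicting the presence of at least one crossing at $p_0$. Hence $u\equiv 0$ on every component; since this holds for every generator of $SO(n)$, $X$ is tangent to $\Sigma$ for every rotational $X$ and $\Sigma$ is rotationally symmetric. The main obstacle in this plan is the decay step: the weight $e^{|\mathbf{x}|^2/4}$ is so strong that the pointwise decay of $u$ coming only from the asymptotic tangency of $X$ to $\mathcal{C}(\Sigma)$ is insufficient, and one must combine it with the exponential closeness of $\Sigma$ to its asymptotic cone — a feature special to self-expanders — before Proposition \ref{DecayEigenfunProp} can be brought to bear.
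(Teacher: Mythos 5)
Your proposal is correct and follows the same overall strategy as the paper: introduce the Jacobi field $u=X\cdot\mathbf{n}_\Sigma$ coming from a rotational Killing vector, use the rapid-decay machinery from \cite{Bernstein} to place it in $W^1_{1/4}(\Sigma)$, invoke weak stability (via Lemma \ref{SpectrumLem}) to force $u$ to have a constant sign on each component, and then rule out a nonzero constant sign. The genuine divergence is in that final step. You rule out the sign topologically: $u>0$ on a component $\Sigma'$ forces every crossing of a closed $X$-orbit with the separating hypersurface $\Sigma'$ to go the same way, while the orbit is null-homologous in $\mathbb{R}^{n+1}$, so the signed intersection number must vanish --- contradiction. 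This is valid and close to Fong--McGrath's original argument. The paper instead observes that since $\ell_{n+1}\cap\mathcal{C}(\Sigma)=\{\mathbf{0}\}$ and $\Sigma$ is asymptotically conical, $\dist(\cdot,\ell_{n+1})$ attains its minimum on $\Sigma$ at some $p$; at such a point the radial direction from the axis is normal to $\Sigma$ and perpendicular to $\mathbf{R}(p)$, so $f(p)=0$ outright. Both work; the paper's version is shorter and sidesteps the separation/orientability preliminaries, while yours is more robust in that it does not need a nearest point to exist. One smaller difference worth noting: you propose to first get $u\in W^0_\gamma(\Sigma)$ for some $\gamma>0$ via the exponential closeness of $\Sigma$ to its cone and only then apply Proposition \ref{DecayEigenfunProp}; the paper instead notes $f\in C^{k-1,\alpha}_{1,0}(\Sigma)$ and appeals directly to \cite[Theorem 9.1]{Bernstein} on the weakly conical end, which is a cleaner route since the exponential closeness of $\Sigma$ to $\mathcal{C}(\Sigma)$ is not established in this paper and would need a separate argument.
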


\begin{proof}
Let $\mathbf{R}$ be a non-trivial rotational Killing vector field in $\Real^{n+1}$ that is a symmetry of $\mathcal{C}(\Sigma)$. Consider  $f=\mathbf{R}\cdot \mathbf{n}_{\Sigma}$.  One readily checks that $f\in C^{k-1,\alpha}_{1, 0}\cap C^{\infty}_{loc}(\Sigma)$ and $L_{\Sigma} f =0$ (see, e.g., \cite[Lemma 2.3]{FongMcGrath}). As $\Sigma$ has a weakly conical end (cf. \eqref{REstimatesEqn}), it follows from \cite[Theorem 9.1]{Bernstein} that $f\in W_{\frac{1}{4}}^{1}(\Sigma)$.  

In particular, either $f$ identically vanishes and so $\mathbf{R}$ is a symmetry of $\Sigma$ or $f$ is an eigenfunction of $-L_{\Sigma}$ with eigenvalue $0$. As $\Sigma$ is (weakly) stable this latter situation would imply that $f$ is the lowest eigenfunction of $-L_\Sigma$ and so $f$ cannot change sign.   However, as $\ell_{n+1}\cap \mathcal{C}(\Sigma)=\set{\mathbf{0}}$, there must be a $p\in \Sigma$ that satisfies $\dist(p, \ell_{n+1})=\dist(\Sigma, \ell_{n+1})$. Clearly, $f(p)=0$ and so one is in the situation that $f$ identically vanishes.  As $\mathbf{R}$ was arbitrary, one deduces that $\Sigma$ is also rotationally symmetric about $\ell_{n+1}$.
\end{proof}

By \cite{BWProperness}, the map $\Pi$, for surfaces of a fixed topological type in $\Real^3$, is proper. Lemma \ref{NonExistLem} implies the degree of $\Pi$ is zero for annuli and so the existence of unstable solutions will follow from Lemma \ref{ExistLem}. 
 
\begin{proof}[Proof of Theorem \ref{ApplicationThm}]
Let 
$$
\Gamma_0= \set{x_1^2+x_2^2=x_3^2+1}\subset \Real^3.
$$
Observe that $\Gamma_0\in \mathcal{ACH}_2^{k,\alpha}$ and is a topological annulus. Let $\mathcal{V}$ be the component of $\mathcal{V}^{k,\alpha}_{\mathrm{emb}}(\Gamma_0)$ that contains $\mathbf{x}|_{\mathcal{L}(\Gamma_0)}$. Let $\mathcal{U}=\Pi^{-1}(\mathcal{V})$ where 
$$
\Pi\colon \mathcal{ACE}_{2}^{k,\alpha}(\Gamma_0)\to C^{k,\alpha}(\mathcal{L}(\Gamma_0); \Real^3)
$$
is the natural projection map from \cite{BWBanachManifold}. By \cite{BWProperness}, $\Pi_0=\Pi|_{\mathcal{U}}\colon \mathcal{U}\to \mathcal{V}$ is a proper map. Hence, by Theorem \ref{MainThm}, $\Pi_0$ possesses an integer degree $\mathrm{deg}(\Pi|_\mathcal{U}, \mathcal{V})$. 

Write $\varphi=(\varphi_1, \varphi_2, \varphi_3)$. For $\lambda>0$, let
$$
\varphi_\lambda= (\varphi_1, \varphi_2, \lambda \varphi_3).
$$
One readily checks that $\varphi_{\lambda}\in \mathcal{V}$ for all $\lambda > 0$. Moreover, one has
$$
\mathcal{C}_\lambda =\mathcal{C}[\varphi_\lambda(\mathcal{L}(\Gamma_0))]=\set{ x_1^2+x_2^2=\lambda^{-2} x_3^2}.
$$
On the one hand, for $\lambda_0=2 \delta_0^{-1}$ where $\delta_0$ is given by Lemma \ref{NonExistLem},  there is no connected self-expander with asymptotic cone $\mathcal{C}_{\lambda_0}$.  As $\Gamma_0$ is connected, one has $\Pi^{-1}(\varphi_{\lambda_0})=\emptyset$ and so $\mathrm{deg}(\Pi|_\mathcal{U}, \mathcal{V})=0$. On the other hand, by the construction of Angenent-Ilmanen-Chopp \cite{AIC}, there is a $\lambda_1\in (0, \lambda_0)$, so that there is a connected rotationally symmetric expander, $\Lambda$, asymptotic to $\mathcal{C}_{\lambda_1}$. That is, $\Pi^{-1}(\varphi_{\lambda_1})\neq \emptyset$.  Clearly, we may assume this $\lambda_1$ satisfies $\lambda_1^{-1}>\delta_1(2)$ where $\delta_1(2)$ is given by Lemma \ref{ExistLem} for $n=2$.

To conclude the proof we need to find a $\varphi\in \mathcal{V}$ so that $\varphi$ is a regular value of $\Pi$ and so $\Pi^{-1}(\varphi)$ is non-empty.  To that end, first let $\Omega$ be the unique component of $\Real^3 \backslash \mathcal{C}(\Lambda)$ that is disjoint from the $x_3$-axis. Next, observe that, as the set of regular values of $\Pi$ is generic (by \cite{BWBanachManifold}), there is a sequence, $\varphi_i\in \mathcal{V}$ of regular values of $\Pi$ with the property that $\varphi_i\to \varphi$ in $C^{k,\alpha}(\mathcal{L}(\Gamma_0); \Real^3)$ and $\varphi_i(\mathcal{L}(\Gamma_0)) \subset \Omega$.  

It follows from Lemma \ref{ExistLem} that there is a sequence of (weakly) stable connected self-expanders $\Sigma_i\in \mathcal{ACH}^{k,\alpha}_2$ with $\mathcal{L}(\Sigma_i)= \varphi_i(\mathcal{L}(\Gamma_0))$. By the compactness results of \cite{BWProperness}, up to passing to a subsequence and relabeling, the $\Sigma_i\to \Sigma$ in the smooth topology where $\Sigma$ is a (weakly) stable, connected,  self-expander with $\mathcal{C}(\Sigma)=\mathcal{C}_{\lambda_1}$. By Lemma \ref{SymmetryLem}, as $\Sigma$ is (weakly) stable and $\mathcal{C}(\Sigma)$ is rotationally symmetric around the $x_3$-axis, $\Sigma$ is also rotationally symmetric.  As $\Sigma$ is also connected this means that $\Sigma$ is an annulus and so, by the nature of the convergence, up to throwing out a finite number of terms, the $\Sigma_i$ are also annuli for all $i$.  

As a consequence, there are $\mathbf{f}_i\in \mathcal{ACH}_2^{k,\alpha}(\Gamma_0)$ with $\mathbf{f}_i(\Gamma_0)=\Sigma_i$ and $\mathrm{tr}_\infty^1 [\mathbf{f}_i]=\varphi_i$.  In particular, $[\mathbf{f}_i]\in \Pi^{-1}_0(\varphi_i)$ are stable elements. As the degree of $\Pi_0$ is $0$, this means that there must be at least one unstable element $[\mathbf{f}_i^\prime]\in \Pi^{-1}_0(\varphi_i)$. As $\Pi$ is a local diffeomorphism, at each regular point, the claim is proved by fixing some $i_0\geq 1$ large and letting the open set be some small neighborhood, in $C^{k,\alpha}$, of $\mathcal{C}(\Sigma_{i_0})$.
\end{proof}

\appendix

\section{Poincar\'{e} inequality} \label{PoincareIneqnSec}

\begin{prop}\label{GradControlsL2Prop}
Let $\Sigma\in \mathcal{ACH}_{n}^{k,\alpha}$ be a self-expander. For $\beta \geq \frac{1}{4}$ and $f\in W^{1}_{\beta}(\Sigma)$,
\begin{equation}\label{PI1eqn}
\int_{\Sigma} \left(4 n +|\mathbf{x}|^2\right)f^2 e^{\beta |\mathbf{x}|^2}\, d\mathcal{H}^n \leq 16 \int_{\Sigma}|\nabla_\Sigma f|^2 e^{\beta |\mathbf{x}|^2}\, d\mathcal{H}^n.
\end{equation}
Likewise
\begin{equation}\label{PI2eqn}
(4\beta-1)	\int_{\Sigma}|\mathbf{x}^\top|^2 f^2 e^{\beta |\mathbf{x}|^2} \, d\mathcal{H}^n \leq 4 \int_{\Sigma}|\nabla_\Sigma f|^2 e^{\beta |\mathbf{x}|^2}\, d\mathcal{H}^n.
\end{equation}
\end{prop}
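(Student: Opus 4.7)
Both inequalities will follow from a single integration-by-parts identity obtained by taking the divergence of the tangential vector field $f^2 e^{\beta |\mathbf{x}|^2} \mathbf{x}^\top$. The first step is to verify the key geometric formula
$$
\Div_\Sigma \mathbf{x}^\top = n + \tfrac{1}{2}|\mathbf{x}^\perp|^2,
$$
which follows by noting that $2\mathbf{x}^\top = \nabla_\Sigma |\mathbf{x}|^2$, so $2\Div_\Sigma \mathbf{x}^\top = \Delta_\Sigma |\mathbf{x}|^2 = 2|\nabla_\Sigma \mathbf{x}|^2 + 2\mathbf{x}\cdot \Delta_\Sigma \mathbf{x}$; the first term equals $2n$ while the second equals $\mathbf{x}\cdot \mathbf{x}^\perp = |\mathbf{x}^\perp|^2$ after invoking the self-expander equation $\Delta_\Sigma \mathbf{x} = \mathbf{H}_\Sigma = \mathbf{x}^\perp/2$.

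Assuming first that $f\in C^1_c(\Sigma)$, I would integrate $\Div_\Sigma(f^2 e^{\beta|\mathbf{x}|^2} \mathbf{x}^\top) = 0$ and use $\nabla_\Sigma e^{\beta|\mathbf{x}|^2} = 2\beta e^{\beta|\mathbf{x}|^2} \mathbf{x}^\top$ to obtain the master identity
$$
\int_\Sigma \Bigl[\, 2f(\nabla_\Sigma f \cdot \mathbf{x}^\top) + 2\beta f^2 |\mathbf{x}^\top|^2 + f^2 \bigl(n + \tfrac{1}{2}|\mathbf{x}^\perp|^2\bigr) \Bigr]\, e^{\beta|\mathbf{x}|^2}\, d\mathcal{H}^n = 0.
$$
Applying the absorbing inequality $|2f\,\nabla_\Sigma f\cdot \mathbf{x}^\top| \leq \eps\, f^2 |\mathbf{x}^\top|^2 + \eps^{-1} |\nabla_\Sigma f|^2$ and rearranging yields, for any $\eps>0$,
$$
\int_\Sigma f^2 \Bigl[(2\beta-\eps)|\mathbf{x}^\top|^2 + n + \tfrac{1}{2}|\mathbf{x}^\perp|^2\Bigr] e^{\beta |\mathbf{x}|^2}\, d\mathcal{H}^n \leq \eps^{-1} \int_\Sigma |\nabla_\Sigma f|^2 e^{\beta|\mathbf{x}|^2}\, d\mathcal{H}^n.
$$

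To prove \eqref{PI1eqn}, I would take $\eps = 1/4$; since $\beta \geq 1/4$ one has $2\beta - 1/4 \geq 1/4$, so multiplying by $4$ gives
$$
\int_\Sigma f^2 \bigl[|\mathbf{x}^\top|^2 + 4n + 2|\mathbf{x}^\perp|^2\bigr] e^{\beta|\mathbf{x}|^2}\, d\mathcal{H}^n \leq 16 \int_\Sigma |\nabla_\Sigma f|^2 e^{\beta|\mathbf{x}|^2}\, d\mathcal{H}^n,
$$
and dropping the extra non-negative $|\mathbf{x}^\perp|^2$ term recovers \eqref{PI1eqn}. For \eqref{PI2eqn}, I would take $\eps = \beta$; discarding the non-negative $n + |\mathbf{x}^\perp|^2/2$ term and dividing by $\beta$ gives $\int f^2 |\mathbf{x}^\top|^2 e^{\beta|\mathbf{x}|^2} \leq \beta^{-2} \int |\nabla_\Sigma f|^2 e^{\beta|\mathbf{x}|^2}$, and the desired bound follows from the elementary inequality $(4\beta-1)\beta^{-2} \leq 4$, equivalent to the nonnegativity of $(2\beta-1)^2$.

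The only remaining issue is to extend from $C^1_c(\Sigma)$ to general $f\in W^1_\beta(\Sigma)$, which I would handle by a standard cutoff-and-mollification density argument, exactly as invoked in Proposition~\ref{ModifiedIsoProp}. There is no real obstacle here: the whole proof is a clean Bochner-type computation on the self-expander, and the only subtlety worth flagging is that the choice of $\eps$ must be tailored to each inequality in order to saturate the algebraic constraint $(2\beta-1)^2 \geq 0$.
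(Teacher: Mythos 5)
Your proof is correct and takes essentially the same route as the paper's. The paper's starting point, $\mathscr{L}_\Sigma^0(|\mathbf{x}|^2 + 2n) = |\mathbf{x}|^2 + 2n$, is the same geometric fact as your formula $\Div_\Sigma \mathbf{x}^\top = n + \tfrac{1}{2}|\mathbf{x}^\perp|^2$ (since $\mathbf{x}^\top = \tfrac{1}{2}\nabla_\Sigma|\mathbf{x}|^2$), and both arguments then proceed by the same integration by parts against the weight $e^{\beta|\mathbf{x}|^2}$ followed by absorbing inequalities with the $\eps$ tuned separately for each estimate.
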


\begin{proof}
First observe that as $\Sigma$ is a self-expander one has that
$$
\mathscr{L}_{\Sigma}^0 \left( |\mathbf{x}|^2+2n\right)=|\mathbf{x}|^2+2n.
$$
Now suppose that $f$ has compact support. Integrating by parts gives
\begin{align*}
&\int_\Sigma \left(2 n +|\mathbf{x}|^2\right)f^2 e^{\beta |\mathbf{x}|^2} \, d\mathcal{H}^n = \int_\Sigma \left(\mathscr{L}_\Sigma^0 \left(2 n +|\mathbf{x}|^2\right) \right) f^2 e^{\beta |\mathbf{x}|^2}\, d\mathcal{H}^n  \\
&= - \int_\Sigma \nabla_\Sigma \left(2 n +|\mathbf{x}|^2\right) \cdot \nabla_\Sigma \left( e^{\left( \beta-\frac{1}{4}\right)|\mathbf{x}|^2} f^2 \right) e^{\frac{|\mathbf{x}|^2}{4}}\, d\mathcal{H}^n \\
&=-\int_{\Sigma} \left( (4\beta-1) |\mathbf{x}^\top|^2 f^2 + 4 \phi \mathbf{x}^\top \cdot \nabla_\Sigma f \right)  e^{\beta |\mathbf{x}|^2} \, d\mathcal{H}^n  \\
&= - (4\beta-1) \int_{\Sigma}  |\mathbf{x}^\top|^2 f^2  e^{\beta |\mathbf{x}|^2} \, d\mathcal{H}^n -4 \int_{\Sigma} f \mathbf{x}^\top \cdot \nabla_\Sigma f \, e^{\beta |\mathbf{x}|^2}\,  d\mathcal{H}^n.
\end{align*}
The absorbing inequality and the hypothesis that $\beta\geq \frac{1}{4}$ give
$$
\int_\Sigma \left(2 n +|\mathbf{x}|^2\right) f^2 e^{\beta |\mathbf{x}|^2}\, d\mathcal{H}^n \leq  \int_{\Sigma}\left( \frac{1}{2}  |\mathbf{x}^\top|^2 f^2 + 8| \nabla_\Sigma f|^2 \right)  e^{\beta |\mathbf{x}|^2}\, d\mathcal{H}^n.
$$
Rearranging this gives the first inequality. The second inequality follows from the same computation but using a different version of the absorbing inequality. 

To see that these estimates hold for all $f\in W^{1}_{\beta}(\Sigma)$ one uses the compactly supported cutoff $\psi_R\in C^\infty(\Real^{n+1})$ with the property that $0\leq \psi_R\leq 1$, $\psi_R=1$ in $B_R$,  $\spt \psi_R\subset B_{2R}$ and $|\nabla \psi_R|\leq 2$. The estimates hold for $f_R=\psi_R f$ and hence, by the dominated convergence theorem, hold also for $f$.
\end{proof}

\begin{prop}\label{GradControlsBoundaryProp}
Let $\Sigma\in \mathcal{ACH}_{n}^{k,\alpha}$ be a self-expander. Suppose $R_0\geq 1$ is such that on $\bar{E}_{R_0}$ the estimates \eqref{REstimatesEqn} hold with constant $C_0$. There is an $R_1=R_1(C)\geq R_0$ so that: For $\beta \geq \frac{1}{4}$,  $f \in W^{1}_{\beta}(\Sigma)$ and $R>R_1$ one has the estimate
\begin{equation*}
R e^{\beta R^2} \int_{S_R}  f^2\, d\mathcal{H}^{n-1} \leq 2 \int_{\bar{E}_R} |\nabla_\Sigma f|^2 e^{\beta |\mathbf{x}|^2} \, d\mathcal{H}^n.
\end{equation*}
\end{prop}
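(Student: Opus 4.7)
The plan is to deduce the inequality from the divergence theorem applied on $\bar{E}_R$ to a well-chosen vector field, after first assuming $f \in C^\infty_c(\Sigma)$ and then extending to $W^1_\beta(\Sigma)$ by the standard cutoff and dominated convergence argument used at the end of Proposition \ref{GradControlsL2Prop}. The candidate vector field is $Y = f^2 e^{\beta |\mathbf{x}|^2} \mathbf{x}^\top$. The outward unit conormal of $\bar{E}_R$ along $S_R$ is $-\nabla_\Sigma r/|\nabla_\Sigma r|$ with $r = |\mathbf{x}|$, and since $\mathbf{x}^\top \cdot \nabla_\Sigma r = |\mathbf{x}^\top|^2/r = r|\nabla_\Sigma r|^2$, the boundary integrand evaluates to $-f^2 e^{\beta R^2} R |\nabla_\Sigma r|$.

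Next I would compute $\mathrm{div}_\Sigma Y$. Using $\nabla_\Sigma e^{\beta r^2} = 2\beta e^{\beta r^2} \mathbf{x}^\top$ and the identity $\mathrm{div}_\Sigma \mathbf{x}^\top = n + |\mathbf{x}^\perp|^2/2$ (which is just $\Delta_\Sigma(|\mathbf{x}|^2/2) = n + \mathbf{x}\cdot \mathbf{H}_\Sigma$ combined with the self-expander equation), one finds
\[
\mathrm{div}_\Sigma Y = \left[ 2 f\, \nabla_\Sigma f \cdot \mathbf{x}^\top + 2\beta f^2 |\mathbf{x}^\top|^2 + f^2\!\left( n + \tfrac{1}{2}|\mathbf{x}^\perp|^2 \right) \right] e^{\beta r^2}.
\]
The divergence theorem then produces the basic identity
\[
R e^{\beta R^2} \int_{S_R} f^2 |\nabla_\Sigma r| \, d\mathcal{H}^{n-1} = -\int_{\bar{E}_R} \left[ 2 f\, \nabla_\Sigma f \cdot \mathbf{x}^\top + 2\beta f^2 |\mathbf{x}^\top|^2 + f^2\!\left( n + \tfrac{1}{2}|\mathbf{x}^\perp|^2 \right) \right] e^{\beta r^2} d\mathcal{H}^n.
\]
The core pointwise estimate, valid exactly when $\beta \geq 1/4$, is obtained by completing the square (or the AM--GM inequality $2|\nabla_\Sigma f|^2 + 2\beta f^2 |\mathbf{x}^\top|^2 \geq 4\sqrt{\beta}|f||\nabla_\Sigma f||\mathbf{x}^\top| \geq 2|f||\nabla_\Sigma f||\mathbf{x}^\top|$):
\[
2|\nabla_\Sigma f|^2 + 2 f\, \nabla_\Sigma f \cdot \mathbf{x}^\top + 2\beta f^2 |\mathbf{x}^\top|^2 \geq 0.
\]
Combining this with the obvious $f^2(n + |\mathbf{x}^\perp|^2/2) \geq 0$ and integrating yields $R e^{\beta R^2} \int_{S_R} f^2 |\nabla_\Sigma r| \, d\mathcal{H}^{n-1} \leq 2 \int_{\bar{E}_R} |\nabla_\Sigma f|^2 e^{\beta r^2} \, d\mathcal{H}^n$.

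The final step is to replace $|\nabla_\Sigma r|$ by $1$ on $S_R$, and this is where I expect the main obstacle to lie. The asymptotic estimate \eqref{REstimatesEqn} only yields $|\nabla_\Sigma r| \geq 1 - C_0 R^{-4}$, so naively one obtains constant $2/(1 - C_0 R^{-4})$, which tends to $2$ but is strictly greater; the pointwise square-completion is sharp at $\beta = 1/4$, so no free positivity is available there. To recover the exact constant $2$, one needs to retain rather than discard the nonnegative slack terms $n \int_{\bar{E}_R} f^2 e^{\beta r^2}$ and $\tfrac{1}{2}\int_{\bar{E}_R} f^2 |\mathbf{x}^\perp|^2 e^{\beta r^2}$ appearing in the divergence identity, and use them to absorb the $C_0 R^{-4} R e^{\beta R^2}\int_{S_R} f^2$ discrepancy by choosing $R_1 = R_1(C_0)$ sufficiently large (noting $|\mathbf{x}^\perp| = 2|\mathbf{H}_\Sigma| = O(r^{-1})$ from \eqref{AEstimatesEqn}, which also contributes decay relatable to the boundary). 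This bookkeeping is the delicate part of the argument.
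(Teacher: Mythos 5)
Your strategy is nearly correct, but you have correctly diagnosed the obstacle and not resolved it. With $Y = f^2 e^{\beta |\mathbf{x}|^2}\mathbf{x}^\top$, the divergence theorem indeed produces the boundary term $R e^{\beta R^2}\int_{S_R} f^2 |\nabla_\Sigma r|\,d\mathcal{H}^{n-1}$. Since $|\nabla_\Sigma r|\leq 1$, this is \emph{weaker} than the quantity you need to bound, so the extra factor cannot simply be dropped. Your proposed fix — absorbing the discrepancy $R e^{\beta R^2}\int_{S_R} f^2(1-|\nabla_\Sigma r|)$ into the bulk slack $n\int f^2 e^{\beta r^2} + \tfrac{1}{2}\int|\mathbf{x}^\perp|^2 f^2 e^{\beta r^2}$ — does not go through as stated. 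The discrepancy is a boundary integral multiplied by $R e^{\beta R^2}$, and the slack is a volume integral; there is no a priori trace inequality in $W^1_\beta$ that compares them without again invoking the derivative, which would push the constant above $2$. The square-completion being sharp at $\beta = 1/4$ is precisely the point where this breaks.

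The paper avoids this entirely by replacing $\mathbf{x}^\top$ with the \emph{normalized} vector field
\[
\mathbf{Z}=\frac{|\mathbf{x}|\,\mathbf{x}^\top}{|\mathbf{x}^\top|},
\]
which satisfies $|\mathbf{Z}|=|\mathbf{x}|$ and $\mathbf{Z}\cdot\frac{\mathbf{x}^\top}{|\mathbf{x}^\top|}=|\mathbf{x}|$ exactly, so the boundary contribution is precisely $R e^{\beta R^2}\int_{S_R}f^2\,d\mathcal{H}^{n-1}$ with no $|\nabla_\Sigma r|$ factor. The cost is that $\mathrm{div}_\Sigma\mathbf{Z}$ no longer has a closed expression, but \eqref{REstimatesEqn} gives $\mathrm{div}_\Sigma\mathbf{Z}\geq n-\tfrac{1}{2}$ and $|\mathbf{x}^\top|\geq|\mathbf{x}|-|\mathbf{x}|^{-1}$ on $\bar{E}_{R_1}$ for $R_1$ large, and the same absorbing inequality you used (with $2\beta\geq\tfrac{1}{2}$ and $|\mathbf{Z}|=|\mathbf{x}|$) makes the slack cancel exactly, yielding $\mathrm{div}_\Sigma(f^2e^{\beta r^2}\mathbf{Z})\geq -2|\nabla_\Sigma f|^2 e^{\beta r^2}$. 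So: same divergence-theorem strategy, same place where $\beta\geq\tfrac14$ enters, but the normalization of the radial vector field is the missing idea — it front-loads the $|\nabla_\Sigma r|$ issue into the interior estimate, where the weakly-conical bounds are available, rather than leaving it at the boundary where they are not.
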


\begin{proof}
Suppose first that $f$ has compact support. Let 
$$
\mathbf{Z}= \frac{|\mathbf{x}| \mathbf{x}^\top}{|\mathbf{x}^\top|}=\frac{\frac{1}{2}\nabla_\Sigma |\mathbf{x}|^2}{|\nabla_\Sigma |\mathbf{x}||}.
$$
By \eqref{REstimatesEqn}, $|\nabla_\Sigma |\mathbf{x}||\geq \frac{1}{2}$ so this vector field is well-defined. Using \eqref{REstimatesEqn}, one sees that, by taking $R_1$ sufficiently large, on $\bar{E}_{R_1}$,  one has
$$
|\mathbf{x}^\top| \geq |\mathbf{x}|-\frac{C}{|\mathbf{x}|^3}\geq  |\mathbf{x}| -\frac{1}{|\mathbf{x}|} \mbox{ and }  \mathrm{div}_\Sigma \mathbf{Z} \geq n-\frac{1}{2} \geq \frac{1}{2}.
$$ 
Hence, on $\bar{E}_R$,
\begin{align*} 
\mathrm{div}_\Sigma \left( f^2 e^{\beta |\mathbf{x}|^2} \mathbf{Z} \right) &= \left( f^2 \mathrm{div}_\Sigma \mathbf{Z}+ 2 f \nabla_\Sigma f \cdot \mathbf{Z} + 2\beta f^2\mathbf{x}^\top \cdot \mathbf{Z}\right) e^{\beta|\mathbf{x}|^2} \\
& \geq  \left(\frac{1}{2} f^2+ 2 f \nabla_\Sigma f \cdot \mathbf{Z} + \frac{1}{2} f^2 |\mathbf{Z}|\left(|\mathbf{x}|-|\mathbf{x}|^{-1}\right) \right) e^{\beta|\mathbf{x}|^2} \\
&\geq \left( \frac{1}{2} f^2-2 |\nabla_\Sigma f |^2 - \frac{1}{2} f^2 |\mathbf{Z}|^2  + \frac{1}{2} f^2 |\mathbf{Z}| \left(|\mathbf{x}|-|\mathbf{x}|^{-1}\right)\right) e^{\beta|\mathbf{x}|^2}
\end{align*}
where the last estimate uses the absorbing inequality. As $|\mathbf{Z}|=|\mathbf{x}|$ one has
$$
\mathrm{div}_\Sigma \left( f^2 e^{\beta |\mathbf{x}|^2} \mathbf{Z} \right) \geq -2 | \nabla_\Sigma f |^2 e^{\beta|\mathbf{x}|^2}.
$$
Integrating over $\bar{E}_R$ and appealing to the divergence theorem give
\begin{align*}
R e^{\beta R^2} \int_{S_R} f^2 \, d\mathcal{H}^{n-1} &= -\int_{S_R} f^2 \mathbf{Z}\cdot \frac{-\mathbf{x}^\top}{|\mathbf{x}^\top|} e^{\beta|\mathbf{x}|^2} \, d\mathcal{H}^{n-1}\\
&=\int_{\bar{E}_R} -\mathrm{div}_\Sigma \left( f^2 e^{\beta |\mathbf{x}|^2} \mathbf{Z} \right) \, d\mathcal{H}^n \leq 2 \int_{\bar{E}_R} |\nabla_\Sigma f |^2 e^{\beta|\mathbf{x}|^2} \, d\mathcal{H}^n,
\end{align*}
which proves the claim for $f$ of compact support. For general $f\in W^{1}_\beta(\Sigma)$ one uses cutoffs and the dominated convergence theorem. 
\end{proof}

\end{document}